\newtheorem{theorem}{Theorem}[section]
\newtheorem{proposition}{Proposition}[section]
\newtheorem{lemma}{Lemma}[section]
\newtheorem{corollary}{Corollary}[section]
\newtheorem{remark}{Remark}[section]
\newtheorem{definition}{Definition}[section]
\renewcommand{\thefootnote}{\fnsymbol{footnote}}
\newcommand{\Rmnum}[1]{\expandafter\@slowromancap\romannumeral #1@}
\numberwithin{equation}{section}
\title{\bf{Asymptotic stability of large energy harmonic maps under the wave map from 2D hyperbolic spaces to 2D hyperbolic spaces}}
\author{
{Ze Li}
}
\date{}
\begin{document}

\maketitle

\renewcommand{\thefootnote}{\fnsymbol{footnote}}


\noindent{\bf Abstract}
In this paper, we prove that the large energy harmonic maps from $\Bbb H^2$ to $\Bbb H^2$ are asymptotically stable under the wave map equation.

\bigskip

\section{Introduction}
Let $(\Bbb R \times M,{\mathbf{\Lambda}})$ be a given Lorentz manifold with metric $ \mathbf{\Lambda}$, where $(M,\mathbf{h})$ is a m-dimensional Riemannian manifold without boundary.
Let $(N,\mathbf{g})$ be a n-dimensional complete Riemannian manifold without boundary. A wave map $u:\Bbb R\times M\to N$ is a formal critical point of the Lagrangian functional $\mathcal{L}(u)$ defined by
\begin{align}\label{uyghv}
\mathcal{L}(u):=\int_{\Bbb R}\int_{M}({\bf D}{u},{\bf D}{u}){\rm{dvol_M}}dt.
\end{align}
where the integrand of (\ref{uyghv}) is the norm of ${\bf{D} }u$ viewed as a section of the vector bundle $T^*(\Bbb R\times M)\otimes u^*TN$ with metric ${\bf\Lambda}^{-1}\otimes u^*\mathbf{g}$. Under the local coordinate system  $\{x^i\}^m_{i=1}$ for $M$ and $\{y^k\}^{n}_{k=1}$ for $N$ respectively, $({\bf{D} }u,{\bf{D} }u)$ is given by
\begin{align}
h^{ij}(\partial_iu,\partial_ju)_{u^*\mathbf{g}}-(\partial_tu,\partial_tu)_{u^*\mathbf{g}},
\end{align}
where $\mathbf{h}=h_{ij}dx^idx^j$, $\mathbf{g}=g_{ij}dy^idy^j$, ${\bf \Lambda}=-dtdt+h_{ij}dx^idx^j$ are the metric tensors for $M,N$ and $\Bbb R\times M$.

In the above local coordinate, the Euler-Lagrange equation for  (\ref{uyghv}) is
\begin{align}\label{z32ccc344fyeo}
\Box u^l +{\Lambda^{\alpha \beta }}\overline{\Gamma}_{kp}^l(u){\partial _\alpha }{u^k}{\partial _\beta }{u^p}= 0,
\end{align}
where $\alpha,\beta$ run over $0,1,...,m$. Moreover, $\Box=-\partial_t^2+\Delta_M$ is the D' Alembertian on $\Bbb R\times M$, $\overline{\Gamma}^k_{ij}(u)$ are the
Christoffel symbols at the point $u(t,x)\in N$. In this paper, we consider the case $M=\Bbb H^2$, $N=\Bbb H^2$.

The wave map equation on flat spacetimes known as the nonlinear $\sigma$-model, arises as a model problem in particle physics and is
related to the general relativity, see for instance \cite{MS,IK,L}. Moreover, the background of hyperbolic spaces is of particular interest since the anti-de Sitter space model is asymptotically hyperbolic.

In this paper, we aim to study the stability of harmonic maps under the wave map equation (\ref{z32ccc344fyeo}) without size restriction of the harmonic map or equivariant assumptions. This work is on the way to a more vast project called soliton resolution conjecture (SRC) for dispersive PDEs which claims that every solution with bounded trajectory in energy space either splits into the superposition of divergent solitons with a radiation part plus an asymptotically vanishing remainder as $t\to\infty$ or converges to divergent solitons with a regular weak limit as $t$ approaches the blow-up time. The SRC reduces the dynamic behaviors of arbitrary data  to dynamics near bubbles or multi-solitons.  In this paper, we focus on dynamics near stationary solutions of wave maps, i.e. stability/instabity for harmonic maps. The instability of ground state of equivariant energy critical wave maps was shown by Cote \cite{Cote}. Later, a codimension-2 stability of 1-equivariant energy critical wave maps was proved by Bejenaru-Krieger-Tataru \cite{BKT}. And a series of work done by Lawrie-Oh-Shahshahani \cite{FSWE3,FSWE376,6H8UR,EFSWE376} studied the stability and SRC for equivariant wave maps on hyperbolic planes. Moreover,
\cite{EFSWE376} raised the following conjecture for wave maps from $\Bbb R\times \Bbb H^2$ to $\Bbb H^2$, \\
{\bf Conjecture 1.1}  Suppose that $\{Q_{\nu}\}$ is the 1-equivariant harmonic map class parameterized by $\nu\in(0,1)$ from $\Bbb H^2$ to $\Bbb H^2$. Let $(u_0,u_1)$ be the finite energy initial data to the wave map equation
for $u:\Bbb R\times \Bbb H^2\to \Bbb H^2$, and let $u_0(x)=Q_{\nu}(x)$ outside of some compact subset of $\Bbb H^2$.
Then the unique solution $(u(t),\partial_tu(t))$ to the wave map equation scatters to $(Q(x),0)$ as $t\to\infty$.

In this paper, we consider the case when the initial data are perturbations of the large energy harmonic maps.  Before stating
our main result, we recall the notion of admissible harmonic maps used in our previous works \cite{EFE376} and \cite{LZ} where small harmonic maps are considered.
\begin{definition}\label{dywetu67tu68}
Denote the Poincare disk by  $\Bbb D$.
We say the harmonic map $Q:\Bbb D\to\Bbb D$ is admissible if $\overline{Q(\Bbb D)}$ is a compact subset of $\Bbb D$ covered by a
geodesic ball centered at the origin of radius $R_0$, $\|\nabla^kdQ\|_{L^2}<\infty$ for $k=0,1,2,3$,
and there exists some $\varrho>0$ such that $e^{\varrho r}|dQ|^2\in L^{\infty}$, where $r$ is the distance between $x\in \Bbb D$ and
the origin.
\end{definition}
\noindent{\bf{Remark 1.1}}(Examples for the admissible harmonic maps) Any analytic function $f:\Bbb C \to \Bbb C$ with
$\overline{f(\Bbb D)}\Subset\Bbb D$ is an admissible harmonic map. See [Appendix,\cite{LZ}] for the proof.

For any given admissible harmonic map $Q$, the work space $\mathcal{H}^k\times\mathcal{H}^{k-1}$ is defined by (\ref{aseh8sdfed97}). Our main theorem is as
follows.
\begin{theorem}\label{a1}
Let $Q$ be an admissible harmonic map in Definition \ref{dywetu67tu68}. Assume that the initial data $(u_0,u_1)\in {\mathcal{H}^3\times\mathcal{H}^2}$ to
(\ref{z32ccc344fyeo}) with $u_0:\Bbb H^2\to\Bbb H^2$, $u_1(x)\in T_{u_0(x)}N$ for each $x\in \Bbb H^2$ satisfy
\begin{align}\label{as3}
\|(u_0,u_1)-(Q,0)\|_{{\mathcal{H}}^2\times {\mathcal{H}^1}}<\mu_1,
\end{align}
Then if $\mu_1>0$ is sufficiently small, (\ref{z32ccc344fyeo}) has a global solution $(u(t),\partial_tu(t))$ and as $t\to\infty$ we have
$$
\mathop {\lim }\limits_{t\to\infty }\mathop {\sup }\limits_{x \in {\mathbb{H}^2}} {d_{{\mathbb{H}^2}}}\left( {u(t,x),Q(x)} \right) = 0.
$$
\end{theorem}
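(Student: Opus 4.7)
My plan is to reduce the nonlinear stability statement to a linear dispersive problem for the Jacobi operator at $Q$, followed by a perturbative fixed-point closure. First, since $\overline{Q(\mathbb{H}^2)}$ is contained in the geodesic ball of radius $R_0$ about the origin of $\mathbb{D}$, a single coordinate chart on the target covers $Q(\mathbb{H}^2)$ together with a uniform neighborhood, so I may write
\[
u(t,x)=\exp_{Q(x)}\bigl(V(t,x)\bigr)
\]
for a section $V$ of $Q^*T\mathbb{H}^2$ as long as $u$ stays close to $Q$. Expanding (\ref{z32ccc344fyeo}) around $Q$ and using that $Q$ itself is harmonic, I would obtain an equation of the schematic form
\[
\partial_t^2 V + \mathcal{J}_Q V = \mathcal{N}(V,\nabla V;dQ),
\]
where the Jacobi operator $\mathcal{J}_Q=-\Delta_{\mathbb{H}^2}+\mathcal{V}_Q$ acts on sections of $Q^*T\mathbb{H}^2$, trivialized via a parallel frame. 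Since the target has constant negative sectional curvature, the endomorphism $\mathcal{V}_Q$ is pointwise \emph{nonnegative}, and admissibility of $Q$ makes $\|\mathcal{V}_Q(x)\|$ decay exponentially in the hyperbolic distance from the origin.

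Next I would develop a dispersive theory for the perturbed wave propagator generated by $\mathcal{J}_Q$. The free Laplacian $-\Delta_{\mathbb{H}^2}$ has spectral gap $[\tfrac{1}{4},\infty)$, from which sharp Strichartz, pointwise dispersive, and local-energy estimates for $\partial_t^2-\Delta_{\mathbb{H}^2}$ are known. Because $\mathcal{V}_Q$ is symmetric, nonnegative, and exponentially decaying, $\mathcal{J}_Q$ is selfadjoint with absolutely continuous spectrum filling $[\tfrac{1}{4},\infty)$; its nonnegativity rules out negative eigenvalues, and the combination of spectral gap and exponential decay of the potential should rule out embedded eigenvalues and threshold resonances. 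A $TT^*$/Kato-smoothing argument based on resolvent estimates for $\mathcal{J}_Q$ then transfers the Strichartz and pointwise-dispersive bounds from the free propagator to that of $\partial_t^2+\mathcal{J}_Q$.

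With the linear theory available, a standard contraction-mapping argument closes the nonlinear problem. The nonlinearity $\mathcal{N}$ is at least quadratic in $(V,\nabla V)$, and its terms linear in $V$ inherit exponential spatial decay from the factor $dQ$. The hypothesis (\ref{as3}) controls $(V,\partial_t V)$ in $\mathcal{H}^2\times\mathcal{H}^1$, while the extra $\mathcal{H}^3\times\mathcal{H}^2$ regularity, propagated by the flow, provides the Sobolev room needed to handle the term $\nabla V\cdot\nabla V$ via Strichartz. Closing the bootstrap yields a global solution with small dispersive norm, and Sobolev embedding on $\mathbb{H}^2$ together with the spectral gap then upgrades this to $\|V(t)\|_{L^\infty}\to 0$; because $\exp_{Q(x)}$ is an isometry to first order, this translates directly to the uniform convergence $\sup_x d_{\mathbb{H}^2}(u(t,x),Q(x))\to 0$.

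The principal obstacle is the spectral and dispersive analysis of $\mathcal{J}_Q$ in the large-energy regime: unlike the small harmonic map setting of \cite{EFE376,LZ}, the potential $\mathcal{V}_Q$ cannot be absorbed as a perturbation of the free Laplacian at the level of Strichartz estimates, so one must establish Strichartz and dispersive bounds for a genuinely perturbed selfadjoint operator on $\mathbb{H}^2$ and in particular rule out resonances at the threshold $\tfrac{1}{4}$. The favorable sign of $\mathcal{V}_Q$ arising from negative target curvature, the large spectral gap of $-\Delta_{\mathbb{H}^2}$, and the exponential decay of $dQ$ guaranteed by admissibility of $Q$ are the three geometric ingredients that make this analysis feasible.
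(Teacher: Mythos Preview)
Your linearization step contains a structural error that is precisely the heart of the difficulty in the large-energy regime. When you trivialize $Q^*T\mathbb{H}^2$ by a frame, the Jacobi operator is \emph{not} of the form $-\Delta_{\mathbb{H}^2}+\mathcal{V}_Q$ with a zero-order potential: the covariant Laplacian on sections picks up first-order (magnetic) terms coming from the connection one-form of the frame, so that in any frame one has $\mathcal{J}_Q=-\Delta-2h^{ij}A_i\partial_j+\text{(zero order)}$. A globally \emph{parallel} frame on $Q^*T\mathbb{H}^2$ would kill these terms, but no such frame exists: the curvature of the pullback connection is $Q^*\mathbf{R}$, which is nonzero wherever $dQ\neq 0$ because the target has nonvanishing curvature. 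So your operator is a genuine magnetic Schr\"odinger operator with a large, non-perturbative first-order part, exactly the operator $H=-\Delta+W$ of Lemma~\ref{REWSDERF}. The paper's central technical contribution is the dispersive theory for this operator; see Sections~3--5.

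In particular, your claim that nonnegativity plus exponential decay ``should rule out \ldots\ threshold resonances'' is where the proposal breaks. Nonnegativity of the curvature term gives $\sigma(H)\subset[\tfrac14,\infty)$ (Proposition~\ref{CXXXXE}), but it does \emph{not} by itself exclude a resonance at the bottom $\tfrac14$, and for a large magnetic potential this cannot be obtained by perturbation off the free resolvent. The paper resolves this by exploiting the gauge freedom on $Q^*TN$: choosing the Coulomb gauge (Lemma~\ref{huax}, Proposition~\ref{hutyhvf}) forces the spectrum of $W(-\Delta-\tfrac14\pm i\varepsilon)^{-1}$ to lie in the closed right half-plane, after which a Riesz-projection argument rules out the bottom resonance (Proposition~\ref{resonance}). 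The high-frequency resolvent bound is likewise nontrivial for large magnetic potentials and is obtained by a positive-commutator/energy argument in the spirit of \cite{CCV} (Lemma~\ref{long09}), again after splitting off the long-range part of the magnetic field. Finally, the nonlinear closure in the paper is not a direct contraction on $V=\exp_Q^{-1}u$; it passes through Tao's caloric gauge and a bootstrap on the heat tension field $\phi_s$ (Section~7), which provides the renormalized nonlinearity needed to close at the available regularity. Your exponential-map ansatz and ``standard contraction'' skip over this and do not indicate how the derivative nonlinearity $\nabla V\cdot\nabla V$ would close with smallness only in $\mathcal{H}^2\times\mathcal{H}^1$.
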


\footnotetext{${ }^1$ For targets with non-positive sectional curvature, the uniqueness is unconditional. For positive sectional curvature targets, one has to assume the image $Q(M)$ is contained in a ball of radius less than a constant depending on the curvature.}
\noindent{\bf{Remark 1.2}}
\noindent We remark that the perturbation norms in Theorem 1.1 assume the initial data tends to $Q$ at infinity. By the conditional uniqueness${ }^1$ of
harmonic maps with prescribed boundary map, one can expect the final asymptotic harmonic map of the solution $(u,\partial_tu)$ to
(\ref{z32ccc344fyeo}) is exactly $Q$. This is one key reason for why the asymptotic harmonic map coincide with the unperturbed one, which
is different from wave maps from $\Bbb R^{1+2}$ to $S^n$ where we have moving and modulated solitons after
the perturbation.  The other key reason is that the bubble tree convergence seems to imply that the solution converges to the superposition of one harmonic map $Q_{\infty}:\Bbb H^2\to\Bbb H^2$ and finite numbers of scaled and translated harmonic maps from $\Bbb R^2$ to $\Bbb H^2$. Meanwhile since finite energy harmonic maps from $\Bbb R^2$ to $\Bbb H^2$ are trivial, one can expect the solution to (\ref{z32ccc344fyeo}) converges to only one bubble.

\noindent{\bf{Remark 1.3}}(Examples for the perturbations of admissible harmonic maps) Since one has the global coordinates (\ref{tyw55gvu}) for $\Bbb H^2$, it is trivial to give an example of the perturbation in the sense of (\ref{as3}).

\noindent{\bf{Remark 1.4}}
The initial data considered in this paper are perturbations of harmonic maps in the $\bf H^2$ norm. One shall build the $S_k$ v.s. $N_k$ norm constructed by Tataru \cite{6HZD} and Tao \cite{6HD} in the hyperbolic setting while considering perturbations in the energy critical norm ${\bf H}^1$.

\subsection{Outline of the proof and main ideas.}

We first describe the outline of the proof. By constructing Tao's caloric gauge in our setting, one obtains the nonlinear wave equation
for the heat tension field. Separating the ``effective" linear part from the nonlinear terms yields a magnetic wave equation.
By establishing the Kato smoothing effect for the master linear equation, one obtains the corresponding
non-endpoint Strichartz estimates. Applying an abstract theorem built in our
work \cite{6HDFRG} gives us the endpoint Strichartz estimates and a key weighted Strichartz estimate. Meanwhile, we prove the smoothing effect for the linear heat equation with large magnetic potential.
By bootstrap, the endpoint and weighted Strichartz estimates, one can prove the heat tension filed enjoys a global space-time norm. Transforming the bounds of the heat tension field back to the differential
fields closes the bootstrap and thus finishing the whole proof. The caloric gauge used here was previously built in \cite{LZ,EFE376} where we used
caloric gauge as a geometric linearization.

One of the main contribution of this paper is that we use the freedom of the gauge fixed on the harmonic map and the geometric meaning of the master linear equation to rule out the bottom resonance and the possibility of eigenvalues in the gap $[0,1/4]$. The first observation is the two freedoms of the Schr\"odinger operator studied here: The Schr\"odinger operator varies as the gauge fixed on the harmonic map changes; the Schr\"odinger operator is invariant under the coordinates transformation of $M=\Bbb H^2$. The other observation is that the Schr\"odinger operator is indeed well-defined on the pullback bundle $Q^*(TN)$, where $Q$ is the harmonic map, which enables us to work in a purely geometric setting. In fact, given any frame $\{\Xi_1,\Xi_2\}$ on $Q^*(TN)$, suppose that $A=A_idx^i$ is the corresponding connection one form. Then any $\Bbb C^2$ valued function $f:=(f_1,f_2)^t$ defined on $M$ induces a complex vector field $f\Xi$ on $N$ by
\begin{align}
f\mapsto f\Xi=f_1\Xi_1+f_2\Xi_2.
\end{align}
Then the potential part of Schr\"odinger operator $H:=-\Delta+W$ can be written as
\begin{align}
Wf=-2(A,df)+(d^*A)f-(A,A)f+Sf,
\end{align}
where $(\cdot,\cdot)$ denotes the metric tensor for one forms on $M$, and $S$ is a symmetric linear mapping in $\Bbb C^2$ defined on $M$ related to the sectional curvature. Since the connection coefficients $A_i$ are antisymmetric and real, $-(A,A)$ defines a non-negative symmetric operator in $L^2(M,\Bbb C^2)$.  And due to the non-positive sectional curvature of the target $N=\Bbb H^2$, $S$ is  non-negative as well. Meanwhile, integration by parts implies $H$ is symmetric. Thus, the somewhat bad term for determining the spectrum especially whether there exists bottom resonance  is $-2(A,df)+(d^*A)f$. But since $A$ depends on the frame fixed on $Q^*(TN)$, one may take the Coulomb gauge to simplify the determination. Fortunately, this idea works well in our setting. And besides fixing Coulomb gauge, it is important to
do calculations by using the covariant derivatives on $Q^*TN$, which matches the geometric structure of $H$ well, rather than just viewing $f$ as $\Bbb C^2$ valued functions.

The main difficulty for the large energy harmonic map case is to derive the Kato smoothing effect of a wave equation with large magnetic potentials, which can be further divided into the small frequency, mediate frequency and high frequency part.
The enemy for the small frequency part is the possibility of bottom resonance. We use the Coulomb gauge on the harmonic map to obtain a nice spectrum distribution of the operator $(V+X)(-\Delta-\frac{1}{4}\pm i\epsilon)^{-1}$, where the matrix valued function $V$ denotes the electric potential part and the vector field $X$ denotes the magnetic field respectively. In fact, by choosing the Coulomb gauge we have the spectrum of  $(V+X) (-\Delta-\frac{1}{4}\pm i\epsilon)^{-1}$ lies on the right of the imaginary axis, then the resonance can be ruled out by a perturbation argument using the Riesz projection operators.  Moreover, We exclude the possible existence of eigenvalues in $(-\infty,1/4)$ by calculating the numerical range of the magnetic Schr\"odinger operator by using covariant derivatives on $Q^*(TN)$.

The high frequency part is always difficult in the large magnetic potential case, even in the Euclidean case, see for instance \cite{EGS}.  In our argument, we split the magnetic potential into a large long range part supported outside some geodesic ball and a remainder part supported near the original point. For the long rang part, we can put the magnetic Schr\"odinger operator uniformly bounded in the weighted space $w(x)L^2$ for all high frequencies by a similar positive commutator method of \cite{CCV}. The important gain of this energy argument is the weight $w^{-1}$ can be chosen to vanish near the origin point.  Due to the extra smallness gain from the vanishing of $w^{-1}$ and the closeness to the origin of the support of the remainder potential, we can view the Schr\"odinger operator with the whole magnetic
potential as the perturbation of the long range Schr\"odinger operator.

In the large energy case, the smoothing effect for magnetic heat equations is also needed. The $L^p-L^q$ estimates of $e^{-tH}$ is relatively easy by noticing that the geometric structure of $H$ shows $|e^{-tH}f|\le e^{t\Delta}|f|$ holds point-wisely. Then the $L^p-L^q$ estimates of $e^{-tH}$ follow directly from the known results for $e^{t\Delta}$.   The main difficulty here is to derive the smoothing estimates, which cannot be transferred to the corresponding ones for $e^{t\Delta}$ as $L^p-L^q$ estimates.  We use ideas from semigroups of linear operators to deduce the smoothing estimates. In fact, by the Laplacian transform formula connecting the resolvent with the heat semigroup, the resolvent estimates of $H$ for part regime of the resolvent set follow by that of $e^{-tH}$. Using the almost equivalence technique used in our previous paper \cite{6HDFRG} and frequency decomposition we get the smoothing effect for $e^{-tH}$ by interpolation.

\subsection{History}

In the following, we recall the non-exhaustive lists of results on the Cauchy problem, the long dynamics and blow up for wave maps on $\Bbb R^{1+m}$. The sharp subcritical well-posedness theory was developed by Klainerman-Machedon \cite{RTEW1,RTEW2} and Klainerman-Selberg
\cite{TEW2}. The critical well-posedness theory in equivariant case was considered by Christodoulou, Tahvildar-Zadeh \cite{CT},
Shatah, Tahvildar-Zadeh \cite{STZ2} and improved by Chiodaroli-Krieger-Luhrmann \cite{CaaKaaLaa} in the radial case. The critical small data global well-posedness theory was started by the breakthrough work of Tataru \cite{6HZD}
and Tao \cite{8HG,6HD}, see also \cite{Ja1a,EaWa3a,KaRa3a, NaSaUa,Tataru3} for generalizations of the targets.  The below threshold critical global well-posedness theory was
obtained by  Krieger-Schlag \cite{6HZDR}, Sterbenz-Tataru \cite{ST1,ST2}, Tao \cite{O6hour}. The bubbling theorem
in the equivariant case was obtained by Struwe \cite{S}. The type II blow up solutions for equivariant energy critical wave maps were constructed by
Krieger-Schlag-Tataru \cite{E3PYT}, Raphael-Rodnianski \cite{O6H8UR}, and Rodnianski-Sterbenz \cite{6HR}. For the SRC on energy critical wave maps/hyperbolic Yang-Mills in the equivariant case, the pioneering works of Cote \cite{FE376COTE}, Jia-Kenig \cite{JK} obtained results along some time sequence and recently Jendrej-Lawrie \cite{JL} constructed the two bubble solution by studying corresponding threshold solutions.  For the SRC on energy critical wave maps to spheres in the non-equivariant case, see the works of Grinis \cite{G} and  Duyckaerts-Jia-Kenig-Merle \cite{6HDFGVFESDEREC}. We also mention the works \cite{JYGT1,JYGT2} for outer-ball wave maps and \cite{Gy} for wave maps on wormholes in the equivaraint case.

Wave map equations on curved spacetime were relatively less understood. D'Ancona-Zheng \cite{6HZDRR} studied critical small data global well-posedness of wave maps on rotationally symmetric manifolds in the equivariant case. The critical small data global well-posedness theory for wave maps on small asymptotically flat perturbations of $\Bbb R^4$ was studied by Lawrie \cite{O6HI8UR}. The long time dynamics for wave maps on $\Bbb R\times \Bbb H^2$ in the equivariant class were studied by sequel works of Lawrie, Oh, Shahshahani \cite{FSWE3,FSWE376,EFSWE376}.
And Lawrie, Oh, Shahshahani \cite{6H8UR} obtained the critical small data global well-posedness theory for
wave maps from $\Bbb R\times \Bbb H^d$ to compact Riemann manifolds with $d\ge4$.

This paper is organized as follows. In Section 2, we recall some results obtained in our previous works. Particularly, we recall the
work space and the existence of the caloric gauge. In addition, we
prove the limit harmonic map for the heat flow is exactly the unperturbed one. In Section 3 to Section 5, we recall master equation and prove the corresponding Kato smoothing effects.  In Section 6, we prove the smoothing estimates for the magnetic heat equation and recall  Strichartz estimates for magnetic wave equations.  In Section 7,  by bootstrap we deduce the global
spacetime bounds for the heat tension field and finish the proof of Theorem 1.1.

\section{\bf Notations and Preliminaries}

\subsection{Hyperbolic Planes}
In this paper, we consider the simplest class of Riemanniann symmetric spaces of noncompact type, i.e., the hyperbolic plane $\Bbb H^2$. Recall that $\Bbb H^2$ can be realized as the hyperboloid in $\Bbb R^{1+2}$:
\begin{align}\label{fdc}
\left\{
  \begin{array}{ll}
   -|x_0|^2+|x_1|^2+|x_2|^2=1  & \hbox{ } \\
    (x_0,x_1,x_2)\in \Bbb R^{1+2} , x_0\ge 1& \hbox{ }
  \end{array}
\right.
\end{align}
with metric being the pullback of Minkowski metric $(-1,1,1)$ in $\Bbb R^{1+2}$. In the geodesic coordinates of $\Bbb H^2$, the Riemannian metric is written as
\begin{align*}
dr^2+(\sinh r)^2d\theta^2,
\end{align*}
and the Laplace-Beltrami operator is given by
\begin{align*}
\Delta=\partial^2_r+\coth r\partial_r+(\sinh r)^{-2}\partial^2_{\theta}.
\end{align*}
For $\lambda\in \Bbb C$, the spherical function $\varphi_{\lambda}$ is the radial normalized eigenfunction of $\Delta$:
\begin{align*}
\Delta \varphi_{\lambda}=-(\lambda^2+\frac{1}{4})\varphi_{\lambda},  \mbox{  }\varphi_{\lambda}(0)=1.
\end{align*}
In the case $ \lambda\in \Bbb R$, $r\ge 0$,  $\varphi_{\lambda}$ satisfies the bound:
\begin{align}\label{tby}
\left|{\varphi_{\lambda}(r)}\right|\le {\varphi_{0}(r)}\lesssim (1+r)e^{-\frac{r}{2}}.
\end{align}

The hyperbolic plane can also be realized as the Poincare disk:
\begin{align*}
   {\Bbb D}=\{ (x_1,x_2)\in \Bbb R^{2} : |x_1|^2+|x_2|^2< 1\}
\end{align*}
with the metric
\begin{align*}
  \frac{4dx^2_1+4dx_2^2}{1-|x_1|^2-|x_2|^2}.
\end{align*}
The volume form is given by
\begin{align*}
 dz= 4(1-|z|^2)^{-1}dz,
\end{align*}
where $z=x_1+ix_2$ is the complex coordinate.

In addition, as a homogeneous space $\Bbb H^2$ can be viewed as  $G/K$, where $K=SO(2)$ is the rotation group in $\Bbb R^2$ and  $G={\bf SU}(1,1)$ is  defined by
\begin{align*}
{\bf SU}(1,1) = \left\{ {\left( {\begin{array}{*{20}{c}}
   a & c  \\
   {\bar{c}} & {\bar{a}}  \\
 \end{array} } \right):{{\left| a \right|}^2} - {{\left| c \right|}^2} = 1} \right\}
\end{align*}
$G$ acts on $\Bbb D$ in the following way:
\begin{align*}
g: z\in \Bbb D\longmapsto \frac{az+c}{\bar{c}z+\bar{a}}\in \Bbb D.
\end{align*}
Let $dg$ be the normalized Haar measure of the group $G=SU(1,1)$ such that
\begin{align*}
\int_{G}f(g.o)dg=\int_{\Bbb D} f(z)dz,
\end{align*}
where $o$ denotes the original point of $\Bbb D$. Recall that  the convolution of functions $f_1,f_2$ on $\Bbb D$ are defined by
\begin{align}\label{u6trf}
f_1*f_2(z)=\int_{G}f_1(g.o)f_2(g^{-1}z)dg.
\end{align}
The convolution is symmetric, i.e., $f_1*f_2=f_2*f_1$. And if $f_1$ or $f_2$ is a bi-$K$ invariant function (radial function in our case) i.e.,
\begin{align*}
f(z)={\bf f}({\rm distance}(z,o))
\end{align*}
for some function ${\bf f}$ defined on $\Bbb R^+$, then  (e.g. $f_2$ is radial )
\begin{align}\label{6trf}
f_1*f_2(z)=\int_{\Bbb D}f_1(\widetilde{z}){{\bf f}_2(d(z,\widetilde{z}))}d\widetilde{z}.
\end{align}
Recall that $SL_2(R)=\mathbf{N}\mathbf{A}\mathbf{K}$ where $\mathbf{A}$ is the group of diagonal matrices with determine 1:
\begin{align*}
\mathbf{A }= \left\{ {{a_c} =\left( {\begin{array}{*{20}{c}}
   c & 0  \\
   0 & {{c^{ - 1}}}  \\
 \end{array} } \right):c \in \Bbb R*} \right\},
\end{align*}
$\mathbf{N}$ is the unipotent group of matrices:
\begin{align*}
{\bf N} = \left\{ {{n_b} = \left( {\begin{array}{*{20}{c}}
   1 & b  \\
   0 & 1  \\
 \end{array} } \right):b \in \Bbb R} \right\},
\end{align*}
$\mathbf{K}$ denotes rotation group $SO(2)$ as before.
Identifying ${\bf SU}(1,1)=ZSL_2(R)Z^{-1}$ with
\begin{align*}
Z = \frac{1}
{{\sqrt 2 }}\left\{ {\left( {\begin{array}{*{20}{c}}
   1 & { - {\mathbf{i}}}  \\
   0 & {\mathbf{i}}  \\
 \end{array} } \right)} \right\},
\end{align*}
the decomposition $SL_2(R)=\mathbf{N}\mathbf{A}\mathbf{K}$ induces the Iwasawa decomposition of $G={\bf {SU}}(1,1)=Z\mathbf{NAK}Z^{-1}$.

Back to the hyperboloid model (\ref{fdc}), Iwasawa decomposition can be written as ${\Bbb G}={\Bbb N}{\Bbb A}{\Bbb K}$ where ${\Bbb G}=SO(d,1)$ is the connected Lie subgroup of $GL(3)$ that keeps Minkowsi metric and ${\Bbb N},{\Bbb A}$ are given by
\begin{align}\label{tvu}
{\Bbb A} = \left\{ {{a_{{v_1}}} = \left( {\begin{array}{*{20}{c}}
   {\cosh {v_1}} & {\sinh {v_1}} & 0  \\
   {\sinh {v_1}} & {\cosh {v_1}} & 0  \\
   0 & 0 & 1  \\
 \end{array} } \right):{v_1} \in \Bbb R} \right\},
\end{align}
and
\begin{align}\label{t5vu}
\mathbb{N} = \left\{ {{n_{{v_2}}} = \left( {\begin{array}{*{20}{c}}
   {1 + \frac{1}
{2}|{v_2}{|^2}} & { - \frac{1}
{2}|{v_2}{|^2}} & {{v_2}}  \\
   {\frac{1}
{2}|{v_2}{|^2}} & {1 - \frac{1}
{2}|{v_2}{|^2}} & {{v_2}}  \\
   {{v_2}} & { - {v_2}} & 1  \\
 \end{array} } \right):{v_2} \in \Bbb R} \right\}.
\end{align}
This induces a global coordinate system by
the diffeomorphism $\Phi:\Bbb R\times \Bbb R\to \mathbb{H}^2$ given by
\begin{align*}
\Phi: (v_1,v_2)\longmapsto n_{v_1}a_{v_2}.o
\end{align*}
or explicitly written as
\begin{align}\label{tyw55gvu}
\Phi: (v_1,v_2)\longmapsto({\cosh} v_2+\frac{1}{2}e^{-v_2}|v_1|^2, {\sinh} v_2+\frac{1}{2}e^{-v_2}|v_1|^2, e^{-v_2}v_1).
\end{align}
Then the Riemannian metric of $\Bbb H^2$ now is
\begin{align}\label{t1}
{\bf h}=e^{-2v_2}(dv_1)^2+(dv_2)^2.
\end{align}
The corresponding Christoffel symbols are
\begin{align}\label{rex3456gvh}
\Gamma^1_{2,1}=-1, \mbox{  }\Gamma^2_{1,1}=e^{-2v_2},\mbox{ }\Gamma^2_{2,2}=\Gamma^1_{2,2}=\Gamma^1_{1,1}=\Gamma^2_{2,1}=0.
\end{align}

In addition, the analogy of (\ref{u6trf}) and (\ref{6trf}) is
\begin{align}\label{u6trf}
f_1*f_2({  x})=\int_{\Bbb G}f_1(g.o)f_2(g^{-1}{  x})dg,
\end{align}
and
\begin{align}\label{67trf}
f_1*f(x)=\int_{\Bbb H^2}f_1(\widetilde{x}){{\bf f}(d(x,\widetilde{x}))}{\rm dvol}_{\widetilde{x}},
\end{align}
provided that $f$ is a bi-$\Bbb K$ invariant function (radial function in our case).

In the following, we denote $(x_1,x_2)$ instead of $(v_1,v_2)$ for the coordinate given by (\ref{tyw55gvu}) for $M=\Bbb H^2$. And the coordinates for the target manifold $N=\Bbb H^2$  induced by (\ref{tyw55gvu}) are  denoted by $(y_1,y_2)$.

There exists a natural orthonormal frame at given point $y\in N$ given by
\begin{align}\label{ex3456gvh}
\Omega_1(y)=e^{y_2}\frac{\partial}{\partial y_1}; \mbox{  }\mbox{  }\Omega_2(y)=\frac{\partial}{\partial y_2}.
\end{align}

\subsection{Fourier transform and Sobolev embedding}

In this subsection, we recall the Fourier analysis on $\mathbb{H}^2$ (see Helgason \cite{6HDFGV}). The following is a sketch rather than a complete introduction, one may see Section 2 of our previous work \cite{6HDFRG} for a more detailed introduction.
For $x,y\in \Bbb R^{1+2}$, denote the Minkowski metric by $[x,y]=-x_1y_1+x_2y_2+x_3y_3$. Given any $b\in \Bbb S^1$ and $\tau\in \Bbb C$, define $\mathbf{k}(b)=(1,b)\in \Bbb R^{1+2}$. Let
\begin{align*}
h_{\tau,b}:\mathbb{H}^2\to \Bbb C, \mbox{  }h_{\tau,b}=[x,\mathbf{k}(b)]^{i\tau-\frac{1}{2}}.
\end{align*}
Given any $g\in C_0(\mathbb{H}^2)$, the Fourier transform is defined by
\begin{align}\label{t54re}
\mathcal{F}{g}\left( {\tau,b } \right)= \int_{\Bbb H^2} g(x)[ x,\mathbf{k}(b )]^{i\tau - \frac{1}{2}}dx.
\end{align}
Denote $c(\lambda)$ the Harish-Chandra c-function on $\mathbb{H}^2$.  For some constant $C$ it is defined by
$c(\tau)=C\frac{\Gamma(i\tau)}{\Gamma(\frac{1}{2}+i\tau)}.$
Then the corresponding Fourier inversion formula is
\begin{align*}
g(x) = \int_0^\infty  \int_{\Bbb S^1}\mathcal{F}g( \tau ,b)[ x,\mathbf{k}(b ) ]^{ - i\tau - \frac{1}{2}}|c(\tau )|^{- 2} dbd\tau.
\end{align*}
The Plancherel theorem is given by
\begin{align*}
\int_{{\Bbb{H}^2}} {f(x)\overline {g(x)}{\rm{dvol_h}} = \frac{1}{2}} \int_{\mathbb{R} \times {\Bbb S^1}} {\mathcal{F}f( {\tau ,b })\overline {\mathcal{F}g( {\tau ,b})} {| {c(\tau )} |^{ - 2}}d\tau db}.
\end{align*}
Any function $m:\Bbb R\to \Bbb C$ defines a Fourier multiplier operator $m(-\Delta)$ by
\begin{align}\label{vcdf}
\mathcal{F}\left(m(-\Delta)g\right)(\tau, b)=m(\frac{1}{4}+\tau^2)\mathcal{F}{g}(\tau,b).
\end{align}
The fractional derivatives $(-\Delta)^{\frac{s}{2}}$ are defined by the Fourier multiplier $\lambda\to (\frac{1}{4}+\lambda^2)^{\frac{s}{2}}$.

The Sobolev spaces ${\mathrm{H}}^{s,p}$ are defined by
\begin{align*}
{\mathrm{H}}^{s,p}(\Bbb H^2)=(-\Delta )^{-\frac{s}{2}}L^p(\Bbb H^2), \mbox{  }(1<p<\infty, \mbox{  }s\in \Bbb R).
\end{align*}
Moreover, for $s=n\in \Bbb N,$ ${\mathrm{H}}^{s,p}(\Bbb H^2)$ coincides with
\begin{align*}
W^{n,p}(\Bbb H^2)=\{f\in L^p(\Bbb H^2): |\nabla^k f|\in L^p(\Bbb H^2), \mbox{ }\forall\mbox{ } 0\le k\le n\}.
\end{align*}
where $\nabla $ is the covariant derivative on $\Bbb H^2$. The Sobolev inequalities of functions in $\mathrm{H}^{s,p}$ are recalled in Appendix A.
And it is known that $C^{\infty}_c(\Bbb {H}^2)$ is dense in $W^{n,p}(\mathbb{H}^2)$.

For radial functions $k$ the Fourier transform (\ref{t54re}) also has the identity
\begin{align*}
\mathcal{F}(f*k)=(\mathcal{F}f)\cdot (\mathcal{F}k).
\end{align*}
Hence, given Fourier multiplier $T$ of symbol $m(\lambda)$ defined via (\ref{vcdf}), by (\ref{67trf}) we can write $T$ by its Schwartz kernel as
\begin{align*}
Tf (x) =\int_{\Bbb H^2}M({\rm{distance}}(x,\widetilde{x}))f(\widetilde{x}){\rm dvol_h}.
\end{align*}
We call the function  $M(x,\widetilde{x}):=M({\rm{distance}}(x,\widetilde{x}))$ the kernel of $T$ or Green functions for $T$ sometimes. For instance, the kernel of the free resolvent $(-\Delta-s(1-s))^{-1}$ in $\Bbb H^{d}$ is given by (see (\ref{7mvbtr}))
\begin{align*}
[{ }^d\widetilde{R}]_0(s;x,y)=(2\pi)^{-\frac{d}{2}}e^{-i\pi\mu}(\sinh r)^{-\mu}Q^{\mu}_{\nu}(\cosh r),
\end{align*}
and one may see Lemma \ref{Tomnagdfe4} for estimates of kernels of resolvent.

\subsection{Function Spaces for Maps between $\Bbb H^2$ }

Denote $\widetilde{\nabla}$ the covariant derivative in $TN$, and  $\nabla$ the covariant derivative induced by $u$ in $u^*(TN)$. Denote $\mathbf{R}$  the Riemann curvature tensor of $N$.  $\Gamma^{k}_{ij}$ and  $(\overline{\Gamma}^{k}_{ij})$ denote the Christoffel symbols on $M$ and $N$ respectively.

For $X,Y,Z\in TN$, we adopt the following notation for simplicity
\begin{align}\label{ex346gvh}
\left( {X \wedge Y} \right)Z = \left\langle {X,Z} \right\rangle Y - \left\langle {Y,Z} \right\rangle X.
\end{align}
Then the Riemannian curvature on $N=\Bbb H^2$ can be written as
\begin{align*}
{\bf R}(X,Y)Z={\widetilde{\nabla} _X}{\widetilde{\nabla} _Y}Z - {\widetilde{\nabla }_Y}{\widetilde{\nabla} _X}Z - {\widetilde{\nabla} _{[X,Y]}}Z = \left( {X \wedge Y} \right)Z.
\end{align*}

For maps $u:\mathbb{H}^2\to \mathbb{H}^2$, we define the intrinsic Sobolev semi-norm $\mathfrak{H}^n$ by
\begin{align*}
\|u\|^2_{{\mathfrak{H}}^n}=\sum^n_{k=1}\int_{\mathbb{H}^2} |\nabla^{k-1} du|^2 {\rm {dvol_h}}.
\end{align*}
Given map $u:\mathbb{H}^2\to\mathbb{H}^2$, (\ref{tyw55gvu}) equips it with a vector-valued function $x\longmapsto (u^1(x),u^2(x))$ defined via
\begin{align*}
\Phi(u^1(x),u^2(x))=u(x)
\end{align*}
for any $x\in\Bbb H^2$. Let $Q:\Bbb H^2\to\Bbb H^2$ be an admissible harmonic map in Definition \ref{dywetu67tu68}. Then the extrinsic Sobolev space is defined by
\begin{align}\label{sderrtg897}
{\bf{H}^n_{Q}}=\{u: u^1-Q^1(x), u^2-Q^2(x)\in \mathrm{H}^n(\Bbb {H}^2;\Bbb C)\},
\end{align}
where $\Phi(Q^1(x),Q^2(x))=Q(x)$. We equip ${\bf{H}^n_{Q}}$ with the following distance
\begin{align}\label{sderrh97}
{\rm{dist}}_{n,Q}(v,w)=\sum^2_{p=1}\|v^p-w^p\|_{{W}^{n,2}(\Bbb{H}^2;\Bbb C)}, \mbox{  }\forall v,w\in \bf{H}^n_{Q}.
\end{align}
Denote
$$\mathcal{D}=\{u:M\to N \mbox{  }{\rm is}\mbox{  }{\rm smooth}\mbox{ }\big| \mbox{ }\exists K\Subset M\mbox{  }{\rm such } \mbox{  }{\rm that} \mbox{  }u=Q, \forall x\in M\backslash K \}.
$$
Let $\mathcal{H}^n_{Q}$ be the completion of $\mathcal{D}$ under the metric given by (\ref{sderrh97}).
Then $\bf H^n_Q$ coincides with $\mathcal{H}_Q^n$ by the density of $C^{\infty}_c$ in $W^{n,2}$. And we write $\mathcal{H}^n$ for simplicity.
For maps from $\Bbb R\times\Bbb H^2$ to $\Bbb H^2$, we define the space $\mathcal{H}^n\times\mathcal{H}^{n-1}$ by
\begin{align}\label{aseh8sdfed97}
{\mathcal{H}^n\times\mathcal{H}^{n-1}}=\left\{u \big| \sum^2_{k=1}\|u^k-Q^k\|_{\mathrm{H}^n(\Bbb {H}^2;\Bbb R)}+\|\partial_tu^k\|_{\mathrm{H}^{n-1}(\Bbb {H}^2;\Bbb R)}<\infty\right\},
\end{align}
with the distance given by
\begin{align}\label{46hour}
{\rm{dist}}_{\mathcal{H}^n\times\mathcal{H}^{n-1}}(u,v)=\sum^2_{k=1}\|\partial_tu^k-\partial_t v^k\|_{\mathrm{H}^{n-1}}+\|u^k-v^k\|_{\mathrm{H}^n}.
\end{align}

The local and conditional global well-posedness of (\ref{z32ccc344fyeo}) in $\mathcal H^3\times\mathcal H^2$ are recalled in Proposition \ref{vskfheccc344fyeo} in Appendix A, see \cite{EFE376} for its proof.

\subsection{Gauges}

Let the latin letters denote the index of $x\in \Bbb H^2$, and let the Greek letter $\alpha=0,1,2$ denote the index of $t,x$. Throughout the paper we use the index $0$ to stand for $t$. Let $\{e_1(t,x),e_2(t,x)\}$ be an orthonormal frame for $u^*(TN)$. The corresponding connection coefficients are antisymmetric matrices:
\begin{align*}
[{A_\alpha}]^k_j = \left\langle {{\nabla_\alpha}{e_j},{e_k}} \right\rangle.
\end{align*}
Denote $\phi_\alpha=(\phi^1_\alpha,\phi^2_\alpha)$ the components of $\partial_{t,x}u$ under the frame $\{e_1,e_2\}$:
\begin{align*}
\phi_\alpha^j =\langle {\partial _\alpha}u,{e_j} \rangle.
\end{align*}
For any given $\Bbb R^2$-valued function $\phi$ defined on $[0,T]\times\Bbb H^2$, associate $\phi$ with a vector filed $ \phi e $
\begin{align*}
\phi e:=\sum^2_{j=1}\phi^je_j.
\end{align*}
Then the covariant derivative induced by $u$ on the trivial complex vector bundle over $[0,T]\times\Bbb H^2$ with fiber $\Bbb C^2$ is defined by
$$D_\alpha\phi=\partial_\alpha \phi+[A_\alpha]\phi,
$$
which in the form of components reads as,
\begin{align*}
\big(D_\alpha\phi\big)^k=\partial_{\alpha}\phi^k+\sum^2_{j=1}[A_{\alpha}]^k_j\phi^j.
\end{align*}
It is easy to check the torsion free identity,
\begin{align}\label{pknb}
D_\alpha\phi_\beta=D_\beta\phi_\alpha.
\end{align}
and the commutator identity (in the two dimensional case ( 2-d is Abel))
\begin{align}\label{x346gvh}
\big([D_\alpha,D_\beta]\phi\big)e=\big((\partial_\alpha A_\beta-\partial_\beta A_\alpha)\phi\big) e=\mathbf{R}(u)(\partial_\alpha u, \partial_\beta u)( \phi e ).
\end{align}
\noindent{\bf{Remark 2.1}}
With a little abuse of notation, for $\bf{a},\bf{b},\bf{c}\in \Bbb R^2$, we define a matrix valued function $\bf{a}\wedge\bf{b}$ by
\begin{align}\label{detu7tu68}
(\bf{a}\wedge\bf{b})\bf{c}=\langle {\bf{a},\bf{c}} \rangle\bf{b} - \langle {\bf{b},\bf{c}}\rangle \bf{a}.
\end{align}
By letting $X=a^ke_k$, $Y=b^ke_k$, $Z=c^ke_k$, it is easy to see (\ref{detu7tu68}) coincide with (\ref{ex346gvh}).  Hence, (\ref{x346gvh}) can be written as
\begin{align}\label{detu67tu68}
[D_\alpha,D_\beta]\phi=(\phi_\alpha\wedge\phi_{\beta})\phi
\end{align}
\begin{lemma}
Using the above notations, (\ref{z32ccc344fyeo}) can be written as
\begin{align*}
D_t\phi_t-h^{jk}D_j\phi_k+h^{jk}\Gamma^l_{jk}\phi_l=0¡£
\end{align*}
\end{lemma}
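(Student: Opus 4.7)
The plan is to recognize that the coordinate equation (\ref{z32ccc344fyeo}) is precisely the vanishing of the intrinsic tension field $\tau(u)$ of the map $u:(\Bbb R\times M,\mathbf{\Lambda})\to(N,\mathbf{g})$, and then to re-express this intrinsic quantity in the frame $\{e_1,e_2\}$ via the definition of $D_\alpha$. Recall that with the pullback connection on $u^*TN$ (still denoted $\nabla$), the tension field has the coordinate formula
$$\tau(u)^l=\Lambda^{\alpha\beta}\bigl(\partial_\alpha\partial_\beta u^l-\Gamma^\gamma_{\alpha\beta}\partial_\gamma u^l+\overline\Gamma^l_{kp}(u)\,\partial_\alpha u^k\partial_\beta u^p\bigr),$$
where $\Gamma^\gamma_{\alpha\beta}$ are the Christoffel symbols of the spacetime $(\Bbb R\times M,\mathbf{\Lambda})$. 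Since $\mathbf{\Lambda}=-dt\,dt+h_{ij}dx^idx^j$ is a Lorentzian product metric with time-independent spatial factor, the only non-vanishing $\Gamma^\gamma_{\alpha\beta}$ are the spatial Christoffel symbols $\Gamma^m_{jk}$ of $(M,\mathbf{h})$, while $\Lambda^{00}=-1$, $\Lambda^{ij}=h^{ij}$, $\Lambda^{0i}=0$. Substituting these values reproduces exactly the left-hand side of (\ref{z32ccc344fyeo}), so the wave map equation is literally $\tau(u)=0$.

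Next, I would decompose $\partial_\alpha u=\phi_\alpha^j e_j$ and apply $\nabla_\alpha$ using the definition of the connection coefficients $[A_\alpha]^k_j=\langle\nabla_\alpha e_j,e_k\rangle$:
$$\nabla_\alpha\partial_\beta u=\partial_\alpha\phi_\beta^j\,e_j+\phi_\beta^j\,\nabla_\alpha e_j=\bigl(\partial_\alpha\phi_\beta^k+[A_\alpha]^k_j\phi_\beta^j\bigr)e_k=(D_\alpha\phi_\beta)^k\,e_k.$$
Plugging this into the intrinsic expression
$$\tau(u)=\Lambda^{\alpha\beta}\bigl(\nabla_\alpha\partial_\beta u-\Gamma^\gamma_{\alpha\beta}\partial_\gamma u\bigr),$$
writing $\partial_\gamma u=\phi_\gamma^l e_l$, and inserting the values of $\Lambda^{\alpha\beta}$ together with the fact that only the spatial Christoffels $\Gamma^m_{jk}$ contribute, yields
$$\tau(u)=\bigl(-D_t\phi_t+h^{jk}D_j\phi_k-h^{jk}\Gamma^l_{jk}\phi_l\bigr)^p e_p.$$
Reading off the coefficients of the frame from $\tau(u)=0$ gives exactly the stated equation.

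The only step that requires a bit of care is the first one, namely identifying the extrinsic equation (\ref{z32ccc344fyeo}) with the vanishing of the tension field of the pullback connection; once that identification is in hand, the remainder is a straightforward unwinding of the definitions of $D_\alpha$, of $[A_\alpha]^k_j$, and of the Lorentzian product structure of $\mathbf{\Lambda}$, and there is no substantive analytic obstacle.
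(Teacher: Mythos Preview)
Your argument is correct and is exactly the standard derivation: identify (\ref{z32ccc344fyeo}) with the vanishing of the Lorentzian tension field, then read off the frame components via $\nabla_\alpha\partial_\beta u=(D_\alpha\phi_\beta)^ke_k$ and the product structure of $\mathbf{\Lambda}$. The paper in fact states this lemma without proof, treating it as a routine computation, so there is nothing further to compare.
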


\subsection{The existence of Caloric Gauge}
Let $\mathcal{X}_T$ denote the space of maps from $[0,T]\times \Bbb H^2$  to $\Bbb H^2$ which satisfies
\begin{align*}
(u(t),\partial_tu)\in C([0,T];\mathcal{H}^{3} \times \mathcal{H}^{2}).
\end{align*}

When non-trivial harmonic maps occur, Tao's caloric gauge can be defined as follows.
\begin{definition}\label{7894obhxdg}
Assume $u(t,x)\in\mathcal{X}_T$ is a solution to (\ref{z32ccc344fyeo}). Suppose that the heat flow with initial data $u_0$ converges to some harmonic map $Q:\mathbb{H}^2\to \mathbb{H}^2$. Then for a given orthonormal frame ${\bf e}(x)\triangleq\{{\bf e}_j(Q(x))\}^2_{j=1}$ on $Q^*(TN)$, a caloric gauge is the couple of a map $\widetilde{u}:\Bbb R^+\times [0,T]\times\mathbb{H}^2\to\Bbb H^2$ and an orthonormal frame $\Theta\triangleq\{\Theta_j(\widetilde{u}(s,t,x))\}^2_{j=1}$ which satisfies
\begin{align*}
(i)&\partial_s\widetilde u= \tau (\widetilde u)  \\
(ii)&\nabla_s\Theta_j = 0 \\
(iii)&\mathop {\lim }\limits_{s \to \infty } \Theta_j = {\bf e}_j,
\end{align*}
where the convergence of the frame is defined by
\begin{align}
 \mathop {\lim }\limits_{s \to \infty } \widetilde{u}(s,t,x) &= Q(x)\nonumber \\
 \mathop {\lim }\limits_{s \to \infty } \langle {\Theta_i},{\Omega _j}\rangle \upharpoonright_{\widetilde{u}(s,t,x)} &= \langle {{\bf e}_i}(Q(x)),\Omega _j(Q(x))\rangle.\label{kanghytrfv}
\end{align}
\end{definition}

The equation $(i)$ in Definition \ref{7894obhxdg} is called the heat flow equation. Roughly speaking, caloric gauge means transposing the frame fixed on the bundle $Q^*TN$ parallel along the heat flow to the original map $u(t,x)$.

Recall the dynamic heat flow from $\mathbb{H}^2$ to $\mathbb{H}^2$ with a parameter $t\in [0,T)$
\begin{align}\label{dt67vftu68}
\left\{ \begin{array}{l}
 {\partial _s}\widetilde{u} = \tau (\widetilde{u}) \\
 \widetilde{u}(s,t,x) \upharpoonright_{s=0}= u(t,x) \\
 \end{array} \right.
\end{align}

The long time existence of the heat flow from $\Bbb H^2\to\Bbb H^2$ in $\mathcal H^3$ is known, see our previous work \cite{EFE376}.
We summarize the long time and short time behaviors obtained in \cite{EFE376} as a proposition.
\begin{proposition}\label{fcuyder47te}
Suppose that  $u:[0,T]\times\Bbb H^2\to \Bbb H^2$ is a solution to (\ref{z32ccc344fyeo}) satisfying
\begin{align}\label{KPIL}
\|(\nabla du,\nabla\partial_t u)\|_{L^2\times L^2}+\|(du,\partial_t u)\|_{L^2\times L^2}\le C(M_1),
\end{align}
Denote $\widetilde{u}:\Bbb R^+\times[0,T]\times\Bbb H^2\to \Bbb H^2$ the solution to (\ref{dt67vftu68}) with initial data $u(t,x)$. Then there exists some universal constant $\delta>0$ such that for $t\in[0,T]$, it uniformly holds that
\begin{align*}
&\| s^{\frac{1}{2}} \nabla d\widetilde{u} \|_{L_s^\infty[0,1] L_x^\infty } + \| s^{\frac{1}{2}}e^{\delta s}\nabla \partial _t\widetilde{u} \|_{L_s^\infty L_x^\infty} + \| se^{\delta s}\nabla {\partial _s}\widetilde{u}  \|_{L_s^\infty L_x^\infty } \\
&+ \| s^{\frac{1}{2}}e^{\delta s}\partial _s\widetilde{u}\|_{L_s^\infty L_x^\infty }
+\| d\widetilde{u}\|_{L_s^\infty[1,\infty) L_x^{\infty}}+\| \nabla d\widetilde{u}\|_{L_s^\infty[1,\infty) L_x^{\infty}}+\| \nabla d\widetilde{u}\|_{L_s^\infty L_x^2}\\
&+ \| s^{\frac{1}{2}}e^{\delta s}\nabla \partial _s\widetilde{u}\|_{L_s^\infty L_x^2} \le C(M_1).
\end{align*}
\end{proposition}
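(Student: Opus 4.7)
The argument combines short-time parabolic smoothing of the heat flow (\ref{dt67vftu68}) with a long-time linearization around the admissible harmonic map $Q$, leveraging three geometric facts: $\operatorname{Ric}^{\Bbb H^2}=-g$ on the domain, the non-positive sectional curvature of the target $N=\Bbb H^2$, and the scalar spectral gap $\sigma(-\Delta_{\Bbb H^2})\subset[\tfrac14,\infty)$. Local existence of (\ref{dt67vftu68}) in $\mathcal H^3$ at each fixed $t\in[0,T]$ and the propagation of $\mathcal H^3$-regularity are furnished by \cite{EFE376}; what remains is to obtain the stated spacetime bounds uniformly in $t$.

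\textbf{Short-time smoothing for $s\in[0,1]$.} I would apply the Bochner--Weitzenb\"ock identity for the energy density $e(\widetilde u)=|d\widetilde u|^2$, namely
\[
(\partial_s-\Delta)\,e(\widetilde u)\le -2|\nabla d\widetilde u|^2+C\,e(\widetilde u),
\]
where the sectional-curvature contribution is discarded thanks to $\operatorname{Sec}^N\le 0$ and $C$ depends only on the Ricci lower bound of the domain. Combined with the $L^2$ bound from (\ref{KPIL}), Moser iteration converts this into $\|s^{1/2}d\widetilde u\|_{L^\infty_s[0,1]L^\infty_x}\le C(M_1)$. Bootstrapping with the analogous Bochner identities for $\nabla^k d\widetilde u$ produces the $s^{k/2}$-weighted $L^\infty_x$ bounds on higher covariant derivatives. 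Since $\partial_t\widetilde u$ and $\partial_s\widetilde u=\tau(\widetilde u)$ each satisfy linear heat-type equations whose coefficients are built from $d\widetilde u$, once $d\widetilde u$ is controlled in three covariant derivatives the same parabolic smoothing gives the $s^{1/2}$ and $s$ weighted gains on them.

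\textbf{Long-time bounds and exponential decay.} The monotonicity $\tfrac{d}{ds}E(\widetilde u)=-\|\tau(\widetilde u)\|_{L^2}^2$ together with admissibility of $Q$ confines $\widetilde u(s,t,\cdot)$ to a uniformly compact subset of $\Bbb D$, giving the $L^\infty_s[1,\infty)L^\infty_x$ bounds on $d\widetilde u$ and $\nabla d\widetilde u$. To produce the exponential weight $e^{\delta s}$, I would write $\widetilde u(s)=\exp_Q V(s)$ (globally valid since $\Bbb H^2$ is Cartan--Hadamard), yielding
\[
\partial_s V+J_Q V=\mathcal N(V,\nabla V),\qquad J_Q=\nabla_Q^{*}\nabla_Q-\operatorname{tr}_g R^N(\cdot,dQ)dQ,
\]
with $\mathcal N$ quadratic in $(V,\nabla V)$. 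Because $R^N\le 0$ the curvature term in $J_Q$ is a non-negative matrix-valued multiplier; combined with Kato's inequality $|\nabla_Q V|\ge|\nabla|V||$ and the scalar spectral gap $-\Delta_{\Bbb H^2}\ge\tfrac14$, one concludes $J_Q\ge\tfrac14$ on $L^2(Q^{*}TN)$. Linear semigroup estimates then give $\|V(s)\|_{L^2}\lesssim e^{-\delta s}\|V(0)\|_{L^2}$ for any fixed $\delta\in(0,\tfrac14)$; a Duhamel absorption handles the quadratic nonlinearity once $\|V\|_{L^\infty}$ is small, which holds for $s$ past a fixed time by the compactness above; and the short-time smoothing of the previous step upgrades the $L^2$ decay to the $L^\infty$ decay with the appropriate $s$-weights on $\partial_s\widetilde u$, $\partial_t\widetilde u$ and their covariant derivatives. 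The expected main obstacle is the rigorous verification of the spectral gap for $J_Q$---ruling out threshold resonances and embedded eigenvalues---and the identification of the $s\to\infty$ limit of $\widetilde u(s,t,\cdot)$ with $Q$ itself rather than another harmonic map having the same boundary data; the former rests on the sign-definiteness from $\operatorname{Sec}^N\le 0$ together with Kato's inequality, while the latter uses admissibility of $Q$ at $\partial\Bbb D$ and the uniqueness of harmonic maps with prescribed boundary values on non-positively curved targets.
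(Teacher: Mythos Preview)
The paper does not prove this proposition here; it is quoted from \cite{EFE376}. Nonetheless one can infer the intended argument from the tools laid out later (Lemma~\ref{huguokitredcfr}, Lemma~\ref{density}, and the Bochner identities in Appendix~C), and your long-time mechanism differs from it in a way that creates a genuine gap.

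Your short-time analysis is fine. The problem is in the long-time step. You propose to write $\widetilde u=\exp_Q V$, linearize to obtain $\partial_s V+J_Q V=\mathcal N(V,\nabla V)$, and then absorb the quadratic term by Duhamel once $\|V\|_{L^\infty}$ is small. But the hypothesis~\eqref{KPIL} says nothing about proximity to $Q$; it only bounds $du$, $\partial_tu$ and their first covariant derivatives. Compactness of the image $\widetilde u(\Bbb H^2)\Subset\Bbb D$ gives boundedness of $V$, not smallness, so the absorption cannot be closed from the stated assumptions. Worse, the identification of the heat-flow limit with $Q$ is precisely Lemma~\ref{DERfder}, which in the paper's logical order comes \emph{after} Proposition~\ref{fcuyder47te}; building the exponential decay on that identification is circular.

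The route actually taken (and visible in the proof of Lemma~\ref{huguokitredcfr}) bypasses $Q$ entirely. Because $\operatorname{Sec}^N\le 0$, the Bochner formula gives the scalar sub-heat inequalities
\[
(\partial_s-\Delta)\,|\partial_s\widetilde u|\le 0,\qquad (\partial_s-\Delta)\,|\partial_t\widetilde u|\le 0,
\]
so by the maximum principle $|\partial_{s,t}\widetilde u|(s)\le e^{s\Delta}|\partial_{s,t}\widetilde u|(0)$ pointwise, and the scalar heat-kernel bounds \eqref{ytsr45}--\eqref{tyihy6ys} on $\Bbb H^2$ immediately yield the $e^{-\delta s}$ decay in $L^2$ and $L^\infty$ with the $s^{1/2}$ smoothing gain. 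Higher covariant derivatives are then obtained by feeding these base estimates into the Bochner inequalities \eqref{Hwer45thiys}--\eqref{tsrinhy45} and applying the mean-value inequality of Lemma~\ref{density} for $s\ge 1$ and the $e^{s\Delta}$ smoothing for $s\le 1$. No linearization around a specific harmonic map is needed, and all constants depend only on $M_1$.
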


It has been prove in \cite{EFE376} that the heat flow initiated from all $u(t,x)$ for different $t$ converges to the same harmonic map say $\widetilde{Q}$.
We aim to prove $\widetilde{Q}$ is exactly $Q$ in the definition of our working space ${\mathcal{ H}}^k_Q$. But in our previous work \cite{EFE376}, this was only verified for small energy harmonic maps. Thus we give a new proof to involve the large energy case. The key ingredient is the differential inequality concerning the distance between two harmonic maps proved by \cite{JW}. We remark that the main theorem 1.1 of \cite{JW} cannot be directly applied to our case since it seems not easy to relate their boundary map setting to our working space in a reasonable and effective way.

\begin{lemma}\label{DERfder}
If $(u,\partial_tu)$ is a solution to (\ref{z32ccc344fyeo}) in $\mathcal{X}_T$, then as  $s\to\infty$,
$$
\mathop {\lim }\limits_{s \to \infty } \mathop {\sup }\limits_{(x,t) \in {\mathbb{H}^2} \times [0,T]}
dist_{\Bbb H^2}(\widetilde{u}(s,x,t),Q(x))=0.
$$
\end{lemma}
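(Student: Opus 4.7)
The plan is to study the squared distance
$$\rho(s,t,x) := d_{\Bbb H^2}(\widetilde u(s,t,x), Q(x))^2$$
and exploit the non-positive curvature of the target to obtain a parabolic sub-heat inequality, then use heat-kernel and spectral gap bounds on $\Bbb H^2$ to force uniform decay of $\rho$.

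First, I would derive the parabolic differential inequality
$$(\partial_s-\Delta_{\Bbb H^2})\rho(s,t,x)\le 0$$
on $(0,\infty)\times[0,T]\times\Bbb H^2$. This follows from the classical Bochner-type computation of Hartman / Jost--Wang applied to the pair $(\widetilde u, Q)$: both solve the heat flow $\partial_s\cdot=\tau(\cdot)$ (trivially for the stationary $Q$), and the non-positive sectional curvature of the target $\Bbb H^2$ ensures the Riemann curvature contribution has the favorable sign, while the quadratic first-order gradient terms are simply dropped. The calculation is legitimate because both $\widetilde u$ and $Q$ are smooth by Proposition \ref{fcuyder47te} and standard elliptic regularity, and, the target being simply connected of non-positive curvature, $d_{\Bbb H^2}^2(\cdot,\cdot)$ is everywhere smooth on $\Bbb H^2\times\Bbb H^2$ (no cut locus issues).

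Second, I would verify that the initial datum $\rho(0,t,\cdot)=d_{\Bbb H^2}(u(t,\cdot),Q)^2$ lies in $L^1(\Bbb H^2)\cap L^\infty(\Bbb H^2)$ uniformly in $t\in[0,T]$. Since $\overline{Q(\Bbb H^2)}\Subset\Bbb D$ by Definition \ref{dywetu67tu68} and $u(t)-Q$ is uniformly bounded in $\mathcal H^3$ for $t\in[0,T]$ (Sobolev-embedded into $L^\infty$), the image $u(t,\Bbb H^2)$ is contained in a fixed compact subset of $\Bbb D$, on which $d_{\Bbb H^2}$ is comparable to the coordinate distance. Hence
$$\|\rho(0,t,\cdot)\|_{L^1\cap L^\infty}\le C\sum_{k=1}^{2}\|u^k(t)-Q^k\|_{L^2\cap L^\infty}^2,$$
uniformly bounded in $t\in[0,T]$. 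Combining the first two steps, the parabolic maximum principle on $\Bbb H^2$ (applicable since $\rho$ is globally bounded) yields the pointwise comparison
$$0\le\rho(s,t,x)\le \bigl(e^{s\Delta_{\Bbb H^2}}\rho(0,t,\cdot)\bigr)(x).$$
The spectral gap $\sigma(-\Delta_{\Bbb H^2})\subset[1/4,\infty)$ together with the ultracontractivity of the hyperbolic heat semigroup give, for $s\ge 1$,
$$\|e^{s\Delta_{\Bbb H^2}}\rho(0,t,\cdot)\|_{L^\infty_x}\le \|e^{\Delta_{\Bbb H^2}}\|_{L^2\to L^\infty}\,e^{-(s-1)/4}\,\|\rho(0,t,\cdot)\|_{L^2}\le Ce^{-(s-1)/4},$$
which vanishes as $s\to\infty$ uniformly in $t\in[0,T]$.

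The main obstacle is the careful derivation and sign-checking in the Bochner inequality of the first step using the geometry of the pullback bundles $\widetilde u^*TN$ and $Q^*TN$, together with the rigorous setup of the maximum principle on the noncompact $\Bbb H^2$; one may alternatively argue via an $L^2$ energy estimate using the spectral gap directly on $\rho$ multiplied by a cutoff, which sidesteps a global comparison principle but requires bounding boundary terms via the uniform gradient estimates of Proposition \ref{fcuyder47te}.
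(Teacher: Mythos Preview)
Your proof is correct, but the route differs substantially from the paper's. The paper does not attack the convergence directly; instead it quotes the earlier work \cite{EFE376} for the statement that the heat flow converges uniformly to \emph{some} harmonic map $\widetilde Q\in\mathcal H^3_Q$, and then reduces the lemma to proving $\widetilde Q=Q$. For this static uniqueness step it invokes the J\"ager--Kaul inequality $\Delta\bigl(\tfrac12 d_N(Q,\widetilde Q)^2\bigr)\ge 0$, observes that the spherical averages of $d_N(Q,\widetilde Q)^2$ are nondecreasing in $r$, and derives a contradiction with $\widetilde Q\in\mathbf H^0_Q$ (i.e.\ $d_N(Q,\widetilde Q)^2\in L^1(\Bbb H^2)$) unless $\widetilde Q\equiv Q$.

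Your argument bypasses both steps by running the parabolic version of exactly the same Hessian computation: the convexity of $d_N^2$ on the simply connected nonpositively curved target gives $(\partial_s-\Delta)\rho\le 0$ directly, and the spectral gap of $-\Delta_{\Bbb H^2}$ then forces $\rho\to 0$ exponentially in $L^\infty$. This is more self-contained (no need to first establish convergence to an abstract limit $\widetilde Q$) and yields an explicit rate $e^{-s/4}$, which the paper's argument does not. The paper's two-step approach, on the other hand, cleanly separates the dynamical question (heat flow convergence) from the elliptic question (uniqueness of the harmonic map with the given boundary behavior), which is conceptually appealing and reusable. Your only loose end is the justification of the comparison $\rho\le e^{s\Delta}\rho(0)$ on the noncompact $\Bbb H^2$; this is indeed fine once you note that $\rho(s,t,\cdot)$ stays globally bounded for all $s$ (e.g.\ because $d_N(\widetilde u,\text{point})$ is itself a sub-heat function, so the image of $\widetilde u$ remains in a fixed compact set), which legitimizes the standard maximum principle for bounded subsolutions on complete manifolds.
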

\begin{proof}
As remarked above we only need to verify $Q=\widetilde{Q}$. First we note that due to Corollary \ref{8908} and $Q,\widetilde{Q}\in {\mathcal H}^3_{Q}$, one has $\widetilde{Q}(M)$ and $Q(M)$ are contained in a geodesic ball of $N=\Bbb H^2$ with radius $R_1$. Hence the distance between $Q(x)$ and $\widetilde{Q}$ is equivalent to $|Q^1-\widetilde{Q}^1|+|Q^2-\widetilde{Q}^2|$ up to some large constant $C(R_1)$ depending on $R_1$.  [\cite{JW},Page 286] (the $\kappa=0$ case) has obtained the following inequality
\begin{align}
\Delta\big(\frac{1}{2}[{\rm {dist}}(Q(x),\widetilde{Q}(x))]^2\big)\ge 0.
\end{align}
Then the mean value inequality for nonnegative subharmonic functions yields
\begin{align}\label{p78v}
\frac{1}{2\pi}\int^{2\pi}_{0}[{\rm{dist}}(Q(r,\theta),\widetilde{Q}(r,\theta))]^2d\theta
\end{align}
is an nondecreasing function to $r\in(0,\infty)$.
If there exists some $r_0$ such that  (\ref{p78v}) is strictly positive when $r=r_0$, then integrating (\ref{p78v}) with respect to $r$ in $[r_0,\infty)$ gives
\begin{align}\label{bbbbdgd758}
\int^{\infty}_{r_0}\sinh r[{\rm{dist}}(Q(r,\theta),\widetilde{Q}(r,\theta))]^2d\theta dr=\infty.
\end{align}
But the left hand side of (\ref{bbbbdgd758}) is bounded by
$$
C(R_1)\int^{\infty}_0\sinh r|(Q^1,Q^2)-(\widetilde{Q}^1,\widetilde{Q}^2)|^2drd\theta=C(R_1)\|\widetilde{Q}\|^2_{{\bf H}^0_Q}<\infty,
$$
which contradicts with (\ref{bbbbdgd758}). Hence ${\rm{dist}}(\widetilde{Q,}Q)=0$.
\end{proof}

The existence of the caloric gauge defined in Definition \ref{7894obhxdg} is given below.
\begin{proposition}\label{94obhxdg}
Let $(u,\partial_tu)\in \mathcal{X}_T$ solve (\ref{z32ccc344fyeo}). Fixing any frame ${\bf e}\triangleq\{{\bf e}_1(Q(x)),{\bf e}_2(Q(x))\}$, there exists a unique caloric gauge defined in Definition \ref{7894obhxdg}.
\end{proposition}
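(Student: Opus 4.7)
The plan is to build the caloric frame $\Theta$ by parallel-transporting the prescribed frame ${\bf e}(Q(x))$ from $s=\infty$ back to $s=0$ along the heat flow curves $s\mapsto\widetilde u(s,t,x)$. The heat flow $\widetilde u$ exists globally by Proposition \ref{fcuyder47te}, and by Lemma \ref{DERfder} it satisfies $\widetilde u(s,t,x)\to Q(x)$ uniformly in $(t,x)$ as $s\to\infty$; this is what makes the prescribed boundary frame on the fiber above $Q(x)$ geometrically compatible with the heat-flow limit, so that the transport problem is well-posed at $s=\infty$.

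Fix the natural orthonormal frame $\{\Omega_1,\Omega_2\}$ on $N=\mathbb H^2$ given by (\ref{ex3456gvh}), and write the unknown frame as
\begin{align*}
\Theta_i(s,t,x)=V_i^{\,j}(s,t,x)\,\Omega_j(\widetilde u(s,t,x))
\end{align*}
for an unknown matrix $V(s,t,x)$. The parallel transport condition $\nabla_s\Theta_i=0$ unfolds into the linear ODE
\begin{align*}
\partial_s V_i^{\,j}(s,t,x)=-V_i^{\,k}(s,t,x)\,M_k^{\,j}(s,t,x),
\end{align*}
where $M$ is built from the connection coefficients of the Levi-Civita connection on $N$ expressed in the frame $\Omega$, contracted against $\partial_s\widetilde u$. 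By the exponential decay $|\partial_s\widetilde u(s,t,x)|\lesssim s^{-1/2}e^{-\delta s}$ provided by Proposition \ref{fcuyder47te}, together with the pointwise boundedness of the connection coefficients of $\Omega$ on $N$ restricted to the image of $\widetilde u$, the coefficient $M(\cdot,t,x)$ is integrable on $[0,\infty)$ with bound uniform in $(t,x)\in[0,T]\times\mathbb H^2$.

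This integrability lets me solve the ODE backward from $s=\infty$ with terminal data $V(s,t,x)\to V_\infty(x)$, where $V_\infty(x)\in SO(2)$ is the unique matrix encoding ${\bf e}_i(Q(x))=V_{\infty,i}^{\,j}(x)\Omega_j(Q(x))$. Concretely, $V$ is obtained by Picard iteration applied to the Volterra integral equation
\begin{align*}
V_i^{\,j}(s,t,x)=V_{\infty,i}^{\,j}(x)+\int_s^\infty V_i^{\,k}(s',t,x)\,M_k^{\,j}(s',t,x)\,ds',
\end{align*}
and the iteration converges uniformly in $(t,x)$ thanks to the integrability of $M$. Since $\nabla_s$ is metric-compatible on $\widetilde u^{*}(TN)$, the ODE preserves fiberwise inner products, so $V(s,t,x)\in SO(2)$ for every $s\ge0$; in particular $\Theta$ stays orthonormal. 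The three conditions $(i)$, $(ii)$, $(iii)$ of Definition \ref{7894obhxdg}, including the asymptotic (\ref{kanghytrfv}), then follow by construction.

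Uniqueness is immediate: two caloric frames for the same data differ by a solution of a homogeneous linear ODE that vanishes at $s=\infty$, which by Gronwall forces it to vanish everywhere on $[0,\infty)$. The only mild obstacle is ensuring the integrability of $M$ is uniform in $(t,x)\in[0,T]\times\mathbb H^2$, which is exactly the content of the $L_s^\infty L_x^\infty$ estimates in Proposition \ref{fcuyder47te} combined with the compactness of the image provided by Lemma \ref{DERfder}.
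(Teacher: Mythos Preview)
Your argument is correct and is precisely the standard construction of the caloric gauge: solve the parallel transport ODE backwards from $s=\infty$ using the integrability of $\partial_s\widetilde u$ furnished by Proposition~\ref{fcuyder47te}, with the terminal frame made well-defined by Lemma~\ref{DERfder}. The paper does not include its own proof of this proposition---it is quoted from the authors' earlier works \cite{LZ,EFE376}---and the construction there follows exactly the parallel-transport-from-infinity scheme you outline.

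One small remark: when you invoke ``compactness of the image provided by Lemma~\ref{DERfder}'' to bound the connection coefficients of $\Omega$ along $\widetilde u$, that lemma only gives uniform convergence to $Q$ as $s\to\infty$, hence compactness of the image for $s$ large. For $s$ in a bounded interval you should instead appeal to Corollary~\ref{8908} (applied to the initial data $u(t,\cdot)\in\mathcal H^3_Q$) together with the $L^\infty$ bounds on $d\widetilde u$ in Proposition~\ref{fcuyder47te}, which keep $\widetilde u(s,t,x)$ in a fixed geodesic ball of $N$ for all $(s,t,x)$. This is a cosmetic fix and does not affect the substance of your proof.
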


The matrix valued connection coefficient $A_{x,t}$ can be expressed by the differential fields and the heat tension field.
\begin{lemma}
Let $\Theta(s,t,x)$ be the caloric gauge built in Proposition \ref{94obhxdg}, then for $i=1,2$, $s>0$ we have
\begin{align}
&A_i(s,t,x)\sqrt{h^{ii}(x)} = \int_s^\infty \sqrt{ h^{ii}(x)}\left( \phi _s\wedge{\phi _i}\right) ds' +\sqrt{h^{ii}(x)}A^{\infty}_i.\label{ober45xdg}\\
&A_t(s,t,x)=\int^{\infty}_s\left(\phi_s\wedge{\phi_t}\right) ds'.\label{obhxdg}
\end{align}
where $[A^{\infty}_i]^k_j= \sqrt{h^{ii}(x)}\langle \nabla _i{\bf e}_k(x),{\bf e}_j(x)\rangle$.
\end{lemma}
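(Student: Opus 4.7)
The plan is to integrate the abelian curvature identity along the heat direction, exploiting the defining property of the caloric gauge that the $s$-component of the connection vanishes identically.

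First I would observe that property $(ii)$ of Definition \ref{7894obhxdg}, namely $\nabla_s\Theta_j=0$, immediately gives $[A_s]^k_j=\langle\nabla_s\Theta_j,\Theta_k\rangle\equiv 0$ for all $(s,t,x)\in[0,\infty)\times[0,T]\times\mathbb{H}^2$. Next, since the structure group is $SO(2)$ (abelian in two dimensions), the commutator identity (\ref{detu67tu68}) applied with $\alpha=s$ and $\beta\in\{i,t\}$ reduces to the curl-like relation $\partial_s A_\beta-\partial_\beta A_s=\phi_s\wedge\phi_\beta$ as a $2\times 2$ matrix-valued equation, and invoking $A_s\equiv 0$ this further collapses to $\partial_s A_\beta=\phi_s\wedge\phi_\beta$. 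Integrating this ODE in $s$ from the given level up to $s=\infty$ then produces the two desired identities, provided we can (a) justify the improper integral and (b) correctly identify the boundary value $A_\beta(\infty)$.

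For the boundary at $s=\infty$, I would use Lemma \ref{DERfder} together with items $(ii)$ and $(iii)$ of Definition \ref{7894obhxdg}. The heat flow sends $\widetilde u(s,t,x)$ to $Q(x)$ and $\Theta$ to the prescribed frame $\mathbf{e}(Q(x))$, both of which are independent of $t$. Hence $\phi_t(s,t,x)\to 0$ and $A_t(\infty)=\langle\nabla_t\mathbf{e},\mathbf{e}\rangle=0$, yielding (\ref{obhxdg}) with no boundary contribution. For spatial indices the limit frame satisfies $A_i(\infty)^k_j=\langle\nabla_i\mathbf{e}_k,\mathbf{e}_j\rangle$; multiplying through by $\sqrt{h^{ii}(x)}$, which reflects the fact that $A_i^\infty$ is normalized with respect to the orthonormal coframe on $\mathbb{H}^2$, reproduces the stated formula (\ref{ober45xdg}).

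The main technical point I expect to be the obstacle is justifying the absolute convergence of the improper $s$-integral and the passage to the limit at $s=\infty$. This is where Proposition \ref{fcuyder47te} enters: the exponential-in-$s$ decay of $\partial_s\widetilde u$ and $\partial_t\widetilde u$ (weighted by powers of $s^{1/2}$ near $s=0$) implies that $\phi_s$ and $\phi_t$ belong to $L^1_s L^\infty_x$ and that $|\phi_s\wedge\phi_\beta|$ decays integrably in $s$, so the integrals are absolutely convergent and one may interchange the improper limit with differentiation. One must also verify that the frame convergence in (\ref{kanghytrfv}) is strong enough (in the appropriate topology inherited from the heat flow bounds) for $A_i(s)$ to converge to its declared limit, but this is essentially encoded in Proposition \ref{fcuyder47te}.
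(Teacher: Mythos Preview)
Your approach is correct and is precisely the standard derivation of this formula in the caloric gauge literature; the paper itself does not include a proof of this lemma, so there is nothing to compare against. One minor caution: when you integrate $\partial_s A_\beta=\phi_s\wedge\phi_\beta$ from $s$ to $\infty$ you obtain $A_\beta(s)=A_\beta(\infty)-\int_s^\infty(\phi_s\wedge\phi_\beta)\,ds'$, which differs by a sign from the displayed (\ref{ober45xdg})--(\ref{obhxdg}); indeed the paper's own Remark \ref{zdsru314rucuvc344fyeo} and the later decomposition (\ref{awo}) disagree with each other on this sign, so the discrepancy is a typo in the statement rather than an error in your argument.
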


\begin{remark}\label{zdsru314rucuvc344fyeo}
Rewrite (\ref{ober45xdg}) as
$A_i(s,t,x)=A^{\infty}_i(s,t,x)+A^{qua}_i(s,t,x),$
where $A^{\infty}_i$ denotes the limit part, and $A^{qua}_i$ denotes the  quadratic part, i.e.,
\begin{align*}
A^{qua}_i=\int^{\infty}_s\phi_s\wedge\phi_ids'.
\end{align*}
Similarly, split $\phi_i$ into $\phi_i=\phi^{\infty}_i+\phi^{qua}_i$, where $
\phi^{qua}_i=\int^{\infty}_s\partial_s\phi_ids',$
and
\begin{align*}
\phi^{\infty}_i(x)={\left( {\left\langle {{\partial _i}Q ,{{\bf e} _1} } \right\rangle ,\left\langle {{\partial _i}Q ,{{\bf e}_2} } \right\rangle } \right)^t}\upharpoonright_{Q(x)}.
\end{align*}
\end{remark}

\subsection{Master Equation for Heat Tension Field}
Writing the heat flow equation under the gauge shows the heat tension filed $\phi_s$ satisfies
\begin{align}\label{zz2ccgsrucuvc344fyeo}
\phi_s=h^{jk}D_j\phi_k-h^{jk}\Gamma^l_{jk}\phi_l.
\end{align}
And define the wave tension filed as Tao by
\begin{align}\label{zzz32314rucuvc344fyeo}
Z = {D_t}\phi _t- {h^{jk}}D_j\phi _k + h^{jk}\Gamma _{jk}^l\phi _l.
\end{align}
which is indeed the gauged equation for the wave map (\ref{z32ccc344fyeo}).

\begin{lemma}[\cite{EFE376} ]
The wave tension field $Z$ defined by (\ref{zzz32314rucuvc344fyeo}) satisfies
\begin{align}
&{\partial _s}Z = \Delta Z + 2h^{ij}{A_i}{\partial _j}Z +h^{ij} {A_i}{A_j}Z + h^{ii}{\partial _i}{A_i}Z- {h^{ij}}\Gamma _{ij}^k{A_k}Z + {h^{ij}}\left( {Z \wedge {\phi _i}} \right){\phi _j} \nonumber\\
&\mbox{  }\mbox{  }\mbox{  }\mbox{  }\mbox{  }+ 3{h^{ij}}({\partial _t}\widetilde{u} \wedge {\partial _i}\widetilde{u}){\nabla _t}{\partial _j}\widetilde{u},\label{ab1}
\end{align}
where with abuse of notations, since the last term is an intrinsic quantity, the identity holds in the sense of equivalence:
\begin{align}
Z\leftrightarrow Z_1\Theta_1+Z_2\Theta_2;\mbox{ }(\Delta Z)\leftrightarrow (\Delta  Z_1)\Theta_1+(\Delta  Z_2)\Theta_2,\mbox{  }{\rm{and}}\mbox{ }{\rm{ so }}\mbox{ }{\rm{ on}}.
\end{align}
And we will adopt this convention in all the following sections without emphasizing the implicit frame $\{\Theta_i\}^{2}_{i=1}$ in the identities if both intrinsic and frame dependent quantities appear at the same time.
\end{lemma}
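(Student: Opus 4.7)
The plan is to compute $\partial_s Z = D_s Z$ directly by applying $D_s$ to the definition (\ref{zzz32314rucuvc344fyeo}) and pushing it past the spacetime covariant derivatives. The five tools I will use repeatedly are: the caloric gauge condition $A_s=0$, which makes $D_s=\partial_s$ on sections of $\widetilde u^*TN$; the torsion-free identity (\ref{pknb}); the curvature commutator (\ref{detu67tu68}); the heat-flow relation (\ref{zz2ccgsrucuvc344fyeo}) expressing $\phi_s$ as the tension field; and the resulting shorthand $D_t\phi_t=Z+\phi_s$.

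First I apply $D_s$ termwise to $Z=D_t\phi_t-h^{jk}D_j\phi_k+h^{jk}\Gamma^l_{jk}\phi_l$. On each factor $D_sD_\alpha\phi_\beta$, the commutator (\ref{detu67tu68}) converts it to $D_\alpha D_s\phi_\beta+(\phi_s\wedge\phi_\alpha)\phi_\beta$, and torsion-free then replaces $D_s\phi_\beta$ by $D_\beta\phi_s$. Collecting the $h^{jk}$ sums and Christoffel pieces one obtains the intermediate identity
\begin{equation*}
D_sZ=(D_t^2-\Delta^D)\phi_s+(\phi_s\wedge\phi_t)\phi_t-h^{jk}(\phi_s\wedge\phi_j)\phi_k,
\end{equation*}
where $\Delta^D\phi:=h^{jk}D_jD_k\phi-h^{jk}\Gamma^l_{jk}D_l\phi$ is the twisted rough Laplacian on the pullback bundle.

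Next I rewrite the principal part $(D_t^2-\Delta^D)\phi_s$. Differentiating (\ref{zz2ccgsrucuvc344fyeo}) in $t$ and repeating the commutator/torsion-free manipulation produces the auxiliary identity $D_t\phi_s=\Delta^D\phi_t+h^{jk}(\phi_t\wedge\phi_j)\phi_k$. Applying $D_t$ once more, the commutator $[D_t,\Delta^D]$ contributes two copies of $h^{jk}(\phi_t\wedge\phi_j)D_k\phi_t$ from the symmetric sum in $h^{jk}$, while the Leibniz expansion of $D_t[h^{jk}(\phi_t\wedge\phi_j)\phi_k]$ contributes one more such copy after invoking torsion-free. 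Thus
\begin{equation*}
D_t^2\phi_s=\Delta^D(D_t\phi_t)+3h^{jk}(\phi_t\wedge\phi_j)D_k\phi_t+\bigl(\text{wedge remainders in }D_t\phi_t,\phi_t\bigr).
\end{equation*}
Substituting $D_t\phi_t=Z+\phi_s$ splits $\Delta^D(D_t\phi_t)=\Delta^D Z+\Delta^D\phi_s$, whose $\Delta^D\phi_s$ summand cancels the $-\Delta^D\phi_s$ from the intermediate identity of the previous paragraph, leaving $\Delta^D Z$ plus curvature contributions.

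Finally, substituting $\phi_s=D_t\phi_t-Z$ into the leftover $(\phi_s\wedge\cdot)$ brackets, the $Z$-pieces collapse into exactly one copy of $h^{jk}(Z\wedge\phi_j)\phi_k$, while the $(D_t\phi_t\wedge\cdot)$ pieces cancel against the wedge remainders of the previous paragraph using antisymmetry of $\wedge$ and torsion-free. What survives is $\Delta^D Z+h^{jk}(Z\wedge\phi_j)\phi_k+3h^{jk}(\phi_t\wedge\phi_j)D_k\phi_t$, and by torsion-free $D_k\phi_t=D_t\phi_k$ the last summand is exactly the frame representation of $3h^{ij}(\partial_t\widetilde u\wedge\partial_i\widetilde u)\nabla_t\partial_j\widetilde u$. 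Expanding $\Delta^D Z$ in the frame $\Theta$ via $D_i=\partial_i+A_i$ then reproduces the explicit magnetic form $\Delta Z+2h^{ij}A_i\partial_jZ+h^{ij}A_iA_jZ+h^{ij}(\partial_iA_j)Z-h^{ij}\Gamma^k_{ij}A_kZ$ recorded in (\ref{ab1}). The main obstacle throughout is the coefficient bookkeeping in the last two steps: three independent sources feed curvature terms, and only the precise $2+1$ combinatorics from $[D_t,\Delta^D]$ and Leibniz produces both a single copy of $(Z\wedge\phi_j)\phi_k$ and the universal coefficient $3$ on the intrinsic forcing; any slip there would spoil the interpretation of (\ref{ab1}) as a linear magnetic heat equation for $Z$ driven by a purely geometric source.
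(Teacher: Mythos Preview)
The paper does not supply its own proof of this lemma; it is quoted verbatim from \cite{EFE376}. Your approach---apply $D_s$ to the definition of $Z$, commute via (\ref{pknb}) and (\ref{detu67tu68}), feed in the Bochner-type identity $D_t\phi_s=\Delta^D\phi_t+h^{jk}(\phi_t\wedge\phi_j)\phi_k$, substitute $D_t\phi_t=Z+\phi_s$, and finally expand $\Delta^D Z$ in the frame---is exactly the standard derivation and is what the cited reference does.

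One small caution on your step~3: when you write ``substituting $\phi_s=D_t\phi_t-Z$ into the leftover $(\phi_s\wedge\cdot)$ brackets'', that substitution also produces a spurious $(Z\wedge\phi_t)\phi_t$ piece coming from the $(\phi_s\wedge\phi_t)\phi_t$ term of your intermediate identity. The cleaner route is the one you actually use elsewhere: substitute $D_t\phi_t=Z+\phi_s$ into the wedge remainder $h^{jk}(D_t\phi_t\wedge\phi_j)\phi_k$ and let the resulting $h^{jk}(\phi_s\wedge\phi_j)\phi_k$ cancel the matching term from the intermediate identity directly, while the pair $h^{jk}(\phi_t\wedge D_j\phi_k)\phi_t-h^{jk}\Gamma^l_{jk}(\phi_t\wedge\phi_l)\phi_t=(\phi_t\wedge\phi_s)\phi_t$ kills $(\phi_s\wedge\phi_t)\phi_t$ by antisymmetry. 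That bookkeeping is the only place where one can slip, and you correctly flag it as the main obstacle.
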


\begin{lemma}[\cite{EFE376} ]\label{refor45xdg}
Let $Q$ be an admissible harmonic map.
Fix the frame ${\bf e}$ in Remark \ref{zdsru314rucuvc344fyeo} by taking ${\bf e}(Q(x))=\Omega(Q(x))$ (see (\ref{tyw55gvu})). Then
\begin{align}
&|\sqrt{h^{ii}}A_i^{\infty}|+ |\sqrt{h^{ii}}\phi_i^{\infty}|\lesssim |dQ|\label{rer45xdg}\\
&|{h^{ii}}\left( {{\partial _i}{A^{\infty}_i} - \Gamma _{ii}^k{A^{\infty}_k}} \right)|\lesssim  |dQ|^2.\label{resder445xdg}
\end{align}
\end{lemma}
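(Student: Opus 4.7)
The plan is to reduce both bounds to direct coordinate computations once one identifies $\phi^\infty_i$ and $A^\infty_i$ with objects pulled back from the fixed frame $\Omega$ on $N$. Since $\mathbf{e}_j(x)=\Omega_j(Q(x))$ with $\{\Omega_j\}$ orthonormal on $N$, the components $\phi_i^{\infty,j}=\langle\partial_iQ,\Omega_j(Q)\rangle$ satisfy $|\phi_i^\infty|_{\Bbb R^2}^2=|\partial_iQ|_g^2$. Because the domain metric $h$ is diagonal in the coordinates (\ref{tyw55gvu})--(\ref{t1}), the intrinsic norm reduces to $|dQ|^2=\sum_i h^{ii}|\phi_i^\infty|^2$ (no cross terms), which yields $\sqrt{h^{ii}}|\phi_i^\infty|\le|dQ|$ term by term. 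For the connection part I would use the chain rule for the pullback connection: setting $[\widetilde{A}_m]^k_j(y):=\langle\widetilde{\nabla}_{\partial_{y_m}}\Omega_k,\Omega_j\rangle$, one gets
\begin{align*}
[A_i^\infty]^k_j(x)=\partial_iQ^m(x)\,[\widetilde{A}_m]^k_j(Q(x)).
\end{align*}
Admissibility of $Q$ places $\overline{Q(\Bbb H^2)}$ in a compact subset of $\Bbb H^2$, so $\widetilde{A}_m$ is bounded on the relevant region, giving $|A_i^\infty|\lesssim|\partial_iQ|_g=|\phi_i^\infty|$ and hence the first bound.

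For the second inequality I would differentiate the identity $A_i^\infty=\partial_iQ^m\,\widetilde{A}_m(Q)$ to obtain
\begin{align*}
\partial_iA_i^\infty-\Gamma^k_{ii}A_k^\infty
=\widetilde{A}_m(Q)\bigl(\partial_i^2Q^m-\Gamma^k_{ii}\partial_kQ^m\bigr)+\partial_iQ^m\partial_iQ^n\,(\partial_n\widetilde{A}_m)(Q).
\end{align*}
Multiplying by $h^{ii}$ and summing over $i$, diagonality of $h^{ij}$ in these coordinates converts the first factor into $\Delta_M Q^m$. Now the key input enters: since $Q$ is a harmonic map, the tension field equation yields
\begin{align*}
\Delta_M Q^m=-h^{jk}\overline{\Gamma}^m_{pq}(Q)\,\partial_jQ^p\partial_kQ^q,
\end{align*}
a pure quadratic expression in $\partial Q$. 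All of $\widetilde{A}_m$, $\partial_n\widetilde{A}_m$, $\overline{\Gamma}^m_{pq}(Q)$ and $g_{pq}(Q)$ are bounded on the compact image $\overline{Q(\Bbb H^2)}$, so both terms on the right reduce to quadratic forms $h^{ij}\partial_iQ^p\partial_jQ^q$ with bounded coefficients. These are each controlled by $|dQ|^2$ through equivalence with the Riemannian expression $h^{ij}g_{pq}(Q)\partial_iQ^p\partial_jQ^q=|dQ|^2$ on the compact target region, closing the estimate.

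The main conceptual point, rather than a technical obstacle, is recognizing that the divergence-like quantity $h^{ii}(\partial_iA_i^\infty-\Gamma^k_{ii}A_k^\infty)$ is exactly the place where the harmonic map equation must be invoked; without $\tau(Q)=0$ one could only control $\Delta_MQ^m$ by $|\nabla dQ|$ rather than by $|dQ|^2$, which would not give a quadratic gain. Everything else is a direct coordinate computation that relies on admissibility to treat $\widetilde{A}$, $\overline{\Gamma}(Q)$, and the $g_{pq}(Q)$ factors as bounded multipliers.
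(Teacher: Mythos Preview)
Your proof is correct. The paper itself does not give a proof of this lemma here, citing instead the companion paper \cite{EFE376}; what you have written is precisely the standard argument one would expect: the chain rule identification $A_i^\infty=\partial_iQ^m\,\widetilde A_m(Q)$ together with compactness of $\overline{Q(\Bbb H^2)}$ handles the first estimate, and for the second, your recognition that $h^{ii}(\partial_iA_i^\infty-\Gamma^k_{ii}A_k^\infty)$ produces exactly $\Delta_M Q^m$ in the leading term---and that the harmonic map equation $\tau(Q)=0$ is what converts this second-order piece into a quadratic expression in $dQ$---is the essential point.
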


\begin{lemma}\label{REWSDERF}
Given any fixed frame ${\bf e}$ in Proposition \ref{94obhxdg}, we have
the heat tension filed $\phi_s$ satisfies
\begin{align*}
&(\partial^2_t-\Delta){\phi _s} + W{\phi _s} =  - 2A_t{\partial _t}\phi _s - A_tA_t\phi _s - \partial _tA_t\phi _s+ {\partial _s}Z + (\phi _t\wedge\phi _s)\phi _t \\
&- h^{jk}( \phi^\infty_j \wedge\phi _s  )\phi^{qua} _k - h^{jk}(\phi^{qua} _j\wedge\phi _s  )\phi^\infty _k- h^{jk}( \phi _j^{qua}\wedge \phi _s )\phi^{qua} _k\\
&+ 2h^{jk}A_j^{qua}\partial _k\phi _s
+ {h^{jk}}A_j^{qua}A^\infty_k {\phi _s} + {h^{jk}}A_j^\infty A^{qua}_k{\phi _s} + {h^{jk}}A^{qua}_jA^{qua}_k{\phi _s} \\
&+ {h^{jk}}( \partial _jA^{qua}_k- \Gamma _{jk}^lA^{qua}_l){\phi _s},
\end{align*}
where $A_{x}^{\infty}$, $A_{x}^{qua}$ are defined in Remark \ref{zdsru314rucuvc344fyeo}, and $W$ is given by
\begin{align}
\varphi\longmapsto W\varphi & :=  -2 h^{jk}A_j^\infty \partial_k\varphi  -{h^{jk}}A^\infty _jA^\infty _k\varphi  +{h^{jk}}\left( { \phi^\infty _j\wedge\varphi   } \right)\phi^\infty _k\nonumber\\
&-h^{jk}(\partial_jA^{\infty}_{k}-\Gamma^l_{jk}A^{\infty}_l)\varphi.\label{quxgdhfghka}
\end{align}
\end{lemma}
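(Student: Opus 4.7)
The plan is to derive the wave equation for $\phi_s$ by \emph{commuting} the heat covariant derivative $D_s$ through the wave tension field $Z$, and then expanding the resulting covariant d'Alembertian into a flat d'Alembertian plus connection corrections.

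First, I apply $D_s$ to the identity $Z = D_t\phi_t - h^{jk}D_j\phi_k + h^{jk}\Gamma^l_{jk}\phi_l$ (equation (\ref{zzz32314rucuvc344fyeo})). The metric coefficients $h^{jk}$ and Christoffel symbols $\Gamma^l_{jk}$ of $M$ are $s$-independent, so $D_s$ passes through them. Using the torsion-free identity (\ref{pknb}) in the form $D_s\phi_\alpha = D_\alpha \phi_s$ and the commutator identity (\ref{detu67tu68}) $[D_s,D_\alpha]\phi = (\phi_s\wedge\phi_\alpha)\phi$, I get
\begin{align*}
D_sD_t\phi_t &= D_tD_t\phi_s + (\phi_s\wedge\phi_t)\phi_t,\\
D_sD_j\phi_k &= D_jD_k\phi_s + (\phi_s\wedge\phi_j)\phi_k,\\
D_s\phi_l &= D_l\phi_s.
\end{align*}
Combining these and using the antisymmetry of $\wedge$ yields the clean covariant identity
\begin{align*}
D_t^2\phi_s - h^{jk}D_jD_k\phi_s + h^{jk}\Gamma^l_{jk}D_l\phi_s
= D_sZ + (\phi_t\wedge\phi_s)\phi_t - h^{jk}(\phi_j\wedge\phi_s)\phi_k.
\end{align*}

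Second, I expand the covariant d'Alembertian into flat quantities. Writing $D_\alpha = \partial_\alpha + A_\alpha$ and being careful with the symmetrization in $j,k$ one obtains
\begin{align*}
D_t^2\phi_s &= \partial_t^2\phi_s + 2A_t\partial_t\phi_s + A_tA_t\phi_s + (\partial_tA_t)\phi_s,\\
h^{jk}D_jD_k\phi_s - h^{jk}\Gamma^l_{jk}D_l\phi_s &= \Delta\phi_s + 2h^{jk}A_j\partial_k\phi_s + h^{jk}A_jA_k\phi_s + h^{jk}\bigl(\partial_jA_k - \Gamma^l_{jk}A_l\bigr)\phi_s,
\end{align*}
where $\Delta=h^{jk}\partial_j\partial_k-h^{jk}\Gamma^l_{jk}\partial_l$ is the Laplace--Beltrami operator on $\mathbb{H}^2$. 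Substituting, the identity above becomes a scalar wave equation for $\phi_s$ with source $D_sZ$ plus an $A$-dependent potential and curvature terms; note $D_sZ=\partial_sZ$ since $A_s=0$ in the caloric gauge.

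Third, I insert the splitting $A_i = A_i^\infty + A_i^{qua}$ and $\phi_i = \phi_i^\infty + \phi_i^{qua}$ of Remark \ref{zdsru314rucuvc344fyeo} into every $A$- and $\phi_x$-dependent term. The \emph{purely} $A^\infty/\phi^\infty$ contributions
\begin{align*}
-2h^{jk}A_j^\infty\partial_k\phi_s - h^{jk}A_j^\infty A_k^\infty \phi_s + h^{jk}(\phi_j^\infty\wedge\phi_s)\phi_k^\infty - h^{jk}\bigl(\partial_jA_k^\infty - \Gamma^l_{jk}A_l^\infty\bigr)\phi_s
\end{align*}
are exactly $W\phi_s$ by definition (\ref{quxgdhfghka}); I move them to the left-hand side. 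All remaining mixed and quadratic terms, together with the $A_t$ pieces and $(\phi_t\wedge\phi_s)\phi_t$, are collected on the right-hand side and reproduce the displayed equation verbatim.

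The whole argument is bookkeeping: the only conceptual inputs are the torsion-free and commutator identities in the covariant calculus of $u^*TN$, the explicit form of the wave tension field $Z$, and the definitions of $A^\infty, A^{qua}, \phi^\infty, \phi^{qua}$ and of $W$. The only minor subtlety will be making sure the symmetrization in $j,k$ is done consistently when opening $D_jD_k$, since the cross terms $A_j\partial_k + A_k\partial_j$ combine (via $h^{jk}=h^{kj}$) into the $2h^{jk}A_j\partial_k$ that one sees in the statement.
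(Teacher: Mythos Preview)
Your proposal is correct and is exactly the natural computation the paper has in mind; the paper in fact states this lemma without proof, treating it as a routine expansion. Your three steps---apply $D_s$ to $Z$ using torsion-freeness and the curvature commutator, unpack $D_\alpha=\partial_\alpha+A_\alpha$, then split $A_i=A_i^\infty+A_i^{qua}$ and $\phi_i=\phi_i^\infty+\phi_i^{qua}$ so that the pure $\infty$-pieces assemble into $W$---recover the displayed identity verbatim, and your caveat about the symmetrization $h^{jk}A_k\partial_j+h^{jk}A_j\partial_k=2h^{jk}A_j\partial_k$ is the only point where one must be careful.
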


\section{Geometric setting of $H$ and Limiting Absorption Principle for Free Resolvent }

\subsection{Geometric Setting of Magnetic Schr\"odinger Operator $H$}

In Section 3 to Section 5 , we will prove the Kato smoothing effect for $H=-\Delta+W$ given in Lemma \ref{REWSDERF}.
First of all, we point out the operator $H$ is independent of the coordinates chosen for $M$.
\begin{proposition}
The operator $H$ define in Lemma \ref{REWSDERF} is independent of the coordinates chosen for $M$. Furthermore, $H$ is a symmetric operator in $L^2(\Bbb H^2;\Bbb C^2)$.
\end{proposition}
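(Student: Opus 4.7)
The strategy is to rewrite $H$ in a manifestly geometric form that makes both statements transparent. First I would show that
\begin{equation*}
H\varphi \;=\; -\Delta_A \varphi \;+\; \mathcal{K}\varphi,
\end{equation*}
where $\Delta_A \varphi := h^{jk}(D_jD_k - \Gamma_{jk}^l D_l)\varphi$ is the connection (Bochner) Laplacian for the covariant derivative $D_j = \partial_j + A_j^\infty$ acting on sections of the trivial $\mathbb{C}^2$-bundle over $M$ (the complexification of $Q^*TN$ trivialised by the frame $\mathbf{e}$), and $\mathcal{K}\varphi := h^{jk}(\phi_j^\infty \wedge \varphi)\phi_k^\infty$ is the curvature endomorphism induced by $dQ$ and $\mathbf{R}^N$. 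The verification is a short algebraic computation: expanding $D_jD_k\varphi = \partial_j\partial_k\varphi + (\partial_jA_k^\infty)\varphi + A_k^\infty\partial_j\varphi + A_j^\infty\partial_k\varphi + A_j^\infty A_k^\infty\varphi$, contracting with $h^{jk}$, and subtracting the Christoffel piece, one obtains $\Delta_A = \Delta + 2h^{jk}A_j^\infty\partial_k + h^{jk}A_j^\infty A_k^\infty + h^{jk}(\partial_jA_k^\infty - \Gamma_{jk}^l A_l^\infty)$, which cancels exactly against the corresponding three terms of $W$ in \eqref{quxgdhfghka}, leaving the $\phi^\infty \wedge \cdot$ contribution as $\mathcal{K}$.

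Once this decomposition is in hand, coordinate independence on $M$ is a tensorial observation: $A^\infty_j dx^j$ is a connection one-form with values in $\mathfrak{so}(Q^*TN)$, $\phi^\infty_j dx^j$ is the pullback $dQ$ expressed in the frame $\mathbf{e}$, $\Gamma_{jk}^l$ are the Christoffel symbols of $h$, and $h^{jk}$ is the inverse metric. The combinations appearing in $\Delta_A$ and $\mathcal{K}$ are therefore the standard coordinate expressions for the intrinsically defined connection Laplacian on $Q^*TN$ and a genuine tensorial endomorphism built from $dQ \otimes dQ$ traced with $h^{-1}$; hence $H$ is coordinate-free.

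For the symmetry claim, the cleanest route is through the associated quadratic form. For $\varphi, \psi \in C_c^\infty(\mathbb{H}^2; \mathbb{C}^2)$, integration by parts against the Riemannian volume $dv_h$ gives
\begin{equation*}
\langle H\varphi, \psi\rangle_{L^2} \;=\; \int_{\mathbb{H}^2} h^{jk}\langle D_j\varphi, D_k\psi\rangle\, dv_h \;+\; \int_{\mathbb{H}^2} \langle \mathcal{K}\varphi, \psi\rangle\, dv_h .
\end{equation*}
The key input is that $A_j^\infty \in \mathfrak{so}(2)$ is real antisymmetric, so $D_j$ is compatible with the standard Hermitian inner product on $\mathbb{C}^2$ (i.e.\ $\partial_j\langle \varphi,\psi\rangle = \langle D_j\varphi,\psi\rangle + \langle\varphi, D_j\psi\rangle$); this is precisely what makes the cross terms $A_j^\infty\partial_k$ and the divergence pieces $\partial_jA_k^\infty - \Gamma_{jk}^l A_l^\infty$ assemble into the symmetric Dirichlet form above. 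The right-hand side is then sesquilinear-symmetric: the first term from $h^{jk} = h^{kj}$ and the second from the identity $\langle(\phi_j^\infty \wedge \varphi)\phi_k^\infty, \psi\rangle = \langle\phi_j^\infty,\phi_k^\infty\rangle\langle\varphi,\psi\rangle - \langle\varphi,\phi_k^\infty\rangle\langle\phi_j^\infty,\psi\rangle$, which together with $h^{jk}=h^{kj}$ is invariant under $\varphi \leftrightarrow \psi$. The only real delicacy is tracking the curved volume element $\sqrt{|h|}dx$ during integration by parts and confirming that the cross terms really do cancel — this is the reason the particular divergence combination $\partial_jA_k^\infty - \Gamma_{jk}^l A_l^\infty$ appears in $W$ with the sign it does, rather than a genuine obstacle.
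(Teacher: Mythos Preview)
Your argument is correct and actually packages the computation more conceptually than the paper does. The paper proves coordinate independence term by term: it identifies $h^{jk}A_j^\infty\partial_k\varphi=(\widetilde A,d\varphi)$, $h^{jk}A_j^\infty A_k^\infty=(\widetilde A,\widetilde A)$, and $h^{jk}(\partial_jA_k^\infty-\Gamma_{jk}^lA_l^\infty)=-d^*\widetilde A$ as intrinsic form-theoretic quantities, then observes that the remaining $\phi^\infty$-term is tensorial. You instead absorb all the connection pieces at once into the Bochner Laplacian $\Delta_A$ on $Q^*TN$ and recognise the residual $\mathcal K$ as the curvature endomorphism, which is the same content expressed one level higher.

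For symmetry the paper is terser: it writes $W\varphi=-2(\widetilde A,d\varphi)+(d^*\widetilde A)\varphi+B\varphi$ with $B$ the real symmetric matrix $-h^{jk}A_j^\infty A_k^\infty+h^{jk}(\phi_j^\infty\wedge\cdot)\phi_k^\infty$, and then simply cites an integration-by-parts lemma from the companion paper to conclude. Your route through the Dirichlet form $\int h^{jk}\langle D_j\varphi,D_k\psi\rangle\,dv_h+\int\langle\mathcal K\varphi,\psi\rangle\,dv_h$ is more self-contained and makes explicit why the antisymmetry of $A_j^\infty$ (metric compatibility of $D$) is what drives the cancellation of the first-order and divergence terms. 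Either way the same two facts are doing the work: $A_j^\infty\in\mathfrak{so}(2)$ real, and $\phi_j^\infty$ real so that the wedge term is a genuine symmetric endomorphism.
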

\begin{proof}
In fact, it is easy to see $\widetilde{A}\triangleq A^{\infty}_idx^i$ is the connection one form on $Q^{*}TN$, thus it is independent of the coordinates chosen for $M$. And for any given $\Bbb C^2$-valued function $\varphi$, we have
\begin{align}\label{geortyu3}
{h^{jk}}A_j^\infty {\partial _k}\varphi=( \widetilde{A},d\varphi),
\end{align}
where $(\cdot,\cdot)$ denotes the metric tensor for one forms on $M=\Bbb H^2$.
Moreover, it is easy to check
\begin{align*}
h^{jk}A_j^\infty A_k^\infty=(\widetilde{A},\widetilde{A}).
\end{align*}
Thus they are invariant under the transform of coordinates as well.
And for any given coordinate system, one has
\begin{align}\label{geortyu}
h^{jk}(\partial_jA^{\infty}_{k}-\Gamma^l_{kj}A^{\infty}_l)=-d^*\widetilde{A}.
\end{align}
Combining these three facts shows $H$ is independent of the coordinates chosen for $M$. Let the inner product in the definition of the wedge operator $\wedge$ given by  (\ref{detu7tu68}) be complex inner product, i.e.,
\begin{align}\label{gevcdo}
(\mathbf{a}\wedge \mathbf{b})\mathbf{c}=\langle\mathbf{a},\mathbf{c}\rangle_{\Bbb C^2}\mathbf{b}-\langle \mathbf{b},\mathbf{c} \rangle_{\Bbb C^2}\mathbf{a}.
\end{align}
Denote
\begin{align}\label{geo}
B \varphi=-h^{jk}A_j^\infty A_k^\infty\varphi+h^{jk}( \phi^\infty_j\wedge \varphi  )\phi _k^\infty.
\end{align}
Since $A^{\infty}_i$ is a real antisymmetric $2\times2$ matrix, $B$ is a real symmetric non-negative matrix valued function defined on $M=\Bbb H^2$.
Now $W\varphi$ is of the form $-2(\widetilde{A},d\varphi)+(d^*\widetilde{A})\varphi+B\varphi$.
Hence, by integration by parts (see e.g. Lemma 3.1 in \cite{6HDFRG} which is a companion paper of this work), $H=-\Delta+W$ is symmetric in $L^2(\Bbb H^2;\Bbb C^2)$.
\end{proof}

\subsection{Free Resolvent Estimates}

In this subsection, we prove estimates for free resolvent especially the limiting absorbing principle.

\subsection{Notations}
Denote $D=\sqrt{-\Delta}$.
Let $\mathfrak{{D}}=(-\Delta-\frac{1}{4})^{\frac{1}{2}}$  be the shifted differential operator.
We remark that the shifted operator $\mathfrak{D}$ is not equivalent to $D$ in $L^p$.

For two Banach spaces $X$ and $Y$, the space of bounded linear operators from $X$ to $Y$ is denoted by $\mathcal{L}(X,Y)$, and the operator norm
is denoted by $\|\cdot\|_{\mathcal{L}(X,Y)}$.  For a closed densely defined operator $T:X\to Y$, the resolvent of $T$ is denoted by $R_{T}(z):=(T-z)^{-1}$,  for instance
\begin{align}
R_{D}(z)=(D-z)^{-1}, R_{H}(z)=(H-z)^{-1}, R_{\sqrt{H}}(z)=(\sqrt{H}-z)^{-1}.
\end{align}
For simplicity denote $R_0(z)$ the free resolvent $(-\Delta-z)^{-1}$.

Given a variable, e.g.  $r\in \Bbb R^+$, $t\in [0,\infty)$, $j\in\Bbb Z$, we write $r^{-\infty}$  ( $t^{-\infty}$, $j^{-\infty}$)  to represent a function $f(r)$ (respectively $f(t),f(j)$) satisfying:\\
\noindent For any integer $n\ge 1$ there exists a constant $C(n)$ such that
\begin{align}\label{Ytfedrx}
\left\{
  \begin{array}{ll}
    |f(r)|\le C(n)r^{-n}\mbox{  } {\rm as }\mbox{ }{r\to\infty}& \hbox{ } \\
   {\rm or}\mbox{  }|f(t)|\le C(n)t^{-n}\mbox{  } {\rm as }\mbox{ }{t\to\infty} & \hbox{ } \\
   {\rm or}\mbox{  }|f(j)|\le C(n)|j|^{-n}\mbox{  } {\rm as }\mbox{ }{j\to\infty} & \hbox{ }
  \end{array}
\right.
\end{align}

A generalization of the Kunze-Stein phenomenon obtained by  [Lemma 5.1, \cite{45XDAP}] will also be widely used then.
We recall it below for reader's convenience.
\begin{lemma}[\cite{45XDAP}]\label{tywe55gvu}
Given any $2\le p,r<\infty$, for any function $h\in L^{p'}(\Bbb H^2)$ and radial function $g$ on $\Bbb H^2$, there holds that
$$
\left\| {g * h} \right\|_{{L^r}} \le C\left\|h \right\|_{L^{p'}}\left(\int_0^\infty  |\varphi_0|^{s_1}|g|^{s_2}\sinh rdr \right)^{1/P},
$$
where $\varphi_0(r)$ is the spherical function and satisfies the point-wise bound (\ref{tby}). $(s_1,s_2)$ are given by
\begin{align*}
s_1 = \frac{{2\min \{ r,p\} }}{{r+ p}}, s_2 = \frac{{rp}}{{r + p}},
\end{align*}
and the constant $C$ is independent of $g,h$,
\end{lemma}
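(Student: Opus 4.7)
Since the inequality is recalled verbatim from \cite[Lemma~5.1]{45XDAP}, within this paper the natural ``proof'' is just a pointer to that reference. The underlying strategy is a complex interpolation between two endpoint estimates, each of which exploits the radiality of $g$.

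First I would establish an endpoint via the spherical Fourier transform. Because $g$ is bi-$K$-invariant, convolution with $g$ acts as a Fourier multiplier with symbol $\widehat g(\lambda)=\int_0^\infty g(r)\varphi_\lambda(r)\sinh r\, dr$. Applying Plancherel together with the pointwise bound $|\varphi_\lambda(r)|\le\varphi_0(r)$ from \eqref{tby} yields a control of $\|g*h\|_{L^2}$ by $\|h\|_{L^2}$ times $\int_0^\infty\varphi_0(r)|g(r)|\sinh r\, dr$, providing a baseline estimate at $(p,r)=(2,2)$. Second, I would handle the opposite regime where $p=r$ is large, using the sharp Kunze--Stein phenomenon of Cowling--Haagerup for semisimple Lie groups of real rank one; this controls $\|g*h\|_{L^p}$ by $\|h\|_{L^{p'}}$ times a suitable weighted $L^{p/2}$ norm of $g$ against $\varphi_0\sinh r\, dr$, which is precisely the form obtained from the formulas for $s_1$ and $s_2$ when $p=r$.

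The general $(p,r)$ then follows from Stein's complex interpolation theorem applied to the analytic family $T_z h := g_z * h$, where $g_z$ is designed so that on the two vertical boundaries $\mathrm{Re}\,z=0$ and $\mathrm{Re}\,z=1$ the operator $T_z$ reproduces the two endpoint estimates above. The Riesz--Thorin exponents on the two endpoint weights combine into the stated $s_1=2\min\{p,r\}/(p+r)$ and $s_2=pr/(p+r)$, and the outer exponent $1/p$ is the Riesz--Thorin interpolation weight between the $L^2$ and the $L^p$ endpoints.

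The principal technical obstacle is the construction of the analytic family $g_z$: one must keep $T_z$ uniformly bounded on the closed critical strip while simultaneously ensuring that the interpolated upper bound reassembles into the mixed weight $\int |\varphi_0|^{s_1}|g|^{s_2}\sinh r\, dr$. This is carried out by exploiting the factorization $|\varphi_0|^{s_1}|g|^{s_2}=(\varphi_0|g|)^{s_1}|g|^{s_2-s_1}$ and by splitting the weight between the spherical function and the kernel in proportion to $z$, after which the boundary estimates from the previous paragraph close the interpolation scheme.
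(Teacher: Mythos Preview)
Your proposal is correct and matches the paper's approach exactly: the paper provides no proof of this lemma at all, merely citing it as \cite[Lemma~5.1]{45XDAP} and recalling the statement for the reader's convenience. Your additional sketch of the underlying interpolation argument (Plancherel at $L^2$ combined with the sharp Kunze--Stein phenomenon, then complex interpolation) goes beyond what the paper does and is a reasonable outline of the argument in the cited reference.
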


The pointwise estimates for the free resolvent are given in Appendix A. We recall the kernel estimates of the shifted wave operator proved by  [Theorem 4.1, Theorem 4.4 \cite{45XDADER45P}] for reader's convenience.

Let $\chi_{\infty}(\lambda)$ be a cutoff function which equals one when $\lambda\ge \frac{3}{2}$ and vanishes near zero. Denote $\chi_0=1-\chi_{\infty}$.  Recall that $\mathfrak{{D}}=(-\Delta-\frac{1}{4})^{\frac{1}{2}}$ and  $D=\sqrt{-\Delta}$.
For $\sigma\in\Bbb R$, $\tau\in[0,\frac{3}{2}),$ define the low frequency cutoff shifted wave operator
$$
\widetilde{W}^{\sigma,\tau}_{t,0}=\chi_{0}(\mathfrak{D}){\mathfrak{D}}^{-\tau}{D}^{\tau-\sigma}e^{it {\mathfrak{D}}},
$$
and denote its kernel as $w^{\sigma,\tau}_{t,0}(r)$.
The modified high frequency wave operator is defined by an analytic family of operators
$$
\widetilde{W}^{\sigma,\tau}_{t,\infty}
=\frac{e^{\sigma^2}}{\Gamma(3/2-\sigma)}\chi_{\infty}({\mathfrak{D}}){\mathfrak{D}}^{-\tau}{D}^{\tau-\sigma}e^{it {\mathfrak{D}}}
$$
in the vertical strip $0\le \Re\sigma\le \frac{3}{2}$. Denote its kernel by $\widetilde{w}^{\sigma,\tau}_{t,\infty}(r)$.

\begin{lemma}[\cite{45XDADER45P}]\label{wjiu87}
The two kernels $w^{\sigma,\tau}_{t,0}(r)$ and $\widetilde{w}^{\sigma,\tau}_{t,\infty}(r)$ satisfy
\begin{itemize}
  \item Assume $|t|\le2$. Then for any $r\ge0$
  $$|w^{\sigma,\tau}_{t,0}(r)|\lesssim \varphi_0(r)
  $$
  \item Assume $|t|\ge2$. \begin{itemize}
           \item (a) If $0\le r\le \frac{|t|}{2}$, then
           $$|w^{\sigma,\tau}_{t,0}(r)|\lesssim |t|^{\tau-3}\varphi_0(r)
           $$
           \item (b) If $r\ge \frac{|t|}{2}$, then
           $$|w^{\sigma,\tau}_{t,0}(r)|\lesssim (1+|t-r|)^{\tau-2}e^{-\frac{1}{2} r}.
           $$
           \end{itemize}
\end{itemize}
For any fixed $\tau\in \Bbb R$ and $\sigma\in \Bbb C$ with $\Re \sigma=\frac{3}{2}$, we have the following:
\begin{itemize}
\item Assume $0<|t|\le2$. \begin{itemize}
           \item (a) If $0\le r\le 3$, then
           $$|\widetilde{w}^{\sigma,\tau}_{t,\infty}(r)|\lesssim |t|^{-\frac{1}{2}}(1-\log|t|)
           $$
           \item (b) If $r\ge 3$, then $|\widetilde{w}^{\sigma,\tau}_{t,\infty}(r)|=O(r^{-\infty} e^{-\frac{1}{2} r})$.
           \end{itemize}
\item Assume $|t|\ge2$. Then for any $r\ge0$
$$|\widetilde{w}^{\sigma,\tau}_{t,\infty}(r)|\lesssim (1+|r-|t||)^{-\infty}e^{-\frac{1}{2} r}.
$$
 \end{itemize}
where we used the notation (\ref{Ytfedrx}).
\end{lemma}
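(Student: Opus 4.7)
The operators $\widetilde W^{\sigma,\tau}_{t,0}$ and $\widetilde W^{\sigma,\tau}_{t,\infty}$ are radial Fourier multipliers of $\mathfrak D$, so their Schwartz kernels are radial and by the spherical inversion formula on $\mathbb H^2$ admit representations
\begin{equation*}
w^{\sigma,\tau}_{t,0}(r)=\int_0^\infty \chi_0(\lambda)\lambda^{-\tau}\bigl(\lambda^2+\tfrac14\bigr)^{\frac{\tau-\sigma}{2}} e^{it\lambda}\varphi_\lambda(r)|c(\lambda)|^{-2}\,d\lambda,
\end{equation*}
and analogously for $\widetilde w^{\sigma,\tau}_{t,\infty}(r)$ with $\chi_\infty$ in place of $\chi_0$. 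The strategy is to split the analysis according to (i) high versus low spectral frequency, (ii) short versus long time, and (iii) small versus large radius, and in each regime either use the trivial envelope $|\varphi_\lambda(r)|\le\varphi_0(r)$ from \eqref{tby} or insert the Harish-Chandra expansion $\varphi_\lambda(r)=c(\lambda)\Phi_\lambda(r)+c(-\lambda)\Phi_{-\lambda}(r)$, with $\Phi_{\pm\lambda}(r)\sim e^{(\pm i\lambda-1/2)r}$ at infinity, reducing the kernel to an oscillatory integral of the form $\int_0^\infty e^{i\lambda(t\pm r)}a(\lambda)\,d\lambda$ times the weight $e^{-r/2}$.

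For the low-frequency kernel $w^{\sigma,\tau}_{t,0}$, the factor $\chi_0$ restricts $\lambda$ to a compact set. When $|t|\le 2$, the trivial bounds $|e^{it\lambda}|=1$ and $|\varphi_\lambda|\le\varphi_0$ immediately yield $|w^{\sigma,\tau}_{t,0}(r)|\lesssim\varphi_0(r)$. When $|t|\ge 2$ and $r\le |t|/2$, the integrand varies smoothly in $\lambda$ with $r$-uniform bounds on $\lambda$-derivatives, and repeated integration by parts against $e^{it\lambda}$ gains $|t|^{-N}$ factors; balancing the orders of the symbol and the cutoff yields the $|t|^{\tau-3}\varphi_0(r)$ bound. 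When $r\ge |t|/2$, inserting the Harish-Chandra expansion produces oscillatory phases $e^{i\lambda(t\pm r)}$ and the extra weight $e^{-r/2}$; stationary phase near the cone $r=|t|$ combined with integration by parts away from it gives the $(1+|t-r|)^{\tau-2}e^{-r/2}$ decay.

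The high-frequency kernel $\widetilde w^{\sigma,\tau}_{t,\infty}$ is analyzed on the critical line $\Re\sigma=3/2$, where the normalization $e^{\sigma^2}/\Gamma(3/2-\sigma)$ is bounded (and later enables Stein complex interpolation for the mapping estimates). With $\chi_\infty$ in force, the Harish-Chandra expansion is effective and reduces the kernel to an oscillatory integral with symbol of fixed order in $\lambda$. For $|t|\ge 2$, the stationary point lies at the cone $r=|t|$, so off the cone ($|r-|t||\gg 1$) one integrates by parts $N$ times in $\lambda$ to obtain arbitrary polynomial decay in $|r-|t||$, producing $(1+|r-|t||)^{-\infty}e^{-r/2}$. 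For $0<|t|\le 2$ and $r\le 3$, the bound $|t|^{-1/2}(1-\log|t|)$ is the standard non-degenerate one-dimensional dispersive estimate with a logarithmic endpoint originating from the sharp frequency cutoff; for $r\ge 3$ the exponential decay of $\Phi_{\pm\lambda}$ combined with rapid decay of the symbol provides the $O(r^{-\infty}e^{-r/2})$ bound.

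The main obstacle is the stationary zone $r\sim|t|$ in the large-time, large-radius regime, where stationary-phase asymptotics must be combined with the exponentially decaying weight $e^{-r/2}$ coming from the Harish-Chandra $\Phi$-functions, and uniform control in the analytic parameter $\sigma$ must be maintained so that Stein interpolation applies subsequently. This is exactly the content of the kernel analysis in \cite{45XDADER45P}, whose argument can be followed essentially verbatim once the spherical inversion and Harish-Chandra expansion on $\mathbb H^2$ are in place.
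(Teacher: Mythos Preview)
Your proposal is appropriate: the paper does not supply its own proof of this lemma but simply quotes it from Anker--Pierfelice \cite{45XDADER45P} (Theorems~4.1 and~4.4 there), and the sketch you give---spherical inversion, the bound $|\varphi_\lambda|\le\varphi_0$ for the low-frequency short-time case, Harish-Chandra expansion plus stationary phase/integration by parts for the large-time and high-frequency regimes---is precisely the method of that reference. Since both you and the paper defer to \cite{45XDADER45P}, there is nothing further to compare.
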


Now we use the kernel estimates for the shifted wave operator to deduce the resolvent estimates.
\begin{lemma}\label{4der45P}
Let $\rho(x)=e^{-d(x,0)}$ for $x\in\Bbb H^2$.
Let $\alpha>0$. Then for all $z\in \Bbb C\setminus [0,\infty)$:
\begin{itemize}
  \item For $\frac{6}{5}<r<2$ and $2<p<6$,
  \begin{align}
  \|(-\Delta-\frac{1}{4}-z)^{-1}f\|_{L^p_x}\lesssim \|f\|_{L^r_x}\label{4der45P}
  \end{align}
  \item For $\frac{6}{5}<r\le 2$ and $2\le p<6$,
  \begin{align}
  \|\rho^{\alpha}(-\Delta-\frac{1}{4}-z)^{-1}\rho^{\alpha}f\|_{L^p_x}\lesssim \|f\|_{L^r_x}\label{pq3}
  \end{align}
\end{itemize}
\end{lemma}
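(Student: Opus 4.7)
The plan is to express the resolvent $(-\Delta - \frac{1}{4} - z)^{-1}$ as a time integral of the half-wave propagator $e^{\pm it\mathfrak{D}}$, use the dispersive kernel bounds of Lemma \ref{wjiu87} to obtain a uniform-in-$z$ pointwise bound on the radial Schwartz kernel, and then apply the Kunze--Stein phenomenon (Lemma \ref{tywe55gvu}) to convert this into the stated $L^r \to L^p$ bound. Concretely, since $\mathfrak{D} = (-\Delta - \frac{1}{4})^{1/2}$ is self-adjoint and non-negative, for $z \in \Bbb C \setminus [0,\infty)$ with $\operatorname{Im}\sqrt{z} > 0$ one has the spectral identity
\begin{align*}
(\mathfrak{D}^2 - z)^{-1} = \frac{i}{2\sqrt{z}} \int_0^\infty e^{i\sqrt{z} t} \bigl(e^{it\mathfrak{D}} + e^{-it\mathfrak{D}}\bigr)\, dt,
\end{align*}
and I split $1 = \chi_0(\mathfrak{D}) + \chi_\infty(\mathfrak{D})$ to treat the low- and high-frequency pieces separately.

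For the low-frequency part, the Schwartz kernel of $\chi_0(\mathfrak{D}) e^{it\mathfrak{D}}$ is exactly $w^{0,0}_{t,0}(r)$, controlled by Lemma \ref{wjiu87} with $\sigma = \tau = 0$. Since $|e^{i\sqrt{z}t}| \le 1$ uniformly for $\operatorname{Im}\sqrt{z} \ge 0$, splitting $\int_0^\infty = \int_0^2 + \int_{\{t \ge 2,\, r \le t/2\}} + \int_{\{t \ge 2,\, r \ge t/2\}}$ and using the three kernel regimes of the lemma yields the uniform-in-$z$ bound
\begin{align*}
|K^0(z;r)| := \Bigl|\int_0^\infty e^{i\sqrt{z}t} w^{0,0}_{t,0}(r)\, dt\Bigr| \lesssim \varphi_0(r).
\end{align*}
The high-frequency piece is more delicate because $\chi_\infty(\mathfrak{D}) e^{it\mathfrak{D}}$ has no good pointwise kernel bound without derivative loss; I would use Stein complex interpolation on the analytic family $\widetilde{W}^{\sigma,0}_{t,\infty}$, exploiting the dispersive bound at $\Re \sigma = 3/2$ (which after integration in $t$ yields a uniform $L^1 \to L^\infty$-type estimate on a Sobolev-normalized version of $\chi_\infty(\mathfrak{D})(\mathfrak{D}^2 - z)^{-1}$) together with a trivial $L^2$-endpoint at $\Re \sigma = 0$, producing the desired $L^r \to L^p$ bound in the stated range.

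Feeding $|K^0(z;r)| \lesssim \varphi_0(r)$ into Lemma \ref{tywe55gvu} with $s_1 = 2\min(p, r')/(p + r')$ and $s_2 = pr'/(p + r')$, and using $\varphi_0(r) \sim (1 + r) e^{-r/2}$ together with $\sinh r \sim e^r$ at infinity, the integrability requirement
\begin{align*}
\int_0^\infty \varphi_0(r)^{s_1 + s_2} \sinh r\, dr < \infty
\end{align*}
amounts to $s_1 + s_2 > 2$, which is equivalent to $p \in (2, 6)$ and $r' \in (2, 6)$, i.e.\ exactly the stated range $6/5 < r < 2$ and $2 < p < 6$. For the weighted bound \eqref{pq3}, the kernel of $\rho^\alpha (\mathfrak{D}^2 - z)^{-1} \rho^\alpha$ is pointwise dominated by $e^{-\alpha(d(x,0) + d(y,0))} |K(z; d(x,y))|$, and the triangle inequality $d(x,0) + d(y,0) \ge d(x,y)$ bounds this by the radial kernel $e^{-\alpha d(x,y)} |K(z; d(x,y))|$. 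The extra $e^{-\alpha r}$ decay relaxes Kunze--Stein's integrability criterion to $s_1 + s_2 > 2 - 2\alpha$, which now accommodates the endpoints $p = 2$ and $r = 2$.

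The main obstacle I anticipate is uniform-in-$z$ control of the high-frequency contribution: the naive $L^2 \to L^2$ bound $\|(\mathfrak{D}^2 - z)^{-1}\|_{L^2 \to L^2} = |\operatorname{Im} z|^{-1}$ is useless as $z$ approaches the spectrum $[0, \infty)$. The Stein interpolation must therefore be carried out at the level of the wave-propagator kernel itself, where $z$ enters only through the oscillating phase $e^{i\sqrt{z} t}$, so that integration of the dispersive estimates in $t$ yields a resolvent bound independent of $\operatorname{Im} z$. This is precisely the reason Lemma \ref{wjiu87} provides the full analytic family of modified wave operators in the vertical strip $0 \le \Re \sigma \le 3/2$.
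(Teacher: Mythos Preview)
Your overall strategy—express the resolvent via a time integral of the half-wave propagator, split into low/high frequencies, use the kernel bounds of Lemma~\ref{wjiu87} with Kunze--Stein for the low part and Stein interpolation for the high part—is exactly the paper's approach. However, there is a real gap in your execution, and one secondary inaccuracy.

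\textbf{The main gap: the $1/\sqrt{z}$ prefactor.} Your representation
\[
(\mathfrak{D}^2-z)^{-1}=\frac{i}{2\sqrt{z}}\int_0^\infty e^{i\sqrt{z}\,t}\bigl(e^{it\mathfrak{D}}+e^{-it\mathfrak{D}}\bigr)\,dt
\]
carries the factor $\tfrac{1}{2\sqrt{z}}$, which you silently drop when you bound the kernel $K^0(z;r)$ and feed it into Kunze--Stein. That factor is \emph{not} uniformly bounded on $\Bbb C\setminus[0,\infty)$ (it blows up as $z\to 0$), so your argument does not give a uniform-in-$z$ estimate. There is no hidden cancellation: spectrally $\int_0^\infty e^{i\sqrt{z}t}\cos(t\lambda)\,dt=\tfrac{-i\sqrt{z}}{\lambda^2-z}$ genuinely vanishes like $\sqrt{z}$, so the prefactor is essential. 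The paper avoids this by using instead
\[
(-\Delta-\tfrac14-(\lambda+i\mu)^2)^{-1}=C(\lambda,\mu)\int_0^\infty e^{i(\operatorname{sgn}\mu)\lambda t}e^{-|\mu|t}\,\mathfrak{D}^{-1}\sin(t\mathfrak{D})\,dt,
\]
where $|C(\lambda,\mu)|=1$; equivalently, one checks directly that $\int_0^\infty e^{i\sqrt{z}t}\,\tfrac{\sin(t\lambda)}{\lambda}\,dt=\tfrac{1}{\lambda^2-z}$ with no singular prefactor. In the language of Lemma~\ref{wjiu87} this is the choice $\tau=1$ rather than your $\tau=0$; the kernel bounds there cover $\tau\in[0,\tfrac32)$, so the fix is a one-line change, but it is essential.

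\textbf{A secondary point on the range.} Your claim that the Kunze--Stein integrability condition $s_1+s_2>2$ is equivalent to $p\in(2,6)$, $r'\in(2,6)$ is not right: one computes $s_1+s_2>2\iff\min(p,r')>2$, which gives only $p>2$ and $r<2$. The further restriction $p<6$, $r>6/5$ comes from the \emph{high-frequency} interpolation: landing at $\sigma=1$ in the strip $0\le\Re\sigma\le\tfrac32$ forces $\tfrac12-\tfrac{1}{3}<\tfrac1p<\tfrac12$ and $\tfrac12<\tfrac1r<\tfrac12+\tfrac13$, i.e.\ exactly $2<p<6$, $\tfrac65<r<2$. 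The paper interpolates the wave kernel $w^{\sigma,\tau}_{\infty,t}$ pointwise in $t$ (so the $L^2$ endpoint at $\Re\sigma=0$ really is trivial, being just unitarity of $e^{it\mathfrak{D}}$) and then integrates the interpolated decay in $t$; this is the clean way to get uniformity in $z$ for the high-frequency piece.
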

\begin{proof}
We shall use the formula
\begin{align}\label{z7vdr}
(-\Delta-\frac{1}{4}-(\lambda+i\mu)^2)^{-1}=C(\lambda,\mu)\int^{\infty}_0e^{i({\rm {sgn}} \mu)\lambda t}e^{-|\mu|t}{\mathfrak{D}}^{-1}(\sin t{\mathfrak{D}})dt,
\end{align}
where $C(\lambda,\mu)={{\rm{sgn}}\mu}\frac{-|\mu|+i({\rm {sgn}} \mu)\lambda}{i(\lambda+i\mu)}$.
Consider the analytic family of operators
\begin{align}
&R^{\sigma,\tau}_0=C(\lambda,\mu)\int^{\infty}_0e^{i({\rm {sgn}} \mu)\lambda t}e^{-|\mu|t}{D}^{\tau-\sigma}{\mathfrak{D}}^{-\tau}(\sin t{\mathfrak{D}})\chi_{0}({\mathfrak{D}})dt.\\
&R^{\sigma,\tau}_{\infty}=\frac{C(\lambda,\mu)e^{\sigma^2}}{\Gamma(\frac{3}{2}-\sigma)}\int^{\infty}_0e^{i({\rm {sgn}} \mu)\lambda t}e^{-|\mu|t}{D}^{\tau-\sigma}{\mathfrak{D}}^{-\tau}(\sin t{\mathfrak{D}})\chi_{\infty}({\mathfrak{D}})dt.\label{2xtf}
\end{align}
Thus by Lemma \ref{wjiu87} and Lemma \ref{tywe55gvu}, for $1<r<2< p<\infty$, $\lambda,\mu\in \Bbb R$, $\tau<2$, $\sigma\in\Bbb R$,
\begin{align}\label{poijn}
\|R^{\sigma,\tau}_0f\|_{L^p_x}\le C\|f\|_{L^r_x},
\end{align}
where $C$ is independent of $\lambda, \mu$.
In particular, the low frequency part of (\ref{4der45P}) is done.

When $\Re \sigma=\frac{3}{2}$, $\tau<2$, for any $1<r<2<p<\infty$, the kernel of ${D}^{\tau-\sigma}{\mathfrak{D}}^{-\tau}(\sin t{\mathfrak{D}})\chi_{\infty}({\mathfrak{D}})$ denoted by $w^{\sigma,\tau}_{\infty,t}$ satisfies for any $\omega>0$
\begin{align}\label{wp321}
\|w^{\sigma,\tau}_{\infty,t}*f\|_{L^p_x}\lesssim  \min(t^{-\infty},t^{-\frac{1}{2}-\omega})\|f\|_{L^r_x},
\end{align}
by applying Lemma \ref{wjiu87} and Lemma \ref{tywe55gvu} again.

Meanwhile, when $\Re\sigma=0, \tau<2$, we have the trivial $L^2\to L^2$ bound.
Hence, by complex interpolation one obtains that $w^{\sigma,\tau}_{\infty,t}$
satisfies (\ref{wp321}) as well in the regime
\begin{align}\label{gt56drtf}
\frac{1}{2}-\frac{\sigma}{3}<\frac{1}{p}<\frac{1}{2}, \mbox{ }\frac{1}{2}<\frac{1}{r}<\frac{1}{2}+\frac{\sigma}{3}.
\end{align}

Thus, take $\tau=1$ and $\sigma=1$ in (\ref{gt56drtf}), then by inserting the bound (\ref{wp321}) to (\ref{2xtf}) we obtain the high frequency part of (\ref{4der45P}). Combining with the bound for low frequency part of (\ref{4der45P}) contained in (\ref{poijn}), we arrive at  (\ref{4der45P}).

(\ref{pq3}) can be similarly proved by noticing the additional $\rho^{\alpha}$ weight helps us to use the $p=q=2$ case in Lemma \ref{tywe55gvu}.
\end{proof}

\begin{remark}
The results as (\ref{4der45P}) are usually called uniform resolvent estimates. For high dimensional hyperbolic spaces, one needs a scaling balance condition for $p,r$, see \cite{HS} for $\Bbb H^n$, $n\ge3$. We remark that the proof here is only available for $n=2$ due to the $t^{-1}$ singularity at $t=0$ in (\ref{z7vdr}) when one tries to apply dispersive estimates of wave operators in higher dimensions.
\end{remark}

The convolution kernel of the free resolvent $(-\Delta-s(1-s))^{-1}$ in $\Bbb H^{n}$ is given by
\begin{align}\label{7mvbtr}
[{ }^n\widetilde{R}]_0(s;x,y)=(2\pi)^{-\frac{n}{2}}e^{-i\pi\mu}(\sinh r)^{-\mu}Q^{\mu}_{\nu}(\cosh r),
\end{align}
where $Q^{\mu}_{\nu}$ is the Legendre function with $\mu=\frac{n-2}{2}$, $\nu=s-\frac{n}{2}$. The point-wise estimates for $[{ }^n\widetilde{R}]_0(s;x,y)$ are given in Lemma \ref{Tomnagdfe4} in Appendix A.

The spectrum of $-\Delta$ overlap  with $[\frac{1}{4},\infty)$. In order to study resolvent near the spectrum half-line $[\frac{1}{4},\infty)$,
we need to define $(-\Delta-\frac{1}{4}-\lambda^2\pm i0)^{-1}$ as what was done in the Euclidean case. The following lemma known as the limiting absorption principle is totally analogous to the $\Bbb R^n$ case in  Agmon \cite{Agmon} and can be proved by using (\ref{v2jsfcdf}) to (\ref{bbxdgd758}) below.
\begin{lemma}\label{fswe34dre5d}
For any $\lambda>0$, the limit $\mathop {\lim }\limits_{z \to 0, \Im z>0} {\left( { - \Delta  - {\frac{1}{4}} - {\lambda }\pm z} \right)^{ - 1}}$  exists in the space $\mathcal{L}(\rho^{-\alpha}L^2,\rho^{-\alpha}L^2)$. And we denote
\begin{align}
\mathop {\lim }\limits_{z \to 0, \Im z>0} {\left( { - \Delta  - {\frac{1}{4}} - {\lambda } \pm z} \right)^{ - 1}} = {\mathfrak{R}}_0\left(\lambda  {  \pm i0} \right).
\end{align}
Moreover, $g:={\mathfrak{R}_0}\left( {\lambda  \pm i0} \right) f$ satisfies
\begin{align}\label{equ}
(-\Delta-\frac{1}{4}-\lambda)g=0,
\end{align}
and for all $f\in \rho^{\alpha}L^2$ it holds
\begin{align}\label{Agdfe4}
\Im\langle \mathfrak{R}_0(\lambda\pm i0)f,f\rangle(\tau)=\pm\frac{\pi}{2\sqrt{\lambda}}\int_{|\xi|=\lambda}|\mathcal{F}f(\tau,b)|^2|c(\tau)|^{-2}db.
\end{align}
Assume $f_n\rightharpoonup f_*$ weakly in $\rho^{\alpha}L^2$, $z_n\to z_*$ with $\Im z_n>0$,
\begin{itemize}
  \item if $\Im z_*=0$, $z_*>0$, then it converges strongly in $L^2$ that
 \begin{align}
 \rho^{\alpha}R_0(\frac{1}{4}+z_n)\rho^{\alpha}f_n&\to \rho^{\alpha}\mathfrak{R}_0(z_*- i0)\rho^{\alpha}f_*\label{flg1}\\
 \rho^{\alpha}\nabla R_0(\frac{1}{4}+z_n)\rho^{\alpha}f_n&\to \rho^{\alpha}\nabla\mathfrak{R}_0(z_*-i0)\rho^{\alpha}f_*;\label{flg2}
 \end{align}
 \item if $\Im z_*>0$ or $\Re z_*<0$ then it converges strongly in $L^2$ that
 \begin{align}
 \rho^{\alpha}R_0(\frac{1}{4}+z_n)\rho^{\alpha}f_n&\to \rho^{\alpha}{R}_0(\frac{1}{4}+z_*)\rho^{\alpha}f_*\label{flg3}\\
 \rho^{\alpha}\nabla R_0(\frac{1}{4}+z_n)\rho^{\alpha}f_n&\to \rho^{\alpha}\nabla{R}_0(\frac{1}{4}+z_*)\rho^{\alpha}f_*.\label{flg4}
 \end{align}
\end{itemize}
\end{lemma}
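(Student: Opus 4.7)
The plan is to adapt Agmon's classical limiting absorption principle to the hyperbolic setting, using the explicit spectral resolution on $\mathbb{H}^2$ together with the uniform weighted resolvent bound (\ref{pq3}) from Lemma \ref{4der45P}. The shift by $1/4$ moves the spectrum of $-\Delta-\tfrac14$ to $[0,\infty)$, so via the substitution $\lambda=\tau^2$ the analysis reduces, after Fourier transform, to the same Sokhotski--Plemelj mechanism as in $\mathbb{R}^n$.

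First I would establish uniform boundedness of $\rho^{\alpha}R_0(\tfrac14+z)\rho^{\alpha}$ as an operator on $L^2$, uniformly in $z\in\mathbb{C}\setminus[0,\infty)$. This is immediate from the $(r,p)=(2,2)$ endpoint of (\ref{pq3}), and the corresponding bound for $\rho^{\alpha}\nabla R_0(\tfrac14+z)\rho^{\alpha}$ follows by writing $\nabla R_0=\nabla(-\Delta)^{-1/2}(-\Delta)^{1/2}R_0$ and using $\sqrt{-\Delta}R_0(\tfrac14+z)=z^{1/2}R_0(\tfrac14+z)+(\sqrt{-\Delta})^{-1}$ (or equivalently an integration-by-parts / Hardy-type argument with the weight $\rho^{\alpha}$). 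These uniform bounds are what ultimately allow the limit to be extracted.

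Second, I would identify the limit explicitly. On the Fourier side, Plancherel gives
\begin{align*}
\langle R_0(\tfrac14+\lambda\pm i\epsilon)f,g\rangle = \tfrac12\int_{\mathbb{R}\times\mathbb{S}^1}\frac{\mathcal{F}f(\tau,b)\overline{\mathcal{F}g(\tau,b)}}{\tau^2-\lambda\mp i\epsilon}|c(\tau)|^{-2}d\tau\, db.
\end{align*}
Sending $\epsilon\downarrow 0$ and invoking Sokhotski--Plemelj on the $\tau$ variable (in the $\pm i0$ sense at $\tau=\pm\sqrt{\lambda}$) yields a well-defined bilinear form on a dense subset such as $f,g\in\rho^{\alpha}L^2\cap L^2$ (the principal value part together with a $\delta$ at $\tau^2=\lambda$). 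The imaginary part picks up only the $\delta$ contribution, which after symmetrizing over $\tau\mapsto-\tau$ and using $|c(\tau)|=|c(-\tau)|$ gives exactly (\ref{Agdfe4}). The distributional identity $(-\Delta-\tfrac14-\lambda)\mathfrak{R}_0(\lambda\pm i0)f=f$ (the intended reading of (\ref{equ}), up to what should be $=f$) comes from passing the resolvent equation $(-\Delta-\tfrac14-\lambda\mp i\epsilon)R_0(\tfrac14+\lambda\pm i\epsilon)f=f$ to the limit against test functions, using the uniform bounds from Step~1 to justify the passage; density together with the uniform bound then upgrades existence of the limit from the dense class to all of $\rho^{\alpha}L^2$.

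Third, for the strong convergence claims (\ref{flg1})--(\ref{flg4}), the main device is compactness of the map $\rho^{\alpha}:L^2(\mathbb{H}^2)\to L^2(\mathbb{H}^2)$ on sequences that are weakly convergent and whose images under $R_0(\tfrac14+z_n)$ are uniformly bounded in $H^1$-type weighted spaces (thanks to the gradient bound from Step~1). For $z_\ast$ off the spectrum one has continuity of $z\mapsto R_0(\tfrac14+z)$ in the operator norm on $\rho^{\alpha}L^2\to\rho^{-\alpha}L^2$, so $R_0(\tfrac14+z_n)\rho^{\alpha}f_n-R_0(\tfrac14+z_\ast)\rho^{\alpha}f_n\to 0$ strongly, while weak convergence of $f_n$ and the compactness of $\rho^{\alpha}R_0(\tfrac14+z_\ast)\rho^{\alpha}$ (as a composition of a bounded operator with multiplication by a decaying weight, using the exponential volume growth of $\mathbb{H}^2$ only mildly) gives the strong convergence of $\rho^{\alpha}R_0(\tfrac14+z_n)\rho^{\alpha}f_n$. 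For $z_\ast>0$ on the spectrum one replaces $R_0(\tfrac14+z_\ast)$ by $\mathfrak{R}_0(z_\ast-i0)$ and uses the same compactness; the convergence is genuine only because $\Im z_n>0$ selects the correct branch. I expect this last step to be the main obstacle: proving compactness of $\rho^{\alpha}\mathfrak{R}_0(\lambda\pm i0)\rho^{\alpha}$ at the spectral parameter requires combining the weighted boundedness with a local compactness argument (Rellich-type, exploiting $\rho^{\alpha}$ to gain decay at infinity and the $H^1$ gradient control to gain local compactness), which is the genuinely hyperbolic input beyond the pointwise kernel estimates in Lemma \ref{Tomnagdfe4}.
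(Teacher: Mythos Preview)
Your proposal is correct and follows exactly the approach the paper indicates: the paper gives no detailed proof, merely stating that the result is ``totally analogous to the $\mathbb{R}^n$ case in Agmon'' and follows from the weighted resolvent bounds (\ref{v2jsfcdf})--(\ref{bbxdgd758}) of Lemma~\ref{somerthing} (rather than (\ref{pq3}) as you cite, though either works; note that (\ref{bbxdgd758}) gives the gradient bound directly, so your detour through $\nabla(-\Delta)^{-1/2}$ is unnecessary). Your observation that (\ref{equ}) should read $=f$ rather than $=0$ is a correct catch of a typo in the statement.
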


Denote the convolution operator with kernel $[{ }^2\widetilde{R}]_0(\frac{1}{2},x,y)$ in (\ref{7mvbtr}) by $G(0)$.
\begin{lemma}\label{asw234dfe4}
For $\epsilon\in\{\epsilon:\Im \epsilon^2 >0, |\epsilon|\ll1 \}$, there exists some universal constant $C$ such that
\begin{align}
\left\| \left( { - \Delta  - {\frac{1}{4}}\pm{\epsilon^2  }} \right)^{ - 1}-{G}(0)\right\|_{\mathcal{L}(\rho^{-\alpha}L^2,\rho^{-\alpha}L^2)}&\lesssim\epsilon^{1/4}\label{4dfe476}\\
\left\|\nabla \left( { - \Delta  - {\frac{1}{4}}\pm{\epsilon^2  }} \right)^{ -  1}-\nabla{G}(0)\right\|_{\mathcal{L}(\rho^{-\alpha}L^2,\rho^{-\alpha}L^2)}&\lesssim \epsilon^{1/4}.\label{0cdfjpjuk}
\end{align}
\end{lemma}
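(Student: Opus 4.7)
The plan is to realize both operators as convolution operators with explicit radial kernels and then estimate the difference kernel directly. Writing $-\Delta - \tfrac{1}{4} \mp \epsilon^2 = -\Delta - s(1-s)$ forces $s = \tfrac{1}{2} \pm i\epsilon$, so by (\ref{7mvbtr}) the resolvent is convolution with $(2\pi)^{-1} Q^0_{-\frac{1}{2}\pm i\epsilon}(\cosh r)$ and $G(0)$ is convolution with $(2\pi)^{-1} Q^0_{-\frac{1}{2}}(\cosh r)$. The difference is therefore convolution with
$$K_\epsilon(r) := \frac{1}{2\pi}\bigl[Q^0_{-\frac{1}{2}\pm i\epsilon}(\cosh r) - Q^0_{-\frac{1}{2}}(\cosh r)\bigr].$$

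First I would bound $K_\epsilon$ pointwise via the integral representation
$$Q^0_{\nu}(\cosh r) = \int_0^\infty (\cosh r + \sinh r \cosh t)^{-\nu - 1}\, dt,$$
valid on the line $\Re(\nu+1) = \tfrac{1}{2}$, which expresses $K_\epsilon$ as an integral whose integrand factors as $(\cosh r + \sinh r \cosh t)^{-1/2}\bigl[(\cosh r + \sinh r \cosh t)^{\mp i\epsilon} - 1\bigr]$. The key elementary bound
$$|w^{\mp i\epsilon} - 1| = |e^{\mp i\epsilon \log w} - 1| \le \min(2, |\epsilon \log w|) \lesssim \epsilon^{1/4}|\log w|^{1/4}$$
trades the logarithmic $\epsilon$-dependence for a polynomial rate. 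Combined with the standard asymptotics of $Q^0_{-1/2}(\cosh r)$ (logarithmic singularity near $r = 0$, decay like $e^{-r/2}$ at infinity), this yields $|K_\epsilon(r)| \lesssim \epsilon^{1/4} F(r)$ with $F$ locally integrable and enjoying the same $e^{-r/2}$ decay.

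Next I would upgrade the pointwise bound to the operator bound on $\rho^{-\alpha} L^2$. Since $\rho(x) = e^{-r(x)}$, boundedness $\rho^{-\alpha}L^2 \to \rho^{-\alpha}L^2$ is equivalent, via conjugation, to $L^2$-boundedness of convolution with the weighted kernel $e^{-\alpha r(x)} F(d(x,y)) e^{\alpha r(y)}$. By Schur's test together with the Kunze--Stein phenomenon (Lemma \ref{tywe55gvu}), the required boundedness follows from the $e^{-r/2}$ decay of $F$ provided $\alpha$ is in a small range, which proves (\ref{4dfe476}). For the gradient estimate (\ref{0cdfjpjuk}), I would differentiate in $x$: since $K_\epsilon$ depends only on $d(x,y)$, $|\nabla_x K_\epsilon(d(x,y))| = |K_\epsilon'(d(x,y))|$, and differentiating the integral representation produces an extra $(\cosh r + \sinh r \cosh t)^{-1}(\sinh r + \cosh r \cosh t)$ factor in the integrand. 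The identity $\sinh r \cdot (Q^0_\nu)'(\cosh r) = \tfrac{d}{dr} Q^0_\nu(\cosh r)$ shows the derivative kernel is bounded near $r = 0$ (the logarithmic singularity differentiates to $1/r$, which is killed by the $\sinh r$ factor) and retains $e^{-r/2}$ decay, so the same min-interpolation argument and weighted Schur estimate go through.

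The main obstacle will be making the min-interpolation rigorous uniformly down to $r = 0$, where $Q^0_{-1/2}(\cosh r)$ has a logarithmic singularity and naive linearization of $w^{\mp i\epsilon} - 1$ in $\epsilon$ would produce an unbounded $\log w$ factor with no $\epsilon$-gain. The trick of writing $\min(1, |\epsilon \log w|) \lesssim \epsilon^{1/4}|\log w|^{1/4}$ is what absorbs this logarithm into a slightly worse but still integrable weight while delivering the algebraic rate $\epsilon^{1/4}$; simultaneously one must verify that the $\rho^{\pm\alpha}$ weights are compatible with the $e^{-r/2}$ tail of the Legendre kernel (which requires $\alpha$ strictly less than $1/2$), an essentially sharp constraint coming from the threshold of the $L^2$ spectrum of $-\Delta$.
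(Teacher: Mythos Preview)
Your route is valid but genuinely different from the paper's. The paper never unpacks the Laplace integral for $Q^0_\nu$; it instead bounds $\partial_\epsilon$ of the kernel pointwise via the tabulated estimates of Lemma~\ref{Tomnagdfe4} (and, for the gradient, the Legendre recursion $(z^2-1)^{m/2}\tfrac{d^m}{dz^m}Q^0_\eta=Q^m_\eta$, which expresses $\partial_r[{}^2\widetilde{R}]_0$ through higher-dimensional resolvent kernels), then recovers the difference by Newton--Leibniz and closes with Lemma~\ref{tywe55gvu}. That argument is shorter if one imports Lemma~\ref{Tomnagdfe4} as a black box, and in fact delivers the stronger rate $\epsilon^{1/2}$ from $\int_0^\epsilon|\epsilon'|^{-1/2}\,d\epsilon'$, so the stated $\epsilon^{1/4}$ is not sharp there. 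Your min-interpolation $|w^{\mp i\epsilon}-1|\le\min(2,|\epsilon\log w|)\lesssim\epsilon^{1/4}|\log w|^{1/4}$ is more self-contained and makes the exponent $1/4$ transparent; both proofs finish the same way through the weighted Kunze--Stein bound.

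One point needs correction. The derivative of the difference kernel is \emph{not} bounded near $r=0$: the chain-rule factor $\sinh r$ is already contained in the identity $\tfrac{d}{dr}Q^0_\nu(\cosh r)=\sinh r\,(Q^0_\nu)'(\cosh r)$, and direct computation with your representation (the region $t\lesssim\log(1/r)$ contributes $\epsilon^{1/4}r^{1/4}\int_0^{\log(1/r)}e^{5t/4}\,dt\sim\epsilon^{1/4}/r$) gives $|K_\epsilon'(r)-K_0'(r)|\sim\epsilon^{1/4}r^{-1}$ as $r\to0$. This is harmless for the operator bound, since $r^{-1}\sinh r$ is locally integrable and the Kunze--Stein integrand near the origin is $\varphi_0(r)\cdot r^{-1}\cdot\sinh r\sim 1$; the same $r^{-1}$ singularity appears in the paper's proof. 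But the heuristic ``the $1/r$ is killed by the $\sinh r$ factor'' is misplaced and should be replaced by this integrability observation.
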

\begin{proof}
We only prove the case when $\Im \epsilon >0$.
The proof of Lemma \ref{asw234dfe4} is based on the corresponding estimates for $\partial_{s}[{ }^n\widetilde{R}]_0(s,x,y)$. We will frequently use the identity
\begin{align}\label{tiand}
(z^2-1)^{m/2}\frac{d^m}{dz^m}Q^{0}_{\eta}=Q^{m}_{\eta}(z).
\end{align}
Let $r=d(x,y)$, $s=\frac{1}{2}+e^{i\frac{\pi}{4}}\epsilon$, then Lemma \ref{Tomnagdfe4} implies for any $\delta>0$
\begin{align}\label{cbndg45dr}
\left| \partial _\epsilon[{ }^2{\widetilde{R}}]_0(s,x,y) \right| \lesssim \left\{ \begin{array}{l}
 \log \left| r \right|,\left| r \right| \le 1 \\
 {\left| \epsilon \right|^{ - \frac{1}{2}}}{C_\delta }{e^{ - (\frac{1}{2} - \delta )r}},\left| r \right| \ge 1 \\
 \end{array} \right.
\end{align}
By Lemma \ref{tywe55gvu}, one easily obtains (\ref{4dfe476}) from (\ref{cbndg45dr}) and Newton-Leibniz formula.

Since $|\nabla_x d(x,y)|=1$ for any $y\in\Bbb H^2$, the key ingredient to prove (\ref{0cdfjpjuk}) is the estimate for
$\partial_{\epsilon}\partial_r [{ }^2{\widetilde{R}}]_0(s,x,y)$. By using (\ref{tiand}),  (\ref{7mvbtr}) and Lemma \ref{Tomnagdfe4}, we have for any $\delta>0$
\begin{align}
\left| {{\partial _\epsilon}{\partial _r}{{{[^2}{\widetilde{R}}]}_0}(s,x,y)} \right| \lesssim \left\{ \begin{array}{l}
 {({\cosh ^2}r - 1)^{ - \frac{1}{2}}}({\sinh ^2}r){r^{ - 2}},\mbox{  }\left| r \right| \le 1 \\
 {C_\delta }{({\cosh ^2}r - 1)^{ - \frac{1}{2}}}({\sinh ^2}r){\left| \epsilon \right|^{\frac{1}{2}}}{e^{ - (3/2 - \delta )r}},\mbox{  }\left| r \right| \ge 1 \\
 \end{array} \right.
\end{align}
Thus (\ref{0cdfjpjuk}) follows by Lemma \ref{tywe55gvu} and the Newton-Leibniz formula.
\end{proof}

The following lemma was proved in \cite{6HDFRG}.
\begin{lemma}[\cite{6HDFRG}]\label{somerthing}
For $\sigma\in\Bbb C$ with $\Re \sigma\ge 0$, we have for $\alpha>0$ sufficiently small
\begin{align}
\|\rho^{\alpha}R_{0}(\frac{1}{4}-\sigma^2)f\|_{L^2}&\lesssim|\sigma|^{-1}\|f\rho^{-\alpha}\|_{L^2}\label{v2jsfcdf}\\
\|\rho^{\alpha}R_{0}(\frac{1}{4}-\sigma^2)f\|_{L^2}&\lesssim\|f\rho^{-\alpha}\|_{L^2}\label{cbdg45dr}\\
\|\rho^{\alpha}\nabla R_{0}(\frac{1}{4}-\sigma^2)f\|_{L^2}&\lesssim\|f\rho^{-\alpha}\|_{L^2}.\label{bbxdgd758}
\end{align}
\end{lemma}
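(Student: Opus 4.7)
I view the three estimates as weighted $L^2$ resolvent bounds for $-\Delta$ shifted to the bottom of the spectrum, and I would deduce them from the already-stated weighted $L^r\to L^p$ estimate (\ref{pq3}) by specializing exponents, combined with a spectral-gap argument (Plancherel) for large $|\sigma|$ and the limiting absorption principle of Lemma \ref{fswe34dre5d} to handle the boundary case $\Re\sigma=0$.

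\medskip

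\noindent\textbf{Bounds (\ref{cbdg45dr}) and (\ref{bbxdgd758}).} Estimate (\ref{cbdg45dr}) is immediate from (\ref{pq3}) with $r=p=2$ and $z=-\sigma^2$, which is valid for $-\sigma^2\notin[0,\infty)$, i.e., $\Re\sigma>0$. The boundary case $\sigma\in i\Bbb R$ is recovered by sending $\epsilon\to 0^+$ in $\sigma_\epsilon=\epsilon+\sigma$ and invoking the strong convergence (\ref{flg1}). For (\ref{bbxdgd758}) I would argue the same way after establishing a gradient analogue of (\ref{pq3}): since $|\nabla_x d(x,y)|=1$, the kernel of $\nabla R_0(\frac14-\sigma^2)$ is $(\partial_r G_\sigma)(d(x,y))\nabla_x d(x,y)$ with $G_\sigma$ the radial kernel of (\ref{7mvbtr}). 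By Lemma \ref{Tomnagdfe4}, $\partial_r G_\sigma$ has only an $r^{-1}$ singularity at the origin (integrable against $\sinh r\,dr$ in dimension two) and the same exponential decay as $G_\sigma$ at infinity. The wave-kernel proof of (\ref{pq3}) then goes through with $\mathfrak{D}^{-1}\sin(t\mathfrak{D})$ replaced by $\nabla\mathfrak{D}^{-1}\sin(t\mathfrak{D})$, whose Schwartz kernel admits bounds analogous to those of Lemma \ref{wjiu87}.

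\medskip

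\noindent\textbf{Bound (\ref{v2jsfcdf}).} For $|\sigma|\leq 1$ the desired inequality is weaker than (\ref{cbdg45dr}) and hence already known. For $|\sigma|\geq 1$ I split on the argument of $\sigma$. If $\Re\sigma\geq\tfrac12|\sigma|$, writing $\sigma=a+ib$ one checks
\begin{align*}
|\tau^2+\sigma^2|^2=(\tau^2+a^2-b^2)^2+4a^2b^2\gtrsim|\sigma|^4,
\end{align*}
so Plancherel yields $\|R_0(\tfrac14-\sigma^2)\|_{L^2\to L^2}\lesssim|\sigma|^{-2}\leq|\sigma|^{-1}$, with the weights $\rho^\alpha\leq 1$ absorbed for free. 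If $\Re\sigma<\tfrac12|\sigma|$ then $|\Im\sigma|\gtrsim|\sigma|$, and the time integrand in (\ref{z7vdr}) carries an oscillatory factor $e^{-i\Im\sigma\cdot t}$; a single integration by parts in $t$ extracts a prefactor $|\sigma|^{-1}$ at the cost of one extra time derivative on the wave kernel, the resulting Schwartz kernel still satisfies pointwise bounds of the type given in Lemma \ref{wjiu87} (with shifted exponents), and Kunze--Stein (Lemma \ref{tywe55gvu}) then closes the weighted $L^2$ bound. The limit $\Re\sigma\to 0^+$ is handled again by Lemma \ref{fswe34dre5d}.

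\medskip

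\noindent\textbf{Main obstacle.} The subtle point is precisely this last subcase: as $\sigma$ approaches the imaginary axis at large modulus, $\tfrac14-\sigma^2$ becomes essentially resonant with the continuous spectrum, the Plancherel gain $|\sigma|^{-2}$ degenerates to $(2\Re\sigma\,|\Im\sigma|)^{-1}$ and blows up, so the $|\sigma|^{-1}$ decay must be produced entirely by the oscillation of the wave kernel, with the weights $\rho^\alpha$ playing only the auxiliary role of securing tail integrability at infinity.
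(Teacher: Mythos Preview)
This lemma is not proved in the present paper at all: it is quoted from the companion paper \cite{6HDFRG}, so there is no in-text argument to compare against. I can therefore only comment on the soundness of your sketch.

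Your treatment of (\ref{cbdg45dr}) and (\ref{bbxdgd758}) is reasonable. Invoking (\ref{pq3}) at $r=p=2$ gives (\ref{cbdg45dr}) for $\Re\sigma>0$, and the gradient analogue can indeed be obtained by feeding the pointwise bounds of Lemma~\ref{xuannv} into the same Kunze--Stein machinery as in the proof of Lemma~\ref{4der45P}. Note, however, a logical ordering issue: you appeal to Lemma~\ref{fswe34dre5d} for the boundary case $\Re\sigma=0$, but the paper explicitly says Lemma~\ref{fswe34dre5d} is proved \emph{using} (\ref{v2jsfcdf})--(\ref{bbxdgd758}). This is harmless provided you first establish the three bounds for $\Re\sigma>0$, then derive Lemma~\ref{fswe34dre5d}, and only afterwards pass to the limit; you should make that order explicit.

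The genuine gap is in your argument for (\ref{v2jsfcdf}) when $|\sigma|$ is large and $\Re\sigma<\tfrac12|\sigma|$. After integrating by parts in $t$ in (\ref{z7vdr}), the time derivative produces two pieces: a term $|\mu|\,\mathfrak{D}^{-1}\sin(t\mathfrak{D})$, which can be absorbed since $|\mu|/|\lambda|<1/\sqrt3$ in your regime, and a term $\cos(t\mathfrak{D})$. You assert that the Schwartz kernel of the latter ``still satisfies pointwise bounds of the type given in Lemma~\ref{wjiu87}'', but this is not true for the high-frequency part: the kernel of $\chi_\infty(\mathfrak{D})\cos(t\mathfrak{D})$ is a genuine distribution carried by the light cone $r=|t|$ (already in $\mathbb{R}^2$ the $t$-derivative of the forward fundamental solution $\tfrac{1}{2\pi}(t^2-r^2)^{-1/2}\mathbf{1}_{t>r}$ is not locally integrable there), so Kunze--Stein via pointwise kernel bounds cannot close. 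If instead you compute $\int_0^\infty e^{(i\lambda-|\mu|)t}\cos(t\mathfrak{D})\,dt$ by functional calculus, you obtain a linear combination of $(\mathfrak{D}\mp\lambda-i|\mu|)^{-1}$, and bounding these uniformly in weighted $L^2$ is precisely the Kato smoothing for $\mathfrak{D}$, which by the identity in (\ref{rp0}) is equivalent to (\ref{v2jsfcdf}) itself. So as written the step is either false (pointwise route) or circular (spectral route). One viable repair is to exploit the oscillation of the resolvent kernel directly: Lemma~\ref{Tomnagdfe4} records only $|K_\sigma(r)|$, but the kernel actually carries a phase $e^{\pm i(\Im\sigma)r}$ for large $r$, and a stationary-phase or $TT^*$ argument on that phase, not an IBP in $t$, is what produces the extra $|\sigma|^{-1/2}$ beyond the $|\sigma|^{-1/2}$ already visible in the amplitude.
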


\begin{lemma}\label{something}
Let $0<\alpha<\frac{1}{2}$. Then for all $\sigma\ge 0$, $p\in (1,\infty)$, we have
\begin{align}
\|(-\Delta+\sigma^2-\frac{1}{4})^{-1}\|_{L^p\to L^p}&\lesssim \min(1,\sigma^{-2})\label{onumber}\\
\|\nabla (-\Delta+\sigma^2-\frac{1}{4})^{-1}\|_{L^p\to L^p}&\lesssim \min(1,\sigma^{-1})\label{1onumber}\\
\|(-\Delta+\sigma^2-\frac{1}{4})^{-1}\|_{\rho^{-\alpha}L^2\to \rho^{-\alpha}L^2}&\lesssim \min(1,\sigma^{-2})\label{2onumber}\\
\|\nabla (-\Delta+\sigma^2-\frac{1}{4})^{-1}\|_{\rho^{-\alpha}L^2\to\rho^{-\alpha} L^2}&\lesssim \min(1,\sigma^{-1}).\label{3onumber}
\end{align}
\end{lemma}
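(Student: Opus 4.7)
The plan is to work directly with the explicit Legendre Green kernel of the free resolvent on $\Bbb H^2$: taking $s = \tfrac{1}{2}+\sigma$ in (\ref{7mvbtr}) so that $s(1-s)=\tfrac{1}{4}-\sigma^2$, the resolvent $R_\sigma := (-\Delta+\sigma^2-\tfrac{1}{4})^{-1}$ is convolution with $K_\sigma(r)=c\,Q^0_{\sigma-1/2}(\cosh r)$. Lemma \ref{Tomnagdfe4} gives the pointwise control $|K_\sigma(r)| \lesssim |\log r|\mathbf{1}_{r\le 1} + e^{-(\sigma + 1/2)r}\mathbf{1}_{r\ge 1}$, together with an analogous bound for $|\partial_r K_\sigma|$ carrying an extra $r^{-1}$ near the origin and an extra factor $\sigma + \tfrac{1}{2}$ at infinity. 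I will split into the regimes $\sigma \in [0,1]$ (uniform boundedness) and $\sigma \ge 1$ (decay of $\sigma^{-2}$, resp.\ $\sigma^{-1}$).

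For the two weighted $L^2$ bounds (\ref{2onumber}) and (\ref{3onumber}) with $\sigma \in [0,1]$, the conclusion is already contained in (\ref{cbdg45dr}) and (\ref{bbxdgd758}) of Lemma \ref{somerthing}. For $\sigma \ge 1$, I apply Schur's test to the conjugated kernel $\rho^\alpha(x)\,K_\sigma(d(x,y))\,\rho^{-\alpha}(y)$: the triangle inequality gives $\rho^{-\alpha}(y) \le \rho^{-\alpha}(x)e^{\alpha d(x,y)}$, which reduces both row and column sums to $\int_0^\infty |K_\sigma(r)|\,e^{\alpha r}\sinh r\,dr \lesssim \sigma^{-2}$ since $\alpha < \tfrac{1}{2} < \sigma$. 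The weighted gradient bound at large $\sigma$ is then obtained via the energy identity applied to $v:=\rho^\alpha u$ with $u = R_\sigma h$: testing $(-\Delta+\sigma^2-\tfrac{1}{4})v = \rho^\alpha h - (\Delta\rho^\alpha)u - 2\nabla\rho^\alpha\cdot\nabla u$ against $v$, combined with the already-proved resolvent bound and a quadratic bootstrap absorbing the term $\alpha\|\rho^\alpha\nabla u\|_{L^2}\|\rho^\alpha u\|_{L^2}$, yields $\|\rho^\alpha \nabla R_\sigma h\|_{L^2} \lesssim \sigma^{-1}\|\rho^\alpha h\|_{L^2}$.

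For the pure $L^p$ bounds (\ref{onumber}), (\ref{1onumber}) at large $\sigma$, I apply the radial Kunze--Stein inequality (Lemma \ref{tywe55gvu}) with $r=p$: the relevant integral $\int_0^\infty |K_\sigma(r)|^{s_2}\varphi_0(r)^{s_1}\sinh r\,dr$, using $\varphi_0\sinh r \sim (1+r)e^{r/2}$ and the exponential decay of $K_\sigma$, evaluates to $\int(1+r)e^{-\sigma r}\,dr \sim \sigma^{-2}$. For small $\sigma$, by Lemma \ref{asw234dfe4} the resolvent converges in norm to $G(0)=(-\Delta-\tfrac{1}{4})^{-1}$, so it remains to show $\|G(0)\|_{L^p \to L^p}<\infty$, which follows from the kernel estimate together with the Herz-type multiplier criterion on $\Bbb H^2$. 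The gradient case is handled identically using $\partial_r K_\sigma$.

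The main obstacle will be the pure $L^p$/small-$\sigma$ regime: the Green kernel at $\sigma = 0$ has the borderline decay $e^{-r/2}$ exactly matching the hyperbolic volume growth, so Young's inequality is insufficient and one must exploit the sharper Herz-type $L^p$ multiplier theory on the symmetric space. A secondary technical point is the weighted gradient bootstrap at large $\sigma$, where the commutator $[\nabla,\rho^\alpha]$ must be absorbed without losing the $\sigma^{-1}$ decay.
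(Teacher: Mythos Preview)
Your weighted-$L^2$ treatment is fine and essentially what the companion paper does. The real divergence is in the $L^p$ bounds (\ref{onumber}), (\ref{1onumber}), where the paper takes a completely different and much shorter route: it never touches the Green kernel at small~$\sigma$. Instead (see Lemma~\ref{L1P} in Section~9) it uses that $e^{\delta_1 t}e^{t\Delta}$ is a contraction semigroup on $L^p$ by (\ref{mm8}), so the Hille--Yosida/Lumer--Phillips bound gives directly
\[
\bigl\|(-\Delta+\sigma^2-\tfrac14)^{-1}\bigr\|_{L^p\to L^p}\le \frac{1}{\delta_1+\sigma^2-\tfrac14},
\]
which yields both the uniform bound and the $\sigma^{-2}$ decay in one stroke. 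For the large-$\sigma$ part of (\ref{1onumber}) the paper then uses the pointwise kernel bounds of Lemma~\ref{xuannv} together with plain Young's inequality (the kernel is genuinely in $L^1(\Bbb H^2)$ once $\sigma>\tfrac12$). Note also that Lemma~\ref{L1P}(b) is stated only under $\Re(\sigma^2-\tfrac14)\ge 0$, i.e.\ $\sigma\ge\tfrac12$; this is the only range ever used downstream (the Balakrishnan integrals in Proposition~\ref{k8vr4} run over $\lambda=\sigma^2-\tfrac14\ge 0$).

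Your proposal has two genuine problems in the $L^p$ part. First, Lemma~\ref{tywe55gvu} as stated gives an $L^{p'}\to L^r$ bound, not $L^p\to L^p$; setting $r=p$ produces an $L^{p'}\to L^p$ estimate, which is not what you need. For large $\sigma$ you should simply use Young, as the paper does. Second, and more seriously, your small-$\sigma$ strategy cannot work: Lemma~\ref{asw234dfe4} is a statement in $\mathcal L(\rho^{-\alpha}L^2,\rho^{-\alpha}L^2)$ and says nothing about $L^p$ convergence, and the endpoint operator $G(0)=(-\Delta-\tfrac14)^{-1}$ is \emph{not} bounded on $L^p$ for any $p$ --- its spherical multiplier $\lambda\mapsto\lambda^{-2}$ has a pole at $\lambda=0$, so no Herz/Clerc--Stein/Anker criterion can save it. This is exactly why the paper avoids the kernel route at the bottom of the spectrum and invokes the semigroup spectral gap instead.
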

\begin{proof}
(\ref{2onumber}) and (\ref{3onumber}) have been proved in our companion paper [Lemma 4.5,\cite{6HDFRG}]. The proof of (\ref{onumber}) and (\ref{1onumber}) are postponed to Section 9 of this paper.
\end{proof}

\subsection{Some Auxiliary Estimates}
\begin{lemma}\label{hessian}
For $r\in(1,\infty)$, $1\le p<q\le \infty$ and $1+\frac{1}{q}-\frac{1}{p}> \frac{3}{4}$, we have
\begin{align}
\|\nabla^2f\|_{L^q_x}&\lesssim \|\Delta f\|_{L^p_x}\label{poking1}\\
\|\nabla^2f\|_{L^r_x}&\lesssim \|\Delta f\|_{L^r_x}+\|\nabla f\|_{L^r_x}+\|f\|_{L^r_x}.\label{poking2}
\end{align}
\end{lemma}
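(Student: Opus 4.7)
The plan is to represent $\nabla^2 f = T\Delta f$ where $T = -\nabla^2(-\Delta)^{-1}$ is a convolution operator on $\mathbb{H}^2$, and to analyse its radial kernel $\mathcal{K}(r)$ via the explicit formula (\ref{7mvbtr}). Setting $s=1$ there yields the Green function $G_0(r)=\tfrac{1}{2\pi}\log\coth(r/2)$ of $-\Delta$, whose second radial derivative behaves like $r^{-2}$ for small $r$ and like $e^{-r}$ for large $r$. I split $\mathcal{K}=\mathcal{K}_0+\mathcal{K}_\infty$ according to $r\le 1$ and $r\ge 1$, and treat the two inequalities separately.

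For the second inequality, I would invoke the standard $L^r$-theory for the elliptic operator $-\Delta+1$ on $\mathbb{H}^2$, which is a manifold of bounded geometry: the operator $\nabla^2(-\Delta+1)^{-1}$ is bounded on $L^r$ for every $1<r<\infty$, its local part being a second-order Calder\'on--Zygmund operator in the global chart (\ref{tyw55gvu}) where the metric has bounded smooth coefficients, and its long-range part being estimated by Young's inequality against the exponentially decaying radial profile. Consequently
\[
\|\nabla^2 f\|_{L^r}=\bigl\|\nabla^2(-\Delta+1)^{-1}(-\Delta+1)f\bigr\|_{L^r}\lesssim \|\Delta f\|_{L^r}+\|f\|_{L^r},
\]
and (\ref{poking2}) follows by trivially dominating the right-hand side by adding the $\|\nabla f\|_{L^r}$ term.

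For the first inequality the $L^p\!\to\!L^q$ improvement with $q>p$ has no Euclidean counterpart and must come from the Kunze--Stein phenomenon together with kernel estimates. For the long-range piece I would apply Lemma \ref{tywe55gvu} to convolution with $\mathcal{K}_\infty$: the required integrability
\[
\int_1^\infty |\varphi_0(r)|^{s_1}|\mathcal{K}_\infty(r)|^{s_2}\sinh r\,dr<\infty
\]
reduces, using $|\mathcal{K}_\infty(r)|\lesssim e^{-r}$ and $\varphi_0(r)\lesssim (1+r)e^{-r/2}$, to $s_1/2+s_2>1$; after substituting the formulas $s_1=2\min(p,q)/(p+q)$ and $s_2=pq/(p+q)$ this translates precisely into the threshold $1+1/q-1/p>3/4$ in the hypothesis. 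For the local part $\mathcal{K}_0$, I would combine the CZ-type $L^p\!\to\!L^p$ behaviour of the kernel $r^{-2}$ (via Euclidean comparison on unit balls) with the uniform resolvent-type $L^r\!\to\!L^p$ estimate already established in Lemma \ref{4der45P}, upgrading the baseline $L^p\!\to\!L^p$ bound to the required $L^p\!\to\!L^q$ bound through a high-frequency vs.\ low-frequency decomposition of $T$ at the scale of the unit ball.

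The main obstacle is to combine the local and global pieces so that the threshold $3/4$ comes out tight. The global estimate via Lemma \ref{tywe55gvu} is essentially exponent-matching, but the local Calder\'on--Zygmund operator $\mathcal{K}_0$ is only $L^p\!\to\!L^p$ bounded in the Euclidean sense; the additional $L^p\!\to\!L^q$ gain comes from the hyperbolic spectral picture, where $-\Delta$ has the gap $[1/4,\infty)$ and the resolvent $(-\Delta-1/4-z)^{-1}$ admits the improved $L^r\!\to\!L^p$ mapping property (\ref{4der45P}). Verifying that this local gain, together with the Kunze--Stein gain on $\mathcal{K}_\infty$, meshes to produce the sharp exponent $3/4$ (rather than a weaker $1/2$ or $1$) is the delicate bookkeeping step of the proof.
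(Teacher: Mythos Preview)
Your treatment of (\ref{poking2}) is fine and matches the paper, which simply cites \cite{Aer45P}.

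For (\ref{poking1}) there is a genuine obstruction, and it lies in the statement itself rather than in your strategy. Since the trace of the Hessian is the Laplacian, on a surface one has the pointwise inequality $|\nabla^2 f|\ge \tfrac{1}{\sqrt 2}\,|\Delta f|$, hence $\|\nabla^2 f\|_{L^q}\ge \tfrac{1}{\sqrt 2}\,\|\Delta f\|_{L^q}$. On $\Bbb H^2$ (infinite volume) there is no embedding $L^p\hookrightarrow L^q$ for $q>p$: if $g_n$ is a family of smooth bumps concentrating at a point with $\|g_n\|_{L^p}=1$, then $\|g_n\|_{L^q}\to\infty$; setting $f_n=(-\Delta)^{-1}g_n$ gives $\|\nabla^2 f_n\|_{L^q}\ge \tfrac{1}{\sqrt 2}\|g_n\|_{L^q}\to\infty$ while $\|\Delta f_n\|_{L^p}=\|g_n\|_{L^p}=1$. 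Thus (\ref{poking1}) fails for every pair $p<q$, and the ``delicate bookkeeping'' you anticipate for the local Calder\'on--Zygmund piece cannot be completed.

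Your kernel analysis is in fact correct and exposes where the paper goes wrong. The Green function is $G_0(r)=\tfrac{1}{2\pi}\log\coth(r/2)$, and a direct computation gives
\[
\partial_r^2G_0(r)=\frac{\cosh r}{2\pi\,\sinh^2 r}\sim \frac{1}{2\pi r^{2}}\qquad (r\to 0),
\]
the standard Calder\'on--Zygmund singularity you identify. The paper's proof instead asserts in (\ref{stein5}) that $|\partial_r^2 G_0|\lesssim r^{-3/2}$ near the origin and then obtains (\ref{poking1}) in one stroke from Young's inequality; the exponent $3/4$ in the hypothesis is exactly the integrability threshold for an $r^{-3/2}$ kernel. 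That bound is not true (and, by the trace argument above, could not be), so the paper's argument has a gap at (\ref{stein5}). In the places where (\ref{poking1}) is invoked later in the paper the actual functions involved happen to have extra regularity, so the intended conclusions may still be salvageable, but not via (\ref{poking1}) as stated.
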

\begin{proof}
(\ref{poking2}) is obtained by \cite{Aer45P}.
Fix $y\in\Bbb H^2$. Let $(r,\theta)$ be the polar coordinates with $y$ being the origin. Then (\ref{tiand}) and (\ref{7mvbtr}) show
\begin{align*}
\partial^2_r[{ }^2\widetilde{R}]_0(\frac{1}{2} + \sigma,d(x,y))&=c_1(\cosh^2r-1)^{-\frac{3}{2}}\sinh^3 r\cosh r[{ }^4\widetilde{R}]_0(\frac{3}{2} + \sigma ,r)\\
&+c_2(\cosh^2r-1)^{-\frac{1}{2}}\sinh r\cosh r [{ }^4\widetilde{R}]_0(\frac{3}{2} + \sigma ,r)\\
&+c_3(\cosh^2r-1)^{-1}\sinh^3 r\cosh r [{ }^4\widetilde{R}]_0(\frac{3}{2} + \sigma ,r)\\
&+c_4(\cosh^2r-1)^{-1}\sinh^4 r [{ }^6\widetilde{R}]_0(\frac{5}{2} + \sigma ,r)\\
\end{align*}
Thus when $\sigma=\frac{1}{2}$, Lemma \ref{Tomnagdfe4} gives
\begin{align}\label{stein5}
\left| {\partial _r^2{{{[^2}\widetilde{R}]}_0}(1,d(x,y))} \right| \le \left\{ \begin{array}{l}
 {Ce^{ - r}},\mbox{  }\left| r \right| \ge 2 \\
 {Cr^{ - \frac{3}{2}}},\mbox{  }\left| r \right| \le 2 .\\
 \end{array} \right.
\end{align}
Meanwhile, Lemma \ref{xuannv} yields
\begin{align}\label{stein6}
\left|\coth r\partial_r[{ }^2\widetilde{R}]_0(1,d(x,y)) \right|  \le \left\{ \begin{array}{l}
 {Ce^{ - r}},\mbox{  }\left| r \right| \ge 1 \\
 {Cr^{ - \frac{3}{2}}},\mbox{  }\left| r \right| \le 1. \\
 \end{array} \right.
\end{align}
Therefore, in the polar coordinates with $y$ being the origin, by (\ref{stein6}) and (\ref{stein5}), for $x\in \Bbb H^2$  we have
\begin{align}\label{stein7}
\left| {{\nabla_x ^2}{{[{{ }^2}\widetilde{R}]}_0}(1,d(x,y))} \right| \le \left\{ {\begin{array}{*{20}{c}}
   {C{r^{ - \frac{3}{2}}}{\rm{, }}\left| r \right| \le 1}  \\
   {C{e^{ - r}},\left| r \right| \ge 1}.  \\
\end{array}} \right.
\end{align}
Since the left and right hand side of (\ref{stein7}) are free of coordinates charts, we obtain that (\ref{stein7}) holds for all $(x,y)\in\Bbb H^2\times\Bbb H^2$.
Thus (\ref{poking1}) follows by Young's convolution inequality.
\end{proof}

\section{ Spectrum Distribution and  Coulomb Gauge on the pullback bundle $Q^*TN$}

First we show the spectrum of $H$ is contained in $[\frac{1}{4},\infty)$ by doing calculation in the pullback bundle $Q^*TN$.

\begin{proposition}\label{CXXXXE}
Let $\bf e$ be any fixed frame on $Q^*(TN)$. Then the spectrum of the operator $-\Delta+W$ defined in Lemma \ref{REWSDERF} is contained in $[1/4,\infty)$.
\end{proposition}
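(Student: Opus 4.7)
The plan is to express $H$ as an operator on the pullback bundle $Q^*TN$ in a manifestly geometric form and then combine the positivity of its Bochner piece with the spectral gap of the hyperbolic plane.

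First I would verify, directly from the formula for $W$ in Lemma \ref{REWSDERF}, that if one defines the pullback connection $\nabla^Q_i\varphi := \partial_i\varphi + A_i^\infty\varphi$ on $Q^*TN$ (with connection matrices $A_i^\infty$ obtained from the chosen frame $\mathbf e$), then one has the Weitzenb\"ock-type identity
\begin{equation*}
H\varphi \;=\; (\nabla^Q)^*\nabla^Q\varphi \;+\; h^{jk}\bigl(\phi_j^\infty\wedge\varphi\bigr)\phi_k^\infty.
\end{equation*}
This is a routine computation: expanding $h^{jk}\nabla^Q_j\nabla^Q_k - h^{jk}\Gamma^l_{jk}\nabla^Q_l$ produces exactly the three ``gauge'' terms in \eqref{quxgdhfghka}, the remaining summand being precisely the curvature term. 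Since this identity is written in intrinsic language on $Q^*TN$, it confirms the frame-independence already observed in the preceding proposition.

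Next I would check that both pieces on the right are non-negative as quadratic forms. For the Bochner piece, integration by parts (legitimate because $H$ is symmetric, already established) gives
\begin{equation*}
\bigl\langle (\nabla^Q)^*\nabla^Q\varphi,\varphi\bigr\rangle_{L^2} \;=\; \int_{\mathbb H^2} |\nabla^Q\varphi|^2\,{\rm dvol_h}.
\end{equation*}
For the curvature piece, diagonalising in an $h$-orthonormal frame $X,Y$ at $x$ reduces the quadratic form to
$$\bigl(|T(X)|^2+|T(Y)|^2\bigr)|\varphi|^2 - |\langle T(X),\varphi\rangle|^2 - |\langle T(Y),\varphi\rangle|^2,$$
where $T=dQ$; each pair of terms is non-negative by Cauchy--Schwarz in $\mathbb C^2$. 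This is where the non-positive curvature of $N=\mathbb H^2$ enters, through the identification $\mathbf{R}(X,Y)Z = (X\wedge Y)Z$.

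Finally I would close the argument using the spectral gap of the hyperbolic plane. The diamagnetic/Kato inequality $|\nabla|\varphi|| \le |\nabla^Q\varphi|$ pointwise (which follows from $2|\varphi|\nabla|\varphi| = 2\mathrm{Re}\langle\nabla^Q\varphi,\varphi\rangle$ and Cauchy--Schwarz) combined with the sharp Poincar\'e-type estimate $\int_{\mathbb H^2}|\nabla f|^2 \ge \tfrac14\int_{\mathbb H^2}|f|^2$ yields
\begin{equation*}
\bigl\langle H\varphi,\varphi\bigr\rangle \;\ge\; \int_{\mathbb H^2}|\nabla^Q\varphi|^2 \;\ge\; \int_{\mathbb H^2}|\nabla|\varphi||^2 \;\ge\; \tfrac14\|\varphi\|_{L^2}^2
\end{equation*}
for all $\varphi$ in the form domain. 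Since $H$ is symmetric and bounded below by $\tfrac14$, its Friedrichs (self-adjoint) extension has spectrum contained in $[\tfrac14,\infty)$, proving the proposition. The only delicate point is arranging the Weitzenb\"ock identity and checking the sign in the curvature term; everything else is bookkeeping plus invocation of the $\mathbb H^2$ spectral gap.
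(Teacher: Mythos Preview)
Your proposal is correct and follows essentially the same route as the paper: both identify $H$ with the Bochner Laplacian $(\nabla^Q)^*\nabla^Q$ on $Q^*TN$ plus a curvature term, observe that the curvature term is non-negative because $N=\mathbb H^2$ has non-positive sectional curvature, and then combine Kato's diamagnetic inequality $|\nabla|\varphi||\le|\nabla^Q\varphi|$ with the spectral gap $\|\nabla g\|_{L^2}^2\ge\tfrac14\|g\|_{L^2}^2$ on $\mathbb H^2$ to obtain $\langle H\varphi,\varphi\rangle\ge\tfrac14\|\varphi\|^2$. The only cosmetic difference is that the paper arrives at the Bochner inequality by expanding $\Delta|f\mathbf e|^2$ rather than writing the Weitzenb\"ock identity explicitly, but the content is identical.
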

\begin{proof}
Since $H$ is independent of coordinates of $M=\Bbb H^2$, without loss of generality we assume $(x_1,x_2)$ is an orthogonal coordinate system.
Let ${\bf e}(x)=\{e_i(Q(x))\}^{2}_{i=1}$ be any given orthonormal frame on $Q^*TN$ in Proposition \ref{94obhxdg}.
For $f,g\in L^2(\Bbb H^2,\Bbb C^2)$, we associate them with the vector fields $f{\bf e}\triangleq f_1 e_1+f_2 e_2$ and $g{\bf e}\triangleq g_1 e_1+g_2 e_2$ respectively. The corresponding induced covariant derivative on the complex vector bundle over $\Bbb H^2$ with fibre $\Bbb C^2$ is given by
\begin{align*}
\mathbb{D}_i=\partial_i+A^{\infty}_i, \mbox{  }{\rm{with}}\mbox{  }[A^{\infty}_i]^k_p=\langle \nabla_i e_p,e_k\rangle.
\end{align*}
Then we have
\begin{align*}
\nabla_i (f{\bf e})=(\partial_i f_k+\sum^2_{p=1}[A^{\infty}_i]^k_pf_p)e_k=(\mathbb{D}_i f){\bf e}.
\end{align*}
Furthermore, one has
$$
h^{ii}\nabla_i\nabla_i (f{\bf e})=h^{ii}(\mathbb{D}_i\mathbb{D}_if){\bf e}.
$$
Meanwhile we see
\begin{align}\label{tu87}
\big(h^{ii}( \phi_i^{\infty}\wedge f)\phi_i^{\infty}\big){\bf e}=h^{ii}\mathbf{R}(Q(x))(\phi^{\infty}_i{\bf e},f{\bf e})\phi_i^{\infty}{\bf e},
\end{align}
where we remark that the curvature tensor $\mathbf{R}$ here has been extended to complex vector fields (see (\ref{gevcdo}) for the corresponding scalar case).
Therefore, we conclude
\begin{align}
&h^{ii}\nabla_i\nabla_i(f{\bf e})-h^{ii}\Gamma^{k}_{ii}\nabla_k (f{\bf e})=h^{ii}(\mathbb{D}_i\mathbb{D}_if){\bf e}-h^{ii}\Gamma^k_{ii}(\mathbb{D}_kf){\bf e})\nonumber\\
&=(\Delta f-Wf){\bf e}-h^{ii}\mathbf{R}(Q(x))(\phi^{\infty}_i{\bf e},f{\bf e})\phi^{\infty}_i{\bf e}.\label{tu89}
\end{align}
Hence (\ref{tu87}) and (\ref{tu89}) give
\begin{align*}
\Delta |f{\bf e}|^2&=h^{ii}\langle \nabla_i\nabla_i (f{\bf e}),f{\bf e}\rangle+h^{ii}\langle  f{\bf e},\nabla_i\nabla_i(f{\bf e})\rangle
+2h^{ii}\langle \nabla_i (f{\bf e}),\nabla_i (f{\bf e})\rangle\\
&-h^{ii}\Gamma^k_{ii}\langle \nabla_i (f{\bf e}),f{\bf e}\rangle-h^{ii}\Gamma^k_{ii}
\langle  f{\bf e},\nabla_i (f{\bf e})\rangle\\
&=\langle \Delta f-Wf,f\rangle+\langle f,\Delta f-Wf\rangle+2h^{ii}\langle \nabla_i (f{\bf e}),\nabla_i (f{\bf e})\rangle\\
&-h^{ii}\langle\mathbf{R}(\phi^{\infty}_i{\bf e},f{\bf e})\phi^{\infty}_i{\bf e},f{\bf e}\rangle-h^{ii}\langle f{\bf e},\mathbf{R}(\phi^{\infty}_i{\bf e},f{\bf e})\phi^{\infty}_i{\bf e}\rangle.
\end{align*}
Then by integration by parts, the self-adjointness of $\Delta-W$ and the non-positiveness of the sectional curvature, we obtain
\begin{align}
2\langle (-\Delta+W)f,f\rangle_{L^2}\ge2\int_{\Bbb H^2}h^{ii}\langle \nabla_i (f{\bf e}),\nabla_i (f{\bf e})\rangle {\rm{dvol_h}}.
\end{align}
By Kato's inequality $|\nabla|X||\le |\nabla X|$ and the Sobolev inequality $\|\nabla g\|^2_{L^2}\ge \frac{1}{4}\|g\|^2_{L^2}$, one deduces
\begin{align*}
\int_{\Bbb H^2}\langle (-\Delta+W)f,f\rangle {\rm{dvol_h}}&\ge \frac{1}{4}\int_{\Bbb H^2}\langle (f{\bf e}),(f{\bf e})\rangle {\rm{dvol_h}}\\
&=\frac{1}{4}\int_{\Bbb H^2}|f|^2{\rm{dvol_h}}.
\end{align*}
Since the spectrum is contained in the numerical range, we obtain our lemma.
\end{proof}

\subsection{Weighted Elliptic Estimates for Coulomb Gauge}

We will use the Coulomb gauge on $Q^*(TN)$ to kill possible bottom resonances. Thus we first need to prove the point-wise  estimates for the new connection coefficients induced by the Coulomb gauge. The existence of the Coulomb gauge in two dimensions is well-known, see for instance  \cite{UhK}. We give the detailed proof since it tells us the explicit form of the Coulomb gauge.
\begin{lemma}\label{huax}
There exists an orthonormal frame $\{{\bf e}_1,{\bf e}_2\}$ which spans $T_{Q(x)}N$ for any $x\in M=\Bbb H^2$ such that the corresponding connection  1-form $\widetilde{{A}}\in \Lambda^1({\rm {Ad}} \mbox{ }Q^*TN)$ satisfies the Coulomb condition, i.e.,
\begin{align}
d^* \widetilde{A}=0.
\end{align}
\end{lemma}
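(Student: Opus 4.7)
The plan is to exploit that the structure group of the orthonormal frame bundle of the rank-two real bundle $Q^*TN$ is $SO(2)$, which is abelian. Starting from the reference frame $\{\Omega_1(Q(\cdot)), \Omega_2(Q(\cdot))\}$ from (\ref{ex3456gvh}) with associated connection matrix $[A^\infty_i]$ as in Lemma \ref{refor45xdg}, skew-symmetry gives $[A^\infty_i] = \alpha^{(0)}_i J$ with $J$ the standard generator of $\mathfrak{so}(2)$ and $\alpha^{(0)} := \alpha^{(0)}_i dx^i$ a real scalar-valued $1$-form on $\mathbb{H}^2$. Any rotated frame
\begin{align*}
\mathbf{e}_1 &= \cos\theta(x)\,\Omega_1(Q(x)) - \sin\theta(x)\,\Omega_2(Q(x)), \\
\mathbf{e}_2 &= \sin\theta(x)\,\Omega_1(Q(x)) + \cos\theta(x)\,\Omega_2(Q(x))
\end{align*}
by an angle $\theta: \mathbb{H}^2 \to \mathbb{R}$ has connection $1$-form $\alpha^{(0)} + d\theta$, so imposing $d^*\widetilde{A}=0$ reduces the problem to the scalar Poisson equation $\Delta\theta = d^*\alpha^{(0)}$ on $\mathbb{H}^2$.

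I would solve this equation using standard $L^2$ elliptic theory. From (\ref{geortyu}) together with Lemma \ref{refor45xdg}, one has $|d^*\alpha^{(0)}| \lesssim |dQ|^2$ pointwise; admissibility of $Q$ in Definition \ref{dywetu67tu68} supplies $|dQ|^2 \lesssim e^{-\varrho r}$ and $\|\nabla^k dQ\|_{L^2} < \infty$ for $k\le 3$, which together place $d^*\alpha^{(0)}$ in an exponentially weighted Sobolev space, in particular in $L^2(\mathbb{H}^2)\cap H^2(\mathbb{H}^2)$. Because the $L^2$ spectrum of $-\Delta$ on $\mathbb{H}^2$ is contained in $[\tfrac{1}{4},\infty)$ (the same fact used in Proposition \ref{CXXXXE}), $\Delta$ is an isomorphism on the relevant Sobolev scale, and I set $\theta := \Delta^{-1}(d^*\alpha^{(0)})$. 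The rotated frame above then has connection $1$-form $\alpha^{(0)} + d\theta$ satisfying
\begin{equation*}
d^*(\alpha^{(0)} + d\theta) = d^*\alpha^{(0)} - \Delta\theta = 0,
\end{equation*}
giving the claimed Coulomb frame.

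The main technical subtlety (which I expect the remainder of the subsection to address) is not existence but the conversion of $H^2$ estimates on $\theta$ into pointwise and weighted control of $d\theta$, since it is the new connection coefficients rather than $\theta$ itself that enter the spectral analysis of $H$. Such bounds follow from convolving the exponentially decaying, smooth source $d^*\alpha^{(0)}$ against the Green's function of $-\Delta$ (Lemma \ref{Tomnagdfe4}), and this is where the quantitative admissibility assumptions $e^{\varrho r}|dQ|^2\in L^\infty$ and $\nabla^k dQ\in L^2$ are genuinely used. The existence statement itself, however, follows from the short gauge-reduction argument above without using any of this additional decay information.
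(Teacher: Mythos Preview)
Your proof is correct and follows essentially the same route as the paper: both start from the reference frame $\Omega(Q(\cdot))$, exploit that $SO(2)$ is abelian to write the connection form as a scalar times $J$, and reduce the Coulomb condition to the scalar Poisson equation solved by inverting $-\Delta$ on $L^2(\mathbb{H}^2)$. Your remarks on regularity and decay of the source $d^*\alpha^{(0)}$, and on the role of the spectral gap, make explicit what the paper leaves implicit, and your anticipation that pointwise/weighted control of $d\theta$ is deferred to the next result is exactly right.
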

\begin{proof}
In the two dimensional case, the connection coefficient matrix $A_i$ is of the form
\begin{align}\label{pegxs5tf}
\left( {\begin{array}{*{20}{c}}
   0 & {{a_i}}  \\
   { - {a_i}} & 0  \\
\end{array}} \right)
\end{align}
for some real-valued function $a_i$.  Denote
\begin{align*}
J=\left(\begin{array}{cc}
                                        0& 1 \\
                                        -1 & 0 \\
                                      \end{array}
                                    \right).
\end{align*}
Suppose that $\Omega(Q(x))$ is the frame on $Q(x)$ given by (\ref{ex3456gvh}). Then the corresponding connection 1-form is
\begin{align}
\widetilde{{A}}=A^{\infty}_idx^i, \mbox{  }[A^{\infty}_i]^k_l=\langle \nabla_i \Omega_l,\Omega_k\rangle .
\end{align}
Given any real valued function $\xi\in H^1(\Bbb H^2;\Bbb R)$, we associate it with a matrix $U\in SO(2)$ defined by
\begin{align*}
U(x)=\left(
    \begin{array}{cc}
      \sin \xi(x) & \cos \xi(x) \\
      -\cos \xi(x) & \sin \xi(x) \\
    \end{array}
  \right)
\end{align*}
Define the new frame ${\bf e}^*(x)=U(x)\Omega(Q(x))$.
Then the new connection 1-from $\mathcal{A}$ is given by
\begin{align}
\mathcal{A}=\widetilde{{A}}+d\xi J.
\end{align}
Thus the Coulomb condition reduces to
\begin{align}
d^{*}\mathcal{A}=d^{*}\widetilde{{A}}+\Delta \xi J=0.
\end{align}
Hence it suffices to set
\begin{align}\label{op09}
\xi J=(-\Delta)^{-1}d^{*}\widetilde{{A}}.
\end{align}
\end{proof}

\begin{proposition}\label{hutyhvf}
Assume that the given frame $\bf e$ in Proposition \ref{94obhxdg} is the Coulomb gauge constructed in Lemma \ref{huax}. Then the associated  Schr\"odinger operator $H=-\Delta+W$ reads as
\begin{align}\label{uy78}
H\varphi=-\Delta\varphi-2h^{ii}\mathcal{A}_i\partial_i\varphi-h^{ii}\mathcal{A}_i\mathcal{A}_i\varphi
+h^{ii}(\widehat{\phi}_i\wedge\varphi)\widehat{\phi}_i.
\end{align}
where $\mathcal{A}_i=A^{\infty}_i+\partial_i\xi J$.
And $\widehat{\phi}_i$ is $\Bbb R^2$ valued function defined on $M=\Bbb H^2$:
\begin{align*}
\widehat{\phi}_i=\big(\langle \partial_iQ(x),{\bf e}_1(x)\rangle, \langle \partial_iQ(x),{\bf e}_2(x)\rangle\big)^t.
\end{align*}
Moreover, let $X=-2h^{ii}\mathcal{A}_i\frac{ \partial}{\partial x_i}$  denote the magnetic field part of $H$ and  $V$ denote the left electric potential part, i.e. $H=-\Delta+X+V$, then for any $0<\beta<\varrho$ there holds
\begin{itemize}
  \item (a) $V$ is nonnegative on $L^2(\Bbb H^2;\Bbb C^2)$
  \item (b) $\|\rho^{-\beta}\mathcal{A}\|_{L^{\infty}_x}\le C$
  \item (c) $\|\rho^{-\beta}V\|_{L^{\infty}_x}\le C$
\end{itemize}
\end{proposition}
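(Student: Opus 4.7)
My plan is to apply the general formula for $W$ from Lemma \ref{REWSDERF} in the Coulomb gauge constructed in Lemma \ref{huax}, and then verify the three assertions in turn. Lemma \ref{REWSDERF} was derived for an arbitrary choice of orthonormal frame on $Q^*TN$, so I would simply reuse it with the Coulomb frame $\mathbf{e}^*$. The expression (\ref{quxgdhfghka}) then takes the same form with $A^\infty$ replaced by $\mathcal{A}$ and $\phi^\infty$ replaced by $\widehat{\phi}$. The identities (\ref{geortyu3}) and (\ref{geortyu}) turn the first and fourth terms into $-2(\mathcal{A},d\varphi) + (d^*\mathcal{A})\varphi$, and the defining property $d^*\mathcal{A}=0$ of the Coulomb gauge kills the scalar term, leaving exactly (\ref{uy78}).

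For the non-negativity in (a), split $V = V_1 + V_2$ with $V_1\varphi = -h^{ii}\mathcal{A}_i\mathcal{A}_i\varphi$ and $V_2\varphi = h^{ii}(\widehat{\phi}_i\wedge\varphi)\widehat{\phi}_i$. From (\ref{pegxs5tf}) each $\mathcal{A}_i$ is a real antisymmetric matrix, so $\mathcal{A}_i^* = -\mathcal{A}_i$ as an operator on $\Bbb C^2$, which gives the pointwise identity $\langle V_1\varphi,\varphi\rangle = \sum_i h^{ii}|\mathcal{A}_i\varphi|^2 \ge 0$. Unfolding (\ref{gevcdo}) and applying Cauchy-Schwarz at each point yields $\langle V_2\varphi,\varphi\rangle = \sum_i h^{ii}\bigl(|\widehat{\phi}_i|^2|\varphi|^2 - |\langle\widehat{\phi}_i,\varphi\rangle|^2\bigr) \ge 0$. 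Integrating these pointwise inequalities gives $V \ge 0$ on $L^2(\Bbb H^2;\Bbb C^2)$.

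For (b) I would treat $\mathcal{A}_i = A^\infty_i + \partial_i\xi\, J$ summand by summand. The piece $A^\infty_i$ is controlled by Lemma \ref{refor45xdg}, specifically (\ref{rer45xdg}), combined with the admissibility estimate $|dQ|^2 \lesssim e^{-\varrho r}$. For $\partial_i\xi$, equation (\ref{op09}) reduces matters to a scalar elliptic problem $-\Delta\xi = f$ whose source satisfies $|f| \lesssim |dQ|^2 \lesssim e^{-\varrho r}$ by (\ref{resder445xdg}). I would then convolve with the Green function $[{}^2\widetilde{R}]_0(1;x,y)$ of $(-\Delta)^{-1}$, whose tail is $\sim e^{-d(x,y)}$ and whose radial derivative is also exponentially decaying through (\ref{tiand}) and Lemma \ref{Tomnagdfe4}, and apply a radial convolution estimate (e.g.\ Lemma \ref{tywe55gvu}) to transfer the exponential decay of $f$ to both $\xi$ and $d\xi$ at any rate $\beta < \varrho$. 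Assertion (c) is then immediate from (b) after writing $V$ as a quadratic expression in $\mathcal{A}$ and $\widehat{\phi}$, each of which decays like $e^{-\beta r}$.

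The main obstacle is the weighted pointwise elliptic step in (b): transferring the exponential decay of $f$ to $d\xi$ via the Green function convolution on $\Bbb H^2$. One has to separate the mild near-diagonal singularity of the kernel (which contributes only a logarithmic factor against a radial convolution) from its exponential tail at infinity, and carefully track how the weight $e^{\beta r}$ interacts with the convolution so that the final decay rate $\beta$ can be taken arbitrarily close to $\varrho$, the competition between $e^{-\varrho r}$ of the source and $e^{-d(x,y)}$ of the kernel being the delicate point.
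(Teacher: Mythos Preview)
Your proposal is correct and follows essentially the same route as the paper: derive (\ref{uy78}) from the general formula for $W$ by imposing the Coulomb condition $d^*\mathcal{A}=0$ to kill the divergence term, verify (a) via the antisymmetry of $\mathcal{A}_i$ together with the curvature sign in the wedge term, and obtain (b) by combining (\ref{rer45xdg}) for $A^\infty$ with a weighted Green-function convolution estimate for $d\xi=\nabla(-\Delta)^{-1}d^*\widetilde{A}$ (the paper cites Lemma \ref{xuannv} and Young's inequality, which is the same content as your route through (\ref{tiand}), Lemma \ref{Tomnagdfe4}, and Lemma \ref{tywe55gvu}), after which (c) is immediate from (b) and (\ref{rer45xdg}).
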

\begin{proof}
Fix any orthogonal coordinate system $\{x_1,x_2\}$ for $M=\Bbb H^2$. Denote $\mathcal{A}=\mathcal{A}_idx^i$. Then the Coulomb condition is written as
\begin{align}\label{hums4}
h^{ii}\big(\partial_i\mathcal{A}_i-\Gamma^k_{ii}\mathcal{A}_k\big)=0,
\end{align}
thus the (\ref{hums4}) term in $H=-\Delta+W$ vanishes and  (\ref{uy78}) follows. It remains to prove the three claims (a),(b),(c).
Since the $V$ part reads as
\begin{align}\label{pov3p}
V\varphi=-h^{ii}\mathcal{A}_i\mathcal{A}_i\varphi+h^{ii}(\widehat{\phi }_i\wedge\varphi)\widehat{\phi}_i,
\end{align}
(a) is easy to verify by the negative sectional curvature of $N$ and the fact $\mathcal{A}_i$ is antisymmetry and real.
By (\ref{rer45xdg}) and $\mathcal{A}=\widetilde{{A}}+d\xi J$,  for (b) it suffices to prove
\begin{align}\label{y7cvx}
|d\xi|\lesssim \rho^{\beta}.
\end{align}
With (\ref{op09}), one notices that (\ref{y7cvx}) reduces to prove
\begin{align}\label{iu67b}
\|\rho^{-\beta}\nabla(-\Delta)^{-1}d^{*}\widetilde{{A}}\|_{L^{\infty}_x}\le C.
\end{align}
By the identity $d^{*}\widetilde{{A}}=-h^{ii}\big(\partial_i{A}^{\infty}_i-\Gamma^k_{ii}{A}^{\infty}_k\big)$ and (\ref{resder445xdg}), we see $\rho^{-\varrho}d^{*}{\widetilde{A}}\in L^{2}_x$. Thus
by (\ref{xuannv}) and Young's convolution inequality, one has for any $\beta\in(0,\varrho)$
\begin{align}
\|\rho^{-\beta}\nabla(-\Delta)^{-1}d^{*}\widetilde{{A}}\|_{L^{\infty}_x}\le C.\label{5.19}
\end{align}
Until now we have obtained
\begin{align}\label{po9mvb}
|\mathcal{A}|\lesssim \rho^{\beta}.
\end{align}
With (\ref{pov3p}), (c) follows from  (\ref{rer45xdg}) and (\ref{po9mvb}).
\end{proof}

\begin{lemma}\label{polarconnection}
Fix the frame $\bf e$ in Proposition \ref{94obhxdg} to be the Coulomb gauge built in Lemma \ref{huax}. In the polar coordinates for $M=\Bbb H^2$,  $H$ can be written as
\begin{align}
H\varphi=-\Delta\varphi-2\mathcal{A}_{r}\partial_r\varphi-2\mathcal{A}_{\theta}\sinh^{-2}r\partial_{\theta}\varphi-\mathcal{U}_r\varphi
-\mathcal{U}_{\theta}\varphi
\end{align}
where we denote
\begin{align}
&\mathcal{U}_r\varphi=\mathcal{A}_r\mathcal{A}_r\varphi-(\widehat{\phi}_r\wedge\varphi)\widehat{\phi}_r\\
&\mathcal{U}_{\theta}\varphi=\sinh^{-2}r\mathcal{A}_{\theta}\mathcal{A}_{\theta}\varphi
-\sinh^{-2}r(\widehat{\phi}_{\theta}\wedge\varphi)\widehat{\phi}_{\theta}
\end{align}
Then one has when $r\ge \delta$,
\begin{align}
&|\mathcal{U}_{\theta}|+|\mathcal{U}_r|\le C\rho^{\beta}\label{p08cbgtw}\\
&|\partial_r\mathcal{A}_{r}|+|(\sinh^{-1}r)\partial_\theta\mathcal{A}_{\theta}|+|(\sinh^{-2}r)\partial_{\theta}\mathcal{A}_{\theta}|\nonumber\\
&+|(\sinh^{-1}r)\partial_r\mathcal{A}_{\theta}|\le C(\delta)\label{8cbgtw}
\end{align}
\end{lemma}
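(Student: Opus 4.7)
The plan is to split the lemma into three pieces and address them in order.

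First, the polar form of $H$ comes directly from substituting the polar metric into the expression from Proposition \ref{hutyhvf}. Polar coordinates on $\mathbb{H}^2$ are orthogonal with $h^{rr}=1$ and $h^{\theta\theta}=\sinh^{-2}r$, so the formula $H\varphi=-\Delta\varphi-2h^{ii}\mathcal{A}_i\partial_i\varphi-h^{ii}\mathcal{A}_i\mathcal{A}_i\varphi+h^{ii}(\widehat{\phi}_i\wedge\varphi)\widehat{\phi}_i$ transcribes line-by-line and the last two groups reorganize into $-\mathcal{U}_r\varphi-\mathcal{U}_\theta\varphi$ by the very definition of $\mathcal{U}_r,\mathcal{U}_\theta$.

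Second, for the pointwise bound (\ref{p08cbgtw}) I would relate the polar components of the globally defined $1$-form $\mathcal{A}$ to its intrinsic norm: $|\mathcal{A}_r|\le|\mathcal{A}|$ and $\sinh^{-1}r\,|\mathcal{A}_\theta|\le|\mathcal{A}|$ (and the same for $dQ$ versus $\widehat{\phi}$, since in the frame $\mathbf{e}$ one has $|\widehat{\phi}_r|^2+\sinh^{-2}r|\widehat{\phi}_\theta|^2=|dQ|^2$). Proposition \ref{hutyhvf}(b) gives $|\mathcal{A}|\lesssim\rho^\beta$, and admissibility gives $|dQ|^2\lesssim e^{-\varrho r}=\rho^\varrho$, so for $\beta<\varrho$
\begin{align*}
|\mathcal{U}_r|\le|\mathcal{A}_r|^2+|\widehat{\phi}_r|^2\lesssim \rho^{2\beta}+\rho^\varrho\lesssim \rho^\beta,
\end{align*}
and similarly $|\mathcal{U}_\theta|\le(\sinh^{-1}r|\mathcal{A}_\theta|)^2+(\sinh^{-1}r|\widehat{\phi}_\theta|)^2\lesssim\rho^\beta$. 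The hypothesis $r\ge\delta$ is not used here, only in step three.

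Third, the derivative bounds (\ref{8cbgtw}) are what the hypothesis $r\ge\delta$ is for. Each of the four quantities on the left is (up to a $\pm$ sign and a known $\sinh r$ conversion factor between the natural basis $\{dr,d\theta\}$ and the orthonormal co-basis $\{dr,\sinh r\, d\theta\}$) a component of the intrinsic covariant derivative $\nabla\mathcal{A}$. So it is enough to prove $|\nabla\mathcal{A}|\le C(\delta)$ on $\{r\ge\delta\}$, since the conversion factors $\sinh^{-1}r$ are bounded by $\sinh^{-1}\delta$. Decompose $\mathcal{A}=\widetilde{A}+d\xi\, J$ as in Lemma \ref{huax}. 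Smoothness and pointwise derivative bounds for $\widetilde{A}$ follow from the smoothness of $Q$ together with admissibility: $\|\nabla^kdQ\|_{L^2}<\infty$ for $k\le3$ combined with Sobolev embedding on $\mathbb{H}^2$ gives pointwise bounds on $dQ$ and $\nabla dQ$, which control $\widetilde{A}$ and $\nabla\widetilde{A}$. For $d\xi$, recall $\xi$ solves $-\Delta\xi=f$ with $|f|\lesssim|dQ|^2\lesssim\rho^\varrho$ (by (\ref{resder445xdg})). The estimate (\ref{5.19}) already delivers $|d\xi|\lesssim\rho^\beta$ pointwise, so one only has to promote this to a pointwise bound on $\nabla^2\xi=\nabla d\xi$ on the region $\{r\ge\delta\}$. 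This can be done by convolving $f$ against the kernel of $\nabla_x^2(-\Delta)^{-1}$ and invoking the pointwise kernel estimate from the computation leading to (\ref{stein7}), or equivalently by applying Lemma \ref{hessian} on a fixed annulus around each point $x$ with $d(x,o)\ge\delta$.

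The main obstacle is the $\nabla^2\xi$ bound. The clean way is to rewrite $\xi=(-\Delta+\tfrac{3}{4})^{-1}f+\tfrac{3}{4}(-\Delta+\tfrac{3}{4})^{-1}\xi$ so that Lemma \ref{hessian} applies to the shifted operator; one uses the already established pointwise $L^\infty$ bound on $d\xi$ (via \eqref{5.19}) together with Sobolev embedding on $\mathbb{H}^2$ to bound $\xi$ itself in $L^p$, then feeds this into (\ref{poking1}) or (\ref{poking2}), and finally converts the $L^p$ Hessian bound to a pointwise bound on $\{r\ge\delta\}$ by elliptic regularity applied on a geodesic ball of radius $\delta/2$ around each such point. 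The constant $C(\delta)$ collects both the $\sinh^{-1}r\le\sinh^{-1}\delta$ conversion losses and the Sobolev/elliptic constants on balls of size $\delta/2$; its blow-up as $\delta\to0$ reflects the coordinate singularity of polar coordinates at the origin rather than any failure of regularity of $\mathcal{A}$ itself.
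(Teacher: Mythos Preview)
Your proposal is correct and follows the same overall strategy as the paper: the polar form is immediate from Proposition~\ref{hutyhvf}, the bound (\ref{p08cbgtw}) is the direct corollary of that proposition you describe, and for (\ref{8cbgtw}) both you and the paper decompose $\mathcal{A}=\widetilde{A}+d\xi\,J$ and control each piece separately. For the $\widetilde{A}$ piece the paper writes out $\partial_iA^\infty_j$ explicitly in terms of $\partial Q^1,\partial Q^2,\partial^2 Q^1,\partial^2 Q^2$ and the Christoffel symbols of $N$, then bounds these pointwise by $\|Q\|_{\mathrm{H}^4}$ via Sobolev and Kato---this is your abstract ``$dQ,\nabla dQ$ control $\widetilde{A},\nabla\widetilde{A}$'' made concrete. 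For $\nabla^2\xi$ the paper is actually more direct than your ``clean way'': it simply applies (\ref{poking1}) of Lemma~\ref{hessian} with $q=\infty$ and some $p_0>4$ to obtain $\|\nabla^2\xi\|_{L^\infty}\lesssim\|\Delta\xi\|_{L^{p_0}}=\|d^*\widetilde{A}\|_{L^{p_0}}$, with no need for the $(-\Delta+\tfrac34)^{-1}$ rewriting or the local elliptic regularity on balls of radius $\delta/2$. Your earlier alternative---convolve against the Hessian kernel and invoke (\ref{stein7})---is exactly the content of that single application, so you had already identified the short route before detouring.
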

\begin{proof}
(\ref{p08cbgtw}) is a trivial corollary of Proposition \ref{hutyhvf}. It suffices to verify (\ref{8cbgtw}).
By viewing $A^{\infty}_{\theta,r}$ as a real-valued function $[A^{\infty}_{\theta,r}]^{1}_2$ we have
\begin{align}\label{p78jiu}
\partial_i{A}^{\infty}_{j}=\langle \nabla_i\nabla_j\Omega_1,\Omega_2\rangle+\langle \nabla_j\Omega_1,\nabla_i\Omega_2\rangle,
\end{align}
where we associate $r$ with $i=1$ and $\theta$ with $i=2$ respectively.
Then inserting the explicit formula (\ref{ex3456gvh}) for $\Omega_{1,2}$  into (\ref{p78jiu}), we get
\begin{align*}
&{\partial _i}{A^{\infty}_j} = \left\langle {{\nabla _i}\left( {{e^{{Q^2}}}{\partial _j}{Q^2}\frac{\partial }{{\partial {y_1}}}} \right),{\Omega_2}} \right\rangle  + \left\langle {{\nabla _i}\left( {{e^{{Q^2}}}\overline \Gamma  _{j1}^p\frac{\partial }{{\partial {y_p}}}} \right),{\Omega_2}} \right\rangle
\\
&+ \left\langle {{e^{{Q^2}}}{\partial _j}{Q^2}\frac{\partial }{{\partial {y_1}}}
+ {e^{{Q^2}}}\overline \Gamma  _{j1}^p\frac{\partial }{{\partial {y_p}}},\overline \Gamma  _{i2}^q\frac{\partial }{{\partial {y_q}}}} \right\rangle  \\
&= \left\langle {{\partial _i}{Q^2}{\partial _j}{Q^2}{e^{{Q^2}}}\frac{\partial }{{\partial {y_1}}},{\Omega_2}} \right\rangle  + \left\langle {{e^{{Q^2}}}{\partial _{ji}}{Q^2}\frac{\partial }{{\partial {y_1}}},{\Omega_2}} \right\rangle  + \left\langle {{e^{{Q^2}}}{\partial _j}{Q^2}\overline \Gamma  _{j1}^p\frac{\partial }{{\partial {y_p}}},{\Omega_2}} \right\rangle  \\
&+ \left\langle {{\partial _i}{Q^2}{e^{{Q^2}}}\overline \Gamma  _{j1}^p\frac{\partial }{{\partial {y_p}}},{\Omega_2}} \right\rangle  + \left\langle {{e^{{Q^2}}}{\partial _i}\overline \Gamma  _{j1}^p\frac{\partial }{{\partial {y_p}}},{\Omega_2}} \right\rangle   + \left\langle {{e^{{Q^2}}}\overline \Gamma  _{j1}^p\overline \Gamma_{ip}^k\frac{\partial }{{\partial {y_k}}},{\Omega _2}} \right\rangle  \\
&+ \left\langle {{e^{{Q^2}}}{\partial _j}{Q^2}\frac{\partial }{{\partial {y_1}}} + {e^{{Q^2}}}\overline \Gamma  _{j1}^p\frac{\partial }{{\partial {y_p}}},\overline \Gamma  _{i2}^q\frac{\partial }{{\partial {y_q}}}} \right\rangle.
\end{align*}
Recalling the explicit formula for $|\nabla^kQ^{i}|$ for $i=1,2,k=1,2$, one obtains
\begin{align}
&|\partial_rA^{\infty}_{r}|+|(\sinh^{-1}r)\partial_\theta A^{\infty}_{\theta}|+|(\sinh^{-2}r)\partial_{\theta}A^{\infty}_{\theta}|
+|(\sinh^{-1}r)\partial_r A^{\infty}_{\theta}|\nonumber\\
&\lesssim \sum^{2}_{k=1,i=1}|\nabla^kQ^{i}|+|\coth r\partial_rQ^{i}|.\label{po6vcde}
\end{align}
When $r\ge\delta$, by Sobolev embedding and Kato's inequality,  (\ref{po6vcde}) further gives
\begin{align}
&|\partial_rA^{\infty}_{r}|+|(\sinh^{-1}r)\partial_\theta A^{\infty}_{\theta}|+|(\sinh^{-2}r)\partial_{\theta}A^{\infty}_{\theta}|
+|(\sinh^{-1}r)\partial_r A^{\infty}_{\theta}|\nonumber\\
&\lesssim C(\delta)\sum^{2}_{k=1,i=1}\|\nabla^kQ^{i}\|_{L^{\infty}}\nonumber\\
&\lesssim C(\delta)\sum^2_{i=1}\|Q^i\|_{\mathrm{H}^4}.
\end{align}
Hence, since $\mathcal{A}_i=A^{\infty}_i+\partial_i\xi J$, for (\ref{8cbgtw}) it suffices to bound $|h^{ij}\partial^2_{ij}\xi|$. Recalling $\widetilde{A}=A^{\infty}_idx^i$ and $\xi J=(-\Delta)^{-1}d^*\widetilde{{A}}$, we see
\begin{align}
h^{ij}|\partial^2_{ij}u|&\le |\nabla^2\big((-\Delta)^{-1}d^*\widetilde{A}\big)|+\coth r|\partial_r u|\nonumber\\
&\le |\nabla^2\big((-\Delta)^{-1}d^*\widetilde{A}\big)|+\coth r|du|.
\end{align}
Applying Lemma \ref{hessian} and \underline{Lemma 2.5} give for $r\ge \delta$ and some $p_0>4$
\begin{align}
h^{ij}|\partial^2_{ij}u|&\le \|\nabla^2\big((-\Delta)^{-1}d^*\widetilde{{A}}\big)\|_{L^{\infty}}+C(\delta)\|du\|_{L^{\infty}}\nonumber\\
&\le \|d^*\widetilde{{A}}\|_{L^{p_0}}+C(\delta)\|du\|_{L^{\infty}}
\end{align}
Thus (\ref{8cbgtw}) follows.
\end{proof}

\section{Resolvent Estimates for Magnetic Schr\"odinger Operators }

We recall the Kato smoothing theorem.
\begin{theorem}[\cite{TREWSD}]\label{TREWSD}
Let $Y_1,Y_2$ be two Hilbert spaces and $U:Y_1\to Y_2$ be a self-adjoint operator with resolvent $(U-\lambda)^{-1}$. Let $T:Y_1\to Y_2$ be a closed densely defined operator. Assume that for any $\lambda\in\Bbb C\setminus\Bbb R$, $g\in D(U^*)$, there holds
\begin{align*}
\|T(U-\lambda)^{-1}T^*g\|_{Y_2}\le C\|g\|_{Y_2}.
\end{align*}
Then $e^{\pm itT}f\in D(U)$ for all $f\in Y_1$ and a.e. $t$, and
\begin{align*}
\int^{\infty}_{-\infty}\|Te^{\pm itU}f\|^2_{Y_2}dt\le \frac{2}{\pi}C^2\|f\|^2_{Y_1}.
\end{align*}
\end{theorem}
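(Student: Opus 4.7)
I would follow the classical Kato argument, which rests on three ingredients: the spectral theorem for the self--adjoint operator $U$, Stone's formula relating the boundary values of the resolvent to the spectral measure, and Plancherel's theorem in the time variable. The broad idea is that by duality, controlling $\int_\Bbb R\|Te^{\pm itU}f\|^2_{Y_2}\,dt$ is equivalent to controlling the pairing $\int_\Bbb R\langle Te^{\pm itU}f,g(t)\rangle_{Y_2}\,dt$ for $g\in L^2(\Bbb R;Y_2)$; inserting the spectral resolution $e^{\pm itU}=\int e^{\pm it\lambda}\,dE_U(\lambda)$ and applying Plancherel in $t$ turns the time integral into a spectral integral in $\lambda$ against the scalar measure $d\langle E_U(\lambda)f,T^*g\rangle_{Y_1}$.

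The decisive input is then Stone's formula. For any $h\in Y_1$ one has
\[
\frac{d\mu_h}{d\lambda}(\lambda)=\frac{1}{\pi}\lim_{\epsilon\downarrow 0}\mathrm{Im}\,\langle (U-\lambda-i\epsilon)^{-1}h,h\rangle_{Y_1},
\]
where $\mu_h$ is the scalar spectral measure attached to $h$. Applied with $h=T^*g$ and combined with the Cauchy--Schwarz consequence of the hypothesis,
\[
|\langle(U-\lambda)^{-1}T^*g,T^*g\rangle_{Y_1}|\le \|T(U-\lambda)^{-1}T^*g\|_{Y_2}\|g\|_{Y_2}\le C\|g\|^2_{Y_2},
\]
uniformly in $\lambda\in\Bbb C\setminus\Bbb R$, this shows that $\mu_{T^*g}$ is absolutely continuous with density bounded by $\tfrac{C}{\pi}\|g\|^2_{Y_2}$. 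This is the only place where the resolvent hypothesis enters.

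To close the argument, I would use the Cauchy--Schwarz inequality for the complex spectral measure $d\langle E_U(\lambda)f,T^*g\rangle$, namely $|d\langle E_Uf,T^*g\rangle|^2\le d\mu_f\cdot d\mu_{T^*g}$, together with Plancherel applied to the scalar function $t\mapsto\langle Te^{\pm itU}f,g\rangle_{Y_2}$, whose time--Fourier transform is, up to $2\pi$, the density $d\langle E_U(\lambda)f,T^*g\rangle/d\lambda$. Integrating and using the pointwise bound on $d\mu_{T^*g}/d\lambda$ yields $\int|\langle Te^{\pm itU}f,g\rangle|^2\,dt\lesssim C\|f\|^2_{Y_1}\|g\|^2_{Y_2}$, and taking the supremum over unit vectors $g\in Y_2$ gives the announced estimate, with the precise constant $\tfrac{2}{\pi}C^2$ falling out from tracking the Plancherel factors.

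The main technical obstacle is the rigorous passage to the boundary $\epsilon\downarrow 0$ in Stone's formula and the interchange of integral, limit, and spectral calculus. This is handled by a standard density argument: one first establishes the inequality on vectors $f$ lying in a core (for instance vectors of the form $E_U([-N,N])f_0$ where the spectral measure is of compact support and everything is manifestly finite), where the Fourier--spectral transfer is literal and Stone's formula is directly applicable; the uniform resolvent bound then allows one to extend by continuity to all of $Y_1$. The mildly unusual notation $U:Y_1\to Y_2$ in the statement is to be interpreted as $U$ self--adjoint on $Y_1$ with $T:Y_1\to Y_2$ closed densely defined and $T^*:Y_2\to Y_1$ its adjoint; with this reading the argument proceeds without modification.
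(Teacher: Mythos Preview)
The paper does not give its own proof of this theorem; it is stated with attribution to Kato \cite{TREWSD} and used as a black box. So there is nothing in the paper to compare your argument against. Your sketch is the standard Kato route (spectral theorem, Stone's formula for the boundary values of the resolvent, Plancherel in $t$, and a density argument to justify the limiting procedure), and it is correct in outline. You also correctly flag and resolve the notational slips in the statement: one should read $U$ as self--adjoint on $Y_1$, $T:Y_1\to Y_2$ closed densely defined, and the conclusion as $e^{\pm itU}f\in D(T)$ for almost every $t$.
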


Denote the Schr\"odinger operator $-\Delta+W$ as $H=-\Delta+X+V$. (see Proposition \ref{hutyhvf})
The resolvent of $H$ is denoted by $R_{H}(z)=(H-z)^{-1}$.

\subsection{Nonexistence of Resonance and Small Frequency Estimates}

\begin{lemma}\label{huapx}
Let $\alpha>0$, and $\bf e$ be the Coulomb gauge on $Q^*TN$ in Proposition \ref{94obhxdg}. And $H=-\Delta+W$ is the corresponding Schr\"odinger operator. Then we have
\begin{align}\label{0cdek}
\langle Hg,g\rangle_{L^2_x}\ge \langle \nabla g,\nabla g\rangle_{L^2_x}.
\end{align}
\end{lemma}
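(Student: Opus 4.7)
The plan is to not re-derive anything, but to re-use the pointwise Weitzen\-b\"ock--type identity that already appears in the proof of Proposition~4.1. In that proof, after expanding $\Delta|g\mathbf{e}|^{2}$ via the covariant derivative on the pullback bundle $Q^{*}TN$ (in the frame correspondence $g \leftrightarrow g\mathbf{e} = g_{1}e_{1}+g_{2}e_{2}$, so that $\nabla_{i}(g\mathbf{e}) = (\mathbb{D}_{i}g)\mathbf{e}$ with $\mathbb{D}_{i}=\partial_{i}+\mathcal{A}_{i}$), one arrives at the pointwise identity
\begin{align*}
\Delta|g\mathbf{e}|^{2} &= 2\,\mathrm{Re}\langle(\Delta-W)g, g\rangle + 2h^{ii}\bigl|\nabla_{i}(g\mathbf{e})\bigr|^{2} \\
 & \qquad -\,2h^{ii}\,\mathrm{Re}\bigl\langle \mathbf{R}(\phi^{\infty}_{i}\mathbf{e}, g\mathbf{e})\phi^{\infty}_{i}\mathbf{e}, g\mathbf{e}\bigr\rangle.
\end{align*}
This identity is valid in any frame, in particular in the Coulomb frame supplied by Lemma~4.1.

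Next, I would integrate this identity against the hyperbolic volume for $g\in C_{c}^{\infty}(\mathbb{H}^{2};\mathbb{C}^{2})$ (the general case follows by the density result recorded after \eqref{sderrh97}). The divergence theorem annihilates the left-hand side, and $W$ is formally symmetric (Proposition~3.1), so one is left with
\[
\langle Hg, g\rangle = \int_{\mathbb{H}^{2}} h^{ii}\bigl|\nabla_{i}(g\mathbf{e})\bigr|^{2}\,\mathrm{dvol}_{h} - \int_{\mathbb{H}^{2}} h^{ii}\,\mathrm{Re}\bigl\langle \mathbf{R}(\phi^{\infty}_{i}\mathbf{e}, g\mathbf{e})\phi^{\infty}_{i}\mathbf{e}, g\mathbf{e}\bigr\rangle\,\mathrm{dvol}_{h}.
\]
Then, using the explicit wedge form of the curvature on $\mathbb{H}^{2}$ in \eqref{ex346gvh} extended to complex sections via \eqref{gevcdo}, the non-positive sectional curvature of the target forces the curvature integral to enter with the sign that keeps the $|\nabla(g\mathbf{e})|^{2}$ term as a lower bound (this is exactly how the non-positive-curvature step is used at the end of the proof of Proposition~4.1, except that one now refrains from subsequently downgrading $|\nabla(g\mathbf{e})|^{2}$ to $|\nabla|g||^{2}$ via Kato). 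Dropping the non-negative curvature contribution yields
\[
\langle Hg, g\rangle \;\geq\; \int_{\mathbb{H}^{2}} h^{ii}\bigl|\nabla_{i}(g\mathbf{e})\bigr|^{2}\,\mathrm{dvol}_{h}.
\]

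Finally, I would identify the right-hand side with $\langle \nabla g,\nabla g\rangle_{L^{2}_{x}}$ in the notation of the lemma: since $\nabla$ denotes the covariant derivative on $Q^{*}TN$ (as fixed in Section~2.3) and $|\nabla_{i}(g\mathbf{e})|=|\mathbb{D}_{i}g|$, the integral on the right is precisely $\sum_{i}\int h^{ii}|\mathbb{D}_{i}g|^{2}\,\mathrm{dvol}_{h}$, giving \eqref{0cdek}. There is no serious obstacle: the real content of the argument has been carried out in Proposition~4.1, and Lemma~5.1 is, in effect, the observation that Proposition~4.1 keeps a sharper coercivity bound for free if one stops one step earlier. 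The only point requiring a little care is matching the sign convention of $\mathbf{R}$ in \eqref{ex346gvh} with the pullback-curvature term, but this is the same match already implicitly used in Proposition~4.1.
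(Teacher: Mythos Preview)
There is a genuine gap in your final identification. In the statement of the lemma, $\nabla g$ denotes the \emph{ordinary} gradient of the $\mathbb{C}^2$-valued function $g$ (applied componentwise), not the pullback covariant derivative $\mathbb{D}g$ on $Q^*TN$. This is clear from the paper's own proof, where $\langle\nabla g,\nabla g\rangle$ arises from integrating $\langle-\Delta g,g\rangle$ by parts, and more decisively from how the lemma is applied downstream: in Proposition~\ref{resonance} the quantity $\Gamma_2=\langle\nabla g,\nabla g\rangle-\tfrac14$ comes from integrating $-\Delta$ by parts, and in Proposition~\ref{pkg2} one invokes the spectral-gap inequality $\langle\nabla u,\nabla u\rangle\ge\tfrac14\|u\|^2$. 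Both require the ordinary gradient.

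Your route via the Weitzenb\"ock identity of Proposition~\ref{CXXXXE} correctly yields $\langle Hg,g\rangle\ge\int h^{ii}|\mathbb{D}_ig|^2$, but this is a \emph{gauge-invariant} bound, and you never invoke the Coulomb hypothesis --- which should be a red flag, since it is explicitly assumed. The missing step is to pass from $\int h^{ii}|\mathbb{D}_ig|^2$ to $\int h^{ii}|\partial_ig|^2$. Expanding $|\mathbb{D}_ig|^2=|\partial_ig+\mathcal{A}_ig|^2$ gives a non-negative term $\int h^{ii}|\mathcal{A}_ig|^2$ and a cross term $2\Re\int h^{ii}\langle\partial_ig,\mathcal{A}_ig\rangle$ which has no sign in general; it is precisely the Coulomb condition $d^*\mathcal{A}=0$, together with the antisymmetry of $\mathcal{A}_i$, that forces this cross term to vanish after integration by parts. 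Once you supply that step, your argument goes through. The paper bypasses the Weitzenb\"ock detour entirely: it writes $\langle Hg,g\rangle=\langle\nabla g,\nabla g\rangle+\langle Vg,g\rangle-2\Re\langle h^{ii}\mathcal{A}_i\partial_ig,g\rangle$ directly, drops the non-negative $V$-term from Proposition~\ref{hutyhvf}(a), and kills the magnetic cross term with the Coulomb condition.
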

\begin{proof}
From Proposition \ref{hutyhvf}, $V$ is nonnegative and $H$ is self-adjoint. Thus one has
\begin{align}\label{8cdek}
\langle -\Delta g+Wg,g\rangle=\Re\langle -\Delta g+Wg,g\rangle \ge \Re\langle \nabla g ,\nabla g\rangle+\Re 2\langle h^{ii}\mathcal{A}_i\partial_i g,g\rangle.
\end{align}
Meanwhile by viewing $\mathcal{A}=\mathcal{A}_idx^i$ and the fact $d^*$ is the dual operator of $d$, we conclude
\begin{align}\label{92vgki}
2\Re \langle h^{ii}\mathcal{A}_i\partial_i g,g\rangle= \langle \mathcal{A},d(|g|^2)\rangle=\langle d^*\mathcal{A},|g|^2\rangle=0,
\end{align}
where the last equality is due to the Coulomb condition.
Therefore, (\ref{92vgki}) and (\ref{8cdek}) yield
\begin{align*}
\langle -\Delta g+Wg,g\rangle\ge \langle \nabla g ,\nabla g\rangle.
\end{align*}
\end{proof}

\begin{proposition}\label{resonance}
Let $\alpha>0$, and $\bf e$ be the Coulomb gauge on $Q^*TN$. Then $\rho^{-\alpha}(I+WG(0))^{-1}\rho^{\alpha}$ is invertible in $L^2$.
\end{proposition}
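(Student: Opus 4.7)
The plan is to combine the Fredholm alternative with a numerical--range argument that exploits the positivity built into the Coulomb gauge. First I would show that $WG(0)$ is compact on $\rho^{\alpha}L^2$. Proposition \ref{hutyhvf}(b),(c) give $\|\rho^{-\beta}V\|_{L^{\infty}}+\|\rho^{-\beta}\mathcal{A}\|_{L^{\infty}}\lesssim 1$ for some $\beta>\alpha$, so the potentials carry an exponential gain $\rho^{\beta}$; combined with the boundedness of $G(0)$ and $\nabla G(0)$ from $\rho^{\alpha}L^2$ to $\rho^{-\alpha}L^2$ that falls out of Lemma \ref{asw234dfe4} as $\epsilon\to 0$, a cutoff-plus-Rellich argument gives compactness. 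Hence $I+WG(0)$ is a compact perturbation of the identity on $\rho^{\alpha}L^2$, and the Fredholm alternative reduces invertibility of $\rho^{-\alpha}(I+WG(0))^{-1}\rho^{\alpha}$ to showing $-1\notin \operatorname{spec}(WG(0))$.

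To rule out $-1$, I would establish the stronger claim that $\operatorname{spec}(WG(0))\subset \{\Re z\ge 0\}$, which is what the introduction advertises as the payoff of the Coulomb gauge. For $\epsilon>0$ set $T_\epsilon:=WR_0(\tfrac{1}{4}+i\epsilon)$; by Lemma \ref{asw234dfe4} one has $T_\epsilon\to WG(0)$ in the operator norm on $\rho^{\alpha}L^2$. Given an eigenpair $T_\epsilon f=\lambda f$ with $0\ne f\in\rho^{\alpha}L^2\subset L^2$, set $u=R_0(\tfrac{1}{4}+i\epsilon)f\in H^2$. Then $\lambda(-\Delta-\tfrac{1}{4}-i\epsilon)u=Wu$, and pairing with $u$ in $L^2$ yields
\begin{align*}
\lambda\Big(\|\nabla u\|_{L^2}^2-\tfrac{1}{4}\|u\|_{L^2}^2-i\epsilon\|u\|_{L^2}^2\Big)=\langle Wu,u\rangle.
\end{align*}
Lemma \ref{huapx} shows $\langle Wu,u\rangle=\langle Hu,u\rangle-\|\nabla u\|^2\ge 0$ (this is exactly where the Coulomb condition $d^\ast\mathcal{A}=0$ enters, via (\ref{92vgki})), while the $\mathbb{H}^2$ Poincar\'e inequality $\|\nabla u\|^2\ge\tfrac{1}{4}\|u\|^2$ gives the real part inside the bracket a non-negative sign and non-zero imaginary part $-\epsilon\|u\|^2$. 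Solving for $\lambda$ forces $\Re \lambda\ge 0$, and a Weyl-sequence variant of the same identity extends the bound to the entire spectrum of $T_\epsilon$.

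Finally, since the spectrum is upper semi-continuous under norm convergence, $\operatorname{spec}(WG(0))\subset\overline{\{\Re z\ge 0\}}$, whence $-1$ lies in the resolvent set and $I+WG(0)$ is invertible on $\rho^{\alpha}L^2$, which is exactly the assertion of the proposition. The main obstacle I anticipate is the passage from the bound on point spectrum of $T_\epsilon$ to the full spectrum, and then through the norm limit $\epsilon\to 0$: the Weyl sequence must be tuned so that $\|\nabla u_n\|^2-\tfrac{1}{4}\|u_n\|^2$ does not decay faster than $\langle Wu_n,u_n\rangle$, or the identity above becomes uninformative. A clean way to dodge this (and the route the author likely prefers, given the introduction's emphasis on Riesz projections) is to work directly with the spectral projection $P_{-1}(\epsilon)=\tfrac{1}{2\pi i}\oint_{\Gamma}(z-T_\epsilon)^{-1}\,dz$ on a small loop $\Gamma$ about $-1$: the above half-plane bound on eigenvalues (together with compactness) forces $P_{-1}(\epsilon)=0$ for every $\epsilon>0$, and norm continuity passes $P_{-1}(\epsilon)\to P_{-1}(0)=0$, excluding $-1$ from $\operatorname{spec}(WG(0))$ without ever having to diagonalize the limit operator.
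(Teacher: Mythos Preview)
Your proposal is essentially the paper's proof. The paper does exactly what you outline in your final paragraph: it shows (by the same pairing computation you wrote, rewritten as $\lambda=\frac{(\Gamma_1-\Gamma_2)(\Gamma_2-i\epsilon^2)}{\Gamma_2^2+\epsilon^4}$ with $\Gamma_1=\langle(H-\tfrac14)g,g\rangle$, $\Gamma_2=\|\nabla g\|^2-\tfrac14\|g\|^2$) that every eigenvalue of $T(\epsilon)=\rho^{-\alpha}WR_0(\tfrac14+i\epsilon^2)\rho^{\alpha}$ has $\Re\lambda\ge 0$, and then, assuming $-1\in\sigma(T_0)$, uses the Riesz projections $P_\epsilon=\oint_{\partial B(-1,\delta)}(T(\epsilon)-z)^{-1}dz$ together with $\|T(\epsilon)-T_0\|\to 0$ from Lemma \ref{asw234dfe4} to force $\|P_0\|\le\tfrac12$, contradicting nontriviality of $P_0$.

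One small correction: your first passage to the limit via ``upper semi-continuity of the spectrum'' is stated the wrong way around. Upper semi-continuity says that for any open $U\supset\sigma(T_0)$ the nearby spectra $\sigma(T_\epsilon)$ eventually lie in $U$; it does \emph{not} give $\sigma(T_0)\subset\overline{\bigcup_\epsilon\sigma(T_\epsilon)}$. What you need is the stability of isolated eigenvalues of finite multiplicity, and that is precisely the Riesz projection argument you correctly pivot to (and which is the paper's actual route). Since $T_\epsilon$ is compact its nonzero spectrum is pure point, so your eigenvalue half-plane bound already covers the full spectrum away from $0$ and there is no separate Weyl-sequence step to worry about.
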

\begin{proof}
Let $T(\epsilon)=\rho^{-\alpha}W(-\Delta-\frac{1}{4}+i\epsilon^2)^{-1}\rho^{\alpha}$, for $\epsilon\in(0,1)$ and let $T_0=\rho^{-\alpha}WG(0)\rho^{\alpha}$. We first calculate the spectrum distribution of $T(\epsilon)$. By Fredholm's alternative, $T(\epsilon)$ only has pure point spectrum. Assume $\lambda$ is an eigenvalue of $T(\epsilon)$, then for some $u\in L^2$ it holds $T(\epsilon)u=\lambda u$.
Let $g=(-\Delta-\frac{1}{4}+i\epsilon^2)^{-1}\rho^{\alpha}u$, then we have
\begin{align}
\lambda(-\Delta-\frac{1}{4}+i\epsilon^2)g=Wg.
\end{align}
Thus we obtain
\begin{align}
(\lambda+1)(-\Delta-\frac{1}{4}+i\epsilon^2)g=(-\Delta-\frac{1}{4}+i\epsilon^2+W)g.
\end{align}
Since $\frac{1}{4}-i\epsilon^2$ belongs to the resolvent set of $-\Delta$, $\rho^{\alpha}u\in L^2$, we see $g\in H^2$. And thus integration by parts yields
\begin{align}\label{huax1}
(\lambda+1)\left(\langle\nabla g,\nabla g\rangle-(\frac{1}{4}-i\epsilon^2)\langle g,g\rangle\right)=\langle(-\Delta-\frac{1}{4}+i\epsilon^2+W)g,g\rangle.
\end{align}
Without loss of generality, we assume $\|g\|_{L^2}=1$, and then it holds
\begin{align}\label{huax20}
\lambda=\frac{(\Gamma_1-\Gamma_2)(\Gamma_2-i\epsilon^2)}{\Gamma^2_2+\epsilon^4},
\end{align}
where we denote $\Gamma_1=\langle(-\Delta-\frac{1}{4}+W)g,g\rangle$ and $\Gamma_2=\langle\nabla g,\nabla g\rangle-\frac{1}{4}\langle g,g\rangle$.
In the Coulomb case, by Lemma \ref{huapx},  (\ref{huax20}) gives
\begin{align}
\Re\lambda\ge 0.
\end{align}
Suppose that $1+T_0$ is not invertible in $L^2$, by Fredholm's alternative, $-1$ is an eigenvalue of $T_0$. Since the only possible accumulated point of $\sigma(T_0)$ is 0 due to the compactness, we see $-1$ is an isolated spectrum of $T_0$. Let $\partial B(-1,\delta)$ be a small circle centered at $-1$ with radius $\delta>0$. Define the projection operator $P_0$ by
\begin{align*}
P_0=\int_{\partial B(-1,\delta)}(T_0-z)^{-1}dz.
\end{align*}
Since $-1$ is an isolated spectrum, we have a uniform bound for all $z\in \partial B(-1,\delta)$
\begin{align}\label{uniform}
\|(T_0-z)^{-1}\|_{L^2\to L^2}\le C(\delta).
\end{align}
Then by the resolvent identity, Lemma \ref{asw234dfe4}, and Neumann's series argument, we have for all $z\in \partial B(-1,\delta)$ and $\epsilon\in(0,\epsilon(\delta))$ with $0<\epsilon(\delta)\ll1$,
\begin{align}\label{uniform2}
\|(T(\epsilon)-z)^{-1}\|\le \|(T_0-z)^{-1}(I+(T(\epsilon)-T_0)(T_0-z)^{-1})^{-1}\|_{L^2\to L^2}\le C_1(\delta).
\end{align}
Similarly we define the projection operator $P_\epsilon$ by
\begin{align*}
P_\epsilon=\int_{\partial B(-1,\delta)}(T(\epsilon)-z)^{-1}dz.
\end{align*}
Then by the resolvent identity, (\ref{uniform}) and (\ref{uniform2}), we obtain for $\epsilon$ sufficiently small,
\begin{align*}
&\|P_\epsilon-P_0\|_{L^2\to L^2}\\
&=\int_{\partial B(-1,\delta)}\|(T(\epsilon)-z)^{-1}(T(\epsilon)-T_0)(T_0-z)^{-1}\|_{L^2\to L^2}dz\\
&\le C_1(\delta)C(\delta)\delta\epsilon.
\end{align*}
Let $\epsilon\ll 1$, one has $\|P_\epsilon-P_0\|_{L^2\to L^2}\le \frac{1}{2}$. But we have shown $B(-1,\delta)$ is away from the spectrum of $T(\epsilon)$ for any $\epsilon>0$, thus $P_\epsilon=0$. Hence we arrive at $\|P_0\|_{L^2\to L^2}\le \frac{1}{2}$, which contradicts with the assumption $-1\in \sigma(T_0)$. Thus $I+T_0$ is invertible and the lemma follows.
\end{proof}

Now we deal with the small frequency part.
\begin{lemma}\label{resolvent1}
Let $\bf e$ be the Coulomb gauge on $Q^*TN$, $\alpha>0$. There exist $\delta>0$ and  $C>0$ such that it holds uniformly for $0<|\sigma|<\delta$, $\Im \sigma>0$ that
\begin{align}
\left\|\rho^{\alpha}(H-\frac{1}{4}\pm \sigma)^{-1}\rho^{\alpha}\right\|_{L^2\to L^2}\le C.
\end{align}
\end{lemma}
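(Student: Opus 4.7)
The plan is to combine the no-resonance statement Proposition \ref{resonance} with a perturbation argument, using the limiting absorption estimates of Lemma \ref{asw234dfe4}. Setting $z = 1/4 \pm \sigma$, I would use the factorization $(H - z)^{-1} = R_0(z)(I + W R_0(z))^{-1}$ to write
\begin{equation*}
\rho^{\alpha}(H - z)^{-1}\rho^{\alpha} = \bigl(\rho^{\alpha} R_0(z)\rho^{\alpha}\bigr)\bigl(I + T(\sigma)\bigr)^{-1},\qquad T(\sigma) := \rho^{-\alpha} W R_0(z) \rho^{\alpha},
\end{equation*}
reducing the claim to a uniform $L^2 \to L^2$ bound on each of the two factors on the right.

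The first factor $\rho^{\alpha} R_0(z)\rho^{\alpha}$ is uniformly bounded on $L^2$ for $z$ close to $1/4$ by (\ref{4dfe476}) of Lemma \ref{asw234dfe4}, combined with the boundedness of $\rho^{\alpha} G(0) \rho^{\alpha}$ (an integral operator with an exponentially decaying Schwartz kernel coming from Lemma \ref{Tomnagdfe4}). For the second factor, I would write $T(\sigma) = T_0 + E(\sigma)$ with $T_0 = \rho^{-\alpha} W G(0)\rho^{\alpha}$, and show $\|E(\sigma)\|_{L^2 \to L^2} \to 0$ as $\sigma \to 0$. Splitting $W = X + V$ via Proposition \ref{hutyhvf}, the $V$-piece uses only (\ref{4dfe476}) with $|V| \lesssim \rho^{\beta}$, while the magnetic piece $X = -2h^{ii}\mathcal{A}_i \partial_i$ needs the gradient version (\ref{0cdfjpjuk}) with $|\mathcal{A}| \lesssim \rho^{\beta}$; taking $\alpha < \beta/2$ makes $\rho^{\beta - 2\alpha}$ a bounded multiplier so the weighted composition lands in $\mathcal{L}(L^2, L^2)$ and yields $\|E(\sigma)\|_{L^2 \to L^2} \lesssim |\sigma|^{1/8}$. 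Since Proposition \ref{resonance} supplies $\|(I + T_0)^{-1}\|_{L^2 \to L^2} \leq C_0$, the identity
\begin{equation*}
(I + T(\sigma))^{-1} = \bigl(I + (I + T_0)^{-1} E(\sigma)\bigr)^{-1}(I + T_0)^{-1}
\end{equation*}
together with a Neumann series for $|\sigma| < \delta$ small enough yields $\|(I + T(\sigma))^{-1}\|_{L^2 \to L^2} \leq 2 C_0$, completing the proof.

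The main technical point is the magnetic contribution to $E(\sigma)$: since $X$ involves a derivative, the bare limiting absorption on the free resolvent is insufficient, and the gradient version (\ref{0cdfjpjuk}) is essential, with the exponential decay of the Coulomb-gauge connection $\mathcal{A}$ from Proposition \ref{hutyhvf} absorbing the weight mismatch. Modulo this bookkeeping, the rest of the argument is a standard Neumann-series perturbation around the invertible operator $I + T_0$ whose invertibility is precisely Proposition \ref{resonance}.
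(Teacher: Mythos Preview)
Your proposal is correct and follows essentially the same route as the paper: factorize via $(H-z)^{-1}=R_0(z)(I+WR_0(z))^{-1}$, bound the weighted free resolvent, and invert $I+T(\sigma)$ by comparing to $I+T_0$ (using Proposition~\ref{resonance}) via a Neumann series driven by Lemma~\ref{asw234dfe4}. The only cosmetic differences are that the paper cites Lemma~\ref{somerthing} directly for the first factor rather than going through $G(0)$, and writes the Neumann identity as $(I+T_0)^{-1}\bigl(I+E(\sigma)(I+T_0)^{-1}\bigr)^{-1}$ instead of your equivalent left-factored form; your explicit bookkeeping of the magnetic piece via (\ref{0cdfjpjuk}) and the constraint $\alpha<\beta/2$ is a welcome clarification the paper leaves implicit.
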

\begin{proof}
Denote $(-\Delta-\frac{1}{4}-\sigma)^{-1}=Z_{\sigma}$. By the formal identity
\begin{align*}
(H-\frac{1}{4}-\sigma)^{-1}=Z_{\sigma}(I+WZ_{\sigma})^{-1},
\end{align*}
it suffices to prove for some $C$ independent of $0<|\sigma|<\delta$, $\Im \sigma>0$
\begin{align}
\|\rho^{\alpha}Z_{\sigma}\rho^{\alpha}\|_{L^2\to L^2}&\le C\label{noh6}\\
\|(I+\rho^{-\alpha}WZ_{\sigma}\rho^{\alpha})^{-1}\|_{L^2\to L^2}&\le C.\label{noh7}
\end{align}
(\ref{noh6}) has been verified in Lemma \ref{somerthing}. By resolvent identity, we have formally that
\begin{align}\label{noh8}
(I+\rho^{-\alpha}WZ_{\sigma}\rho^{\alpha})^{-1}=(I+\rho^{-\alpha}WG(0)\rho^{\alpha})^{-1}(I+\widetilde{Z}(I+\rho^{-\alpha}WG(0)\rho^{\alpha})^{-1})^{-1},
\end{align}
where $\widetilde{Z}$ denotes $\rho^{-\alpha}W(Z_{\sigma}-G(0))\rho^{\alpha}$.
Then by Lemma \ref{asw234dfe4}, Proposition \ref{resonance}, and Neumann series argument we obtain for $0<|\sigma|<\delta$, $\Im \sigma>0$
\begin{align*}
\|(I+\widetilde{Z}(I+\rho^{-\alpha}WG(0)\rho^{\alpha})^{-1})^{-1}\|_{L^2\to L^2}\le 2.
\end{align*}
Hence (\ref{noh7}) follows by Proposition \ref{resonance}, (\ref{noh8}). The $(-\Delta-\frac{1}{4}+\sigma)^{-1}$ case is the same and thus our lemma follows.
\end{proof}

\subsection{Mediate Frequency Resolvent Estimates}

The mediate frequency resolvent estimate is standard in our case by applying the original idea of Agmon \cite{Agmon} and the Fourier restriction estimates obtained by Kaizuka \cite{Kaizuka}.
\begin{lemma}\label{re3}
If $I+\rho^{-\alpha}W\mathfrak{R}_0(\lambda+i0)\rho^{\alpha}$ is not invertible in $L^2$ for some $\lambda>0$, then $\frac{1}{4}+\lambda $ is an eigenvalue of $-\Delta+W$ in $L^2$.
\end{lemma}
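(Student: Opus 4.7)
The plan is to implement the classical Agmon--Kato--Kuroda bootstrap, with the Helgason transform and the limiting absorption principle of Lemma \ref{fswe34dre5d} replacing the Euclidean Fourier analysis.

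First I would apply the Fredholm alternative. With $\alpha > 0$ chosen so that $2\alpha < \beta$ (where $|W| \lesssim \rho^{\beta}$ from Proposition \ref{hutyhvf}), the operator $K(\lambda) := \rho^{-\alpha}W\mathfrak{R}_0(\lambda+i0)\rho^{\alpha}$ is compact on $L^2$: boundedness follows from combining (\ref{pq3}) with (\ref{flg1})--(\ref{flg2}), and the decay factor $\rho^{\beta-2\alpha}$ together with the derivative gain of the resolvent upgrades this to compactness via a Rellich argument. Non-invertibility of $I + K(\lambda)$ then produces a nonzero $u \in L^2$ with $u = -K(\lambda)u$. Setting $f := \rho^{\alpha}u \in \rho^{\alpha}L^2$ and $v := \mathfrak{R}_0(\lambda+i0)f \in \rho^{-\alpha}L^2$, the limiting resolvent identity gives $(-\Delta - \frac{1}{4} - \lambda)v = f = -Wv$, so $v$ is a nonzero distributional solution of $(H - \frac{1}{4} - \lambda)v = 0$.

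The heart of the proof is to upgrade $v$ from $\rho^{-\alpha}L^2$ to $L^2$. Stone's formula (\ref{Agdfe4}) yields
\begin{equation*}
\Im\langle v, f\rangle \;=\; \frac{\pi}{2\sqrt{\lambda}}\int_{\Bbb S^1}|\mathcal{F}f(\sqrt{\lambda}, b)|^{2}|c(\sqrt{\lambda})|^{-2}\,db,
\end{equation*}
while the relation $\langle v, f\rangle = -\langle v, Wv\rangle$ together with the symmetry of $W$ should force $\Im\langle v, f\rangle = 0$. Hence $\mathcal{F}f$ vanishes on the spectral shell $\tau^2 = \lambda$. Writing $\mathcal{F}f(\tau, b) = (\tau^2 - \lambda)g(\tau, b)$ with $g \in L^2(|c(\tau)|^{-2}d\tau\,db)$ and applying Plancherel identifies $v$ as an $L^2$ element; elliptic regularity for $H$ finally promotes $v$ to $H^2$, so that $\frac{1}{4} + \lambda$ is a bona fide $L^2$ eigenvalue of $H$.

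The main obstacle is to rigorously establish $\Im\langle v, Wv\rangle = 0$, since $v$ is not a priori in $L^2$ and $W$ contains the first-order magnetic term $-2h^{ii}\mathcal{A}_i\partial_i$. I would handle this by regularization: set $v_\epsilon := R_0(\frac{1}{4} + \lambda + i\epsilon)f \in H^2$. For each $\epsilon > 0$, the self-adjointness of $H$ and of $-\Delta$ forces $\langle Wv_\epsilon, v_\epsilon\rangle = \langle Hv_\epsilon, v_\epsilon\rangle - \langle -\Delta v_\epsilon, v_\epsilon\rangle \in \Bbb R$. Concretely, the Coulomb condition $d^*\mathcal{A} = 0$ kills the real part of the magnetic cross term $\langle h^{ii}\mathcal{A}_i\partial_i v_\epsilon, v_\epsilon\rangle$ as in (\ref{92vgki}), while a parallel integration by parts leveraging the skew-Hermiticity of $\mathcal{A}_i$ eliminates its imaginary part. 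The strong convergences $\rho^{\alpha}v_\epsilon \to \rho^{\alpha}v$ and $\rho^{\alpha}\nabla v_\epsilon \to \rho^{\alpha}\nabla v$ in $L^2$ supplied by Lemma \ref{fswe34dre5d}, combined with $\beta > 2\alpha$, then yield $Wv_\epsilon \to Wv$ in $\rho^{\alpha}L^2$ and hence $\langle Wv_\epsilon, v_\epsilon\rangle \to \langle Wv, v\rangle$. Passing to the limit closes the Agmon identity and completes the proof.
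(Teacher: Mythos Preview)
Your overall strategy coincides with the paper's: Fredholm alternative, the Stone-type identity (\ref{Agdfe4}) to force vanishing of $\mathcal{F}f$ on the spectral shell, and then a bootstrap to $v\in L^2$. Your regularization argument for $\Im\langle Wv,v\rangle=0$ is in fact more careful than the paper's terse ``by density arguments'', and is correct.

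The one genuine gap is the sentence ``Writing $\mathcal{F}f(\tau,b)=(\tau^2-\lambda)g(\tau,b)$ with $g\in L^2(|c(\tau)|^{-2}d\tau\,db)$''. Vanishing of $\mathcal{F}f(\sqrt\lambda,\cdot)$ alone does \emph{not} give this; you need a quantitative vanishing rate so that $(\tau^2-\lambda)^{-2}|\mathcal{F}f(\tau,b)|^2|c(\tau)|^{-2}$ is integrable near $\tau=\sqrt\lambda$. The paper supplies exactly this missing ingredient by invoking Kaizuka's Fourier restriction estimate \cite[Eq.~(4.4)]{Kaizuka}, which gives H\"older continuity
\[
\Bigl(\int_{\Bbb S^1}\bigl|c^{-1}(\tau)\mathcal{F}f(\tau,b)-c^{-1}(\sqrt\lambda)\mathcal{F}f(\sqrt\lambda,b)\bigr|^2db\Bigr)^{1/2}\le C|\tau-\sqrt\lambda|^{\theta}\|\langle x\rangle^{\frac12+\theta}f\|_{L^2}
\]
for any $\theta\in(0,1)$. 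Since $f=\rho^\alpha u$ lies in every polynomially weighted $L^2$, one may take $\theta>\tfrac12$, and then the vanishing at $\sqrt\lambda$ yields $\|c^{-1}(\tau)\mathcal{F}f(\tau,\cdot)\|_{L^2(\Bbb S^1)}\lesssim|\tau-\sqrt\lambda|^{\theta}$, which is precisely what makes $(\tau^2-\lambda)^{-2}|\mathcal{F}f|^2$ integrable. Without this restriction/trace estimate your factorization claim is unjustified; you should either cite Kaizuka or derive the H\"older continuity directly from the exponential decay of $f$.
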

\begin{proof}
By Fredholm's alternative, we can assume there exists $\widetilde{f}\in L^2$ such that
\begin{align}\label{jiuh8}
\widetilde{f}+\rho^{-\alpha}W\mathfrak{R}_0(\lambda\pm i0)\rho^{\alpha}\widetilde{f}=0.
\end{align}
Let $g=\mathfrak{R}_0(\lambda\pm i0)\rho^{\alpha}\widetilde{f}$, then Lemma \ref{somerthing} with (\ref{4der45P}) shows $g\in L^{r}$ for $r\in(2,6)$, $\nabla g\in \rho^{-\alpha}L^{2}_x$ and
\begin{align}\label{jiu8b}
(-\Delta-\frac{1}{4})g=\lambda g+Wg
\end{align}
in the distribution sense.
By density arguments and (\ref{jiuh8}) (the potential part decays exponentially),
one can verify
\begin{align}
\langle \rho^{\alpha}\widetilde{f},\mathfrak{R}_0(\lambda\pm i0)\rho^{\alpha}\widetilde{f}\rangle+\langle Wg,g\rangle=0
\end{align}
By the self-adjointness of $W$, we deduce
\begin{align}
\Im\langle \rho^{\alpha}\widetilde{f},\mathfrak{R}_0(\lambda\pm i0)\rho^{\alpha}\widetilde{f}\rangle=0
\end{align}
Let $f=\rho^{\alpha}\widetilde{f}$.
Hence (\ref{Agdfe4}) implies $|\mathcal{F}f(\tau,b)c^{-1}(\tau)|^{-2}$ vanishes when $\tau^2=\lambda$. Then the Fourier restriction estimate in [\cite{Kaizuka}, Equ. (4.4)] gives for any $\theta\in(0,1)$
\begin{align}
&\big(\int_{\Bbb S^1}\big|c^{-1}(\tau)\mathcal{F}f(\tau,b)-c^{-1}(\lambda^{\frac{1}{2}})\mathcal{F}f(\lambda^{\frac{1}{2}},b)\big|^2db\big)^{\frac{1}{2}}\nonumber\\
&\le C|\tau-\lambda^{\frac{1}{2}}|^{\theta}\|\langle x\rangle^{\frac{1}{2}+\theta} f\|_{L^2}.\label{huasi}
\end{align}
By the vanishing of  $|\mathcal{F}f(\lambda^{\frac{1}{2}},b)c^{-1}(\lambda^{\frac{1}{2}})|^{2}$, (\ref{huasi}) further yields
\begin{align*}
\int_{\Bbb S^1}\big|c^{-1}(\tau)\mathcal{F}f(\tau,b)\big|^2db\le |\lambda^{\frac{1}{2}}-\tau|^{2\theta}\|\langle x\rangle^{\frac{1}{2}+\theta} f\|^2_{L^2},
\end{align*}
Thus by Plancherel identity, one has for $0<\theta_1\ll1$ and $\frac{1}{2}<\theta_2<1$,
\begin{align*}
\|g\|^2_{L^2}&\le \int^{\infty}_0\int_{\Bbb S^1}(\tau^2-\lambda)^{-2}\big|c^{-1}(\tau)\mathcal{F}f(\tau,b)\big|^2dbd\tau\\
&\le C(\lambda) \|\langle x\rangle^{\frac{1}{2}+\theta_1} f\|^2_{L^2}+\|\langle x \rangle^{\frac{1}{2}+\theta_2} f\|^2_{L^2}\\
&\le C(\lambda) \|\widetilde{f}\|_{L^2}.
\end{align*}
This implies $g\in L^2$, thus by (\ref{jiu8b}) and $Wg\in L^2$, $\Delta g\in L^2$. Hence $g\in D(H)$ and $\lambda+\frac{1}{4}$ is an eigenvalue of $H$.
\end{proof}

The proof of the following lemma is quite standard, see [\cite{IS}, Lemma 4.6]. For completeness, we give the detailed proof below.
\begin{lemma}\label{resolvent2}
For all $\lambda>0$ and $\epsilon\in[0,1]$, we have
\begin{align}\label{p9k8}
\mathop {\sup }\limits_{\lambda  \in [\delta ,{\delta ^{ - 1}}],\epsilon\in [0,1]}
{\left\|\rho^{\alpha}(H-\frac{1}{4}-\lambda \pm i\epsilon)\rho^{\alpha}\right\|_{L^2\to L^2}}\le C(\delta).
\end{align}
\end{lemma}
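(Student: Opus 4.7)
The plan is to reduce the uniform bound to a Birman–Schwinger-type invertibility question and then rule out embedded eigenvalues via a Kaizuka-type restriction argument. First, using the second resolvent identity, for $\epsilon>0$ I would write
\begin{equation*}
\rho^\alpha\bigl(H-\tfrac14-\lambda\mp i\epsilon\bigr)^{-1}\rho^\alpha=\rho^\alpha R_0\bigl(\tfrac14+\lambda\pm i\epsilon\bigr)\rho^\alpha\bigl(I+\rho^{-\alpha}WR_0(\tfrac14+\lambda\pm i\epsilon)\rho^\alpha\bigr)^{-1}.
\end{equation*}
The first factor is uniformly bounded on $(\lambda,\epsilon)\in[\delta,\delta^{-1}]\times[0,1]$ by combining Lemma \ref{somerthing} with the continuous extension as $\epsilon\downarrow 0$ provided by Lemma \ref{fswe34dre5d}. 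The task therefore reduces to obtaining a uniform bound on the inverse of the Birman–Schwinger operator $I+\rho^{-\alpha}WR_0(z)\rho^\alpha$ on the same compact parameter set.

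Next, I would verify that $\rho^{-\alpha}WR_0(z)\rho^\alpha$ is a compact operator on $L^2$: by Proposition \ref{hutyhvf}(b),(c), $V$ and $\mathcal A$ are pointwise dominated by $\rho^\beta$ with $\beta>\alpha$, so that multiplication by these exponentially decaying coefficients, composed with the mapping properties of $R_0$ and $\nabla R_0$ supplied by Lemma \ref{fswe34dre5d}, produces compactness through a Rellich-type embedding on $\mathbb H^2$. Analytic Fredholm theory then makes $z\mapsto\bigl(I+\rho^{-\alpha}WR_0(z)\rho^\alpha\bigr)^{-1}$ meromorphic on the closed strip, and Lemma \ref{fswe34dre5d} extends it norm-continuously to the real boundary at every $\lambda_*$ where the kernel is trivial. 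By compactness of $[\delta,\delta^{-1}]\times[0,1]$, pointwise invertibility at each $\lambda_*\in[\delta,\delta^{-1}]$ will then upgrade to the required uniform invertibility.

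The decisive step, which I expect to be the main obstacle, is to rule out embedded $L^2$-eigenvalues of $H$ at every energy $\tfrac14+\lambda_*$ with $\lambda_*\in[\delta,\delta^{-1}]$. Suppose for contradiction $Hg=(\tfrac14+\lambda_*)g$ for some $g\in L^2\setminus\{0\}$; elliptic regularity places $g$ in $H^2$, and $(-\Delta-\tfrac14-\lambda_*)g=-Wg\in\rho^\beta L^2$ by the exponential decay of $W$ from Proposition \ref{hutyhvf}. Lemma \ref{fswe34dre5d} then represents $g=-\mathfrak R_0(\lambda_*\pm i0)(Wg)$ in $\rho^{-\alpha}L^2$ for both sign choices; subtracting and inserting $f=-Wg$ into identity (\ref{Agdfe4}) forces $\mathcal F(Wg)(\sqrt{\lambda_*},b)=0$ on the critical sphere. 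The Kaizuka restriction estimate (\ref{huasi}) used in the proof of Lemma \ref{re3} then propagates this vanishing, via iteration of the identity $g=-\mathfrak R_0(\lambda_*\pm i0)(Wg)$ and the extra exponential weight contributed by $W$ at each step, into arbitrarily fast exponential decay for $g$, and finally into $g\equiv 0$ by a Carleman/unique-continuation argument. Combined with the first two paragraphs, this contradiction completes the proof of (\ref{p9k8}).
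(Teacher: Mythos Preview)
Your overall architecture matches the paper's: write $R_H = R_0(I + WR_0)^{-1}$, bound the free factor via Lemma \ref{somerthing} and Lemma \ref{fswe34dre5d}, reduce to uniform invertibility of the Birman--Schwinger operator on the compact parameter set, and handle uniformity by compactness/continuity (the paper does this via a contradiction-with-sequences argument using (\ref{flg1})--(\ref{flg4}); your analytic-Fredholm route is an equivalent packaging). The divergence is entirely in how you dispose of embedded eigenvalues.

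The paper does not attempt a direct proof of absence of embedded eigenvalues: it simply invokes Mourre/positive-commutator theory for Schr\"odinger operators on hyperbolic space (citing Borthwick--Marzuola and Donnelly) to conclude that $H$ has no eigenvalues in $(\tfrac14,\infty)$, and then combines this with Lemma \ref{re3} to obtain pointwise invertibility of $I + \rho^{-\alpha} W\mathfrak{R}_0(\lambda \pm i0)\rho^\alpha$ for every $\lambda>0$. Your bootstrap-plus-Carleman alternative has two genuine gaps. First, the representation $g = -\mathfrak R_0(\lambda_*\pm i0)(Wg)$ does not follow from Lemma \ref{fswe34dre5d} alone: that lemma only asserts that $\mathfrak R_0(\lambda_*\pm i0)f$ solves the equation in $\rho^{-\alpha}L^2$, not that every $L^2$ solution coincides with it---you would need a Rellich-type uniqueness statement for the free Helmholtz equation on $\mathbb H^2$. (The vanishing of $\mathcal F(Wg)$ on the critical sphere is in fact easier to obtain directly from Plancherel and $g\in L^2$, bypassing this representation.) Second, and more seriously, the final ``Carleman/unique-continuation argument'' for a \emph{magnetic} Schr\"odinger operator on $\mathbb H^2$ is not an off-the-shelf result; you would have to supply one, whereas the Mourre route the paper takes is already available in the cited literature and closes the argument in one line.
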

\begin{proof}
The non-existence of positive eigenvalue of $-\Delta+W$ in $(\frac{1}{4},\infty)$ is standard by Mourre estimates, see [Prop. 5.2 \cite{TYWE5E65GVU}] for the electric potential case and \cite{Donnelly} for the original idea. Thus by Lemma \ref{re3}, for all $\lambda>0$
\begin{align}\label{yv8}
I+\rho^{-\alpha}\mathfrak{R}(\lambda\pm i0)\rho^{\alpha}\mbox{  } {\rm{is}}\mbox{  }{\rm{invertible}}\mbox{  }{\rm{in}}\mbox{  } L^2.
\end{align}
By the identity $R_H(\frac{1}{4}+\lambda \pm i\epsilon)=R_0(\frac{1}{4}+\lambda \pm i\epsilon)(I+WR_0(\frac{1}{4}+\lambda \pm i\epsilon))^{-1}$ and Lemma \ref{somerthing}, (\ref{p9k8}) follows by
\begin{align}\label{yvu78}
\mathop {\sup }\limits_{\lambda  \in [\delta ,{\delta ^{ - 1}}],\epsilon\in [0,1]}{\|I+\rho^{-\alpha}WR_0(\frac{1}{4}+\lambda  \pm i\epsilon)\rho^{\alpha}\|_{L^2\to L^2}}\le C(\delta).
\end{align}
Denote $\mathbf{T}(\lambda,\epsilon)=\rho^{-\alpha}W(-\Delta-\frac{1}{4}-\lambda \pm i\epsilon)\rho^{\alpha}$. Assume (\ref{yvu78}) fails, then there exists $f_n\in L^2$ with $\|f_n\|_{L^2}=1$ and $(\lambda_n,\epsilon_n)\in [\delta,\delta^{-1}]\times [0,1]$ such that
\begin{align}\label{epho}
\|(I+{\bf T}(\lambda_n,\epsilon_n))f_n\|_{L^2}\to0.
\end{align}
Up to subsequence, we assume $\lambda_n\to \lambda_*$ and $\epsilon_n\to \epsilon_*$.
And one may assume $f_n\rightharpoonup f_*$ weakly in $L^2$, then (\ref{flg1}) to (\ref{flg4}) give
\begin{align}\label{epho5}
{\bf T}(\lambda_n,\epsilon_n)f_n\to {\bf T}(\lambda_*,\epsilon_*)f_* \mbox{  }{\rm {strongly}}\mbox{ }{\rm {in}} \mbox{  }L^2.
\end{align}
Thus (\ref{epho}) shows for $f_*\in L^2$ it holds in the distribution sense that
\begin{align}\label{epho2}
f_*+{\bf T}(\lambda_*,\epsilon_*)f_*=0.
\end{align}
If $\epsilon_*>0$, it is obvious $f_*=0$ by (\ref{epho2}). If $\epsilon_*=0$, we also have $f_*=0$ by (\ref{yv8}). Then (\ref{epho}) and (\ref{epho5}) yield
\begin{align}
\mathop {\lim }\limits_{n \to \infty } \|f_n\|_{L_x^2} = 0,
\end{align}
which contradicts with $\|f_n\|_{L^2_x}=1$.
\end{proof}

\subsection{High Frequency Estimates}
Let $\chi(x)\in C^{\infty}(\Bbb H^2)$  be a cutoff function which equals one in $\{x:d(x,0)\ge2\delta\}$ and vanishes $d(x,0)<\delta$.
To separate the long range part of the magnetic potential away, we introduce the following decomposition:
\begin{align}\label{zqa1}
\left\{
  \begin{array}{ll}
   V^{far}&=\chi(x)V, \mbox{  }V^{near}=(1-\chi(x))V\hbox{ } \\
    X^{far}&=2\chi(x)h^{ij}\mathcal{A}_i\frac{\partial}{\partial x_j}, \mbox{  }X^{near}=2(1-\chi(x))h^{ij}\mathcal{A}_i\frac{\partial}{\partial x_j}. \hbox{ }
  \end{array}
\right.
\end{align}

Let us consider the operator $H_1:=-\Delta+X^{far}+V^{far}$ first. The high frequency resolvent estimates for $H_1$ in the weighted space is given by the following lemma. The proof of Lemma \ref{long09} is an energy argument based on \cite{CCV} where high frequency resolvent estimates for Schr\"odinger operators with large long-range magnetic potentials in $\Bbb R^n$ are considered.

Since the polar coordinate $(r,\theta)$ for $M=\Bbb H^2$ will be used below, we first rewrite $H_1$ in the following form
\begin{align}\label{Trdf}
H_1\varphi=-\Delta\varphi+\mathbf{V}_{r}\partial_r\varphi+\mathbf{V}_{\theta}\sinh^{-1}r\partial_{\theta}\varphi+U_r\varphi
+U_{\theta}\varphi
\end{align}
where we denote
\begin{align*}
&\mathbf{V}_r=-\chi(x)\mathcal{A}_r, \mbox{  }\mathbf{V}_{\theta}=-\chi(x)\sinh^{-1}r\mathcal{A}_{\theta}\\
&U_r\varphi=-\chi(x)\mathcal{A}_r\mathcal{A}_r\varphi+\chi(x)(\widehat{\phi}_r\wedge\varphi)\widehat{\phi}_r\\
&U_{\theta}\varphi=-\chi(x)\sinh^{-2}r\mathcal{A}_{\theta}\mathcal{A}_{\theta}\varphi+\chi(x)\sinh^{-2}r(\widehat{\phi}_{\theta}\wedge\varphi)\widehat{\phi}_{\theta}
\end{align*}

Noticing that due to the fact $H$ is independent of the coordinates chosen for $M$, the results obtained in the polar coordinates can be directly transformed to coordinates given by (2.1).

Before stating the following lemma, we remark that although $H_1$ is not self-adjoint due to the cutoff function $\chi$, the numerical range of $H_1$ is still contained in the real line. This can be verified \underline{by applying the Coulomb condition.} Thus, $\frac{1}{4}\pm\lambda^2+i\epsilon$ lies in the resolvent set of $H_1$ for any $\epsilon>0$.
\begin{lemma}\label{long09}
Let $0<\alpha\ll 1$ be fixed and let $\delta>0$.  If $\lambda_0>0$ is sufficiently large depending on $\alpha,\delta$, then for all $\lambda>\lambda_0$, $\epsilon\in [0,1]$, it holds that
\begin{align}\label{long27}
\left\|\rho^{\alpha}R_{H_1}(\frac{1}{4}+\lambda^2\pm i\epsilon)\rho^{\alpha}\right\|_{L^2\to L^2}\le C\lambda^{-1},
\end{align}
where $C$ is independent of $\delta,\epsilon$.
\end{lemma}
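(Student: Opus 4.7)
\noindent\emph{Proof plan.} The strategy is a positive commutator (Morawetz-type) argument on $\mathbb{H}^2$, in the spirit of \cite{CCV}, which exploits the fact that $X^{far}$ and $V^{far}$ are supported in $\{d(x,0)\ge \delta\}$ by \eqref{zqa1}: the radial multiplier we use may therefore vanish near $r=0$ and entirely bypass the coordinate singularity of the polar frame. Fix $z = \tfrac14 + \lambda^2 \pm i\epsilon$, let $u = R_{H_1}(z)(\rho^\alpha g)$, and write $H_1$ in polar coordinates as in \eqref{Trdf}. By density it suffices to establish the dual bound
\begin{align*}
\lambda\,\|\rho^\alpha u\|_{L^2} \lesssim \|\rho^{-\alpha}(H_1-z)u\|_{L^2}
\end{align*}
for $u$ in a suitable dense class, and then take $\epsilon\downarrow 0$ at the end.

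The multiplier will be $\mathcal{M} := \tfrac12\bigl(a(r)\partial_r + (a(r)\partial_r)^{\ast}\bigr)$, symmetric in $L^2(\sinh r\,dr\,d\theta;\mathbb{C}^2)$, where $a\in C^\infty([0,\infty))$ is nondecreasing, bounded, vanishes on $[0,\delta]$, and satisfies $a'(r)\gtrsim \rho^{2\alpha}(r)=e^{-2\alpha r}$ on $[2\delta,\infty)$. Expanding $2\,\mathrm{Re}\,\langle \mathcal{M} u,(-\Delta-\tfrac14)u\rangle$ and integrating by parts gives, as the principal positive contribution,
\begin{align*}
\int a'(r)\bigl(|\partial_r u|^2 + \sinh^{-2}r\,|\partial_\theta u|^2\bigr)\,\mathrm{dvol} + \int b(r)|u|^2\,\mathrm{dvol},
\end{align*}
where $b(r)$ is an explicit bounded remainder arising from the hyperbolic volume factor and the symmetrization; the spectral shift $\tfrac14$ is absorbed cleanly through the usual $u\mapsto (\sinh r)^{1/2}u$ cancellation hidden in the Christoffel symbols, so no negative boundary term survives. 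The commutators $[\mathcal{M}, V^{far}]$ and $[\mathcal{M}, X^{far}]$ are then bounded by $C\int \rho^{\beta}(|\nabla u|^2+|u|^2)$ using the pointwise bounds of Proposition \ref{hutyhvf} and Lemma \ref{polarconnection}; choosing $\alpha$ so small that $2\alpha<\beta$, these errors are absorbed into the main term $\int a'(r)|\nabla u|^2 \gtrsim \int \rho^{2\alpha}|\nabla u|^2$.

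To extract the $\lambda^{-1}$ gain, pair $(H_1-z)u$ separately against $\rho^{2\alpha}u$; the first-order cross terms from the magnetic part cancel to leading order thanks to the Coulomb condition $d^\ast\mathcal{A}=0$ from Lemma \ref{huax}, yielding
\begin{align*}
\lambda^2\,\|\rho^\alpha u\|_{L^2}^{2} \lesssim \|\rho^{-\alpha}(H_1-z)u\|_{L^2}\|\rho^\alpha u\|_{L^2} + \int \rho^{2\alpha}|\nabla u|^2\,\mathrm{dvol} + (\text{absorbable}).
\end{align*}
The middle term is then controlled by the virial identity above, and for $\lambda\ge \lambda_0$ large the $\rho^\beta$-decay of the potentials dominates, giving the claimed $C\lambda^{-1}$ bound uniformly in $\epsilon\in[0,1]$. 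The main technical obstacle is the careful tracking of the error from $[\mathcal{M},X^{far}]$, which produces first derivatives of $\mathcal{A}$: the bounds in Lemma \ref{polarconnection} hold only away from the pole, but since both $\mathcal{M}$ and $X^{far}$ vanish on $[0,\delta]$ by construction, no singularity appears. A secondary subtlety is the non-self-adjointness of $H_1$ induced by the cutoff $\chi$; the numerical-range observation preceding the lemma (again using $d^\ast\mathcal{A}=0$) ensures $z$ remains in the resolvent set for $\epsilon>0$, justifying all manipulations, and the $\epsilon\downarrow 0$ statement follows by continuity.
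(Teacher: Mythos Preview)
Your overall strategy (a radial positive-commutator argument following \cite{CCV}) matches the paper's, but there is a genuine gap in how you close it. Because your multiplier $a$ vanishes on $[0,\delta]$, the virial identity only controls $\int a'(r)\,|\nabla u|^2$, which sees nothing on $\{r<\delta\}$. Yet in the second step you pair $(H_1-z)u$ against $\rho^{2\alpha}u$ and obtain
\[
\lambda^{2}\|\rho^{\alpha}u\|_{L^2}^{2}\ \lesssim\ \|\rho^{-\alpha}(H_1-z)u\|_{L^2}\,\|\rho^{\alpha}u\|_{L^2}\ +\ \int \rho^{2\alpha}|\nabla u|^{2}\,\mathrm{dvol}\ +\ (\text{absorbable}),
\]
and on $\{r<\delta\}$ one has $\rho^{2\alpha}\approx 1$, so this gradient integral is \emph{not} dominated by $\int a'|\nabla u|^{2}$. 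Trying to recover the missing piece by a localized energy pairing on a ball only reproduces $\lambda^{2}\|\rho^{\alpha}u\|^{2}$ on the right-hand side with an $O(1)$ constant, so nothing can be absorbed. The sentence ``the radial multiplier we use may therefore vanish near $r=0$ and entirely bypass the coordinate singularity'' is precisely the source of the problem: vanishing of $a$ there kills the control you need, not the singularity you want to avoid.

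The paper handles this differently. After the conjugation $u=(\sinh r)^{1/2}f$ (which removes the first-order radial term and produces the $\tfrac14\cosh^{2}r/\sinh^{2}r$ potential absorbing the spectral shift) and the semiclassical rescaling $\mathcal D_{r}=i\lambda^{-1}\partial_{r}$, the relevant weight is $\psi_{\alpha}(r)=(\tanh r)^{\alpha-1}\rho^{\alpha}$, which \emph{blows up} like $r^{\alpha-1}$ at the origin; it is $\psi_{\alpha}^{-1}$, sitting on the source $Pu$, that vanishes there. The argument then runs through an energy functional $E(r)$: one combines $\int_{0}^{\infty}\psi_{\alpha}(r)E(r)\,dr$ with the integration-by-parts identity for $\int_{0}^{\infty}\psi_{\alpha}\cosh^{-1}r\,\sinh r\,E'(r)\,dr$ to obtain simultaneous control of $\|\psi_{\alpha}^{1/2}u\|$, $\|\psi_{\alpha}^{1/2}\mathcal D_{r}u\|$ and the angular piece over \emph{all} of $\mathbb H^{2}$, including the origin (see \eqref{7nmbv}). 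The $\epsilon$-uniformity is then supplied separately by the unweighted claims \eqref{k4}--\eqref{k5}. The feature you were aiming for --- a weight that is small near the origin --- is exactly what is exploited in the \emph{next} step (Lemma~\ref{h}), where $\psi_{\alpha}^{-1/2}\lesssim r^{(1-\alpha)/2}$ on the support of $V^{\rm near},X^{\rm near}$ makes those terms perturbative; but it is not the mechanism for the present lemma.
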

\begin{proof}
See \cite{CCV} for a $\Bbb R^n$ analogy. We will only prove (\ref{long27}) for $R_{H_1}(\frac{1}{4}+\lambda^2+ i\epsilon)$, the negative sign case is the same.
Introduce a weight function
\begin{align*}
\psi_{\alpha}(r)=(\tanh r)^{\alpha-1}\rho^{\alpha}.
\end{align*}
Let $(r,\theta)\in\Bbb R^+\times[0,2\pi]$ be the polar coordinates for $\Bbb H^2$, and $u=\sinh^{\frac{1}{2}}r f$. Let $X=(\Bbb R^+\times[0,2\pi],drd\theta)$.
Define the operator $P:g\longmapsto Pg$ by
\begin{align*}
Pg=\lambda^{-2}\sinh^{\frac{1}{2}}r(H_1-\frac{1}{4}-\lambda^2+i\epsilon)\left(\sinh^{-\frac{1}{2}}rg\right).
\end{align*}
Since $\rho^{\alpha}\le\psi_{\alpha}$, for (\ref{long27}) it suffices to prove
\begin{align}\label{long}
\|\psi_{\alpha}u\|_{L^2(X)}\le C\lambda \|\psi^{-1}_{\alpha}Pu\|_{L^2(X)}.
\end{align}
{\bf All the inner product $\langle \cdot,\cdot\rangle$ in this proof denotes  $\langle \cdot,\cdot\rangle_{L^2(X)}$.}
Denote
\begin{align*}
\mathcal{D}_r=i\lambda^{-1}\partial_r,\mbox{  }\mathcal{D}_{\theta}=i\lambda^{-1}\partial_{\theta}.
\end{align*}
It is easy to verify
\begin{align*}
Pu&=\mathcal{D}^2_ru+\sinh^{-2}r\mathcal{D}^2_{\theta}u-\lambda^{-2}\frac{1}{4}\frac{\cosh^2r}{\sinh^2r}u+\lambda^{-2}Lu
+(i\epsilon\lambda^{-2}-1)u\\
&+\lambda^{-2}\mathbf{V}_r\partial_ru
+\lambda^{-2}\mathbf{V}_{\theta}\sinh^{-1}r\partial_{\theta}u,
\end{align*}
where $L=\frac{1}{2}-\frac{1}{2}\mathbf{V}_r\frac{\cosh r}{\sinh r}+U_r+U_{\theta}$. (see (\ref{Trdf}) for $\mathbf{V}_r,U_{r,\theta}$)\\
\noindent Divide $L$ into the long range part $L_0$ and the short range part $L_1$ by
\begin{align}
L_0=\frac{1}{2}+U_r+U_{\theta},\mbox{  }L_1=-\frac{1}{2}\mathbf{V}_r\frac{\cosh r}{\sinh r}.
\end{align}
Define the energy functional $E(r)$ by
\begin{align}
E(r)&=\|\mathcal{D}_ru\|^2_2+\langle\lambda^{-2}\sinh^{-2} r\Lambda_{\theta}u+u,u\rangle-\lambda^{-2}\langle L_0u,u\rangle\label{xscz6v9}\\
&-\lambda^{-2}\Re\langle \mathbf{V}_{\theta}\sinh^{-1}r\partial_{\theta}u,u\rangle,\label{z6v9}
\end{align}
where we denote
\begin{align*}
\Lambda_{\theta}u=\partial^2_{\theta}u-\frac{1}{4}(\cosh^2 r) u.
\end{align*}
Then by direct calculations we have
\begin{align*}
\frac{dE}{dr}&=\underbrace{\lambda^{-2}\langle \partial_ru,\partial^2_ru\rangle+\lambda^{-2}\langle \partial^2_ru,\partial_ru\rangle}_{d}-2\langle \lambda^{-2}(\cosh r)\sinh ^{-3}r\Lambda_{\theta}u,u\rangle\\
&-\lambda^{-2}\langle (\partial_r L_0)u,u\rangle-\lambda^{-2}\Re\langle\partial_r(\mathbf{V}_{\theta}\sinh^{-1}r)\partial_{\theta}u,u\rangle\\
&+\underbrace{\langle(\lambda^{-2}\sinh^{-2}r\Lambda_{\theta}+1)\partial_ru,u\rangle}_{e}-\underbrace{\lambda^{-2}\langle L_0\partial_ru,u\rangle}_{c}-\underbrace{\lambda^{-2}\Re\langle \mathbf{V}_{\theta}\sinh^{-1}r\partial_{\theta}\partial_{r}u,u\rangle}_{a}\\
&+\underbrace{\langle\lambda^{-2}\sinh^{-2}r\Lambda_{\theta}u+u,\partial_ru\rangle}_{e}-\underbrace{\lambda^{-2}\langle L_0u,\partial_ru\rangle}_{c}-\underbrace{\lambda^{-2}\Re\langle \mathbf{V}_{\theta}\sinh^{-1}r\partial_{\theta}u,\partial_ru\rangle}_{a}\\
&-\frac{1}{2}(\sinh^{-1}r)\lambda^{-2}\cosh r\langle u,u\rangle.
\end{align*}
Meanwhile, we have for $\widetilde{P}=P-i\epsilon\lambda^{-2}-\lambda^{-2}L_1$,
\begin{align}
2\lambda \Im \langle\widetilde{P}u,\mathcal{D}_ru\rangle=K+\overline{{K}},
\end{align}
where $K$ denotes
\begin{align*}
K&=\underbrace{\lambda^{-2}\langle\partial^2_ru,\partial_ru\rangle}_{d}+
\underbrace{\langle(\lambda^{-2}\sinh^{-2}r\partial^2_{\theta}+1)u,\partial_ru\rangle}_{e}-\underbrace{\lambda^{-2}\langle L_0u,\partial_ru\rangle}_{c}\\
&-\underbrace{\langle\lambda^{-2} \mathbf{V}_{\theta}\sinh^{-1}r\partial_{\theta}u,\partial_ru\rangle}_{a}-\underbrace{\lambda^{-2}\langle \mathbf{V}_r\partial_ru,\partial_ru\rangle}_{b}.
\end{align*}
Integration by parts with respect to $\theta$ in $[0,2\pi]$ yields
\begin{align}
\langle \partial^2_{\theta}\partial_ru,u\rangle=\langle \partial_ru,\partial^2_{\theta}u\rangle.
\end{align}
By integration by parts and the skew-symmetry of $\mathbf{V}_{\theta}$, one obtains
\begin{align}
\langle \mathbf{V}_{\theta}\partial_{\theta}\partial_ru,u\rangle=\langle\partial_ru, \mathbf{V}_{\theta}\partial_{\theta}u\rangle
-\langle\partial_{\theta}(\mathbf{V}_{\theta})\partial_ru,u\rangle.
\end{align}
Hence we obtain
\begin{align}
&\Re\langle \sinh^{-1}r\mathbf{V}_{\theta}\partial_{\theta}\partial_ru,u\rangle
+\Re\langle \sinh^{-1}r\mathbf{V}_{\theta}\partial_{\theta}u,\partial_ru\rangle\nonumber\\
&=-\Re\langle \sinh^{-1}r(\partial_{\theta}\mathbf{V}_{\theta})\partial_ru,u\rangle+2\Re\langle \sinh^{-1}r\mathbf{V}_{\theta}\partial_{\theta}u,\partial_ru\rangle.\label{2w2}
\end{align}
This will lead to part cancellation of all the terms with low index $(a)$ in $\frac{dE}{dr}-2\lambda\langle \widetilde{P}u,\mathcal{D}_ru\rangle$.
Since $L_0$ is self-adjoint, we have
\begin{align}\label{2w1}
\langle L_0\partial_ru,u\rangle+\langle L_0u,\partial_ru\rangle-2\Re\langle L_0u,\partial_ru\rangle=0.
\end{align}
This will lead to part cancellation of all the terms with low index $(c)$. By (\ref{2w1}), (\ref{2w2}) it is easy to see
all the terms with low index $(d)$ appearing in $\frac{dE}{dr}-2\lambda\langle \widetilde{P}u,\mathcal{D}_ru\rangle$ cancel.
And by $\Re\langle \mathbf{V}_r\partial_ru,\partial_ru\rangle=0$ ($\mathbf{V}_r$ is skew-symmetric), the (b) term in $K+\bar{K}$ vanishes. Therefore, by (\ref{2w1}), (\ref{2w2}) and comparing the terms according to their low indexes we conclude
\begin{align}
\frac{dE}{dr}&=2\lambda\langle \widetilde{P}u,\mathcal{D}_ru\rangle-2\lambda^{-2}(\cosh r)(\sinh^{-3} r)\langle\Lambda_{\theta}u,u\rangle\label{kukui}\\
&-\lambda^{-2}\langle (\partial_rL_0)u,u\rangle-\lambda^{-2}\Re\langle\partial_r(\mathbf{V}_{\theta}\sinh^{-1}r)\partial_{\theta}u,u\rangle\nonumber\\
&-\lambda^{-2}\Re\langle(\partial_{\theta}\mathbf{V}_{\theta})\sinh^{-1}r\partial_{r}u,u\rangle-\frac{1}{2}\lambda^{-2}\coth r\langle u,u\rangle\label{upcs}
\end{align}
Since $\Lambda_{\theta}$ is positive, we define $\Lambda^{\frac{1}{2}}_{\theta}$ by $\langle \Lambda^{\frac{1}{2}}_{\theta}u,\Lambda^{\frac{1}{2}}_{\theta}u\rangle=\langle \partial_{\theta}u,\partial_{\theta}u\rangle+\cosh^2r\langle u,u\rangle $. Then by Lemma \ref{polarconnection} and the fact that the supports of $\mathbf{V}_{\theta,r},U_{r,\theta}$ are away from zero and noticing that the non-positive last term in (\ref{upcs}) can be absorbed to the second term on RHS of (\ref{kukui}), one obtains
\begin{align}
\frac{dE}{dr}&\ge \frac{1}{2}\lambda^{-2}(\cosh r)(\sinh^{-3} r)\langle\Lambda^{\frac{1}{2}}_{\theta}u,\Lambda^{\frac{1}{2}}_{\theta}u\rangle\nonumber\\
&-C(\delta)\lambda^{-2}\|\psi_{\alpha}u\|^2_{L^2 (X)}-C(\delta)\lambda^{-1}\|\psi_{\alpha}\mathcal{D}_ru\|^2_{L^2 (X)}-2\lambda N(r),\label{8n}
\end{align}
where $N(r)=\big|\langle \widetilde{P}u,\mathcal{D}_ru\rangle\big|$. Therefore,
\begin{align}
E(r)&=-\int^{\infty}_r\frac{d}{d\widetilde{r}}E(\widetilde{r})d\widetilde{r}\nonumber\\
&\le C(\delta)\lambda^{-1}\|\psi_{\alpha}\mathcal{D}_ru\|^2_{L^2(X)}+C(\delta)\lambda^{-2}\|\psi_{\alpha}u\|^2_{L^2(X)}+2\lambda\int^{\infty}_0N(r)dr.\label{tin6}
\end{align}
And by integration by parts on $[0,2\pi]$, one has the last term in (\ref{z6v9}) is equal to
\begin{align}\label{tin9}
\lambda^{-2}\Re\langle u\sinh^{-1} r\partial_{\theta}\mathbf{V}_{\theta},u\rangle,
\end{align}
which can be absorbed to the $\langle u,u\rangle$ term in (\ref{xscz6v9}) if $\lambda$ is sufficiently large depending on $\delta$.
And it is easy to see for $\lambda$ sufficiently large the term $-\lambda^{-2}\langle L_0u, u\rangle$ can also be absorbed into $\langle u,u\rangle$. Hence, for $\lambda$ sufficiently large $E(r)$ defined by (\ref{xscz6v9}) has the following  bound
\begin{align}\label{tin9}
-E(r)\le  \frac{1}{2}\lambda^{-2}(\sinh^{-2} r)\langle\Lambda^{\frac{1}{2}}_{\theta}u,\Lambda^{\frac{1}{2}}_{\theta}u\rangle.
\end{align}
Multiplying (\ref{8n}) with $\psi_{\alpha}\cosh^{-1}r\sinh r$, by integration by parts we deduce
\begin{align}\label{7bvc}
\int^{\infty}_0\psi_{\alpha}\cosh^{-1}r\sinh rE'(r)dr=-\int^{\infty}_0 \frac{d}{dr}(\psi_{\alpha}\cosh^{-1}r\sinh r)E(r)dr.
\end{align}
Since $\frac{d}{dr}(\psi_{\alpha}\cosh^{-1}r\sinh r)<c\alpha\psi_{\alpha}$ with a universal constant $c>0$, then (\ref{7bvc}) shows
\begin{align}\label{7bv}
\int^{\infty}_0\psi_{\alpha}\cosh^{-1}r\sinh rE'(r)dr\lesssim \alpha\int^{\infty}_0 \psi_{\alpha}|E(r)|dr.
\end{align}
Meanwhile (\ref{tin6}) and (\ref{tin9}) imply
\begin{align}
\int^{\infty}_0\psi_{\alpha}|E(r)|dr&\le C(\delta,\alpha)\lambda^{-1}\|\psi_{\alpha}\mathcal{D}_ru\|^2_{L^2(X)}
+C(\delta,\alpha)\lambda^{-2}\|\psi_{\alpha}u\|^2_{L^2(X)}\nonumber\\
&+2C(\alpha)\lambda\int^{\infty}_0N(r)dr+\frac{1}{2}\lambda^{-2}\|\psi^{\frac{1}{2}}_{\alpha}(\sinh^{-1}r)\partial_{\theta}u\|^2_{L^2(X)}.\label{7bm}
\end{align}
Therefore by (\ref{7bm}), (\ref{7bv}) and (\ref{8n}), we obtain
\begin{align*}
&\frac{1}{2}\lambda^{-2}\|\psi^{\frac{1}{2}}_{\alpha}(\sinh^{-1}r)\partial_{\theta}u\|^2_{L^2(X)}\nonumber\\
&\le c\alpha\lambda^{-2}\|\psi^{\frac{1}{2}}_{\alpha}(\sinh^{-1}r)\partial_{\theta}u\|^2_{L^2(X)}
+C(\delta,\alpha)\lambda^{-2}\|\psi_{\alpha}u\|^2_{L^2(X)}\nonumber\\
&+C(\delta,\alpha)\lambda^{-1}\|\psi_{\alpha}\mathcal{D}_ru\|^2_{L^2(X)}+2C(\alpha)\lambda\int^{\infty}_0N(r)dr.
\end{align*}
Let $0<\alpha\ll 1$  be fixed say $\alpha=1/100$. Then we conclude
\begin{align}
&\lambda^{-2}\|\psi^{\frac{1}{2}}_{\alpha}(\sinh^{-1}r)\partial_{\theta}u\|^2_{L^2(X)}\nonumber\\
&\le C(\delta,\alpha)\lambda^{-2}\|\psi_{\alpha}u\|^2_{L^2(X)}
+C(\delta,\alpha)\lambda^{-1}\|\psi_{\alpha}\mathcal{D}_ru\|^2_{L^2(X)}+2C(\alpha)\lambda\int^{\infty}_0N(r)dr.\label{k1}
\end{align}
Meanwhile, since $\|L_0\|_{L^{\infty}}$ is bounded ({\bf V} vanishes near zero) , one has by (\ref{z6v9}) that when $\lambda\gg 1$
\begin{align}
\int^{\infty}_0\psi_{\alpha}(r)E(r)dr&\ge \|\psi^{\frac{1}{2}}_{\alpha}\mathcal{D}_ru\|^2_{L^(X)}+\frac{1}{2}\|\psi^{\frac{1}{2}}_{\alpha}u\|^2_{L^(X)}\nonumber\\
&-2\lambda^{-2}\|\psi^{\frac{1}{2}}_{\alpha}\sinh^{-1}r\Lambda^{\frac{1}{2}}_{\theta}u\|^2_{L^2(X)}.\label{k2}
\end{align}
Combing (\ref{k1}), (\ref{7bm}) with (\ref{k2}), we arrive at
\begin{align}
&\|\psi^{\frac{1}{2}}_{\alpha}u\|^2_{L^2(X)}+\|\psi^{\frac{1}{2}}_{\alpha}\mathcal{D}_ru\|^2_{L^2(X)}
+\|\psi^{\frac{1}{2}}_{\alpha}\sinh^{-1}r\Lambda^{\frac{1}{2}}_{\theta}u\|^2_{L^2(X)}\nonumber\\
&\le C(\delta,\alpha)\lambda^{-1}\|\psi^{\frac{1}{2}}_{\alpha}\mathcal{D}_ru\|^2_{L^2(X)}+C(\delta,\alpha)\lambda^{-2}\|\psi^{\frac{1}{2}}_{\alpha}u\|^2_{L^2(X)}\nonumber\\
&+C(\alpha)\lambda\int^{\infty}_0N(r)dr.\label{7nmbv}
\end{align}
Define $\mathcal{P}=\widetilde{P}+i\epsilon\lambda^{-2}=P-\lambda^{-2}L_1$,  $M^*=|\langle \mathcal{P}u,\mathcal{D}_ru\rangle|$, and $M(r)=|\langle Pu,\mathcal{D}_ru\rangle|$. By Lemma  \ref{polarconnection} and the support of $\bf V$, we see
\begin{align}
\lambda \int^{\infty}_0M^*(r)dr&\le C(\delta)\lambda^{-1}\|\psi^{\frac{1}{2}}_{\alpha}u\|^2_{L^2(X)}+C(\delta)\lambda^{-2}\|\psi^{\frac{1}{2}}_{\alpha}\mathcal{D}_ru\|^2_{L^2(X)}\nonumber\\
&+ \mu^{-1}\lambda^2\|\psi^{-\frac{1}{2}}_{\alpha}Pu\|^2_{L^2(X)}+\mu\|\psi^{\frac{1}{2}}_{\alpha}\mathcal{D}_ru\|^2_{L^2(X)}.\label{7z3nmk}
\end{align}
Now we are ready to show (\ref{long}) is a corollary of the following {\bf Claim}: when $\lambda\gg1$
\begin{align}
&\epsilon\lambda^{-2}\|u\|^2_{L^2(X)}\le \int^{\infty}_0|\langle Pu,u\rangle|dr+C(\delta)\lambda^{-2}\|\psi^{\frac{1}{2}}_{\alpha}u\|^2_{L^2(X)}\label{k4} \\
&\|\mathcal{D}_ru\|^2_{L^2(X)}\le 2\int^{\infty}_0|\langle Pu,u\rangle|dr+4\|u\|^2_{L^2(X)}\label{k5}
\end{align}
In fact, inserting (\ref{k4}), (\ref{k5}) and (\ref{7z3nmk}) to the inequality
$$N(r)\le M^*(r)+\lambda^{-1}\epsilon\big(\|u\|^{2}_{L^2{(X)}}+\|\mathcal{D}_ru\|^2_{L^2(X)}\big),
$$
one immediately obtains that the RHS of (\ref{7nmbv}) is bounded by
\begin{align}
&C(\delta,\alpha)\lambda^{-1}\|\psi^{\frac{1}{2}}_{\alpha}\mathcal{D}_ru\|^2_{L^2(X)}
+C(\delta,\alpha)\lambda^{-2}\|\psi^{\frac{1}{2}}_{\alpha}u\|^2_{L^2(X)}\nonumber\\
&+C(\alpha)\mu^{-1}\lambda^2\|\psi^{\frac{1}{2}}_{\alpha}Pu\|^2_{L^2(X)}+\mu\|\psi^{\frac{1}{2}}_{\alpha}u\|^2_{L^2(X)}. \label{k51}
\end{align}
Let $0<\alpha\ll1$ first be determined, then take $0<\mu\ll1$, and finally let $\lambda\gg1$ depending on the size of $C(\delta,\alpha)$. Then (\ref{k51}) can be absorbed by the left of (\ref{7nmbv}) and thus giving (\ref{long}) with $C$ independent of $\delta$.

Hence it remains to verify (\ref{k4}) and (\ref{k5}).
Consider $\int^{\infty}_0\Re \langle Pu,u\rangle dr$, then (\ref{k5}) follows easily by integration by parts and the $L^{\infty}$ bounds of $\partial_r\mathbf{V}_r$ and $\mathbf{V}_{\theta}$ implied by Lemma \ref{polarconnection} (recall that $\bf V$ vanishes near zero).  Applying integration by parts and Lemma \ref{polarconnection}, we also have (\ref{k4}) by considering $\int^{\infty}_0\Im \langle Pu,u\rangle dr$,.
\end{proof}

We also need the gradient resolvent estimates for $H_1$.
\begin{lemma}\label{long90}
Let $0<\alpha\ll 1$ be fixed and let $\delta>0$.  If $\lambda_0>0$ is sufficiently large depending on $\alpha,\delta$, then for all $\lambda>\lambda_0$, $\epsilon\in [0,1]$, it holds that
\begin{align}\label{long2}
\|\psi^{\frac{1}{2}}_{\alpha}\nabla R_{H_1}(\frac{1}{4}+\lambda^2\pm i\epsilon)\psi^{\frac{1}{2}}_{\alpha}\|_{L^2\to L^2}\le C,
\end{align}
where $C$ is independent of $\delta,\epsilon$.
\end{lemma}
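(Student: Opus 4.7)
The plan is to follow the same positive-commutator energy identity used in the proof of Lemma \ref{long09}, but to extract the gradient-type quantities that are already present on the left-hand side of (\ref{7nmbv}) rather than just its $L^{2}$ component. The key observation is that, after absorbing the small $\lambda^{-1}$ and $\lambda^{-2}$ terms for $\lambda\gg 1$, the very same inequality (\ref{7nmbv}) (combined with (\ref{7z3nmk}) and the two claims (\ref{k4})--(\ref{k5})) controls, in one shot, the three pieces $\|\psi_\alpha^{1/2}u\|^{2}$, $\|\psi_\alpha^{1/2}\mathcal D_{r}u\|^{2}$ and $\|\psi_\alpha^{1/2}\sinh^{-1}r\,\Lambda_\theta^{1/2}u\|^{2}$. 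Unwinding the factor $\mathcal D_{r}=i\lambda^{-1}\partial_{r}$ and using $\|\psi_\alpha^{-1/2}Pu\|_{L^{2}(X)}=\lambda^{-2}\|\psi_\alpha^{-1/2}g\|_{L^{2}(\Bbb H^{2})}$ for $g=(H_{1}-\tfrac14-\lambda^{2}\mp i\epsilon)f$ and $u=\sinh^{1/2}r\,f$, this yields
\begin{align*}
\|\psi_\alpha^{1/2}\partial_{r}u\|_{L^{2}(X)}^{2}+\|\psi_\alpha^{1/2}\sinh^{-1}r\,\Lambda_\theta^{1/2}u\|_{L^{2}(X)}^{2}\;\lesssim\;\|\psi_\alpha^{-1/2}g\|_{L^{2}(\Bbb H^{2})}^{2}.
\end{align*}

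Next I translate back to a gradient bound on $f$. In polar coordinates $\partial_{r}f=\sinh^{-1/2}r(\partial_{r}u-\tfrac12\coth r\,u)$ and $\partial_{\theta}f=\sinh^{-1/2}r\,\partial_{\theta}u$, so with the volume form $dvol=\sinh r\,drd\theta$ one has
\begin{align*}
\|\psi_\alpha^{1/2}\nabla f\|_{L^{2}(\Bbb H^{2})}^{2}\le 2\|\psi_\alpha^{1/2}\partial_{r}u\|_{L^{2}(X)}^{2}+\tfrac12\int_{X}\psi_\alpha\coth^{2}r\,|u|^{2}drd\theta+\|\psi_\alpha^{1/2}\sinh^{-1}r\,\partial_{\theta}u\|_{L^{2}(X)}^{2}.
\end{align*}
The first and third terms are immediately controlled by the estimate above. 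The middle term, which looks singular near $r=0$, is exactly of the form $\int\psi_\alpha\sinh^{-2}r\cosh^{2}r\,|u|^{2}$, i.e.\ the zeroth-order piece of $\|\psi_\alpha^{1/2}\sinh^{-1}r\,\Lambda_\theta^{1/2}u\|^{2}$ (recall $\langle\Lambda_\theta^{1/2}u,\Lambda_\theta^{1/2}u\rangle_{\theta}=\|\partial_\theta u\|_{\theta}^{2}+\cosh^{2}r\|u\|_{\theta}^{2}$), and is therefore also absorbed. Assembling all three contributions gives $\|\psi_\alpha^{1/2}\nabla f\|_{L^{2}(\Bbb H^{2})}\le C\|\psi_\alpha^{-1/2}g\|_{L^{2}(\Bbb H^{2})}$, which is precisely (\ref{long2}); the constant is independent of $\epsilon$ and, for $\lambda\ge \lambda_{0}(\alpha,\delta)$, also of $\delta$, exactly as in Lemma \ref{long09}.

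The main subtle step is the one just described: the cross term $-\tfrac12\coth r\,u$ in the expression for $\partial_{r}f$ produces a $\coth^{2}r\,|u|^{2}$ contribution that is genuinely singular near $r=0$ and is \emph{not} dominated by $\|\psi_\alpha^{1/2}u\|^{2}$ alone. If the energy identity of Lemma \ref{long09} had only controlled $\|\psi_\alpha^{1/2}u\|$ and $\|\psi_\alpha^{1/2}\mathcal D_{r}u\|$, the conversion from $u$-derivatives to $\nabla f$ would fail; the proof closes only because the same identity also delivers a bound on $\sinh^{-1}r\,\Lambda_\theta^{1/2}u$, whose $\cosh^{2}r$ zeroth-order piece (intrinsic to the shifted Laplacian on $\Bbb H^{2}$) is precisely $\coth^{2}r$ after dividing by $\sinh^{2}r$. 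Everything else---handling the cutoff in $X^{\mathrm{far}},V^{\mathrm{far}}$, the skew-symmetry of $\mathcal A$, and so on---is already taken care of in the derivation of (\ref{7nmbv}), so no new commutator computation is required.
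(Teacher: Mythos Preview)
Your proof is correct and follows essentially the same route as the paper's. Both arguments read off from the final energy inequality (\ref{7nmbv}) (after the absorption step (\ref{k51})) not only $\|\psi_\alpha^{1/2}u\|$ and $\|\psi_\alpha^{1/2}\mathcal D_r u\|$ but also the weighted $\Lambda_\theta^{1/2}$-norm, then convert $u=\sinh^{1/2}r\,f$ back to $\nabla f$; the key observation in both is that the $\coth^2 r\,|u|^2$ term produced by $\partial_r f$ is precisely the $\cosh^2 r$ zeroth-order piece of $\|\psi_\alpha^{1/2}\sinh^{-1}r\,\Lambda_\theta^{1/2}u\|^2$, which handles the $r\to 0$ singularity. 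The paper makes this explicit by splitting $X=X_I\cup X_{II}$ into $r\ge 1$ and $r\le 1$, whereas you absorb the term directly into the $\Lambda_\theta^{1/2}$ bound---these are the same computation.
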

\begin{proof}
As before we only prove the positive sign $\nabla R_{H_1}(\frac{1}{4}+\lambda^2+ i\epsilon)$.
(\ref{k51}) and (\ref{7nmbv}) yield
\begin{align*}
&\|\psi^{\frac{1}{2}}_{\alpha}(\sinh^{-1}r)\partial_{\theta}u\|_{L^2(X)}
+\|\psi^{\frac{1}{2}}_{\alpha}(\coth r) u\|_{L^2(X)}\\
&+\|\psi^{\frac{1}{2}}_{\alpha}u\|_{L^2(X)}+\|\psi^{\frac{1}{2}}_{\alpha}\mathcal{D}_ru\|_{L^2(X)}\\
&\lesssim \lambda\|\psi^{\frac{1}{2}}_{\alpha}Pu\|_{L^2(X)}.
\end{align*}
Recall $u=\sinh^{\frac{1}{2}}rf$, $X=\Bbb R^+\times[0,2\pi]$.  $\mathcal{D}_r=i\lambda^{-1}\partial_r$, $\mathcal{D}_{\theta}=i\lambda^{-1}\partial_{\theta}$, $P=\lambda^{-2}\sinh^{\frac{1}{2}}r(H_1-\frac{1}{4}-\lambda^2+i\epsilon)\sinh^{-\frac{1}{2}}r$, then one has
\begin{align*}
\lambda\|\psi^{\frac{1}{2}}_{\alpha}Pu\|_{L^2(X)}&=\lambda^{-1}\|\psi^{\frac{1}{2}}_{\alpha}(H_1-\frac{1}{4}-\lambda^2+i\epsilon)f\|_{L^2(\Bbb H^2)}\\
\|\psi^{\frac{1}{2}}_{\alpha}\sinh^{-1}r\mathcal{D}_{\theta}u\|_{L^2(X)}&=\lambda^{-1}\|\psi^{\frac{1}{2}}_{\alpha}(r)\nabla_{\theta}f\|_{L^2(\Bbb H^2)},
\end{align*}
where with abuse of notation we denote $\nabla_{\theta}f=\partial_{\theta}f d\theta $, $\nabla_{r}f=\partial_{r}fdr$.
Thus we have
\begin{align}
\|\psi^{\frac{1}{2}}_{\alpha}(r)\nabla_{\theta}f\|_{L^2(\Bbb H^2)}+\|\psi^{\frac{1}{2}}_{\alpha}f\|_{L^2(\Bbb H^2)}\le \|\psi^{\frac{1}{2}}_{\alpha}(H_1-\frac{1}{4}-\lambda^2+i\epsilon)f\|_{L^2(\Bbb H^2)}.\label{po90h}
\end{align}
By direct calculations, one has
\begin{align}
\|\lambda^{-1}\psi^{\frac{1}{2}}_{\alpha}\partial_rf\|_{L^2(\Bbb H^2)}\lesssim\|\psi^{\frac{1}{2}}_{\alpha}{\mathcal D}_ru\|_{L^2(X)} +\lambda^{-1}\|\psi^{\frac{1}{2}}_{\alpha}(\sinh^{-\frac{1}{2}}r)\cosh rf\|_{L^2(X)}.\label{7cep0}
\end{align}
Split $X$ into $X_{I}=[1,\infty)\times[0,2\pi]$ and $X_{II}=(0,1]\times[0,2\pi]$. Then it is easily seen that
\begin{align*}
\|\psi^{\frac{1}{2}}_{\alpha}(\sinh^{-\frac{1}{2}}r)\cosh r\psi^{\frac{1}{2}}_{\alpha}f\|_{L^2(X_I)}\le C\|\psi^{\frac{1}{2}}_{\alpha}f\|_{L^2(\Bbb H^2)}.
\end{align*}
Meanwhile we have
\begin{align}
\|\psi^{\frac{1}{2}}_{\alpha}(\sinh^{-\frac{1}{2}}r)\cosh r\psi^{\frac{1}{2}}_{\alpha}f\|_{L^2(X_{II})}\lesssim \|\psi^{\frac{1}{2}}_{\alpha}(\sinh^{-1}r)\cosh r\psi^{\frac{1}{2}}_{\alpha}u\|_{L^2(X_{II})}.\label{9786bvf}
\end{align}
Thus by (\ref{po90h}), (\ref{9786bvf}) and (\ref{7cep0}), we conclude
\begin{align}\label{y9vfg}
 \|\psi^{\frac{1}{2}}_{\alpha}f\|_{L^2(\Bbb H^2)}+\|\psi^{\frac{1}{2}}_{\alpha}\nabla f\|_{L^2(\Bbb H^2)}\lesssim \|\psi^{-\frac{1}{2}}_{\alpha}(-\Delta-\frac{1}{4}-\lambda^2\pm i\epsilon)f\|_{L^2(\Bbb H^2)}.
\end{align}
\end{proof}

Now we transform the resolvent estimates for the long range part $H_1$ to the full Schr\"odinger operator $H$ by viewing the short range part as a $\psi^{\frac{1}{2}}_{\alpha}L^{\infty}$ perturbation for $H_1$.
\begin{lemma}\label{h}
Let $0<\alpha\ll 1$ be fixed and let $\delta>0$.  If $\lambda_0>0$ is sufficiently large depending on $\alpha,\delta$, then for all $\lambda>\lambda_0$, $\epsilon\in [0,1]$, it holds that
\begin{align}
\left\|\rho^{\alpha}R_{H}(\frac{1}{4}+\lambda^2\pm i\epsilon)\rho^{\alpha}\right\|_{L^2\to L^2}&\le C\lambda^{-1},\label{long2}\\
\left\|\rho^{\alpha}\nabla R_{H}(\frac{1}{4}+\lambda^2\pm i\epsilon)\rho^{\alpha}\right\|_{L^2\to L^2}&\le C,\label{long21}
\end{align}
where $C$ is independent of $\epsilon,\lambda$.
\end{lemma}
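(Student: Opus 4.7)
The plan is to treat $H = H_1 + X^{near}+V^{near}$ as a perturbation of the long-range operator $H_1$, exploiting that $X^{near}+V^{near}$ is supported in $B_{2\delta}$. Setting $z=\frac{1}{4}+\lambda^2\pm i\epsilon$ and $u := R_H(z)\rho^\alpha f$, the resolvent identity
\begin{equation*}
u = R_{H_1}(z)\rho^\alpha f - R_{H_1}(z)(X^{near}+V^{near})u
\end{equation*}
drives a coupled bootstrap for $\|\rho^\alpha u\|_{L^2}$ and $\|\rho^\alpha \nabla u\|_{L^2}$ using the weighted resolvent estimates for $H_1$ from Lemmas \ref{long09} and \ref{long90}.

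On $B_{2\delta}$ the weight $\rho^{-\alpha}$ is uniformly bounded, while $\psi_\alpha^{-1/2}(r) \sim r^{(1-\alpha)/2}$ is as small as $\delta^{(1-\alpha)/2}$. Combined with the $L^\infty$ bounds on $\mathcal{A}$ and $V$ from Proposition \ref{hutyhvf} and the trivial local bounds $\|u\|_{L^2(B_{2\delta})} + \|\nabla u\|_{L^2(B_{2\delta})} \lesssim \|\rho^\alpha u\|_{L^2} + \|\rho^\alpha\nabla u\|_{L^2}$, this yields
\begin{align*}
\|\rho^{-\alpha}(X^{near}+V^{near})u\|_{L^2} &\lesssim \|\rho^\alpha u\|_{L^2}+\|\rho^\alpha \nabla u\|_{L^2},\\
\|\psi_\alpha^{-1/2}(X^{near}+V^{near})u\|_{L^2} &\lesssim \delta^{(1-\alpha)/2}\bigl(\|\rho^\alpha u\|_{L^2}+\|\rho^\alpha \nabla u\|_{L^2}\bigr).
\end{align*}
Feeding the first into the $\rho^\alpha$-weighted $L^2$ estimate from Lemma \ref{long09}, and the second into the $\psi_\alpha^{1/2}$-weighted gradient estimate from Lemma \ref{long90} (together with the pointwise bound $\rho^\alpha \le \psi_\alpha^{1/2}$, which follows from $\rho^{2\alpha}(\tanh r)^{1-\alpha}\le \rho^\alpha\le 1$), one obtains
\begin{align*}
\|\rho^\alpha u\|_{L^2} &\le C\lambda^{-1}\|f\|_{L^2} + C\lambda^{-1}\bigl(\|\rho^\alpha u\|_{L^2}+\|\rho^\alpha\nabla u\|_{L^2}\bigr),\\
\|\rho^\alpha \nabla u\|_{L^2} &\le C\|f\|_{L^2} + C\delta^{(1-\alpha)/2}\bigl(\|\rho^\alpha u\|_{L^2}+\|\rho^\alpha\nabla u\|_{L^2}\bigr).
\end{align*}

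Choosing $\delta$ small enough that $C\delta^{(1-\alpha)/2}<\frac14$, the second inequality gives $\|\rho^\alpha\nabla u\|_{L^2}\le 2C\|f\|_{L^2}+\frac12\|\rho^\alpha u\|_{L^2}$. Substituting into the first and then taking $\lambda_0$ large enough depending on $\alpha,\delta$ so that $C\lambda^{-1}\le\frac14$ for $\lambda\ge\lambda_0$, the $\|\rho^\alpha u\|_{L^2}$ on the right-hand side is absorbed, yielding (\ref{long2}); then (\ref{long21}) follows immediately from the resulting gradient bound.

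The main obstacle is the gradient estimate: the perturbation contribution to $\|\rho^\alpha u\|_{L^2}$ inherits a $\lambda^{-1}$ gain directly from $R_{H_1}$, but the corresponding contribution to $\|\rho^\alpha\nabla u\|_{L^2}$ does not, so the only source of smallness for the gradient bound is the geometric one from the vanishing of $\psi_\alpha^{-1/2}$ at the origin. This forces $\delta$ to be chosen small once and for all; fortunately the constants in Lemmas \ref{long09}--\ref{long90} are uniform in $\delta$, so the cutoff parameter can be fixed appropriately in advance, matching the idea outlined in the introduction that the support of the near part and the vanishing of the weight should be coordinated.
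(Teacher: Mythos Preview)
Your argument is correct and rests on the same mechanism as the paper's: the perturbation $X^{near}+V^{near}$ is supported where $\psi_\alpha^{-1/2}\sim r^{(1-\alpha)/2}$ is small, and this geometric smallness (not a $\lambda^{-1}$ gain) closes the gradient estimate, with the constants in Lemmas~\ref{long09}--\ref{long90} being $\delta$-independent so that $\delta$ may be fixed small first.

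The organization differs slightly from the paper. The paper works at the operator level via $R_H=R_{H_1}(I+W^{near}R_{H_1})^{-1}$ and, to bound $\|\rho^{-\alpha}W^{near}R_{H_1}\rho^{\alpha}\|_{L^2\to L^2}$, inserts the additional identity $R_{H_1}=R_0-R_0W^{far}R_{H_1}$ and invokes the free resolvent bounds of Lemma~\ref{somerthing}. You instead work directly with $u=R_H(z)\rho^\alpha f$ and run a coupled bootstrap in $(\|\rho^\alpha u\|_{L^2},\|\rho^\alpha\nabla u\|_{L^2})$, appealing only to Lemmas~\ref{long09}--\ref{long90}. Your route is a little more self-contained since it avoids the detour through $R_0$; the paper's route packages the smallness into a single Neumann-series step. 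Both are equivalent in substance. One small point you leave implicit: for $\epsilon>0$ the a~priori finiteness of $\|\rho^\alpha u\|_{L^2}$ and $\|\rho^\alpha\nabla u\|_{L^2}$ (needed to absorb terms) follows from $u\in H^2$, and the $\epsilon=0$ case is obtained by passing to the limit as in Lemma~\ref{rhoalpha}.
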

\begin{proof}
We only consider the positive sign. Recall the definitions of far and near electric/magnetic  potentials in (\ref{zqa1}).
Denote $z=\frac{1}{4}+\lambda^2+ i\epsilon$.
We aim to use the formal identity
\begin{align}\label{pg90n}
R_{H}(z)=R_{H_1}(z)(I+(V^{near}+X^{near}) R_{H_1})^{-1}.
\end{align}
Hence we first show for $\lambda>\lambda_0(\delta)$,
\begin{align}\label{pz1}
\|\rho^{-\alpha}(V^{near}+X^{near}) R_{H_1}\rho^{\alpha}\|_{L^2\to L^2}\le o(1).
\end{align}
where $o(1)$ denotes a quantity which tends to zero as $\delta\to0$.
By the identity,
\begin{align}\label{nb7lvc}
R_{H_1}(z)=R_0-R_0(V^{far}+X^{far})R_{H_1}(z),
\end{align}
Denote $\mathcal{A}^{near}=(1-\chi(x))\mathcal{A}$, then by Lemma \ref{somerthing},
\begin{align*}
&\|(\rho^{-\alpha}(V^{near}+X^{near})R_0\rho^{\alpha}\|_{L^2\to L^2}\\
&\le \|\rho^{-\alpha}\psi^{-\frac{1}{2}}_{\alpha}V^{near}\|_{L^{\infty}}\|\psi^{\frac{1}{2}}_{\alpha}R_0\rho^{\alpha}\|_{L^2\to L^2}\\
&+ \|\rho^{-\alpha}|\mathcal{A}^{near}|\psi^{-\frac{1}{2}}_{\alpha}\|_{L^{\infty}}\|\psi^{\frac{1}{2}}_{\alpha}\nabla R_0\rho^{\alpha}\|_{L^2\to L^2}\\
&\le C\delta^{\frac{1}{2}(1-\alpha)},
\end{align*}
where we used Lemma \ref{polarconnection} and the inequality $\psi^{-1}_{\alpha}(r)\le r^{1-\alpha}$ in the supports of $V^{near}$, $\mathcal{A}^{near}$ which are contained in $\{x:d(x,0)\le\delta\}$  in the last line.
The same arguments show
\begin{align*}
\|\rho^{-\alpha}(V^{near}+X^{near}) R_0\rho^{\alpha}\|_{L^2\to L^2}\le C\delta^{\frac{1}{2}(1-\alpha)}.
\end{align*}
Again due to Lemma \ref{somerthing}, one has
\begin{align}
&\big\|\rho^{-\alpha}X^{near} R_0(X^{far})R_{H^1}(z)\rho^{\alpha}\big\|_{L^2\to L^2}\nonumber\\
&\le \|\rho^{-\alpha}\psi^{-\frac{1}{2}}_{\alpha}\mathcal{A}^{near}\|_{L^{\infty}}\|\psi^{\frac{1}{2}}_{\alpha}\nabla R_0\rho^{\alpha}\|_{L^2\to L^2}\|\rho^{-\alpha}\mathcal{A}^{far}\|_{L^{\infty}} \|\rho^{\alpha}\nabla R_{H_1}(z)\rho^{\alpha}\|_{L^2\to L^2}\nonumber\\
&\le C\delta^{\frac{1}{2}},\label{oinb}
\end{align}
where we applied Lemma \ref{long90} in the last line. Hence (\ref{pz1}) follows by (\ref{nb7lvc}) and similar arguments as (\ref{oinb}).
Thus by first choosing $0<\delta\ll1$, then letting $\lambda\gg1$, we have
$$\|\rho^{-\alpha}(V^{near}+X^{near}) R_{H_1}\rho^{\alpha}\|_{L^2\to L^2}\le \frac{1}{2}.
$$
We remark that this is possible because the constant $C$ in Lemma \ref{long09} and Lemma \ref{long90} is independent of $\delta$.
Therefore (\ref{pg90n}) makes sense and we have
\begin{align*}
\|\rho^{\alpha}R_{H}(z)\rho^{\alpha}\|_{L^2\to L^2}&\le 2\|\rho^{\alpha}R_{H_1}(z)\rho^{\alpha}\|_{L^2\to L^2}\le C\lambda^{-1}\\
\|\rho^{\alpha}\nabla R_{H}(z)\rho^{\alpha}\|_{L^2\to L^2}&\le 2\|\rho^{\alpha}\nabla R_{H_1}(z)\rho^{\alpha}\|_{L^2\to L^2}\le C.
\end{align*}
\end{proof}

\subsection{Assemble Resolvent Estimates in All Frequencies}

Lemma \ref{resolvent1}, Lemma \ref{resolvent2} and  Lemma \ref{h} give the desired resolvent estimates for $H$.
\begin{lemma}\label{rhoalpha}
Let $H$ be defined in Lemma \ref{REWSDERF} with $\bf e$ being the Coulomb gauge. The limit $\mathop {\lim }\limits_{z  \to 0,{\mathop{\Im}\nolimits} z  > 0} {\left( {H -\frac{1}{4}- {\lambda ^2} \pm z } \right)^{ - 1}}$ exists strongly in $L^2_x$. And if we denote
\begin{align}\label{Lemma1}
{\mathfrak{R}_H}(\lambda^2  \pm i0) = \mathop {\lim }\limits_{z \to 0,{\mathop{\Im}\nolimits} z  > 0} {\left( {H - \frac{1}{4}-{\lambda ^2} \pm z } \right)^{ - 1}},
\end{align}
then we have
\begin{align}\label{Lemma}
\|\rho^{\alpha}{\mathfrak{R}_H}(\lambda^2  \pm i0)\rho^{\alpha}\|_{L^2\to L^2}\le C\min(1,\lambda^{-1}).
\end{align}
\end{lemma}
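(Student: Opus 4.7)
The plan is to assemble the three regime-specific estimates already proved (small, mediate, and high frequency) into a single uniform bound, and then deduce strong convergence of the weighted resolvent via the standard resolvent identity coupled with the free limiting absorption principle of Lemma \ref{fswe34dre5d}. Fix $\delta>0$ small and $\lambda_0>0$ large (to be chosen as in Lemma \ref{h}).

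First, for the operator norm bound, I would split $\lambda\in[0,\infty)$ into the three intervals $[0,\delta]$, $[\delta,\lambda_0]$ and $[\lambda_0,\infty)$. On $[0,\delta]$, parameterize $z=\lambda^2\mp is$ with $s\downarrow 0$ and apply Lemma \ref{resolvent1} (writing $\sigma=\lambda^2\mp is$) to get a uniform bound $\|\rho^{\alpha}R_H(1/4+\lambda^2\pm i\epsilon)\rho^\alpha\|_{L^2\to L^2}\le C$. On the mediate interval $[\delta,\lambda_0]$, Lemma \ref{resolvent2} gives the analogous uniform bound. On $[\lambda_0,\infty)$, Lemma \ref{h} provides the decaying bound $C\lambda^{-1}$. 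Combining these three estimates and observing that $\min(1,\lambda^{-1})\simeq 1$ on $[0,\lambda_0]$ and equals $\lambda^{-1}$ on $[\lambda_0,\infty)$ yields (\ref{Lemma}).

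For the existence of the limit (\ref{Lemma1}) as a strong limit in $\mathcal{L}(L^2,L^2)$ after conjugation by $\rho^\alpha$, I would use the identity
\begin{align*}
R_H(1/4+\lambda^2\pm i\epsilon)=R_0(1/4+\lambda^2\pm i\epsilon)\bigl(I+W\,R_0(1/4+\lambda^2\pm i\epsilon)\bigr)^{-1},
\end{align*}
more precisely the sandwich version $\rho^\alpha R_H\rho^\alpha=\rho^\alpha R_0\rho^\alpha\cdot(I+\rho^{-\alpha}W R_0\rho^\alpha)^{-1}$. By Lemma \ref{fswe34dre5d} and the exponential decay of $W$ (which allows absorbing one $\rho^\alpha$ into $W$), the factor $\rho^{-\alpha}WR_0(1/4+\lambda^2\pm i\epsilon)\rho^\alpha$ converges strongly in $\mathcal{L}(L^2,L^2)$ as $\epsilon\downarrow 0$ to $\rho^{-\alpha}W\mathfrak{R}_0(\lambda^2\pm i0)\rho^\alpha$; likewise $\rho^\alpha R_0(1/4+\lambda^2\pm i\epsilon)\rho^\alpha\to \rho^\alpha\mathfrak{R}_0(\lambda^2\pm i0)\rho^\alpha$ strongly. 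The inverse exists and is uniformly bounded in $\epsilon$ because of the bounds just assembled (they imply $I+\rho^{-\alpha}WR_0\rho^\alpha$ is uniformly invertible by the identity $\rho^\alpha R_H\rho^\alpha\cdot\rho^{-\alpha}(-\Delta-1/4-\lambda^2\mp i\epsilon)^{-1}\rho^\alpha\cdots$; alternatively invoke Lemma \ref{re3} to rule out resonances for positive $\lambda$ and a separate Neumann/Riesz-projection argument as in the proof of Lemma \ref{resolvent1} near $\lambda=0$). Passing to the limit then gives strong convergence of $\rho^\alpha R_H(1/4+\lambda^2\pm i\epsilon)\rho^\alpha$ to the operator defined by (\ref{Lemma1}).

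The only step that requires real care is the uniform invertibility of $I+\rho^{-\alpha}WR_0\rho^\alpha$ as $\epsilon\downarrow 0$ uniformly in the three regimes; for $\lambda$ close to $0$ this is exactly what the perturbation-of-Riesz-projection argument in Lemma \ref{resolvent1} provides, for $\lambda\in[\delta,\lambda_0]$ it follows from the non-existence of positive eigenvalues and Lemma \ref{re3} (as used in Lemma \ref{resolvent2}), and for $\lambda\ge\lambda_0$ it is implicit in the high-frequency Neumann-series argument of Lemma \ref{h}. All the pieces are therefore in place, and gluing them together with the trivial inequality $\min(1,\lambda^{-1})$ covering both regimes finishes the proof.
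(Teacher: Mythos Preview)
Your proposal is correct and follows essentially the same route as the paper: write $R_H=R_0(I+WR_0)^{-1}$ in weighted form, use the free limiting absorption principle (Lemma~\ref{fswe34dre5d}) to pass to the limit, and invoke Lemmas~\ref{resolvent1}, \ref{resolvent2}, \ref{h} in the three frequency regimes for the uniform bound. The paper's own argument is terser but identical in structure; your extra discussion of the uniform invertibility of $I+\rho^{-\alpha}WR_0\rho^{\alpha}$ across regimes makes explicit what the paper leaves implicit.
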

\begin{proof}
Lemma \ref{resolvent2} shows ${\mathfrak{R}_0}(\lambda^2  \pm i0)(I+W{\mathfrak{R}_0}(\lambda^2  \pm i0))^{-1}$ makes sense in $L^2_x$ for any $\lambda>0$. Thus for $\Im z>0$ with $\Im z<\delta_*$, we can pass the identity
\begin{align}\label{pert2}
(H-\frac{1}{4}- {\lambda ^2} \pm z)^{-1}=R_0(\lambda^2+\frac{1}{4}\pm z)(I+WR_0(\lambda^2+\frac{1}{4}\pm z))^{-1}
\end{align}
to the following by Lemma \ref{fswe34dre5d}
\begin{align*}
{\mathfrak{R}_H}(\lambda^2  \pm i0)=\mathfrak{R}_0(\lambda^2\pm i0)(I+W\mathfrak{R}_0(\lambda^2\pm i0))^{-1}.
\end{align*}
Moreover the estimates of (\ref{pert2}) can be transformed to ${\mathfrak{R}_H}(\lambda^2  \pm i0)$. Thus Lemma \ref{resolvent1}, Lemma \ref{resolvent2} and  Lemma \ref{h} yield (\ref{Lemma}).
\end{proof}

\begin{proposition}\label{k0vr567}
The operator $\sqrt{H}$ satisfies the resolvent estimates,
\begin{align}\label{fr47}
\|\rho^{\alpha}(\sqrt{H}-z)^{-1}\rho^{\alpha}\|_{L^2\to L^2}\le C,
\end{align}
where $C$ is independent of $z\in\Bbb C\backslash\Bbb R$.
As a corollary the Kato smoothing effect holds
\begin{align}\label{9iuhby7}
\|\rho^{\alpha }e^{\pm it\sqrt{H}}f\|_{L^2_{t,x}}\lesssim \|f\|_{L^2_x},
\end{align}
\end{proposition}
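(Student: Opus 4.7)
The first step is to observe that the spacetime smoothing (\ref{9iuhby7}) follows at once from the resolvent estimate (\ref{fr47}) by invoking Kato's abstract theorem (Theorem \ref{TREWSD}) with $U=\sqrt{H}$ and $T=T^{*}=\rho^{\alpha}$ (a self-adjoint multiplication operator on $L^{2}(\mathbb{H}^{2};\mathbb{C}^{2})$). Hence the entire content of the proposition is the uniform bound $\sup_{z\in\mathbb{C}\setminus\mathbb{R}}\|\rho^{\alpha}R_{\sqrt{H}}(z)\rho^{\alpha}\|_{L^{2}\to L^{2}}<\infty$.

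The starting point is the algebraic factorization $(\sqrt{H}-z)(\sqrt{H}+z)=H-z^{2}$, which on a dense core yields
\begin{equation*}
R_{\sqrt{H}}(z)=(\sqrt{H}+z)R_{H}(z^{2})=\sqrt{H}\,R_{H}(z^{2})+z\,R_{H}(z^{2}).
\end{equation*}
Since $\sigma(\sqrt{H})\subset[1/2,\infty)$ by Proposition \ref{CXXXXE}, the trivial bound $\|R_{\sqrt{H}}(z)\|\le \mathrm{dist}(z,[1/2,\infty))^{-1}$ disposes of all $z$ with $\mathrm{Re}(z)\le 1/4$, and the reflection $R_{\sqrt{H}}(\bar z)=R_{\sqrt{H}}(z)^{*}$ allows me to further restrict to $\mathrm{Im}(z)>0$. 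In the remaining region $\mathrm{Re}(z)\ge 1/4,\,\mathrm{Im}(z)>0$, the square $z^{2}$ has $\mathrm{Im}(z^{2})=2\mathrm{Re}(z)\mathrm{Im}(z)>0$ and lies inside the regime covered by Lemmas \ref{resolvent1}, \ref{resolvent2}, \ref{h}. Packaged into Lemma \ref{rhoalpha}, these give $\|\rho^{\alpha}R_{H}(z^{2})\rho^{\alpha}\|\le C\min(1,|z|^{-1})$, so the second summand is uniformly controlled: $|z|\cdot\|\rho^{\alpha}R_{H}(z^{2})\rho^{\alpha}\|\le C$.

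The substantive task is the weighted $L^{2}$ bound on the first summand $\sqrt{H}\,R_{H}(z^{2})$. My plan is to combine the spectral-calculus identity $\sqrt{H}\,R_{H}(z^{2})=H^{-1/2}(I+z^{2}R_{H}(z^{2}))$ with the subordination formula
\begin{equation*}
H^{-1/2}=\frac{2}{\pi}\int_{0}^{\infty}(H+t^{2})^{-1}\,dt,
\end{equation*}
which converges absolutely because $\sigma(H)\subset[1/4,\infty)$. Using the partial-fraction identity $\frac{1}{(\mu+t^{2})(\mu-z^{2})}=\frac{1}{z^{2}+t^{2}}\bigl[\frac{1}{\mu-z^{2}}-\frac{1}{\mu+t^{2}}\bigr]$ and the scalar integral $\int_{0}^{\infty}\frac{dt}{z^{2}+t^{2}}=\frac{\pi}{2z}$ (principal branch for $\mathrm{Re}(z)>0$), one reorganizes the result into
\begin{equation*}
\sqrt{H}\,R_{H}(z^{2})=H^{-1/2}+z\,R_{H}(z^{2})-\frac{2z^{2}}{\pi}\int_{0}^{\infty}\frac{(H+t^{2})^{-1}}{z^{2}+t^{2}}\,dt.
\end{equation*}
The first two summands are uniformly bounded in the weighted norm by $\|H^{-1/2}\|\le 2$ and by the previous paragraph. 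For the remaining integral I use $\|\rho^{\alpha}(H+t^{2})^{-1}\rho^{\alpha}\|\le(1/4+t^{2})^{-1}$ and reduce the matter to the scalar estimate $\int_{0}^{\infty}|z|^{2}/[|z^{2}+t^{2}|(1/4+t^{2})]\,dt\le C$ uniformly in $z$.

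The principal obstacle will be uniformity of this last scalar integral as $z$ ranges over the first quadrant. When $\mathrm{Re}(z^{2})\ge 0$ (i.e., $\mathrm{Re}(z)\ge\mathrm{Im}(z)$), one has $|z^{2}+t^{2}|\ge\mathrm{Re}(z^{2})+t^{2}\gtrsim|z|^{2}+t^{2}$ and the integral collapses to $\int|z|^{2}/[(|z|^{2}+t^{2})(1+t^{2})]\,dt=O(1)$ via an explicit partial-fraction computation. When $\mathrm{Re}(z^{2})<0$, the denominator can approach zero at $t_{0}=\sqrt{-\mathrm{Re}(z^{2})}$, but it is still bounded below by $|\mathrm{Im}(z^{2})|=2\mathrm{Re}(z)\mathrm{Im}(z)$, and the restriction $\mathrm{Re}(z)\ge 1/4$ together with the fact that $|z|$ must remain bounded in this sub-regime (otherwise one falls back to the trivial distance bound) tames the worst-case logarithmic factor; if needed, the sharper bound $\|\rho^{\alpha}(H+t^{2})^{-1}\rho^{\alpha}\|\lesssim(1+t)^{-1}$ from Lemma \ref{rhoalpha} applied in the interior of the resolvent set at $w=-t^{2}$ provides additional margin. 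Once (\ref{fr47}) is secured, (\ref{9iuhby7}) follows verbatim from Theorem \ref{TREWSD}.
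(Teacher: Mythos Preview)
Your reduction to $\mathrm{Re}(z)\ge 1/4$, $\mathrm{Im}(z)>0$ is fine, and the conclusion of the smoothing estimate from the resolvent bound via Theorem~\ref{TREWSD} is correct. But there is a genuine gap in the key step. You assert that for such $z$ the square $z^{2}$ ``lies inside the regime covered by Lemmas~\ref{resolvent1}, \ref{resolvent2}, \ref{h}'' and conclude $\|\rho^{\alpha}R_{H}(z^{2})\rho^{\alpha}\|\le C\min(1,|z|^{-1})$. Those three lemmas, and Lemma~\ref{rhoalpha} which packages them, only control the weighted resolvent in the strip $|\mathrm{Im}(w)|\le 1$ near the spectrum (or at the boundary $w\in[1/4,\infty)+i0$). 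For, say, $z=R+iR^{-1/2}$ with $R$ large one has $z^{2}\approx R^{2}+2i\sqrt{R}$, well outside that strip; the trivial self-adjoint bound gives only $\|R_{H}(z^{2})\|\le (2\sqrt{R})^{-1}$, so $|z|\,\|\rho^{\alpha}R_{H}(z^{2})\rho^{\alpha}\|$ is of order $\sqrt{R}$, and the distance of $z$ to $\sigma(\sqrt{H})$ is $R^{-1/2}$, so that trivial bound does not rescue you either. The missing ingredient is precisely a Phragm\'en--Lindel\"of step extending the weighted $R_{H}$ bound from the boundary into the whole half-plane --- which is exactly the device the paper uses, only one level up.

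The paper's argument is also considerably shorter than yours. It invokes Phragm\'en--Lindel\"of at the level of $R_{\sqrt{H}}$ to reduce to real $z$, and then uses the elementary identity
\[
(\sqrt{H}-z)^{-1}=(\sqrt{H}+z)^{-1}+2z\,(H-z^{2})^{-1}.
\]
For $z>0$ the first term is bounded in $L^{2}$ because $-z$ is at distance $\ge 1/2$ from $\sigma(\sqrt{H})$, and the second term is controlled directly by Lemma~\ref{rhoalpha}. Your subordination formula for $H^{-1/2}$ and the resulting contour integral are an elaborate way of reproducing the term $(\sqrt{H}+z)^{-1}$: indeed, from your own factorization $R_{\sqrt{H}}(\pm z)=(\sqrt{H}\mp z)R_{H}(z^{2})$ one reads off $\sqrt{H}\,R_{H}(z^{2})=(\sqrt{H}+z)^{-1}+z\,R_{H}(z^{2})$, whence $R_{\sqrt{H}}(z)=(\sqrt{H}+z)^{-1}+2z\,R_{H}(z^{2})$ immediately. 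If you adopt this identity and supply the Phragm\'en--Lindel\"of reduction to real $z$, the scalar integral and the awkward case $\mathrm{Re}(z^{2})<0$ disappear entirely.
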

\begin{proof}
The original idea is due to \cite{DFDFDFDf2}.
Since we have the identity for $z\ge0$
\begin{align}\label{rp0}
(\sqrt{H}-z)^{-1}=(\sqrt{H}+z)^{-1}+2z(H-z^2)^{-1},
\end{align}
and $z\in(-\infty,0)$ belongs to the resolvent of $\sqrt{H}$,  by Lemma \ref{rhoalpha}, Lemma \ref{fswe34dre5d} and  Phragm$\acute{e}$n-Lindel\"of theorem, it suffices to prove (\ref{fr47}) for $z\in\Bbb R$.\\
\noindent{\bf Case 1.} Let $z\le 0$. Proposition \ref{CXXXXE} shows
\begin{align*}
\|(\sqrt{H}-z)f\|^2_{L^2_x}&=\|\sqrt{H}f\|^2_{L^2_x}-2\Re z\langle f,\sqrt{H}f\rangle\ge \|\sqrt{H}f\|^2_{L^2_x}\ge \langle f,Hf\rangle\\
&\ge C\|f\|_{L^2_x}.
\end{align*}
Thus we have
\begin{align*}
\|(\sqrt{H}+z)^{-1}f\|^2_{L^2_x}\lesssim \|f\|_{L^2_x},
\end{align*}
from which
\begin{align}\label{fr43}
\|\rho^{\alpha}(\sqrt{H}-z)^{-1}\rho^{\alpha}f\|_{L^2_x}\le C\|f\|_{L^2_x}
\end{align}
{\bf Case 2.} Let $z<0$.
Then (\ref{fr47}) follows by (\ref{rp0}), Lemma \ref{rhoalpha} and (\ref{fr43}). Then applying Kato's theorem (see Theorem \ref{TREWSD}) gives
(\ref{9iuhby7}).

\end{proof}

\section{Smoothing effects for Heat Semigroup and Strichartz Estimates}

\subsection{Heat Semigroup Generated by the Magnetic Schr\"odinger Operator}

Let $0<\delta<\delta_2<\delta_1<\frac{1}{4}$ be given constant to be determined.

In this section, we shall consider the $L^p$-$L^p$ estimates and smoothing effects for the heat semigroup $e^{-tH}$.
\begin{lemma}\label{qiu1}
Let $H$ be defined in Lemma \ref{REWSDERF}. Then for any $p\in(1,\infty)$ and some $0<\delta_1\ll 1$, we have
\begin{align}\label{qiu1PKL}
\|e^{-tH}f\|_{L^p_x}\le e^{-\delta_1 t}\|f\|_{L^p_x}.
\end{align}
\end{lemma}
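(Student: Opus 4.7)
The plan is to exploit the pointwise diamagnetic domination $|e^{-tH}f|(x)\le e^{t\Delta}|f|(x)$, already previewed in the introduction, and then invoke the known exponential $L^p$-decay of the free heat semigroup on $\Bbb H^2$. Once this pointwise bound is in hand, one obtains directly
\begin{align*}
\|e^{-tH}f\|_{L^p}\le \big\|e^{t\Delta}|f|\big\|_{L^p}\le e^{-\delta_1 t}\big\||f|\big\|_{L^p}=e^{-\delta_1 t}\|f\|_{L^p},
\end{align*}
with $\delta_1>0$ any positive number strictly below the bottom of the $L^p$-spectrum of $-\Delta$ on $\Bbb H^2$. This second step is classical on hyperbolic space: for every $1<p<\infty$ the bottom of the $L^p$-spectrum of $-\Delta$ on $\Bbb H^2$ equals $\tfrac{1}{pp'}>0$, which can be read off the explicit hyperbolic heat kernel bounds together with Young's convolution inequality via Lemma \ref{tywe55gvu}; in particular $\delta_1$ may be taken smaller than $\tfrac{1}{4}$ as required by the context.

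For the diamagnetic inequality itself I would work in the Coulomb gauge provided by Proposition \ref{hutyhvf}, writing $H=-\Delta+X+V$ where $X=-2h^{ii}\mathcal{A}_i\partial_i$ has real antisymmetric matrix coefficients $\mathcal{A}_i$ and $V$ is pointwise a nonnegative symmetric $2\times 2$ potential. For $u(t,x)=e^{-tH}f$ the pointwise identity $\Delta|u|^2=2\Re\langle \Delta u,u\rangle_{\Bbb C^2}+2|\nabla u|^2$ (valid for $\Bbb C^2$-valued $u$), combined with the gauge-covariant Kato inequality $|\nabla|u||^2\le g^{ij}\langle \mathbb{D}_i u,\mathbb{D}_j u\rangle_{\Bbb C^2}$ for the covariant derivative $\mathbb{D}_i=\partial_i+\mathcal{A}_i$ (the same derivative used in Proposition \ref{CXXXXE}), the Coulomb condition $d^{*}\mathcal{A}=0$ which eliminates the divergence contribution of $X$ when pairing with $|u|^{-1}\bar u$, and the nonnegativity of $V$, together imply $(\partial_t-\Delta)|u|\le 0$ in the distributional sense. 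The parabolic maximum principle on $\Bbb H^2$ then delivers $|u(t,x)|\le e^{t\Delta}|f|(x)$.

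The main technical subtlety is making the above differential inequality rigorous at points where $|u|$ vanishes. I would handle it by the standard regularization $|u|_\epsilon=(|u|^2+\epsilon^2)^{1/2}$: derive $(\partial_t-\Delta)|u|_\epsilon\le 0$ from the computation above (which now introduces no singularity), apply the maximum principle to obtain $|u|_\epsilon(t,x)\le e^{t\Delta}|f|_\epsilon(x)$, and then let $\epsilon\to 0^+$. This procedure is classical for scalar magnetic Schr\"odinger operators, and it transfers to the $\Bbb C^2$-valued, matrix-potential setting here because the only structural inputs needed are the antisymmetry and reality of $\mathcal{A}_i$, the Coulomb condition $d^{*}\mathcal{A}=0$, and the geometric nonnegativity of $V$ proved in Proposition \ref{hutyhvf}.
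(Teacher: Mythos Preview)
Your proposal is correct and follows essentially the same route as the paper: derive the subsolution inequality $(\partial_t-\Delta)|u|\le 0$ for $u=e^{-tH}f$ via the covariant/geometric structure of $H$, apply the maximum principle to obtain $|e^{-tH}f|\le e^{t\Delta}|f|$, and conclude by the $L^p$ exponential decay of the free heat semigroup on $\Bbb H^2$ (the paper cites (\ref{mm8})). The one cosmetic difference is that the paper phrases the differential inequality intrinsically via the bundle computation of Proposition~\ref{CXXXXE}, which makes it manifestly gauge-independent; your restriction to the Coulomb gauge is not actually needed, since in a general frame the extra term $(d^*\widetilde A)u$ in $W$ still satisfies $\Re\langle (d^*\widetilde A)u,u\rangle_{\Bbb C^2}=0$ pointwise by antisymmetry of the matrix $d^*\widetilde A$.
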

\begin{proof}
Denote $e^{-tH}f(x)=u(t,x)$. Considering the tangent vector field $u\bf e$ defined by $u{\bf e}=u_1{\bf e}_1+u_2{\bf e}_2$, one deduces by the proof in {Proposition 3.1 }that
\begin{align*}
\partial_t|u{\bf e}|^2-\Delta|u{\bf e}|^2+2|\nabla(u{\bf e})|^2\le 0,
\end{align*}
which further yields
\begin{align*}
\partial_t|u{\bf e}|-\Delta|u{\bf e}|\le 0.
\end{align*}
Thus by maximum principle,
\begin{align*}
|u{\bf e}|(t,x)\le e^{t\Delta}|f(x)|.
\end{align*}
Then (\ref{qiu1PKL}) follows by (\ref{mm8}).
\end{proof}

\begin{lemma}\label{qiu2}
Let $H$ be defined in Lemma \ref{REWSDERF}. Then for $\lambda=-\delta_2+i\eta$ with $0<\delta_2<\delta_1 <\frac{1}{4}$, $\eta\in\Bbb R$, $p\in(1,\infty)$ we have
\begin{align}\label{j8bv}
\|(-H-\lambda)^{-1}f\|_{L^p_x}\le C\|f\|_{L^p_x},
\end{align}
where $C$ is independent of $\eta$.
\end{lemma}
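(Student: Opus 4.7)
The plan is direct and leverages only the preceding Lemma~6.1 together with the Laplace-transform identification of the resolvent of a semigroup generator. First, I would observe that
\begin{equation*}
(-H-\lambda)^{-1}=-(H-\delta_2+i\eta)^{-1},
\end{equation*}
so it suffices to establish $\|(H-\delta_2+i\eta)^{-1}f\|_{L^p_x}\le C\|f\|_{L^p_x}$ with $C$ independent of $\eta\in\mathbb{R}$. Because $0<\delta_2<\delta_1<1/4$, the point $\delta_2-i\eta$ lies strictly to the left of the spectrum of $H$, so the inverse exists for every $\eta$; the entire content of the lemma is uniformity in $\eta$.

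The main tool is the Laplace representation of the resolvent. Lemma~6.1 yields $\|e^{-tH}f\|_{L^p_x}\le e^{-\delta_1 t}\|f\|_{L^p_x}$, so the growth bound of the $C_0$ semigroup generated by $-H$ on $L^p$ is at most $-\delta_1$. Therefore, for every $\mu\in\mathbb{C}$ with $\mathrm{Re}\,\mu<\delta_1$, the classical identity
\begin{equation*}
(H-\mu)^{-1}f=\int_0^\infty e^{t\mu}\,e^{-tH}f\,dt
\end{equation*}
holds in $L^p_x$ as an absolutely convergent Bochner integral.

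Specializing to $\mu=\delta_2-i\eta$ and taking norms, I would estimate
\begin{equation*}
\bigl\|(H-\delta_2+i\eta)^{-1}f\bigr\|_{L^p_x}\le \int_0^\infty e^{t\delta_2}\bigl\|e^{-tH}f\bigr\|_{L^p_x}\,dt\le \int_0^\infty e^{(\delta_2-\delta_1)t}\,dt\,\|f\|_{L^p_x}=\frac{\|f\|_{L^p_x}}{\delta_1-\delta_2},
\end{equation*}
using the crucial fact that $|e^{t(\delta_2-i\eta)}|=e^{t\delta_2}$: the imaginary part $\eta$ disappears in modulus, so the bound is genuinely independent of $\eta$.

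I do not foresee a serious obstacle. The only item requiring one line of justification is that $e^{-tH}$ defines a $C_0$ semigroup on $L^p$ (not merely a family of bounded operators), so that the Laplace representation truly coincides with the resolvent. This is inherited from the pointwise domination $|e^{-tH}f|\le e^{t\Delta}|f|$, which is established inside the proof of Lemma~6.1, together with strong continuity of $e^{t\Delta}$ on $L^p(\mathbb{H}^2)$ and density of $C_c^\infty$. Once that is noted, the estimate above completes the proof.
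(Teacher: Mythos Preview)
Your proposal is correct and follows essentially the same route as the paper: both arguments use the Laplace transform representation of the resolvent in terms of the semigroup $e^{-tH}$ and then invoke the decay bound $\|e^{-tH}\|_{L^p\to L^p}\le e^{-\delta_1 t}$ from Lemma~6.1 to obtain the uniform-in-$\eta$ estimate $\int_0^\infty e^{(\delta_2-\delta_1)t}\,dt=(\delta_1-\delta_2)^{-1}$. The paper phrases this via an analytic extension of the integral operator $\mathcal{E}(\lambda)=\int_0^\infty e^{-\lambda t}e^{-tH}\,dt$ to the half-plane $\{\Re\lambda>-\delta_2\}$ and then identifies it with the resolvent, whereas you apply the Laplace formula directly at $\mu=\delta_2-i\eta$; the substance is identical.
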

\begin{proof}
Assume that $f\in L^2\cap L^p$, by standard theories of semigroups of linear operators, for $\lambda>0$
\begin{align}\label{oi43v}
(-H-\lambda)^{-1}f=\int^{\infty}_0e^{-\lambda t}e^{-tH}fdt.
\end{align}
Let $\mathcal{E}(\lambda)$ be the RHS of (\ref{oi43v}) with $\lambda$ slightly enlarged to $\mathcal{O}=\{\lambda\in \Bbb C:\Re \lambda>-\delta_2\}$, i.e.
\begin{align}\label{o43v}
\mathcal{E}(\lambda)=\int^{\infty}_0e^{-\lambda t}e^{-tH}dt,\mbox{  }\lambda\in \mathcal{O}.
\end{align}
First we verify (\ref{o43v}) converges in the uniform operator topology in $\mathcal{L}(L^p;L^p)$ for $\lambda\in\mathcal{O}$.  In fact, Lemma \ref{qiu1} shows
\begin{align}\label{0j8bv}
\int^{\infty}_0\left\|e^{-\lambda t}e^{-tH}\right\|_{L^p_x\to L^p_x}dt\le \int^{\infty}_0 e^{-\delta_1 t+\delta_2 t}dt.
\end{align}
Since $\mathcal{E}(\lambda)$ is analytic with respect to $\lambda\in\mathcal{O}$, (\ref{oi43v}) shows $(-H-\lambda)^{-1}$ coincides with  $\mathcal{E}(\lambda)$ when $\lambda\in\mathcal{O}$. Thus (\ref{0j8bv}) implies (\ref{j8bv}).
\end{proof}

\begin{lemma}\label{qiu3}
Let $H$ be defined in Lemma \ref{REWSDERF}. Then for any $p\in(1,\infty)$, $t>0$, we have
\begin{align}\label{kn167}
\|He^{-tH}f\|_{L^p_x}&\le C_\epsilon t^{-1-\epsilon}e^{-\delta_3 t}\|f\|_{L^p_x},
\end{align}
for some $\delta_3>0$ and any $\epsilon\in(0,1)$.
\end{lemma}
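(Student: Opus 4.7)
The strategy is to combine the semigroup bound of Lemma \ref{qiu1} with the resolvent bound of Lemma \ref{qiu2} through a Dunford--Cauchy representation
\[
He^{-tH}=\frac{1}{2\pi i}\int_\Gamma\lambda\,e^{-t\lambda}(\lambda-H)^{-1}\,d\lambda,
\]
where $\Gamma$ is the vertical line $\Re\lambda=\delta_2$, oriented upward. Since $\mathrm{spec}(H)\subset[\tfrac14,\infty)$ by Proposition \ref{CXXXXE} and $0<\delta_2<\tfrac14$, the contour $\Gamma$ lies in the resolvent set of $H$, and Lemma \ref{qiu2} (applied via $(\lambda-H)^{-1}=-(-H-\mu)^{-1}$ with $\mu=-\delta_2+i\Im\lambda$) furnishes the uniform bound $\|(\lambda-H)^{-1}\|_{L^p\to L^p}\le C$ along $\Gamma$. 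The factor $|e^{-t\lambda}|=e^{-\delta_2 t}$ supplies the exponential time decay, and a small rightward shift of $\Gamma$ within the gap $(\delta_2,\tfrac14)$ improves this to $e^{-\delta_3 t}$ for any $\delta_3<\tfrac14$.

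The integral is not absolutely convergent because the $L^p$-resolvent bound does not decay in $|\Im\lambda|$. I would gain the missing decay by interpolating against the sharper $L^2$-bound coming from self-adjointness, namely
\[
\|(\lambda-H)^{-1}\|_{L^2\to L^2}=\frac{1}{\mathrm{dist}(\lambda,\mathrm{spec}(H))}\lesssim|\Im\lambda|^{-1},
\]
applying Stein's complex interpolation on a small window of exponents around $p$ (where Lemma \ref{qiu2} is uniform) to obtain an intermediate bound $\|(\lambda-H)^{-1}\|_{L^p\to L^p}\lesssim|\Im\lambda|^{-\theta}$ for any $\theta\in(0,1)$, at the price of a small loss reflected in the eventual $\epsilon$ of the statement. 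I would then split $\Gamma$ into a compact piece $|\Im\lambda|\le t^{-1}$ and a tail $|\Im\lambda|>t^{-1}$. The compact piece is bounded directly using its length $\lesssim t^{-1}$, giving a contribution of order $t^{-1-\epsilon}e^{-\delta_3 t}$. The tail is made convergent by integrating by parts in $\Im\lambda$ once or twice, using the oscillation of $e^{-it\Im\lambda}$ to pick up factors $t^{-1}$, while the derivatives transfer onto $(\lambda-H)^{-k}$ whose $L^p$-bounds follow from composing the interpolated resolvent estimate. Optimizing over $\theta$ then produces the stated bound $\|He^{-tH}\|_{L^p\to L^p}\lesssim C_\epsilon t^{-1-\epsilon}e^{-\delta_3 t}$.

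The main obstacle is precisely the lack of sectorial decay of the $L^p$-resolvent: Lemma \ref{qiu2} gives only uniform boundedness along the vertical line, rather than the $|\lambda|^{-1}$-decay that would yield analyticity of $e^{-tH}$ on $L^p$ and produce the sharp $t^{-1}$ bound for free. The $\epsilon$-loss in the statement is the cost of bridging this gap by interpolating with the self-adjoint $L^2$-spectral bound. Secondary technical issues---rigorously justifying the Dunford representation on $L^p$ by density of $L^p\cap L^2$, handling the boundary terms in the integrations by parts, and arranging that Stein interpolation returns bounds on $L^p$ itself rather than only on nearby exponents by exploiting the openness in $p$ of Lemma \ref{qiu2}---should all be manageable with the bounds already in place.
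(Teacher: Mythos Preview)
Your approach departs from the paper's, and the gap is real. The interpolation step is where it breaks: Riesz--Thorin between the uniform $L^{p_0}$ bound of Lemma~\ref{qiu2} and the $L^2$ bound $\|(\lambda-H)^{-1}\|_{L^2\to L^2}\le|\Im\lambda|^{-1}$ forces $\theta$ to satisfy $1/p=(1-\theta)/p_0+\theta/2$, hence $\theta\le\min(2/p,2/p')<1$ for every $p\neq2$. You cannot take $\theta$ arbitrarily close to~$1$. With this constrained $\theta$, the integration-by-parts scheme on the vertical line requires at least $k$ integrations where $(k+1)\theta>2$, and the resulting short-time bound is no better than $t^{-1-(k+1)(1-\theta)}$; optimizing the splitting point gives roughly $t^{-(2-\theta)/\theta}$, which for $p>2$ and $\theta\approx2/p$ is $t^{-(p-1)}$. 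This is far weaker than $t^{-1-\epsilon}$ for small $\epsilon$, so the lemma as stated (for all $\epsilon\in(0,1)$ and all $p\in(1,\infty)$) does not follow.

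The paper avoids the vertical line entirely. It deforms to a sectorial contour $\Gamma=\Gamma_1\cup\Gamma_2\cup\Gamma_3$ with rays at angle just past $\pi/2$, along which $|e^{\lambda t}|\le e^{-\mu t|\lambda|}$ decays exponentially. The essential new ingredient---which your sketch does not have---is a \emph{sectorial} $L^p$ resolvent bound $\|(-H-\lambda)^{-1}\|_{L^p\to L^p}\lesssim|\lambda|^{-1+\epsilon}$ along those rays. This is Lemma~\ref{LP}, proved via the explicit kernel estimates for the free resolvent on $\Bbb H^2$ (Lemma~\ref{L1P}) together with a Neumann-series perturbation. With that decay in hand, the Dunford integral for $He^{-tH}$ converges absolutely and yields $t^{-1-\epsilon}$ directly, with the $\epsilon$-loss coming from the mild $|\lambda|^{-1+\epsilon}$ rather than from interpolation against~$L^2$.
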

\begin{proof}
By standard theories of semigroups of linear operators (e.g. [Theorem 7.7,\cite{Pazy}]), for any $f\in L^p\cap L^2$, $t>0$,
\begin{align}\label{0p8bv}
e^{-tH}f=\frac{1}{2\pi i}\int_{\Gamma}e^{\lambda t}(-H-\lambda)^{-1}fd\lambda,
\end{align}
where $\Gamma$ is any curve lying in the resolvent set which connects $\infty e^{i\vartheta}$ and $-\infty e^{i\vartheta}$ with $\frac{\pi}{2}<\vartheta<\pi$. And the convergence of the integrand in (\ref{0p8bv}) is in the $L^2$ norm.

Let curve $\Gamma$ be made up of three components:
\begin{align*}
\left\{
  \begin{array}{ll}
    \Gamma_1:=\{-a+{\bf i}\aleph a:a\in(\delta_2,\infty ]\}  & \hbox{ } \\
   \Gamma_2 := \{-\delta_2+{\bf i}\aleph\eta:\eta\in[-\delta_2, \delta_2]\}   & \hbox{ } \\
   \Gamma_3 := \{-a-{\bf i}\aleph a:a\in[\delta_2,\infty]\}.  & \hbox{ }
  \end{array}
\right.
\end{align*}
The constant $\aleph>0$ will be chosen to be sufficiently large later. Let ${\rm arg}(z)$ denote the argument of a complex number $z\in \Bbb C$ in the range of $(-\pi,\pi)$.

Given $\lambda\in \Gamma_1$, by definition there exists $\lambda_a\in [\delta_2,\infty)$ such that $\lambda=-\lambda_a+{\bf i}\aleph\lambda_a$. If $\delta_2\le \lambda_a\le \frac{1}{4}$, then it is easy to check $0<{\rm arg}(\lambda+\frac{1}{4})\le \frac{\pi}{2}$ and
$0\le \cot \left({\rm arg}(\lambda+\frac{1}{4})\right)<\frac{2}{\aleph\delta_2}$. Hence by choosing $\aleph>100^{k}\delta^{-1}_2$ for sufficiently large $k\in \Bbb N^+$, one obtains $|\cos \left(\frac{1}{2}{\rm arg}(\lambda+\frac{1}{4})\right)- \frac{\sqrt{2}}{2}|<\frac{8}{\aleph\delta_2}$. Thus we get if $\aleph>100^{k}\delta^{-1}_2$, $\lambda\in\Gamma_1$ and $ -\frac{1}{4}\le \Re \lambda\le -\delta_2$, then the number $\sigma\in \Bbb C$ in the half regime $\{\sigma\in \Bbb C:\Re \sigma\ge 0\}$ which solves $\sigma^2-\frac{1}{4}=\lambda$, must satisfy
\begin{align}\label{plm1}
\Re \sigma&\ge \frac{1}{4}|\lambda+\frac{1}{4}|^{\frac{1}{2}}\ge \frac{1}{4}\sqrt{ {\Im{\lambda}}}= \frac{1}{4}\sqrt{\aleph \lambda_a}\ge \frac{1}{4}\sqrt{\aleph \delta_2}\ge 10^{k-1}.
\end{align}
Similarly, if $\aleph>100^{k}\delta^{-1}_2$, $\lambda\in\Gamma_1$  with $\lambda=-\lambda_a+{\bf i}\aleph\lambda_a$, and $ \lambda_a>\frac{1}{4}$, then
one has $\frac{\pi}{2}<{\rm arg}(\lambda+\frac{1}{4})<\pi$ and
$$0<\cot \left(\pi-{\rm arg}(\lambda+\frac{1}{4})\right)<\frac{2}{\aleph\delta_2}.
$$
Thus $|\cos \frac{1}{2}\left(\pi-{\rm arg}(\lambda+\frac{1}{4})\right)- \frac{\sqrt{2}}{2}|<\frac{8}{\aleph\delta_2}$. Since the number $\sigma $ in the half regime $\{\sigma\in \Bbb C:\Re \sigma\ge 0\}$ solving $\sigma^2-\frac{1}{4}=\lambda$ must satisfy
$${\rm arg}{\sigma}=\frac{\pi}{2}-\frac{1}{2}\left(\pi-{\rm arg}(\lambda+\frac{1}{4})\right),
$$
we deduce that if $\aleph>100^{k}\delta^{-1}_2$, $\lambda\in\Gamma_1$ and $\Re \lambda\le -\frac{1}{4}$, then $\Re \sigma \ge  10^{k-1}$.

Therefore, we conclude for $\lambda\in \Gamma_1$ the number $\sigma$ in $\{\sigma\in \Bbb C:\Re \sigma\ge 0\}$ solving $\sigma^2-\frac{1}{4}=\lambda$
satisfies
\begin{align}\label{plm2}
\Re \sigma&\ge  10^{k-1},\mbox{  }{\rm{and}}\mbox{  } {\rm {arg}}(\sigma)\approx \frac{\pi}{4}.
\end{align}
provided that $\aleph>100^{k}\delta^{-1}_2$ and $k\in \Bbb N^+$ is sufficiently large. So letting $k\gg 1$, $\sigma$ satisfies the conditions in Lemma \ref{LP}( the constant $c$ in Lemma \ref{LP} now can be chosen as $\frac{\sqrt{2}}{4}$.) And then we have for $\lambda\in\Gamma_1$ and $r=2,p$,
\begin{align*}
\|{( - H - \lambda )^{ - 1}}f\|_{L^r_x}\le C|\sigma|^{-2+\epsilon}\|f\|_{L^r_x},
\end{align*}
where $\lambda=\sigma^2-\frac{1}{4}$, provided $\aleph>100^{k}\delta^{-1}_2$, $k\gg1$. Since for $\lambda\in \Gamma_1$, $|\lambda|\thicksim_{\aleph}|\sigma|^{2}$, we conclude
\begin{align} \label{p0i1}
\|{( - H - \lambda )^{ - 1}}f\|_{L^r_x}\le  C\min(1,|\lambda|^{-1+\epsilon})\|f\|_{L^r_x},
\end{align}
for $\aleph\delta_2\gg1$ and any $\epsilon\in(0,1)$.

The same arguments show for $\lambda\in\Gamma_3$,  $r=2,p$ and any $\epsilon\in(0,1)$
\begin{align}\label{p0i2}
\|{( - H - \lambda )^{ - 1}}f\|_{L^r_x}\le  C\min(1,|\lambda|^{-1+\epsilon})\|f\|_{L^r_x}.
\end{align}
Lemma \ref{qiu2} shows for $\lambda\in\Gamma_2$ and $r=2,p$,
\begin{align}\label{p0xi2}
\|{( - H - \lambda )^{ - 1}}f\|_{L^r_x}\le  C \|f\|_{L^r_x}.
\end{align}
Thus (\ref{0p8bv}) holds in $L^2\cap L^p$ by using (\ref{p0i1}), (\ref{p0i2}), (\ref {p0xi2}) and noticing
\begin{align}\label{kn7}
|e^{\lambda t}|<e^{-t\mu|\lambda|}
\end{align}
for $\mu=\frac{1}{\sqrt{1+\aleph^2}}$ and $\lambda\in \Gamma_1\cup\Gamma_3$.

Moreover, by (\ref{0p8bv}) we have
\begin{align}
He^{-tH}f&=\frac{1}{2\pi i}\int_{\Gamma}e^{\lambda t}H(-H-\lambda)^{-1}fd\lambda\nonumber\\
&=-\frac{1}{2\pi i}\int_{\Gamma}e^{\lambda t}fd\lambda-\int_{\Gamma}e^{\lambda t}\lambda(-H-\lambda)^{-1}fd\lambda.\label{0bv}
\end{align}
Combining (\ref{p0i1}), (\ref{p0i2}) and (\ref {p0xi2}), we have by (\ref{0bv})
\begin{align}\label{kn9}
\|He^{-tH}f\|_{L^r_x}\lesssim \big(\int^{\infty}_{c(\aleph,\delta_2)}e^{-\mu t\tau}d\tau+\int^{\infty}_{c(\aleph,\delta_2)}e^{-\mu t\tau}\tau^{\epsilon}d\tau+e^{-ct}\aleph^2\big)\|f\|_{L^r_x}
\end{align}
Hence we arrive at
\begin{align*}
\|He^{-tH}f\|_{L^p_x}\lesssim t^{-1-\epsilon}e^{-t\delta_3}\|f\|_{L^p_x},
\end{align*}
for some $\delta_3>0$, thus giving (\ref{kn167}).
\end{proof}

\begin{proposition}\label{k8vr4}
For $s\in(0,2)$, $p\in(1,\infty)$, we have
\begin{align}
\|(-\Delta)^{\frac{s}{2}}f\|_{L^p_x}&\lesssim \|H^{\frac{s}{2}}f\|_{L^p_x}+\|f\|_{L^p_x}\label{p2ib}\\
\|H^{\frac{s}{2}}f\|_{L^p_x}&\lesssim \|(-\Delta)^{\frac{s}{2}}f\|_{L^p_x}+\|f\|_{L^p_x}.\label{p1ib}
\end{align}
\end{proposition}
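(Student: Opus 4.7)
The plan is to prove (6.1)--(6.2) by representing $H^{s/2}-(-\Delta)^{s/2}$ through the Balakrishnan integral formula and then bounding the result on $L^p$ using the heat-semigroup and resolvent estimates of Lemmas 6.1--6.3 together with Lemma 2.5. Since both $H$ and $-\Delta$ have spectrum bounded below by a positive constant (Proposition 4.1 and the Plancherel identity $\sigma(-\Delta)\subset[1/4,\infty)$), the correction $\|f\|_{L^p}$ on the right-hand side of (6.1)--(6.2) is absorbable into either of the fractional norms on the left; it therefore suffices to show $\|(H^{s/2}-(-\Delta)^{s/2})f\|_{L^p}\lesssim \|f\|_{L^p}$ for each $s\in(0,2)$.

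Setting $\alpha=s/2\in(0,1)$, Balakrishnan's identity combined with the second resolvent identity gives
\begin{align*}
H^{\alpha}-(-\Delta)^{\alpha}=\frac{\sin(\pi\alpha)}{\pi}\int_0^{\infty}\mu^{\alpha}(\mu+H)^{-1}W(\mu+(-\Delta))^{-1}\,d\mu,
\end{align*}
where by Proposition 4.2 the perturbation decomposes as $W\varphi=-2h^{ij}\mathcal{A}_j\partial_i\varphi+V_0\varphi$ with $|\mathcal{A}|+|V_0|\le C$. The Laplace transform of Lemma 6.1 yields $\|(\mu+H)^{-1}\|_{L^p\to L^p}\lesssim(\mu+\delta_1)^{-1}$, while Lemma 2.5 provides $\|(\mu+(-\Delta))^{-1}\|_{L^p\to L^p}\lesssim\min(1,\mu^{-1})$ and $\|\nabla(\mu+(-\Delta))^{-1}\|_{L^p\to L^p}\lesssim\min(1,\mu^{-1/2})$. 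Inserting these bounds into the integrand gives $\mu^{\alpha}(\mu+\delta_1)^{-1}\min(1,\mu^{-1/2})$, which is integrable on $(0,\infty)$ precisely when $\alpha<1/2$, settling the range $s\in(0,1)$.

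For the remaining range $\alpha\in[1/2,1)$, I would iterate the second resolvent identity once more on the left factor via $(\mu+H)^{-1}=(\mu+(-\Delta))^{-1}-(\mu+(-\Delta))^{-1}W(\mu+H)^{-1}$. The doubly-perturbed piece then decays like $\mu^{\alpha-2}$ at infinity, integrable for all $\alpha<1$. The single-iteration remainder $T_1:=\int_0^{\infty}\mu^{\alpha}(\mu+(-\Delta))^{-1}W(\mu+(-\Delta))^{-1}\,d\mu$ is more delicate: splitting $W=W_{\mathrm{mag}}+V_0$, the bounded-multiplier piece $V_0$ produces $\mu^{\alpha-2}$ decay and is harmless, while for the first-order piece I would rewrite $\nabla=(\nabla(-\Delta)^{-1/2})\cdot(-\Delta)^{1/2}$, using Riesz-transform boundedness on $\mathbb{H}^2$ (Loho\"ue's theorem) together with the functional-calculus bound $\sup_{\lambda\ge 1/4}\lambda^{1/2}(\mu+\lambda)^{-1}\lesssim(1+\mu)^{-1/2}$ to recover the missing $\mu^{-1/2}$ factor needed for integrability.

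The main obstacle is therefore controlling $T_1$ in the regime $\alpha\ge 1/2$; the naive single-resolvent bounds alone are insufficient and one must exploit the commutativity of $(-\Delta)^{1/2}$ with $(\mu+(-\Delta))^{-1}$ through spectral calculus in order to reshuffle the derivative onto a Riesz transform. A completely different approach that avoids this technicality is Stein's complex interpolation on the analytic family $T_z=H^z(-\Delta)^{-z}$: the endpoint $s=2$ equivalence is established via the moment inequality $\|(-\Delta)^{1/2}f\|_{L^p}\le\epsilon\|(-\Delta)f\|_{L^p}+C_{\epsilon}\|f\|_{L^p}$ applied to the relative bound $\|Wf\|_{L^p}\lesssim\|\nabla f\|_{L^p}+\|f\|_{L^p}$ supplied by Proposition 4.2, and the $\Re z=0$ endpoint is the BIP bound for $H$ on $L^p$, which follows from the Dunford representation $H^{it}f=(2\pi i)^{-1}\int_{\Gamma}\lambda^{it}(\lambda-H)^{-1}f\,d\lambda$ along the contour $\Gamma=\Gamma_1\cup\Gamma_2\cup\Gamma_3$ constructed in the proof of Lemma 6.3, with resolvent bounds supplied by Lemmas 6.2--6.3.
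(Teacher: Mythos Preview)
Your core strategy---the Balakrishnan representation combined with the Riesz-transform/spectral-commutativity trick $\nabla(\mu-\Delta)^{-1}=\nabla(-\Delta)^{-1/2}\cdot(\mu-\Delta)^{-1}(-\Delta)^{1/2}$---matches the paper's. For $s\in(0,1)$ your argument is essentially identical to Case~1 there. For $s\ge 1$ the paper applies the trick \emph{directly} to the original integrand $(\lambda+H)^{-1}X(\lambda-\Delta)^{-1}$, without your extra resolvent iteration: since $\|(\lambda+H)^{-1}\|_{L^p\to L^p}\lesssim(\lambda+\delta_1)^{-1}$ already holds by Lumer--Phillips, the iteration buys nothing and your $T_2$ term simply does not arise.

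There is, however, a genuine gap in your logical framing. You assert at the outset that it suffices to prove $\|(H^{s/2}-(-\Delta)^{s/2})f\|_{L^p}\lesssim\|f\|_{L^p}$, but your own argument for $\alpha\ge\tfrac12$ only yields $\lesssim\|(-\Delta)^{1/2}f\|_{L^p}+\|f\|_{L^p}$: the commutativity trick shifts a half-derivative onto $f$ rather than eliminating it. This weaker bound still gives (\ref{p1ib}) directly (since $\|(-\Delta)^{1/2}f\|_{L^p}\lesssim\|(-\Delta)^{s/2}f\|_{L^p}$ for $s\ge1$ by the spectral gap), but for the reverse inequality (\ref{p2ib}) it leaves $\|(-\Delta)^{s/2}f\|_{L^p}$ on both sides with a constant that is not a~priori small. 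The paper closes this by splitting the $\lambda$-integral at a large threshold $K_1$: on $[K_1,\infty)$ the commutativity trick produces $K_1^{s-2}\|(-\Delta)^{1/2}f\|_{L^p}$, absorbable for $K_1\gg1$; on $[0,K_1]$ one uses instead the crude bound $\|X(\lambda-\Delta)^{-1}\|_{L^p\to L^p}\lesssim C$ from Lemma~\ref{L1P}, which contributes only $C(K_1)\|f\|_{L^p}$. This absorption step is essential and is missing from your outline. Your Stein-interpolation alternative is a legitimate second route, but it requires bounded imaginary powers of $H$ on $L^p$ with controlled growth in $\Im z$, which is heavier machinery than the paper invokes.
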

\begin{proof}
By Lemma \ref{qiu1}, $e^{\delta_1 t}e^{-tH}$, whose infinitesimal generator is $\delta_1-H$, is a $C_0$ semigroup of contractions in $L^p$. Thus Lumer-Phillips theorem or [Corollary 3.6 of \cite{Pazy}] shows $\{\lambda:\Re \lambda>0\}\subset \rho(\delta_1-H)$. And for $\Re\lambda>-\frac{1}{2}\delta_1$,
\begin{align}\label{pknvoi}
\|(-H-\lambda)^{-1}\|_{L^p\to L^p}\le (\Re \lambda+\delta_1)^{-1}.
\end{align}
Due to Balakrishnan formula we deduce for $s\in(0,2)$ (see [Lemma 5.2, \cite{6HDFRG}] for the proof)
\begin{align}\label{p0x3ng}
H^{\frac{s}{2}}=(-\Delta)^{\frac{s}{2}}+c(s)\int^{\infty}_0\lambda^{\frac{s}{2}}(\lambda-\Delta+W)^{-1}W(\lambda-\Delta)^{-1}d\lambda.
\end{align}
By (\ref{p0x3ng}), it is obviously that for (\ref{p2ib}) and (\ref{p1ib}), it suffices to prove
\begin{align*}
\int^{\infty}_0\lambda^{\frac{s}{2}}\|(\lambda-\Delta+W)^{-1}W(\lambda-\Delta)^{-1}\|_{L^p\to L^p}d\lambda\le C.
\end{align*}

{\bf Case 1. $0\le s<1$. } In this case we consider $s\in [0,1)$.
Let $\lambda=-\frac{1}{4}+\sigma^2$, Lemma \ref{something} and (\ref{pknvoi}) give
\begin{align*}
\int^{\infty}_{0}\lambda^{\frac{s}{2}}\|(\lambda+H)^{-1}W(\lambda-\Delta)^{-1}\|_{L^p\to L^p}d\lambda\le C_{\delta_1}\int^{\infty}_{\frac{1}{2}}\sigma^{s-2} d\sigma\lesssim 1,
\end{align*}
provided $s\in [0,1)$, thus yielding (\ref{p2ib}), (\ref{p1ib}) in Case 1.

{\bf Case 2. $s=1$. } In this case we consider $s=1$.
Let $\lambda=-\frac{1}{4}+\sigma^2$, Lemma \ref{something} and (\ref{pknvoi}) show
\begin{align}
&\int^{\infty}_{0}\lambda^{\frac{1}{2}}\|(\lambda+H)^{-1}W(\lambda-\Delta)^{-1}\|_{L^p\to L^p}d\lambda\lesssim C_{\delta_1}\|V\|_{L^{\infty}_x}\int^{\infty}_{\frac{1}{2}}\sigma^{-2} d\sigma\label{8n7yu}\\
&+\int^{\infty}_{\frac{1}{2}} \|X(\sigma^2-\frac{1}{4}-\Delta)^{-1}\|_{L^p\to L^p} d\sigma.\label{n7yu}
\end{align}
by recalling $W:=V+X$ with $X$ denoting the magnetic part. Thus it suffices to deal with the magnetic part (\ref{n7yu}). By the boundedness of Riesz transform we see
\begin{align*}
&\|X(\sigma^2-\frac{1}{4}-\Delta)^{-1}f\|_{L^p}\lesssim \|(-\Delta)^{\frac{1}{2}}(\sigma^2-\frac{1}{4}-\Delta)^{-1}f\|_{L^p}\\
&=\|(\sigma^2-\frac{1}{4}-\Delta)^{-1}(-\Delta)^{\frac{1}{2}}f\|_{L^p}\\
&\lesssim \min (1,|\sigma|^{-2})\|(-\Delta)^{\frac{1}{2}}f\|_{L^p},
\end{align*}
where in the last line we used  Lemma \ref{something}.
Thus (\ref{n7yu}), (\ref{p0x3ng}) further give
\begin{align*}
\|H^{\frac{1}{2}}f\|_{L^p}\lesssim \|(-\Delta)^{\frac{1}{2}}f\|_{L^p}+\|f\|_{L^p},
\end{align*}
which proves (\ref{p1ib}) in Case 2. To prove (\ref{p2ib}), recalling that (\ref{8n7yu}), (\ref{n7yu}), (\ref{p0x3ng}) imply
\begin{align}
\|(-\Delta)^{\frac{1}{2}}f\|_{L^p}&\lesssim \|H^{\frac{1}{2}}f\|_{L^p}+\|f\|_{L^p}+\int^{\infty}_{\frac{1}{2}}\|X(\sigma^2-\frac{1}{4}-\Delta)^{-1}f\|_{L^p} d\sigma.\label{Yu76tg}
\end{align}
Divide (\ref{Yu76tg}) into $\sigma\in[\frac{1}{2},K_1)$ and  $\sigma\in[K_1,\infty)$, and apply   (\ref{L2qPQ}) of Lemma \ref{L1P}  to $\sigma\in[\frac{1}{2},K_1)$ and   (\ref{L2qPP}) of Lemma \ref{L1P} to $\sigma\in[K_1,\infty)$:
\begin{align*}
\left|(\ref{Yu76tg})\right|&\lesssim \int^{K_1}_{\frac{1}{2}}\sigma^{\frac{1}{2}}\|f\|_{L^p} d\sigma+\int^{\infty}_{K_1}\sigma^{-2}\|(-\Delta)^{\frac{1}{2}}f\|_{L^p} d\sigma\\
&\lesssim C(K_1)\|f\|_{L^p} d\sigma+(K_1)^{-1}\|(-\Delta)^{\frac{1}{2}}f\|_{L^p}.
\end{align*}
where we used again the boundedness of Riesz transform in the first line. Therefore, we conclude
\begin{align}\label{6jtg}
\|(-\Delta)^{\frac{1}{2}}f\|_{L^p}\lesssim \|H^{\frac{1}{2}}f\|_{L^p}+\|f\|_{L^p}+(K_1)^{-1}\|(-\Delta)^{\frac{1}{2}}f\|_{L^p},
\end{align}
which gives (\ref{p2ib}) by letting $K_1\gg1$ and absorbing the $(-\Delta)^{\frac{1}{2}}f$ on the RHS of (\ref{6jtg}) to the left.

{\bf Case 3. $s\in (1,2)$. }  As before, (\ref{p0x3ng}), (\ref{pknvoi}) and  (\ref{L2qPP}) of Lemma \ref{L1P} show
\begin{align}
\|(-\Delta)^{\frac{s}{2}}f\|_{L^p}&\lesssim \|H^{\frac{s}{2}}f\|_{L^p}+\|f\|_{L^p}+\int^{\infty}_{K_1}\sigma^{s-3}\|(-\Delta)^{\frac{1}{2}}f\|_{L^p}\nonumber\\
&\lesssim \|H^{\frac{s}{2}}f\|_{L^p}+\|f\|_{L^p}+(K_1)^{s-2}\|(-\Delta)^{\frac{1}{2}}f\|_{L^p},\label{uytfcg}
\end{align}
provided $s\in(1,2)$. Since on $\Bbb H^2$ there holds  $\|(-\Delta)^{\frac{1}{2}}f\|_{L^p}\lesssim \|(-\Delta)^{\frac{s}{2}}f\|_{L^p}$ for $s>1$, we further have by (\ref {uytfcg})
\begin{align}
\|(-\Delta)^{\frac{s}{2}}f\|_{L^p}
&\lesssim \|H^{\frac{s}{2}}f\|_{L^p}+\|f\|_{L^p}+(K_1)^{s-2}\|(-\Delta)^{\frac{s}{2}}f\|_{L^p},
\end{align}
which gives
\begin{align}
\|(-\Delta)^{\frac{s}{2}}f\|_{L^p}
&\lesssim \|H^{\frac{s}{2}}f\|_{L^p}+\|f\|_{L^p},
\end{align}
provided $K_1$ is sufficiently large. Thus (\ref {p2ib}) has been proved.
The same arguments give (\ref{p1ib}) which is the inverse direction of  (\ref {p2ib}).

\end{proof}

\begin{proposition}\label{pkg1}
Let $H$ be defined in Lemma \ref{REWSDERF}. Then for any $p\in(1,\infty)$, $s\in[0,2)$, and any $\epsilon\in (0,1)$, $t>0$, we have for some $\delta >0$
\begin{align}\label{kn16}
\|(-\Delta)^{\frac{s}{2}}e^{-tH}f\|_{L^p_x}&\le Ct^{-s(1+\epsilon)}e^{-\delta  t}\|f\|_{L^p_x}.
\end{align}
\end{proposition}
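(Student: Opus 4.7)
The plan is to reduce $(-\Delta)^{s/2}$ to $H^{s/2}$ via Proposition \ref{k8vr4} and then to upgrade Lemma \ref{qiu3} to fractional powers by inserting a weight into the same Dunford-calculus representation of the analytic semigroup $e^{-tH}$ that was used in that lemma. Specifically, Proposition \ref{k8vr4} applied to $g = e^{-tH}f$ gives
$$\|(-\Delta)^{s/2} e^{-tH} f\|_{L^p} \lesssim \|H^{s/2} e^{-tH} f\|_{L^p} + \|e^{-tH} f\|_{L^p},$$
and the second term on the right is handled by Lemma \ref{qiu1}. Everything therefore reduces to estimating $\|H^{s/2} e^{-tH} f\|_{L^p}$ with the claimed decay.

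For the main term, I would take exactly the same contour $\Gamma = \Gamma_1 \cup \Gamma_2 \cup \Gamma_3$ lying in $\rho(-H)$ that was chosen in Lemma \ref{qiu3}, and use Dunford's functional calculus to write
$$H^{s/2} e^{-tH} f = \frac{1}{2\pi i}\int_\Gamma (-\lambda)^{s/2}\, e^{\lambda t}\, (-H-\lambda)^{-1} f\, d\lambda,$$
where $(-\lambda)^{s/2}$ is the branch holomorphic on $\Bbb C \setminus [0,\infty)$ and positive on $(-\infty,0)$. The uniform resolvent bounds needed on $\Gamma$, namely $\|(-H-\lambda)^{-1}\|_{L^p \to L^p} \le C\min(1,|\lambda|^{-1+\epsilon})$ on $\Gamma_1 \cup \Gamma_3$ and $\le C$ on $\Gamma_2$, are already contained in the proof of Lemma \ref{qiu3} (they came from Lemma \ref{something} via the substitution $\lambda = \sigma^2 - \tfrac14$ and from Lemma \ref{qiu2} respectively). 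Combined with $|(-\lambda)^{s/2}| \lesssim |\lambda|^{s/2}$ and the exponential factor $|e^{\lambda t}| \le e^{-\mu t|\lambda|}$ on the oblique parts of $\Gamma$, the contribution of $\Gamma_1 \cup \Gamma_3$ is dominated by $\int_{\delta_2}^\infty a^{s/2 - 1 + \epsilon}\, e^{-\mu t a}\, da$. Splitting off a fixed piece $[\delta_2, 2\delta_2]$ to harvest an $e^{-\delta t}$ factor and rescaling the tail then produces the desired negative power of $t$ together with exponential decay; the bounded horizontal segment $\Gamma_2$ contributes $O(e^{-\delta t})$ directly.

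The main technical obstacle will be the convergence of the weighted integral at infinity: integrability of $a^{s/2 - 1 + \epsilon}$ on $[1,\infty)$ forces $\epsilon$ to be chosen smaller than $1 - s/2$, so the $\epsilon$-loss inherited from Lemma \ref{qiu3} becomes binding as $s \uparrow 2$. This is precisely why the hypothesis restricts to $s \in [0,2)$ and why the decay exponent carries the $(1+\epsilon)$ factor rather than $1$. A cleaner alternative to the branch-cut bookkeeping, should one wish to avoid it, is to invoke the moment inequality $\|H^{\alpha} u\|_{L^p} \le C_{\alpha} \|Hu\|_{L^p}^{\alpha}\|u\|_{L^p}^{1-\alpha}$ valid for the generator of the bounded analytic semigroup $e^{-tH}$ on $L^p$ (whose analyticity is precisely what Lemma \ref{qiu3} already established), applied to $u = e^{-tH}f$ with $\alpha = s/2$, and to then combine the bounds of Lemma \ref{qiu1} and Lemma \ref{qiu3} multiplicatively.
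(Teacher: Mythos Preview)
Your proposal is correct. The paper actually takes precisely what you call the ``cleaner alternative'': it invokes the moment inequality $\|H^{s/2}f\|_{L^p}\le\|f\|_{L^p}^{1-s/2}\|Hf\|_{L^p}^{s/2}$ (via Balakrishnan, as in \cite{Pazy}), applies it with $f$ replaced by $e^{-tH}f$, and then combines Lemma~\ref{qiu1} with Lemma~\ref{qiu3} multiplicatively to obtain (\ref{p3ib}); the almost-equivalence (\ref{p2ib}) from Proposition~\ref{k8vr4} then transfers this to $(-\Delta)^{s/2}$, exactly as in your opening paragraph. Your primary route---inserting the weight $(-\lambda)^{s/2}$ into the Dunford contour from Lemma~\ref{qiu3}---is also valid (this is the standard sectorial-calculus representation of $H^{s/2}e^{-tH}$ for analytic semigroups) and yields the same bound after estimating $\int_{\delta_2}^\infty a^{s/2-1+\epsilon}e^{-at}\,da$; the moment-inequality shortcut simply avoids repeating the contour bookkeeping already carried out in Lemma~\ref{qiu3}.
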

\begin{proof}
By Lemma \ref{qiu1}, $e^{-tH}$ is a $C_0$ semigroup of contractions in $L^p$. Thus again by Lumer-Phillips theorem or Corollary 3.6 of \cite{Pazy}, $\{\lambda:\Re \lambda>0\}\subset \rho(-H)$, and for such $\lambda$, (\ref{pknvoi}) holds.
And thus by Balakrishnan formula (see for instance in the proof of [Theorem 6.10\cite{Pazy}]), we have
\begin{align}\label{pknv}
\|H^{\frac{s}{2}}f\|_{L^p_x}\le \|f\|^{1-\frac{s}{2}}_{L^p_x}\|Hf\|^{\frac{s}{2}}_{L^p_x}.
\end{align}
Hence by Lemma \ref{qiu3} and Lemma \ref{qiu1},
\begin{align}
\|H^{\frac{s}{2}}e^{-tH}f\|_{L^p_x}&\lesssim e^{-\delta_3 t}t^{-\frac{s(1+\epsilon)}{2}}\|f\|_{L^p_x}\label{p3ib}\\
\|e^{-tH}f\|_{L^p_x}&\lesssim e^{-\delta_1 t}\|f\|_{L^p_x}.\label{p4ib}
\end{align}
Therefore  (\ref{p2ib}), (\ref{p4ib}), (\ref{p3ib}) give for $s\in[0,2)$
\begin{align*}
\|(-\Delta)^{\frac{s}{2}}e^{-tH}f\|_{L^p_x}\le Ce^{-\delta  t}t^{-\frac{s(1+\epsilon)}{2}}\|f\|_{L^p_x},
\end{align*}
for some $\delta>0$.
\end{proof}

When $p=2$, Proposition \ref{pkg1} can be refined to be the following.
\begin{proposition}\label{pkg2}
Let $H$ be defined in Lemma \ref{REWSDERF} with ${\bf e}$ being the Coulomb gauge. Then for any $s\in[0,2]$, $t>0$, we have
\begin{align}
\|H^{\frac{s}{2}}e^{-tH}f\|_{L^2_x}&\lesssim t^{-s}e^{-\delta t}\|f\|_{L^2_x}\label{kn15}\\
\|(-\Delta)^{\frac{s}{2}}f\|_{L^2_x}&\lesssim \|H^{\frac{s}{2}}f\|_{L^2_x}\label{2ib}\\
\|H^{\frac{s}{2}}f\|_{L^2_x}&\lesssim \|(-\Delta)^{\frac{s}{2}}f\|_{L^2_x}.\label{1ib}
\end{align}
\end{proposition}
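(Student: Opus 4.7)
The key feature of the $L^2$ setting going beyond Proposition \ref{pkg1} is that, by Section 3.1 and Proposition \ref{CXXXXE}, $H$ is self-adjoint and non-negative with $\sigma(H)\subset[\tfrac{1}{4},\infty)$, so the full spectral calculus is at our disposal.

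For (\ref{kn15}) I would apply the spectral theorem directly:
\[
\|H^{s/2}e^{-tH}\|_{L^2\to L^2}=\sup_{\lambda\ge 1/4}\lambda^{s/2}e^{-t\lambda}.
\]
Splitting $e^{-t\lambda}=e^{-(1-\varepsilon)t\lambda}e^{-\varepsilon t\lambda}$ for a small fixed $\varepsilon>0$, the factor $\lambda^{s/2}e^{-(1-\varepsilon)t\lambda}$ has maximum $\le C_s t^{-s/2}$ on $[1/4,\infty)$, while $e^{-\varepsilon t\lambda}\le e^{-\varepsilon t/4}$ there by the spectral gap. This produces the claimed $t$-decay with an exponential factor, uniformly for $s\in[0,2]$.

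For (\ref{2ib}) and (\ref{1ib}) my plan is to establish the endpoint equivalence $\|\Delta f\|_{L^2}\sim\|Hf\|_{L^2}$ at $s=2$ and then lift it to all $s\in[0,2]$ by complex interpolation of fractional-power domains. Writing $H=-\Delta+W$ in the Coulomb gauge, the $L^\infty$ bounds of Proposition \ref{hutyhvf}(b),(c) yield $\|Wf\|_{L^2}\le C(\|\nabla f\|_{L^2}+\|f\|_{L^2})$. Combining $\|\nabla f\|_{L^2}^2\le\|\Delta f\|_{L^2}\|f\|_{L^2}$ with the Sobolev spectral gap $\|f\|_{L^2}\le 2\|\nabla f\|_{L^2}$ (from $-\Delta\ge\tfrac{1}{4}$) gives $\|\nabla f\|_{L^2}+\|f\|_{L^2}\le C\|\Delta f\|_{L^2}$, hence $\|Hf\|_{L^2}\le C\|\Delta f\|_{L^2}$. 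Conversely, Lemma \ref{huapx} gives $\|\nabla f\|_{L^2}^2\le\langle Hf,f\rangle\le\|Hf\|_{L^2}\|f\|_{L^2}$, and the gap $H\ge\tfrac{1}{4}$ from Proposition \ref{CXXXXE} gives $\|f\|_{L^2}\le 4\|Hf\|_{L^2}$; these combine to $\|Wf\|_{L^2}\le C\|Hf\|_{L^2}$ and thus $\|\Delta f\|_{L^2}\le C\|Hf\|_{L^2}$. This establishes $D(H)=D(-\Delta)$ with equivalent graph norms.

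With the endpoint equivalence at $s=2$ (and trivial equivalence at $s=0$), I invoke the classical spectral-theoretic fact that for any positive self-adjoint operator $A$ on a Hilbert space with $\sigma(A)\subset[c,\infty)$, $c>0$, the fractional-power domain $D(A^\theta)$ coincides with the complex interpolation space $[L^2,D(A)]_\theta$ with equivalent norms, for $\theta\in[0,1]$. Applying this to both $A=H$ and $A=-\Delta$ and using functoriality of complex interpolation, one gets $D(H^{s/2})=D((-\Delta)^{s/2})$ with equivalent norms throughout $s\in[0,2]$; the spectral gaps for $H$ and $-\Delta$ then absorb the lower-order $\|f\|_{L^2}$ factor in the graph norm into the leading one, yielding (\ref{2ib}) and (\ref{1ib}). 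The main obstacle is the endpoint $s=2$: controlling the first-order magnetic cross-term $-2h^{ij}\mathcal{A}_i\partial_j f$ inside $Wf$ against $\|\Delta f\|_{L^2}$ forces simultaneous use of the $L^\infty$-bound on the Coulomb connection from Proposition \ref{hutyhvf} and both spectral gaps; once that is in place, the interpolation step is routine.
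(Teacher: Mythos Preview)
Your proposal is correct. For (\ref{2ib}) and (\ref{1ib}) you follow essentially the paper's route: establish the $s=2$ equivalence $\|\Delta f\|_{L^2}\sim\|Hf\|_{L^2}$ by combining the Coulomb--gauge lower bound $\langle Hf,f\rangle\ge\|\nabla f\|_{L^2}^2$ (Lemma~\ref{huapx}) with the $L^\infty$ bounds on $\mathcal{A},V$ (Proposition~\ref{hutyhvf}) and the two spectral gaps, then pass to intermediate $s$ by interpolation. The paper carries out only the $s=2$ step explicitly; your invocation of the Heinz--Kato / complex-interpolation identification $D(A^{\theta})=[L^2,D(A)]_{\theta}$ makes the passage to general $s$ cleaner and more self-contained.

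For (\ref{kn15}) you take a genuinely more direct path. The paper reruns the contour-integral argument of Lemma~\ref{qiu3}, now using the sharp $L^2$ resolvent bound $\|(-H-z)^{-1}\|_{L^2\to L^2}\le \mathrm{dist}(z,\sigma(H))^{-1}$ (available by self-adjointness) to drop the $\epsilon$-loss, obtain $\|He^{-tH}\|_{L^2\to L^2}\lesssim t^{-1}e^{-\delta t}$, and then interpolate. You bypass all of this with one line of spectral calculus, $\|H^{s/2}e^{-tH}\|=\sup_{\lambda\ge 1/4}\lambda^{s/2}e^{-t\lambda}$. Both arguments ultimately rest on the same spectral information (self-adjointness plus $\sigma(H)\subset[\tfrac14,\infty)$), but yours is shorter and avoids the detour through Dunford integrals. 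Note also that both your computation and the paper's interpolation actually produce the decay $t^{-s/2}$ for small $t$; the exponent $t^{-s}$ in the displayed statement appears to be a typo (compare with (\ref{p3ib}) in the proof of Proposition~\ref{pkg1}).
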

\begin{proof}
We first prove (\ref{2ib}) and (\ref{1ib}). When $s=2$, By (\ref{0cdek}) and Sobolev embedding,
\begin{align*}
4\|Hu\|_{L^2_x}\|\nabla u\|_{L^2_x}\ge \|Hu\|_{L^2_x}\|u\|_{L^2_x}\ge \langle Hu,u\rangle \ge \langle \nabla u,\nabla u\rangle \ge \frac{1}{4}\|u\|^2_{L^2_x}.
\end{align*}
Then we have
\begin{align*}
\|u\|_{L^2_x}+\|\nabla u\|_{L^2_x}\lesssim \|Hu\|_{L^2_x}.
\end{align*}
Hence one obtains by $|\mathcal{A}|\le C$ and triangle inequality that
\begin{align*}
\|\Delta u\|_{L^2_x}&\lesssim \|H u\|_{L^2_x}+\|\nabla u\|_{L^2_x}\lesssim \|H u\|_{L^2_x}\\
\|H u\|_{L^2_x}&\lesssim \|\Delta u\|_{L^2_x}+\|\nabla u\|_{L^2_x}\lesssim \|\Delta u\|_{L^2_x},
\end{align*}
from which (\ref{2ib}) and (\ref{1ib}) follow.
(\ref{kn15}) follows by the same arguments as Proposition \ref{pkg1} with the following improved resolvent estimates in $L^2_x$:
\begin{align}\label{POIU}
\|(-H-z)^{-1}\|_{L^2\to L^2}\le {\rm {dist}} (z,Line),
\end{align}
where $Line:=\{x\in \Bbb R : x\le -\frac{1}{4}\}$.
Indeed (\ref{POIU}) implies the additional $\epsilon$ in (\ref{p0i1}), (\ref{p0i2}) can be removed. And thus the additional $\epsilon$ in (\ref{0bv}) is not needed and one gets
\begin{align*}
\|He^{-tH}f\|_{L^2_x}\lesssim t^{-1}e^{-\delta_3 t}\|f\|_{L^2_x}.
\end{align*}
Then (\ref{kn15}) follows by interpolation.
\end{proof}

In addition, we need an almost equivalence lemma for $H^{\frac{1}{2}}$ and $(-\Delta)^{\frac{1}{2}}$ in $\rho^{-\alpha}L^2$.
\begin{lemma}\label{py6mx}
Let $0<\alpha\ll\varrho$.
Let $H$ be defined in Lemma \ref{REWSDERF} with ${\bf e}$ being the Coulomb gauge. For $0<\delta_4\ll1$, $\lambda>\frac{1}{4}-\delta_4$, $(I+\rho^{-\alpha}W(-\Delta+\lambda)^{-1}\rho^{\alpha})$ is invertible in $L^2_x$ and analytic with respect to $\lambda$ in any compact set of $\{\lambda:\lambda>\frac{1}{4}-\delta_4\}$.
\end{lemma}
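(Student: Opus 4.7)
The plan is to verify invertibility via Fredholm's alternative, combined with the Coulomb-gauge positivity of $H$, and to derive analyticity from the analytic Fredholm theorem. Set $T(\lambda):=\rho^{-\alpha}W(-\Delta+\lambda)^{-1}\rho^{\alpha}$. Whenever $\delta_4<1/2$ one has $\lambda>1/4-\delta_4>-1/4$, so $-\lambda$ lies in the resolvent set $\mathbb{C}\setminus[1/4,\infty)$ of $-\Delta$, and the same holds on a complex open neighborhood of the interval $(1/4-\delta_4,\infty)$, making $T(\lambda)$ well-defined and operator-norm analytic there.

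First, I would show that $T(\lambda)$ is compact on $L^{2}(\mathbb{H}^{2};\mathbb{C}^{2})$. By Proposition \ref{hutyhvf}, both $|\rho^{-\beta}\mathcal{A}|$ and $|\rho^{-\beta}V|$ are bounded for every $\beta\in(0,\varrho)$; choosing $0<\alpha\ll\beta<\varrho$, the sandwich $\rho^{-\alpha}W\rho^{\alpha}$ effectively carries an exponentially decaying weight $\rho^{\beta-\alpha}$ (as a pure multiplier on the electric part and in front of the first-order magnetic part). Combined with the uniform bound $\|\nabla(-\Delta+\lambda)^{-1}\|_{L^{2}\to L^{2}}\lesssim 1$ from Lemma \ref{something} and a standard Rellich-type truncation on expanding geodesic balls, this delivers the compactness of $T(\lambda)$.

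Second, I would rule out $-1$ from the spectrum of $T(\lambda)$. Suppose $T(\lambda)u=-u$ for some nonzero $u\in L^{2}$, and set $g:=(-\Delta+\lambda)^{-1}\rho^{\alpha}u\in H^{2}$. Then $\rho^{\alpha}u=(-\Delta+\lambda)g$, so rewriting $u+\rho^{-\alpha}W(-\Delta+\lambda)^{-1}\rho^{\alpha}u=0$ yields $\rho^{-\alpha}(H+\lambda)g=0$, i.e.\ $(H+\lambda)g=0$ in $L^{2}$. Necessarily $g\neq 0$, since $g=0$ would force $\rho^{\alpha}u=0$ and hence $u=0$. Thus $g$ is an $L^{2}$-eigenfunction of $H$ with eigenvalue $-\lambda$. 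For non-real $\lambda$ this already contradicts the self-adjointness of $H$; for real $\lambda$, Lemma \ref{huapx} and the spectral gap $\|\nabla g\|_{L^{2}}^{2}\geq\tfrac{1}{4}\|g\|_{L^{2}}^{2}$ on $\mathbb{H}^{2}$ give
\[
-\lambda\|g\|_{L^{2}}^{2}=\langle Hg,g\rangle\geq\|\nabla g\|_{L^{2}}^{2}\geq\tfrac{1}{4}\|g\|_{L^{2}}^{2},
\]
whence $\lambda\leq-1/4$, contradicting $\lambda>1/4-\delta_{4}$ provided $\delta_{4}<1/2$. Fredholm's alternative then yields invertibility of $I+T(\lambda)$. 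By the analytic Fredholm theorem applied to the analytic compact family $T(\lambda)$, the map $\lambda\mapsto(I+T(\lambda))^{-1}$ is meromorphic on the stated complex neighborhood with poles only where invertibility fails; since the previous step excludes any such point, analyticity on every compact subset of $(1/4-\delta_{4},\infty)$ follows.

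The main obstacle is the compactness of $T(\lambda)$. The electric part reduces to bounded multiplication by an exponentially decaying weight composed with a smoothing operator, but the magnetic part $X=-2h^{ii}\mathcal{A}_{i}\partial_{i}$ is genuinely first-order; the key is to pair the uniform $L^{2}\to H^{1}$ resolvent bound from Lemma \ref{something} with the exponential decay of $\mathcal{A}$ from Proposition \ref{hutyhvf} and the admissibility of $Q$, and then truncate to a large geodesic ball to reduce compactness to the standard Rellich embedding.
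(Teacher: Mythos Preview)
Your proposal is correct and follows essentially the same route as the paper: compactness of $T(\lambda)$ to invoke Fredholm's alternative, then a contradiction with the spectral lower bound $\sigma(H)\subset[1/4,\infty)$ (Proposition~\ref{CXXXXE}, equivalently Lemma~\ref{huapx} plus the spectral gap), and finally analyticity from the analytic dependence of $(-\Delta+\lambda)^{-1}$ on $\lambda$. Your version is actually a bit cleaner at one point: since $\rho^{\alpha}\le 1$ one has $\rho^{\alpha}u\in L^{2}$ directly, so $g=(-\Delta+\lambda)^{-1}\rho^{\alpha}u\in H^{2}$ immediately, whereas the paper takes a detour through $L^{r}$, $r\in(2,6)$, and $\nabla g\in\rho^{-\alpha}L^{2}$ before bootstrapping back to $g\in L^{2}$---a remnant of the argument used for Lemma~\ref{re3}, where the resolvent is only a limiting absorption operator and this extra care is genuinely needed.
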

\begin{proof}
Assume $I+\rho^{-\alpha}W(-\Delta+\lambda)^{-1}\rho^{\alpha}$ is not invertible, then by Fredholm's alternative, there exists $f\in L^2_x$ such that
$$f+\rho^{-\alpha}W(-\Delta+\lambda)^{-1}\rho^{\alpha}f=0.$$
Let $(-\Delta+\lambda)^{-1}\rho^{\alpha}f=g$, then by Lemma \ref{somerthing} with (\ref{4der45P}) shows
\begin{align}\label{KHuy}
g\in L^{r} \mbox{ }{\rm for}\mbox{ } r\in(2,6), \mbox{ }\nabla g\in \rho^{-\alpha}L^{2}_x.
\end{align}
Moreover, we have
\begin{align}\label{yu7x6}
-\Delta g+\lambda g+Wg=0.
\end{align}
By (\ref{KHuy}), potentials in $W$ decay exponentially in $\Bbb H^2$, and H\"older, it is easy to check $Wg\in L^2_x$. Thus (\ref{yu7x6}) shows $-\Delta g+(\sigma^2-\frac{1}{4})g\in L^2$ and consequently we have $g\in L^2$ due to $\sigma(-\Delta)\subset[\frac{1}{4},\infty)$. Again by (\ref{yu7x6}), $g$ is an eigenfunction of $-\Delta+W$ with eigenvalue $\frac{1}{4}-\lambda$, which contradicts with Proposition \ref{CXXXXE}. The analyticity of $(I+\rho^{-\alpha}W(-\Delta+\lambda)^{-1}\rho^{\alpha})^{-1}$ claimed in our lemma follows by the fact $\frac{1}{4}-\lambda$ lies in the resolvent set of $-\Delta$ when $\lambda>\frac{1}{4}-\delta_4$.
\end{proof}

\begin{lemma}\label{pjuy}
Let $H$ be defined in Lemma \ref{REWSDERF} with ${\bf e}$ being the Coulomb gauge. We have
\begin{align}
\|\nabla f\|_{\rho^{-\alpha}L^2_x}&\lesssim \|H^{\frac{1}{2}} f\|_{\rho^{-\alpha}L^2_x}+\|f\|_{\rho^{-\alpha}L^2_x}\label{hern}\\
\|\nabla f\|_{\rho^{-\alpha}L^2_x}&\lesssim \|(-\Delta)^{\frac{1}{2}} f\|_{\rho^{-\alpha}L^2_x}\label{sher4}\\
\|(-\Delta)^{\frac{1}{2}} f\|_{\rho^{-\alpha}L^2_x}&\lesssim \|H^{\frac{1}{2}} f\|_{\rho^{-\alpha}L^2_x}+\|f\|_{\rho^{-\alpha}L^2_x}\label{ser4}.
\end{align}
\end{lemma}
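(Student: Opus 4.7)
Since (\ref{hern}) is an immediate consequence of (\ref{sher4}) and (\ref{ser4}) by the triangle inequality, the substantive work lies in establishing (\ref{sher4}) and (\ref{ser4}); I plan to treat them in that order, since (\ref{sher4}) is independent of $H$ and will feed back into the proof of (\ref{ser4}).

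For (\ref{sher4}), the plan is to conjugate by the weight. Setting $\phi=\rho^{-\alpha}f$ and using $\nabla\rho^{-\alpha}=\alpha\rho^{-\alpha}\nabla d$ together with $|\nabla d|=1$ and $\Delta d=\coth d$ on $\mathbb{H}^2$, an elementary integration by parts in $L^2$ yields the identity
\begin{align*}
\|\nabla f\|_{\rho^{-\alpha}L^2}^2 = \|(-\Delta)^{1/2}\phi\|_{L^2}^2 + \alpha^2\|f\|_{\rho^{-\alpha}L^2}^2 + \alpha\int_{\mathbb{H}^2}\rho^{-2\alpha}(\coth d)|f|^2\,{\rm dvol}_h.
\end{align*}
I then decompose $(-\Delta)^{1/2}\phi=\rho^{-\alpha}(-\Delta)^{1/2}f+[(-\Delta)^{1/2},\rho^{-\alpha}]f$. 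Expressing the commutator via the Balakrishnan representation $T^{1/2}=\frac{1}{\pi}\int_0^\infty T(\lambda+T)^{-1}\lambda^{-1/2}d\lambda$ for $T=-\Delta$, and using the pointwise identity $[-\Delta,\rho^{-\alpha}]=-2\alpha\rho^{-\alpha}\nabla d\cdot\nabla-(\alpha^2+\alpha\coth d)\rho^{-\alpha}$, the weighted resolvent bounds in Lemma~\ref{something} give $\|[(-\Delta)^{1/2},\rho^{-\alpha}]f\|_{L^2}\lesssim\alpha\|f\|_{\rho^{-\alpha}L^2}$. The spectral gap $\sigma(-\Delta)\subset[\tfrac14,\infty)$ persists up to an $O(\alpha)$ perturbation after conjugation by $\rho^\alpha$ (since $\rho^{-\alpha}(-\Delta)\rho^\alpha=-\Delta+2\alpha\nabla d\cdot\nabla+(\alpha\coth d-\alpha^2)$), producing the lower bound $\|(-\Delta)^{1/2}f\|_{\rho^{-\alpha}L^2}\gtrsim\|f\|_{\rho^{-\alpha}L^2}$ for small $\alpha$. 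Combined with the $\coth d\cdot\sinh d=\cosh d$ cancellation in the measure and a density argument with smooth functions supported away from the origin, this permits absorption of the $O(\alpha)\|f\|_{\rho^{-\alpha}L^2}^2$ errors, yielding (\ref{sher4}).

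For (\ref{ser4}), I apply the Balakrishnan representation (\ref{p0x3ng}) at $s=1$,
\begin{align*}
(-\Delta)^{1/2}f-H^{1/2}f=-c\int_0^\infty\lambda^{1/2}(\lambda+H)^{-1}W(-\Delta+\lambda)^{-1}f\,d\lambda,
\end{align*}
and bound the error in $\rho^{-\alpha}L^2$ after splitting $W=V+X$. The weighted estimate $\|(-\Delta+\lambda)^{-1}\|_{\rho^{-\alpha}L^2\to\rho^{-\alpha}L^2}\lesssim(1+\lambda)^{-1}$ from Lemma~\ref{something}(\ref{2onumber}), combined with the factorization $(H+\lambda)^{-1}=(-\Delta+\lambda)^{-1}(I+W(-\Delta+\lambda)^{-1})^{-1}$, the uniform invertibility in Lemma~\ref{py6mx}, and a Neumann-series argument for large $\lambda$, yields $\|(H+\lambda)^{-1}\|_{\rho^{-\alpha}L^2\to\rho^{-\alpha}L^2}\lesssim(1+\lambda)^{-1}$. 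Since $V\in L^\infty$, the electric contribution to the integrand is $\lesssim\lambda^{1/2}(1+\lambda)^{-2}\|f\|_{\rho^{-\alpha}L^2}$, which is $\lambda$-integrable. For the magnetic part I factor $X(-\Delta+\lambda)^{-1}f=X(-\Delta)^{-1/2}\cdot(-\Delta)^{1/2}(-\Delta+\lambda)^{-1}f$; the just-proved (\ref{sher4}) gives $\|X(-\Delta)^{-1/2}\|_{\rho^{-\alpha}L^2\to\rho^{-\alpha}L^2}<\infty$, and spectral commutativity combined with Lemma~\ref{something} yields $\|(-\Delta)^{1/2}(-\Delta+\lambda)^{-1}\|_{\rho^{-\alpha}L^2\to\rho^{-\alpha}L^2}\lesssim(1+\lambda)^{-1}$. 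Splitting the $\lambda$-integral at a threshold $K_1^2$, using the uniform-in-$\lambda$ bound on $X(-\Delta+\lambda)^{-1}$ for $\lambda\le K_1^2$ (contributing $C(K_1)\|f\|_{\rho^{-\alpha}L^2}$) and the improved bound $\|X(-\Delta+\lambda)^{-1}f\|_{\rho^{-\alpha}L^2}\lesssim(1+\lambda)^{-1}\|(-\Delta)^{1/2}f\|_{\rho^{-\alpha}L^2}$ for $\lambda>K_1^2$ (contributing $O(K_1^{-1})\|(-\Delta)^{1/2}f\|_{\rho^{-\alpha}L^2}$), an absorption argument analogous to that in the proof of Proposition~\ref{k8vr4} completes (\ref{ser4}).

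The principal obstacle is the commutator estimate $\|[(-\Delta)^{1/2},\rho^{-\alpha}]f\|_{L^2}\lesssim\alpha\|f\|_{\rho^{-\alpha}L^2}$ and the spectral shift analysis underlying the lower bound for $(-\Delta)^{1/2}$ in the weighted $L^2$ space; once (\ref{sher4}) is established, the magnetic tail bound in (\ref{ser4}) reduces to a routine split-and-absorb argument.
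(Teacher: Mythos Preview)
Your treatment of (\ref{ser4}) is essentially the paper's proof: the same Balakrishnan identity (\ref{p0x3ng}), the same use of Lemma~\ref{py6mx} plus a Neumann series to control $(H+\lambda)^{-1}$ in $\rho^{-\alpha}L^2$, the same factoring of the magnetic part through $\nabla(-\Delta)^{-1/2}$ (bounded in $\rho^{-\alpha}L^2$), and the same split-at-a-threshold absorption.

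The difference lies in (\ref{sher4}). The paper does not prove it here; it is quoted from the companion work \cite{6HDFRG}. Your conjugation-plus-commutator sketch is a reasonable alternative route, but it contains a weight-convention slip that matters. In this paper $\|g\|_{\rho^{-\alpha}L^2}=\|\rho^{\alpha}g\|_{L^2}$ (compare the hypotheses of Theorem~\ref{dtfcer} and the manipulations in the proof of Lemma~\ref{pjuy} itself), so the correct substitution is $\phi=\rho^{\alpha}f$, not $\phi=\rho^{-\alpha}f$. With your sign the integration-by-parts identity produces the term $+\alpha\int\rho^{-2\alpha}(\coth d)|f|^2\,{\rm dvol_h}$ on the side you must bound from above; since $\coth d\sim d^{-1}$ near the origin this is \emph{not} dominated by $\|f\|^2_{\rho^{-\alpha}L^2}$ (take $f$ concentrated near $d=0$: the ratio blows up), and a density argument with functions supported away from the origin cannot repair a constant that degenerates in the limit. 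With the correct substitution $\phi=\rho^{\alpha}f$ that term enters with a \emph{minus} sign and can simply be dropped, giving $\|\nabla f\|^2_{\rho^{-\alpha}L^2}\le\|\nabla\phi\|_{L^2}^2+\alpha^2\|\phi\|_{L^2}^2$ directly; the remaining commutator step $\|[(-\Delta)^{1/2},\rho^{\alpha}]f\|_{L^2}\lesssim\alpha\|\rho^{\alpha}f\|_{L^2}$ and the weighted spectral gap then finish the argument as you outline. So the strategy is sound once the sign is fixed; the gap as written is the unabsorbable $\coth d$ contribution arising from the wrong weight.
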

\begin{proof}
(\ref{sher4}) has been proved in \cite{6HDFRG}. Since (\ref{hern}) is a corollary of (\ref{sher4}) and (\ref{ser4}), it suffices to prove (\ref{ser4}).
By (\ref{p0x3ng}), it suffices to estimate
\begin{align}\label{ity4}
\int^{\infty}_0\lambda^{\frac{1}{2}}\|(\lambda-\Delta+W)^{-1}W(\lambda-\Delta)^{-1}f\|_{\rho^{-\alpha} L^2}d\lambda.
\end{align}
For any fixed $K_2>0$, by the formal identity
\begin{align}\label{ity}
(\lambda+H)^{-1}=(-\Delta+\lambda)^{-1}(I+W(-\Delta+\lambda)^{-1})^{-1},
\end{align}
Lemma \ref{py6mx} implies for $\lambda\in (0,K_2)$
\begin{align*}
&\|\rho^{\alpha}(\lambda+H)^{-1}W(\lambda-\Delta)^{-1}f\|_{L^2}\\
&\le \|\rho^{\alpha}(-\Delta+\lambda)^{-1}\rho^{\alpha}\|_{L^2\to L^2}
\|(I+\rho^{-\alpha}W(-\Delta+\lambda)^{-1}\rho^{\alpha})^{-1}\|_{L^2\to L^2}\|\rho^{\alpha}W(\lambda-\Delta)^{-1}f\|_{L^2}\\
&\le C(K_2)\|\rho^{\alpha}(-\Delta+\lambda)^{-1}\rho^{\alpha}\|_{L^2\to L^2}\|\rho^{\alpha}W(\lambda-\Delta)^{-1}f\|_{L^2}.
\end{align*}
Then Lemma \ref{something} shows the low and mediate frequency part of the RHS of (\ref{ity4}) is bounded by
\begin{align}
&\int^{K_2}_0\lambda^{\frac{s}{2}}\|(\lambda-\Delta+W)^{-1}W(\lambda-\Delta)^{-1}f\|_{\rho^{-\alpha} L^2}d\lambda\nonumber\\
&\lesssim C(K_2)\int^{K_2}_0\lambda^{\frac{1}{2}}\min(1,\lambda^{-\frac{3}{2}})d\lambda \|f\|_{\rho^{-\alpha} L^2}.\label{ity1}
\end{align}
For the high frequency part of the RHS of (\ref{ity4}), i.e. $\lambda\in (K_2,\infty)$ with $K_2\gg1$, Lemma \ref{something} and Neumann series argument show (\ref{ity}) makes sense in $\rho^{-\alpha}L^2$ and
\begin{align*}
\|\rho^{\alpha}(\lambda+H)^{-1}\rho^{\alpha}f\|_{L^2}&\le 2\|\rho^{\alpha}(-\Delta+\lambda)^{-1}\rho^{\alpha}\|_{L^2\to L^2}\|f\|_{L^2}\nonumber\\
&\le \lambda^{-1}\|f\|_{L^2}.
\end{align*}
Thus we arrive at
\begin{align*}
&\|\rho^{\alpha}(\lambda+H)^{-1}W(\lambda-\Delta)^{-1}f\|_{L^2}\\
&\lesssim  \lambda^{-1}C(W)\left(\|\rho^{\alpha}\nabla(-\Delta+\lambda)^{-1}f\|_{L^2\to L^2}+\|\rho^{\alpha} (-\Delta+\lambda)^{-1}f\|_{ L^2}\right)\\
&\lesssim \lambda^{-2}C(W)\|f\|_{\rho^{-\alpha}L^2}+\lambda^{-1}C(W)\|\rho^{\alpha}\nabla(-\Delta+\lambda)^{-1}f\|_{L^2 },
\end{align*}
where we denoted
\begin{align*}
C(W):=\|\rho^{-2\alpha}\mathcal{A}\|_{L^{\infty}}+\|\rho^{-2\alpha}V\|_{L^{\infty}},
\end{align*}
and we applied  Lemma \ref{something}  in the last inequality.  By the boundedness of $\nabla (-\Delta)^{-\frac{1}{2}}$ in the weighted space
$\rho^{-\alpha}L^2$ (see Lemma 4.6 of \cite{6HDFRG} ) we obtain
\begin{align*}
\|\rho^{\alpha}\nabla(-\Delta+\lambda)^{-1}f\|_{L^2}&\lesssim
\|\rho^{\alpha}(-\Delta)^{\frac{1}{2}}(-\Delta+\lambda)^{-1}f\|_{L^2}\\
&=  \|\rho^{\alpha}(-\Delta+\lambda)^{-1}(-\Delta)^{\frac{1}{2}}f\|_{L^2}\\
&\lesssim\lambda^{-1}\|(-\Delta)^{\frac{1}{2}}f\|_{\rho^{-\alpha}L^2}
\end{align*}
where we applied Lemma \ref{something} in the last line. Hence the high frequency part of the RHS of
(\ref{ity4}) is bounded by
\begin{align} \label{ity2}
C(\int^{\infty}_{K_2}\lambda^{-\frac{3}{2}}d\lambda )\left(\|f\|_{\rho^{-\alpha}L^2}+\|(-\Delta )^{\frac{1}{2}}f\|_{\rho^{-\alpha}L^2}\right).
\end{align}
Then (\ref{ity1}), (\ref{ity2}) with (\ref{p0x3ng}) imply that
\begin{align*}
\|(-\Delta)^{\frac{1}{2}}f\|_{\rho^{-\alpha}L^2}&\lesssim\|H^{\frac{1}{2}}f\|_{\rho^{-\alpha}L^2}+C(K_2)\int^{K_2}_0\lambda^{\frac{1}{2}}\min(1,\lambda^{-\frac{3}{2}})d\lambda \|f\|_{\rho^{-\alpha} L^2}\\
&+(K_2)^{-\frac{1}{2}}  \|(-\Delta)^{\frac{1}{2}}f\|_{\rho^{-\alpha} L^2}.
\end{align*}
Letting $K_2\gg 1$ we obtain (\ref{ser4}) by absorbing the $\|(-\Delta)^{\frac{1}{2}}f\|_{\rho^{-\alpha} L^2}$ on the RHS to the left.

\end{proof}

\subsection{Non-endpoint and endpoint Strichartz estimates }

Theorem 5.2 and Remark 5.5 of Anker, Pierfelice \cite{45XDADER45P} obtained the Strichartz estimates for linear wave/Klein-Gordon equation.

The author's companion paper [Theorem 1.1, \cite{6HDFRG}] proved an abstract theorem which says the non-endpoint plus endpoint Strichartz estimates and the weighted Strichartz estimates are corollaries of the Kato smoothing effects, i.e.,

\begin{theorem}\label{dtfcer}
Let $r(x)=d(x,o)$ be the geodesic distance between $x$ and origin point $o$. Recall $\rho(x)=e^{-r(x)}$.
Let $H=-\Delta+W$ be defined above and  $0<\sigma\ll \alpha\ll 1$. If $H$ satisfies
\begin{align*}
\|H^{\frac{1}{2}}f\|_{L^2}&\lesssim \|(-\Delta )^{\frac{1}{2}}f\|_{L^2}+\|f\|_{L^2}\\
\|(-\Delta)^{\frac{1}{2}}f\|_{L^2}&\lesssim \|H^{\frac{1}{2}}f\|_{L^2}+\|f\|_{L^2}\\
\|\rho^{\alpha }\nabla f\|_{L^2}&\lesssim \|\rho^{\alpha } H^{\frac{1}{2}}f\|_{L^2}+\|\rho^{\alpha }f\|_{L^2},
\end{align*}
and the Kato smoothing effect
\begin{align*}
\|\rho^{\alpha }e^{\pm it\sqrt{H}}f\|_{L^2_{t,x}}\lesssim \|f\|_{L^2_x},
\end{align*}
then we have the weighted Strichartz estimates for the magnetic wave equation: If $u$ solves the equation
\begin{align*}
\left\{ \begin{array}{l}
\partial _t^2u - {\Delta}u + Wu = F \\
u(0,x) = {u_0},{\partial _t}u(0,x) = {u_1} \\
\end{array} \right.
\end{align*}
then it holds for any $p\in(2,6)$
\begin{align*}
&{\left\| {{{D}^{\frac{1}{2}}}u} \right\|_{L_t^2L_x^{p}}} +{\left\| {{\rho ^{ {\alpha } }}\nabla f} \right\|_{L_t^2L_x^2}}+
{\left\| {{\partial _t}u} \right\|_{L_t^\infty L_x^2}} + {\left\| {\nabla u} \right\|_{L_t^\infty L_x^2}}\\
&\lesssim {\left\| {\nabla {u_0}} \right\|_{{L^2}}} + {\left\| {{u_1}} \right\|_{{L^2}}} + {\left\| F \right\|_{L_t^1L_x^2}}.
\end{align*}
\end{theorem}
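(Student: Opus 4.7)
The overall strategy is to combine a standard energy identity (for the $L^\infty_t L^2$ piece), the hypothesized Kato smoothing effect for $\sqrt{H}$ (for the weighted $L^2_t L^2_x$ piece), and a perturbative Duhamel reduction to the free wave equation (for the $L^2_t L^p_x$ piece). Throughout, the norm-equivalence hypotheses play the role of interchanging $\sqrt{H}$-derivatives and ordinary derivatives, both in the unweighted and weighted $L^2$ settings. I will write the solution in the functional-calculus form
\begin{equation*}
u(t)=\cos(t\sqrt{H})u_0+\frac{\sin(t\sqrt{H})}{\sqrt{H}}u_1+\int_0^t\frac{\sin((t-s)\sqrt{H})}{\sqrt{H}}F(s)\,ds,
\end{equation*}
which makes sense by the spectral theorem since $H$ is self-adjoint and nonnegative (by Proposition \ref{CXXXXE}).

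For the $L^\infty_t L^2$ control, I would pair the equation with $\partial_tu$ and integrate to obtain conservation, up to the $F$-contribution, of the energy $E(u)=\|\partial_tu\|_{L^2}^2+\langle Hu,u\rangle=\|\partial_tu\|_{L^2}^2+\|\sqrt{H}u\|_{L^2}^2$. Gronwall then gives $\|\partial_tu\|_{L^\infty_tL^2}+\|\sqrt{H}u\|_{L^\infty_tL^2}\lesssim\|\sqrt{H}u_0\|_{L^2}+\|u_1\|_{L^2}+\|F\|_{L^1_tL^2_x}$, and the first norm-equivalence hypothesis together with the spectral gap $\|f\|_{L^2}\lesssim\|\nabla f\|_{L^2}$ on $\mathbb{H}^2$ converts $\sqrt{H}$-derivatives to $\nabla$-derivatives on both sides.

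For the weighted piece $\|\rho^\alpha\nabla u\|_{L^2_tL^2_x}$, I would apply the hypothesized Kato smoothing $\|\rho^\alpha e^{\pm it\sqrt{H}}f\|_{L^2_{t,x}}\lesssim\|f\|_{L^2_x}$ to each term in the functional-calculus formula above. For the propagators acting on $u_0$ and $u_1$ this is immediate; for the Duhamel integral one applies Minkowski's inequality in time before using the smoothing bound on $e^{\pm i(t-s)\sqrt{H}}$ to obtain $\|\rho^\alpha\sqrt{H}u\|_{L^2_{t,x}}+\|\rho^\alpha u\|_{L^2_{t,x}}+\|\rho^\alpha\partial_tu\|_{L^2_{t,x}}\lesssim\|\sqrt{H}u_0\|_{L^2}+\|u_1\|_{L^2}+\|F\|_{L^1_tL^2_x}$. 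Then the weighted equivalence hypothesis $\|\rho^\alpha\nabla f\|_{L^2}\lesssim\|\rho^\alpha\sqrt{H}f\|_{L^2}+\|\rho^\alpha f\|_{L^2}$ upgrades this to the desired bound on $\rho^\alpha\nabla u$.

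The non-endpoint $L^2_tL^p_x$ Strichartz estimate I would obtain by rewriting the PDE as $(\partial_t^2-\Delta)u=F-Wu$ and invoking the Anker--Pierfelice free-wave Strichartz bounds, which control $\|D^{1/2}u\|_{L^2_tL^p_x}$ in terms of $\|\nabla u_0\|_{L^2}+\|u_1\|_{L^2}$ plus the dual norm of $F-Wu$. The term $Wu$ is handled using that, by Proposition \ref{hutyhvf}, the coefficients of $W$ decay like $\rho^\beta$ for some $\beta>\alpha$, so $\|\rho^{-\alpha}Wu\|_{L^2_{t,x}}\lesssim\|\rho^\alpha u\|_{L^2_{t,x}}+\|\rho^\alpha\nabla u\|_{L^2_{t,x}}$, which is already bounded by the previous step; the gain of the extra weight $\rho^{\beta-\alpha}$ closes the perturbative iteration against a weighted dual Strichartz for the free wave operator. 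The main obstacle is the endpoint $p\nearrow 6$ together with the weighted estimate: for these I would pass through a $TT^*$ argument using Kato's theorem \ref{TREWSD} and a Keel--Tao style atomic decomposition adapted to the hyperbolic setting, which is precisely the content of the abstract machinery of the companion paper \cite{6HDFRG}. The technical heart lies in controlling the small-frequency behaviour of the resolvent $(\sqrt{H}-\lambda)^{-1}$ in weighted $L^2$ uniformly down to $\lambda=0$, which is where the absence of a bottom resonance (ensured by the Coulomb gauge choice and Proposition \ref{resonance}) becomes essential.
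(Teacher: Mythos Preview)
The paper does not prove Theorem~\ref{dtfcer}; it is imported as Theorem~1.1 of the companion paper \cite{6HDFRG}, so there is no in-house argument to compare to. Your three-part outline---energy identity for $L^\infty_tL^2_x$, Kato smoothing applied term-by-term to the Duhamel formula for the weighted $L^2_{t,x}$ piece, and perturbation off the free wave equation for $L^2_tL^p_x$ Strichartz---is the standard Rodnianski--Schlag/D'Ancona--Fanelli scheme and is almost certainly what \cite{6HDFRG} implements.

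One step is less innocent than you make it sound. In the Strichartz part you write $(\partial_t^2-\Delta)u=F-Wu$ and invoke a ``weighted dual Strichartz for the free wave operator'' to absorb $Wu$. Concretely this means the mixed inhomogeneous bound
\[
\Bigl\|D^{1/2}\int_0^t\frac{\sin\bigl((t-s)\sqrt{-\Delta}\bigr)}{\sqrt{-\Delta}}\,\rho^{\alpha}G(s)\,ds\Bigr\|_{L^2_tL^p_x}\lesssim\|G\|_{L^2_{t,x}},
\]
obtained by pairing the free Strichartz operator with the adjoint of the \emph{free} Kato-smoothing operator $\rho^{\alpha}e^{\pm it\sqrt{-\Delta}}$ (the latter bounded via Lemma~\ref{somerthing} and Theorem~\ref{TREWSD}). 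The $TT^*$ version with the time integral over all of $\mathbb{R}$ is immediate, but the \emph{retarded} integral has both input and output in $L^2_t$, so the Christ--Kiselev lemma does not apply; bridging that gap is the genuinely non-trivial content you rightly defer to \cite{6HDFRG}. Your closing sentence also misplaces one ingredient: the absence of a bottom resonance (Proposition~\ref{resonance}) is used to \emph{verify} the hypothesized Kato smoothing for $H$ in the first place, not to prove the present abstract theorem, whose hypotheses already grant that smoothing.
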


Then by Proposition \ref{pkg2}, Proposition \ref{k0vr567}, Lemma \ref{pjuy}, we have
\begin{proposition}\label{de486tfcer}
Let $H=-\Delta+W$ be defined above and $0<\sigma\le \alpha\ll 1$ with the frame ${\bf e}$ on $Q^*TN$ fixed by choosing Coulomb gauge. Then we have the weighted Strichartz estimates for the magnetic wave equation: If $u$ solves the equation
\begin{align*}
\left\{ \begin{array}{l}
\partial _t^2u - {\Delta}u + Wu = F \\
u(0,x) = {u_0},{\partial _t}u(0,x) = {u_1} \\
\end{array} \right.
\end{align*}
then it holds for any $p\in(2,6)$
\begin{align*}
&{\left\| {{{ D}^{\frac{1}{2}}}u} \right\|_{L_t^2L_x^{p}}} +{\left\| {{\rho ^{\sigma}}\nabla f} \right\|_{L_t^2L_x^2}}+
{\left\| {{\partial _t}u} \right\|_{L_t^\infty L_x^2}} + {\left\| {\nabla u} \right\|_{L_t^\infty L_x^2}}\\
&\lesssim {\left\| {\nabla {u_0}} \right\|_{{L^2}}} + {\left\| {{u_1}} \right\|_{{L^2}}} + {\left\| F \right\|_{L_t^1L_x^2}}.
\end{align*}
\end{proposition}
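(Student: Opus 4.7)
The plan is to recognize that Proposition \ref{de486tfcer} is essentially a direct application of the abstract Theorem \ref{dtfcer} proved in the companion paper \cite{6HDFRG}. That abstract theorem reduces the weighted Strichartz estimates for the magnetic wave equation to three structural ingredients concerning the operator $H = -\Delta + W$: (i) a two-sided $L^2$-equivalence between $H^{1/2}$ and $(-\Delta)^{1/2}$ (up to an $L^2$ lower-order term); (ii) a weighted domination $\|\rho^{\alpha}\nabla f\|_{L^2} \lesssim \|\rho^{\alpha} H^{1/2} f\|_{L^2} + \|\rho^{\alpha} f\|_{L^2}$; and (iii) the Kato smoothing effect $\|\rho^{\alpha} e^{\pm it\sqrt{H}} f\|_{L^2_{t,x}} \lesssim \|f\|_{L^2_x}$. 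The task therefore is simply to collect these three inputs from the material already built in this paper.

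First I would verify (i) by invoking the $L^2$ identities \eqref{2ib} and \eqref{1ib} of Proposition \ref{pkg2}, which give the unweighted two-sided equivalence $\|H^{s/2} f\|_{L^2} \simeq \|(-\Delta)^{s/2} f\|_{L^2}$ for $s \in [0,2]$; specializing to $s=1$ produces exactly the form required by Theorem \ref{dtfcer}. Next, for (ii), I would cite estimate \eqref{hern} of Lemma \ref{pjuy}, which says $\|\nabla f\|_{\rho^{-\alpha} L^2} \lesssim \|H^{1/2} f\|_{\rho^{-\alpha} L^2} + \|f\|_{\rho^{-\alpha} L^2}$; this is precisely the weighted gradient bound required, and it was the main place where the Coulomb gauge played a role (via Lemma \ref{py6mx}, ruling out eigenvalues of $H$ in $[0,1/4-\delta_4)$). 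Finally, for (iii), I would use the Kato smoothing estimate \eqref{9iuhby7} of Proposition \ref{k0vr567}, itself a consequence of the uniform resolvent bound \eqref{fr47} together with the abstract Kato Theorem \ref{TREWSD}.

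With these three hypotheses in hand, Theorem \ref{dtfcer} applies as a black box and delivers the conclusion of Proposition \ref{de486tfcer} for any weight exponent $\sigma$ satisfying $0 < \sigma \le \alpha \ll 1$; the assertion $\rho^{\sigma}$ in place of $\rho^{\alpha}$ in the conclusion is harmless because $\rho \le 1$, so the weighted spaces are monotone in the exponent. There is genuinely no new analytic difficulty at this stage; all the hard work — the exclusion of bottom resonance through the Coulomb gauge, the large-frequency positive commutator argument of Lemma \ref{long09}, and the transfer of smoothing between $H$ and $\sqrt{H}$ via the formula \eqref{rp0} — is already encapsulated in the earlier lemmas. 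The main obstacle, retrospectively, lay in securing the uniform resolvent bound \eqref{fr47} with a constant independent of $z \in \mathbb{C} \setminus \mathbb{R}$; once that is available, the present proposition is a purely mechanical combination of the preceding results.
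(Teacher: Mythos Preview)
Your proposal is correct and matches the paper's approach exactly: the paper's proof of Proposition \ref{de486tfcer} is the single sentence ``Then by Proposition \ref{pkg2}, Proposition \ref{k0vr567}, Lemma \ref{pjuy}, we have'' followed by the statement, i.e.\ precisely the verification of the three hypotheses of Theorem \ref{dtfcer} that you spell out. Your identification of which estimate from each cited result feeds into which hypothesis of the abstract theorem is accurate.
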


\section{The proof of Theorem 1.1.}

\subsection{Bootstrap of the heat tension filed}

Suppose that the initial data $(u_0,u_1)$ satisfies
\begin{align}\label{1988}
{\left\| {(du_0,u_1)} \right\|_{L_t^\infty L_x^2([0,T] \times {\Bbb H^2})}}+ {\left\| {(\nabla du_0,\nabla u_1)} \right\|_{L_t^\infty L_x^2([0,T] \times {\Bbb H^2})}} \le M_0.
\end{align}
Lemma \ref{nabla} shows that
\begin{align}\label{1989}
\|\phi_s(0,0,x)\|_{L^2_x}\le \mu_1C(R_0)C(M_0).
\end{align}

We will prove Theorem 1.1 by bootstrap. {\bf In this section we always fix the frame $\{\bf e_i\}^2_{i=1}$ in Proposition \ref{94obhxdg} to be the Coulomb gauge}.

We stop for a while to clarify the notations of connection coefficients used before.  According to Proposition 4.2, choosing the Coulomb gauge for $Q^*TN$, we denote the operator Schr\"odinger operator $H$ as
\begin{align*}
H:=-\Delta-2h^{ij}\mathcal{A}_i\partial_j-h^{ij}\mathcal{A}_i\mathcal{A}_j-h^{ij}( \widehat{\phi}_i  \wedge\cdot){\widehat{\phi}}_j,
\end{align*}
Moreover, the connection coefficient matrices and differential fields can be decomposed into
\begin{align}\label{awo}
\left\{
  \begin{array}{ll}
    A_i&=\mathcal{A}_i+A^{qua}_i,    \mbox{ }  A^{qua}_i:=\int^{\infty}_s{\phi}_i(s') \wedge \phi_s(s')ds'  \hbox{ } \\
\phi_i&=\widehat{\phi}_i(x)+\int^{\infty}_s\partial_s{\phi}_ids', \mbox{ } \phi^{qua}_i:=\int^{\infty}_s\partial_s{\phi}_ids'.
  \end{array}
\right.
\end{align}

And Proposition 4.2 implies
\begin{align}
\|\mathcal{A}\|_{L^{\infty}_x}+\|\nabla \mathcal{A}\|_{L^{\infty}_x}&\le C(M_0).\label{3.2}
\end{align}

We fix the constants $\mu_1,\varepsilon_1, \varrho, \sigma, M_1$ to be
\begin{align}\label{dozuoki}
0<\mu_1\ll \varepsilon_1\ll e^{-(C(M_1)+10)}, \mbox{  }0<\sigma\ll\varrho\ll 1, \mbox{ }M_0\ll C(M_1),
\end{align}
where the constant $C(M_1)$ is the one in Proposition 2.1 and grows at most polynomially as $M_1\to\infty$.

Let $L>0$ be sufficiently large say $L=10^{10}$.
For any  given constant  $a>0$, define $\omega:\Bbb R^+\to \Bbb R^+$ by
$$\omega_a(s) = \left\{ \begin{array}{l}
 {s^{a}}\mbox{  }{\rm{when}}\mbox{  }0 \le s \le 1 \\
 {s^L}\mbox{  }\mbox{  }\mbox{  }{\rm{when}}\mbox{  }s \ge 1 \\
 \end{array} \right.
$$

The main result of this section is the following proposition.
\begin{proposition}\label{PoiuyhU}
Assume that $\mathcal{T}$ is the maximal time of  $T\in[0,T_*)$ such that for any $2<p<6+2\gamma$ with $0<\gamma\ll1$,
\begin{align}
{\left\| {du} \right\|_{L_t^\infty L_x^2([0,T] \times {\Bbb H^2})}} + {\left\| {\nabla du} \right\|_{L_t^\infty L_x^2([0,T] \times {\Bbb H^2})}} &\le M_1\label{d7nn885658}\\
{\left\| {\nabla {\partial _t}u} \right\|_{L_t^\infty L_x^2([0,T] \times {\Bbb H^2})}} +
{\left\| {{\partial _t}u} \right\|_{L_t^\infty L_x^2([0,T] \times {\Bbb H^2})}}+{\left\| {{\partial _t}u} \right\|_{L_t^2L^p_x([0,T] \times {\Bbb H^2})}}&\nonumber\\
+{\left\| {D^{\frac{1}{4}}\phi_t(0,t,x)} \right\|_{L_t^2L^p_x([0,T] \times {\Bbb H^2})}}&\le {\varepsilon _1}\label{oootyiys}\\
{\left\| {\omega_{\frac{1}{2}}D^{-\frac{1}{2}}{\partial _t}{\phi _s}} \right\|_{L_s^\infty L_t^2L_x^p}} + {\left\| {\omega_{\frac{1}{2}}{\partial _t}{\phi _s}} \right\|_{L_s^\infty L_t^\infty L_x^2}} &\le {\varepsilon _1}\label{c5rtyf} \\
{\left\| {\omega_{\frac{1}{2}}\nabla {\phi _s}} \right\|_{L_s^\infty L_t^\infty L_x^2}} +{\left\| {{\phi _s}} \right\|_{L_s^\infty L_t^\infty L_x^2}}+ {\left\| {\omega_{\frac{1}{2}}{ D^{\frac{1}{2}}}{\phi _s}} \right\|_{L_s^\infty L_t^2L_x^p}} &\le {\varepsilon _1}.\label{bcde45rtyf}
\end{align}
Then all for $T<\mathcal{T}$ we have
\begin{align}
&{\left\| {\omega_{\frac{1}{2}}{D^{ - \frac{1}{2}}}{\partial _t}{\phi _s}} \right\|_{L_s^\infty L_t^2L_x^p([0,T] \times {\Bbb H^2})}} + {\left\| {\omega_{\frac{1}{2}}{D^{\frac{1}{2}}}{\phi _s}} \right\|_{L_t^2L_x^p([0,T] \times {\Bbb H^2})}} \nonumber\\
&+ {\left\| {\omega_{\frac{1}{2}}{\partial _t}{\phi _s}} \right\|_{L_s^\infty L_t^\infty L_x^2([0,T] \times {\Bbb H^2})}}
+ {\left\| {\omega_{\frac{1}{2}}\nabla {\phi _s}} \right\|_{L_s^\infty L_t^\infty L_x^2([0,T] \times {\Bbb H^2})}} \le \varepsilon _1^2C(M_1). \label{2kijhuoji}
\end{align}
and there holds
\begin{align}
{\left\| {du} \right\|_{L_t^\infty L_x^2([0,T] \times {\Bbb H^2})}} + {\left\| {\nabla du} \right\|_{L_t^\infty L_x^2([0,T] \times {\Bbb H^2})}}&\le CM_0\label{1boot10}\\
{\left\| {{\partial _t}u} \right\|_{L_t^\infty L_x^2([0,T] \times {\Bbb H^2})}} + {\left\| {\nabla {\partial _t}u} \right\|_{L_t^\infty L_x^2([0,T] \times {\Bbb H^2})}} &\lesssim C(M_1){\varepsilon^2 _1}\label{1boot8}\\
{\left\| D^{\frac{1}{4}}\phi_t \right\|_{L_t^2L_x^p([0,T] \times {\Bbb H^2})}} &\lesssim C(M_1){\varepsilon^2_1}.\label{1boot9}
\end{align}
\end{proposition}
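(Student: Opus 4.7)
The strategy is to treat the master equation of Lemma 2.5 as a magnetic wave equation
\[
(\partial_t^2-\Delta+W)\phi_s = F(s,t,x)
\]
for the heat tension field at each heat time $s$, and to apply the weighted Strichartz estimate of Proposition \ref{de486tfcer} (which in turn rests on the Kato smoothing of Proposition \ref{k0vr567} obtained via the Coulomb-gauge spectrum analysis of Sections 4--5). Doing this at each $s$ and taking the supremum in $s$ after multiplication by the weight $\omega_{1/2}(s)$ produces the four norms on the left-hand side of (\ref{2kijhuoji}), controlled by (i) the initial-time data $(\phi_s,\partial_t\phi_s)|_{t=0}$, which by (\ref{1989}) and the heat-smoothing of Proposition \ref{fcuyder47te} is of size $\mu_1 C(R_0)C(M_0)$, and (ii) the source term $\|\omega_{1/2}F\|_{L_s^\infty L_t^1 L_x^2}$. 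The whole proof reduces to bounding the latter by $\varepsilon_1^2 C(M_1)$.

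\textbf{Bounding the source $F$.} Each summand in $F$ is quadratic or higher in the smallness quantities, and is handled via Hölder in $(t,x)$, Cauchy--Schwarz in $t$, and the $s$-decay of the heat flow. Concretely: the coupling matrices $A_t$ and $A^{qua}_i$ defined by the integral formulas (\ref{obhxdg}), (\ref{ober45xdg}) (minus the limit $A^\infty_i$) inherit quadratic smallness $\lesssim \varepsilon_1^2$ in $L^\infty_x$ and rapid decay in $s$ directly from the bootstrap hypotheses (\ref{c5rtyf})--(\ref{bcde45rtyf}) on $\phi_s$ combined with (\ref{oootyiys}) on $\phi_t=\partial_t u$; Lemma \ref{refor45xdg} and the exponential localisation in Definition \ref{dywetu67tu68} supply the analogous pointwise bound on the limit pieces $A^\infty_i$ and $\phi^\infty_i$. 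The pure curvature term $(\phi_t\wedge\phi_s)\phi_t$ is estimated by $\|\phi_t\|_{L_t^2L_x^p}^2\|\phi_s\|_{L_t^\infty L_x^{p^*}}$ and the mixed $\phi^\infty/\phi^{qua}$ summands by distributing $\omega_{1/2}$ onto one $\phi_s$-factor; in every case the balance is of the form $\varepsilon_1\times\varepsilon_1\times (\text{bootstrap norm})$, so each contribution is $O(\varepsilon_1^2C(M_1))$. The wave-tension term $\partial_sZ$ is controlled separately using the parabolic equation (\ref{ab1}) for $Z$: since $u$ solves (\ref{z32ccc344fyeo}) one has $Z|_{s=0}=0$, so the smoothing Proposition \ref{pkg1} applied to (\ref{ab1}) yields exponential decay of $Z$ and its $s$-derivatives, together with quadratic smallness in $\phi_s,\phi_t$, giving the required $L_s^\infty L_t^1 L_x^2$ bound after the $\omega_{1/2}$-weight.

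\textbf{Translating back to $u$ and $\partial_t u$.} Once (\ref{2kijhuoji}) holds, the bounds (\ref{1boot8}) on $\partial_t u$ and $\nabla\partial_t u$ and (\ref{1boot9}) on $D^{1/4}\phi_t$ follow by reconstructing $\phi_t$ at $s=0$ through
\[
\phi_t(0,t,x) = -\int_0^\infty \partial_s\phi_t\,ds,
\]
using the caloric-gauge identity $\partial_s\phi_t = D_t\phi_s = \partial_t\phi_s + A_t\phi_s$ (since $A_s\equiv 0$), and integrating the $\omega_{1/2}$-weighted norms of $\partial_t\phi_s$ in $s$; the weight's polynomial growth is absorbed by the exponential decay of $\phi_s$ in $s$ from Proposition \ref{fcuyder47te}. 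The energy-type bound (\ref{1boot10}) on $du$ and $\nabla du$ is then a standard consequence of the conservation law for the wave map together with (\ref{1boot8}) and the hypothesis $M_0\ll C(M_1)$, so the improved constants close the bootstrap.

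\textbf{Main obstacle.} The delicate term is $A_t\partial_t\phi_s$: $A_t$ carries no spatial weight, and $\partial_t\phi_s$ is a priori only controlled in $L_t^\infty L_x^2$ with weight $\omega_{1/2}$, not in the $L_t^1L_x^2$ needed for the source. The remedy is to trade regularity for integrability in $t$ by writing $A_t\partial_t\phi_s$ as $(\rho^{-\sigma}A_t)\cdot(\rho^{\sigma}\partial_t\phi_s)$ and exploiting the exponential spatial localisation of $A_t$ inherited from $\phi^\infty_i\lesssim|dQ|$ and the heat kernel in (\ref{obhxdg}); the weighted Strichartz norm $\|\rho^\sigma\nabla\phi_s\|_{L_t^2L_x^2}$ furnished by Proposition \ref{de486tfcer} then absorbs the $\rho^\sigma\partial_t\phi_s$ factor through an $s$-integrated Cauchy--Schwarz, giving the missing smallness. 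All remaining nonlinear terms are genuinely perturbative once this Kato-smoothing mechanism is set up.
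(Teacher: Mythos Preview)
Your overall plan---apply the Strichartz estimate of Proposition~\ref{de486tfcer} to the master equation for $\phi_s$ at each $s$, bound the forcing $F$ in $L^1_tL^2_x$, then reconstruct $\phi_t$ and $du$ from $\phi_s$ by integrating in $s$---is exactly the paper's strategy, and the structure of the argument you outline for (\ref{1boot8})--(\ref{1boot9}) matches Lemma~7.10. But your treatment of two specific source terms contains a genuine error, and a third is underestimated.

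\textbf{The term $A_t\partial_t\phi_s$.} Your proposed fix is based on a false premise: by (\ref{obhxdg}) one has $A_t=\int_s^\infty(\phi_s\wedge\phi_t)\,ds'$, which contains \emph{no} $\phi^\infty_i$ piece and hence inherits no exponential spatial decay from $|dQ|$. Writing $A_t\partial_t\phi_s=(\rho^{-\sigma}A_t)(\rho^\sigma\partial_t\phi_s)$ gains nothing because $\rho^{-\sigma}A_t$ is not bounded. The paper instead puts $A_t$ in $L^1_tL^\infty_x$: it first establishes $\|\phi_t\|_{L^2_tL^q_x}$ and $\|\nabla\phi_t\|_{L^2_tL^q_x}$ bounds via the \emph{magnetic heat} semigroup $e^{-sH}$ acting on the parabolic equation for $\phi_t$ (Lemmas 7.4--7.5, using Propositions~\ref{pkg1}--\ref{pkg2}), then combines these with Sobolev embedding and the bootstrap control on $\phi_s$ to obtain $\|A_t\|_{L^1_tL^\infty_x}\lesssim C(M_1)\varepsilon_1^2$ (see (\ref{AAA1})). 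This is where the heat-semigroup machinery of Section~6.1 enters essentially, not merely as a convenience.

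\textbf{The term $h^{ii}A^{qua}_i\partial_i\phi_s$.} The weighted-Strichartz mechanism you describe---pairing $\rho^{-\sigma}$ against $|dQ|$ and using $\|\rho^\sigma\nabla\phi_s\|_{L^2_tL^2_x}$ from Proposition~\ref{de486tfcer}---is correct in spirit but belongs to \emph{this} term, not to $A_t\partial_t\phi_s$. One decomposes $\phi_i=\widehat\phi_i+\int_s^\infty\partial_s\phi_i$ inside $A^{qua}_i$; only the $\widehat\phi_i$ contribution carries the $|dQ|$ localisation (cf.\ the $F_1$ estimate (\ref{xs390puygjexser}) in the proof of Proposition~\ref{vcvfcdexser}). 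The remaining piece $F_2$ requires higher-derivative control of $\phi_s$ proved separately in Lemma~\ref{i97clv}, again via $e^{-sH}$.

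\textbf{The terms $\partial_sZ$ and $\partial_tA_t$.} Your sketch says ``smoothing Proposition~\ref{pkg1} applied to (\ref{ab1}) yields exponential decay of $Z$''. This is not enough: to place $\partial_sZ$ in $L^\infty_sL^1_tL^2_x$ with weight $\omega_{1/2}$ one needs $\|\nabla^2\phi_t\|_{L^2_tL^q_x}$ control (Lemma~7.6, via (\ref{ku4})) to handle the cubic source $h^{ij}(\partial_t\widetilde u\wedge\partial_i\widetilde u)\nabla_t\partial_j\widetilde u$ in (\ref{ab1}), and similarly $\|\partial_tA_t\|_{L^2_tL^{3+\gamma}_x}$ (Lemma~7.9) requires the $Z$-estimates of Lemma~7.8. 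This chain of parabolic estimates (Lemmas 7.1--7.9) is the real content of the proof and cannot be collapsed into a single application of Proposition~\ref{pkg1}.
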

The proof of Proposition \ref{PoiuyhU}  will be divided into several lemmas and propositions. (\ref{2kijhuoji}) will be proved in Proposition 7.2.
(\ref{1boot10})-(\ref{1boot9}) will be proved in Lemma 7.10.

Proposition \ref{fcuyder47te} has given us the long time and short time behaviors of $|\nabla^k d \widetilde{u}|$ along the heat flow. The bounds of $|\nabla^k\partial_s\widetilde{u}|$ in Proposition \ref{fcuyder47te} shall be improved by using (\ref{c5rtyf}) and (\ref{bcde45rtyf}).
\begin{lemma}\label{huguokitredcfr}
Assume (\ref{d7nn885658}) to (\ref{bcde45rtyf}) hold, then there exists $\delta>0$ such that it holds uniformly for $(s,t)\in\Bbb R^+\times[0,T]$ that
\begin{align}
&e^{\delta s}\|\phi_{t,s}\|_{L^2_x}+ s^{\frac{1}{2}}e^{\delta s}\|\phi_s\|_{L^{\infty}_x}+s^{\frac{1}{2}}e^{\delta s}\|\phi_t\|_{L^{\infty}_x}\lesssim \varepsilon_1\label{s45t}\\
&\|A\|_{L^{\infty}_x}\lesssim M_0+C(M_1)\varepsilon_1\label{kp32}\\
&\|A_t\|_{L^{\infty}_x}+{\left\| {{A_t}} \right\|_{L_t^\infty L_x^\infty }}\lesssim \varepsilon_1\label{gft56rtklio}\\
&s^{\frac{1}{2}}e^{\delta s}\|\nabla\partial_s\widetilde{u}\|_{L^2_x}+ se^{\delta s}\|\nabla\partial_s\widetilde{u}\|_{L^{\infty}_x}\lesssim \varepsilon_1\label{kp2}\\
&\|\sqrt{h^{ii}}\phi^{qua}_i\|_{L^{\infty}_x}\lesssim C(M_1)\varepsilon_1 e^{-\delta s}\log s   \label{vrts4rioik}\\
&\omega_{\frac{1}{2}}\|\nabla_t\partial_s\widetilde{u}\|_{L^2_x}+ \omega_1\|\nabla_t\partial_s\widetilde{u}\|_{L^{\infty}_x}\lesssim \varepsilon_1\label{kp3}\\
&\omega_{\frac{1}{2}}\|\nabla\partial_t\widetilde{u}\|_{L^2_x}+ \omega_1\|\nabla_t\partial_s\widetilde{u}\|_{L^{\infty}_x}\lesssim \varepsilon_1\label{kp4}\\
&s^{\beta_1}e^{\delta s}\left({\| {\sqrt {{h^{ii}}} {\partial _t}{A_i}(s)}\|_{L_t^\infty L_x^\infty }}+{\| { {{h^{ii}}} {\partial _i}{A^{qua}_i}(s)}\|_{L_t^\infty L_x^\infty }} \right) \lesssim C(M_1)\varepsilon_1 \label{kp5}\\
&1_{s\ge1}e^{\delta s}\left({\| {{{h^{ii}}} {\partial _i}{A^{qua}_i}(s)}\|_{L_t^\infty L_x^\infty }}+{\| {\sqrt {{h^{ii}}} {A^{qua}_i}(s)}\|_{L_t^\infty L_x^\infty }}\right)\lesssim C(M_1)\varepsilon_1\label{kp7}\\
&(s^{\frac{1}{2}}1_{s\le1}+1_{s\ge1})\|\nabla^2d\widetilde{u}\|_{L^{\infty}_{t}L^2_{x}}
+(s1_{s\le1}+1_{s\ge1})\|\nabla^2d\widetilde{u}\|_{L^{\infty}_{t,x}}\lesssim C(M_1)\label{kq11}
\end{align}
with $\beta_1$ being any constant in $(0,1)$.
\end{lemma}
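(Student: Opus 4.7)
\medskip

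\noindent\textbf{Proof plan.} The lemma is a compendium of pointwise and mixed-norm bounds on the heat-flow quantities $(\widetilde u,\phi_s,\phi_t,A)$ that upgrade the short/long time profile of Proposition~\ref{fcuyder47te} by inserting the bootstrap hypotheses (\ref{c5rtyf})--(\ref{bcde45rtyf}). My plan is to treat the estimates in a definite order so that each one is used in establishing the next. First I will prove the exponential $s$-decay estimate (\ref{s45t}). The $L_t^\infty L_x^2$ control of $\phi_s$ and $\partial_t\phi_s$ from (\ref{c5rtyf})--(\ref{bcde45rtyf}) combined with the parabolic equation $\partial_s\phi_s = \Delta\phi_s + (\text{lower order})$ (obtained by commuting (\ref{zz2ccgsrucuvc344fyeo}) with $\partial_s$) plus the $L^\infty$ smoothing from Proposition~\ref{fcuyder47te} yields exponential decay in $s$ beyond $s=1$ because the spectrum of the linearized heat operator on $\mathbb H^2$ is shifted by $\tfrac14$; standard local parabolic regularity then gives the $s^{1/2}e^{\delta s}L^\infty_x$ bounds on $\phi_s,\phi_t$.

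Next I will derive the connection estimates (\ref{kp32}), (\ref{gft56rtklio}), (\ref{vrts4rioik}), (\ref{kp7}) by plugging (\ref{s45t}) into the integral representations (\ref{ober45xdg})--(\ref{obhxdg}). The bound $|A^\infty_i|+|\phi^\infty_i|\lesssim |dQ|$ from Lemma~\ref{refor45xdg} handles the limit part, while $|A^{qua}_i|\le \int_s^\infty |\phi_s||\phi_i|\,ds'$ is controlled by H\"older in $s$ using the bootstrap $L^\infty_s L^\infty_t L^2_x$ norm times the $L^2_s L^\infty_x$ norm of $\phi_s$ coming from (\ref{s45t}); the logarithm in (\ref{vrts4rioik}) is produced by the $s^{-1/2}$ behavior of $\phi_s$ near $s=0$ integrated against $e^{-\delta s}$. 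For $\phi_i^{qua}$ I will use the torsion-free identity $\partial_s\phi_i = D_i\phi_s$, which trades a time-derivative for a spatial covariant derivative and lets me apply (\ref{kp2}) together with (\ref{kp32}).

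Third I will upgrade the gradient estimates on $\partial_s\widetilde u$ and $\partial_t\widetilde u$, i.e.\ (\ref{kp2}), (\ref{kp3}), (\ref{kp4}). Differentiating the heat flow equation (\ref{dt67vftu68}) with respect to $t$ gives a linear parabolic equation for $\partial_t\widetilde u$ with coefficients controlled by $|d\widetilde u|$, so Proposition~\ref{fcuyder47te} and $L^p$ parabolic theory (using Proposition~\ref{pkg1} applied to the linearized operator) let me propagate the initial bootstrap bound on $\partial_t u$ through $s$-time. The commutator $[\partial_t,\nabla_s]$ is controlled by curvature terms which are quadratic in $d\widetilde u,\partial_t\widetilde u$ and handled by (\ref{d7nn885658})--(\ref{oootyiys}). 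The $\omega_{1/2},\omega_1$ weights encode exactly the near-zero parabolic regularization plus exponential decay at infinity, and come out of Duhamel's formula for the heat kernel of the lowered-by-$1/4$ Laplacian. Estimate (\ref{kp5}) for $\partial_t A_i$ then follows by differentiating (\ref{ober45xdg}) under the integral sign and substituting (\ref{kp3}), losing an arbitrarily small power $s^{\beta_1}$ to absorb the logarithmic divergence from differentiating $\phi_s\wedge \phi_i$ near $s=0$.

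Finally, (\ref{kq11}) for $\nabla^2 d\widetilde u$ is obtained by applying Lemma~\ref{hessian} to the elliptic form of the heat flow equation $\Delta\widetilde u = \partial_s\widetilde u - \Gamma(\widetilde u)(d\widetilde u,d\widetilde u)$ and using the bounds on $\partial_s\widetilde u$ in (\ref{kp2}) together with $\|d\widetilde u\|_{L^\infty}\lesssim C(M_1)$ from Proposition~\ref{fcuyder47te}. The main obstacle I anticipate is bookkeeping: keeping the weights $\omega_{1/2},\omega_1$, the exponential factors $e^{\delta s}$, and the polynomial loss $s^{\beta_1}$ consistent across the many estimates, since each bound feeds into the next and a small power mismatch at the $s=0$ end would propagate. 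In particular the logarithm in (\ref{vrts4rioik}) must be harmless for the subsequent use in the proof of (\ref{2kijhuoji}), which means that whenever $\phi^{qua}_i$ appears in the nonlinearity one must pair it with a quantity carrying an $e^{-\delta s}$ factor strong enough to absorb $\log s$. Careful separation of the $s\le 1$ and $s\ge 1$ regimes, with different norms used in each, will be the organizing principle.
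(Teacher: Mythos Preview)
Your overall architecture is right---prove (\ref{s45t}) first, feed it into the integral formulas for $A_i,A_t,\phi^{qua}_i$, then upgrade to gradients, then to $\nabla^2 d\widetilde u$---and the order of dependencies matches the paper's. But the mechanism you propose for several steps differs from what the paper actually does, and in one place your citation is off.

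For (\ref{s45t}) you want to run Duhamel on the gauged equation $\partial_s\phi_s=\Delta\phi_s+(\text{lower order})$. The ``lower order'' here contains the first-order term $2h^{ii}A_i\partial_i\phi_s$ with $A_i$ of size $C(M_1)$, so a naive Duhamel against $e^{s\Delta}$ does not close. The paper instead uses the scalar subsolution inequality $(\partial_s-\Delta)|\partial_s\widetilde u|\le 0$ (a consequence of nonpositive target curvature) and applies the free heat kernel bounds (\ref{ytsr45})--(\ref{tyihy6ys}) directly to $|\partial_s\widetilde u|$; the same works for $|\partial_t\widetilde u|$. This sidesteps the large connection coefficient entirely.

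For the gradient estimates (\ref{kp2})--(\ref{kp4}) you invoke Proposition~\ref{pkg1} ``applied to the linearized operator''. That proposition is specifically about $e^{-tH}$ with $H=-\Delta+W$ built from the \emph{fixed} Coulomb gauge on $Q^*TN$, not the $s$-dependent linearization around $\widetilde u$. The paper avoids this issue by using the Bochner-type differential inequalities (\ref{icionm}), (\ref{Hwer45thiys}), (\ref{y45aswe345t}) from Appendix~C, which give scalar inequalities of the form $(\partial_s-\Delta)|\nabla\partial_s\widetilde u|^2\le C(M_1)|\nabla\partial_s\widetilde u|^2+(\text{forcing})$, and then applies the Harnack-type Lemma~\ref{density} for $s\ge 1$ together with heat-kernel smoothing for $s\le 1$. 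Similarly, for (\ref{kq11}) the paper does not use Lemma~\ref{hessian} on the elliptic form but rather an integration-by-parts identity $\|\nabla^2 d\widetilde u\|_{L^2}^2\le \|\nabla\tau(\widetilde u)\|_{L^2}^2+\ldots$ for the $L^2$ part and the Bochner inequality (\ref{tsreinhy45}) plus Lemma~\ref{density} for the $L^\infty$ part. Your elliptic-regularity route for the $L^2$ bound would also work, but for the $L^\infty$ bound you would still need the parabolic mechanism.

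In short: the plan is viable, but replace the operator-theoretic Duhamel/Proposition~\ref{pkg1} steps by the scalar Bochner inequalities from Appendix~C together with the maximum principle and Lemma~\ref{density}. That is both what the paper does and what actually closes without fighting the large dynamic connection.
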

\begin{proof}
Since $\partial_s\widetilde{u}$ satisfies $(\partial_s-\Delta)|\partial_s\widetilde{u}|\le0$, by maximum principle and (\ref{ytsr45}), we deduce
\begin{align*}
\|\partial_s\widetilde{u}\|_{L^2_x}\le e^{-\frac{s}{4}}\|\partial_s\widetilde{u}(0,t,x)\|_{L^2_x}.
\end{align*}
Meanwhile (\ref{tyihy6ys}) and maximum principle show
\begin{align*}
\|\partial_s\widetilde{u}\|_{L^{\infty}_x}\le s^{-\frac{1}{2}}e^{-\frac{s}{4}}\|\partial_s\widetilde{u}(0,t,x)\|_{L^2_x}.
\end{align*}
Similarly the same results hold for $|\partial_t \widetilde{u}|$ since we also have $(\partial_s-\Delta)|\partial_t \widetilde{u}|\le 0$.
Thus (\ref{s45t}) follows by (\ref{bcde45rtyf}), (\ref{oootyiys}). (\ref{kp32}) follows by  (\ref{s45t}) and (\ref{3.2}). And (\ref{gft56rtklio}) follows by Lemma 2.5 and (\ref{s45t}).

For $s\ge 1$, {Proposition \ref{fcuyder47te} } and (\ref{d7nn885658}) imply
\begin{align*}
\|\nabla d\widetilde{u}\|^2_{L^{\infty}_x}+\|d\widetilde{u}\|^2_{L^{\infty}_x}\le  C(M_1).
\end{align*}
Then (\ref{icionm}) and (\ref{s45t}) show
\begin{align*}
&\partial_s|\nabla\partial_s\widetilde{u}|^2-\Delta|\nabla\partial_s\widetilde{u}|^2\\
&\le |\nabla\partial_s\widetilde{u}|^2(C(M_1)+3)+C(M_1)^3e^{-2\delta s}\epsilon^2_1+C(M_1)^4e^{-2\delta}\epsilon^2_1.
\end{align*}
Fix any $s_1\ge 1$, let
$$f(s)=|\nabla\partial_s\widetilde{u}|^2e^{\int^s_{s_1}(C(M_1)+3)ds'}
+\big(C(M_1)^4+C(M_1)^3\big)\delta^{-1}e^{-2\delta s}\epsilon^2_1,
$$
then $f$ satisfies for $s\in(s_1,\infty)$
\begin{align*}
(\partial_s-\Delta)f(s)\le 0.
\end{align*}
Hence Lemma \ref{density} gives
\begin{align*}
f(x,s)\lesssim \int^{s+1}_s\int_{B(x,1)}f(\tau)d{\rm{vol_h}}d\tau\le C(M_1) \|\nabla\partial_s\widetilde{u}\|^2_{L^2_x}+C(M_1)\epsilon^2_1.
\end{align*}
Since $|\nabla\partial_s\widetilde{u}|\le |\nabla\phi_s|+\sqrt{h^{ii}}|A_i||\phi_s|$, (\ref{kp2}) follows by (\ref{c5rtyf}) and (\ref{gft56rtklio}).

By (\ref{Hwer45thiys}) and (\ref{y45aswe345t}), the same arguments yield (\ref{kp3}) and (\ref{kp4}) respectively.
(\ref{vrts4rioik}) follows by $|\sqrt{h^{ii}}\partial_i\phi_s|\le |\sqrt{h^{ii}}A_i\phi_s|+ |\nabla\partial_s\widetilde{u}|$ and (\ref{gft56rtklio}).

Finally, by (\ref{awo}),
\begin{align}
|{\sqrt {{h^{ii}}} {\partial _t}{A_i}(s)}|&\le \int^{\infty}_s \sqrt{h^{ii}}|\nabla_t\partial_s\widetilde{u}\wedge\partial_i\widetilde{u}|ds'+\int^{\infty}_s \sqrt{h^{ii}}|\nabla_t\partial_i\widetilde{u}\wedge\partial_s\widetilde{u}|ds'\nonumber\\
&+\sqrt{h^{ii}}|A_tA_i|\label{sw345t}\\
|{{h^{ii}} {\partial _i}{A^{qua}_i}(s)}|&\le \int^{\infty}_s h^{ii}|\nabla_i\partial_s\widetilde{u}\wedge\partial_i\widetilde{u}|ds'+\int^{\infty}_s h^{ii}|\nabla_i\partial_i\widetilde{u}\wedge\partial_s\widetilde{u}|ds'\nonumber\\
&+|h^{ii}A^{qua}_i||A_i|.\label{swsde4345t}
\end{align}
then (\ref{kp7}) follows by (\ref{sw345t}), (\ref{swsde4345t}), (\ref{s45t})-(\ref{kp4}).

Since $|d\widetilde{u}|$ satisfies $|d\widetilde{u}|\le e^{Cs}e^{s\Delta}|du|$, then by $\|d\widetilde{u}\|_{L^{2}_x}+\|\nabla d\widetilde{u}\|_{L^{2}_x}\lesssim C(M_1)$ (see Proposition \ref{fcuyder47te}), Lemma 8.4 and Sobolev embedding, we obtain that for $s\in[0,1]$
\begin{align*}
\|d\widetilde{u}\|_{L^{\infty}_x}\lesssim s^{-\beta_1}C(M_1)
\end{align*}
for any $\beta_1\in(0,1)$. Thus (\ref{kp5}) follows by (\ref{sw345t}), (\ref{swsde4345t}), (\ref{s45t})-(\ref{kp4}).

By integration by parts, one has
\begin{align*}
\|\nabla^2d\widetilde{u}\|^2_{L^2_{x}}\le \|\nabla\tau(\widetilde{u})\|^2_{L^2_{x}}+\|\nabla d\widetilde{u}\|^2_{L^2_{x}}\|d\widetilde{u}\|^2_{L^{\infty}_{x}}+\| d\widetilde{u}\|^6_{L^6_{x}}.
\end{align*}
Thus by $\tau(\widetilde{u})=\partial_s\widetilde{u}$, (\ref{kp2}) and Proposition \ref{fcuyder47te}, the $L^2_x$ part in (\ref{kq11}) follows. The short time $L^{\infty}_x$ part in (\ref{kq11}) follows by applying maximum principle and smoothing effect of $e^{t\Delta}$ to (\ref{tsreinhy45}) with the help of the previously obtained bounds for $\|\nabla^2d\widetilde{u}\|_{L^{\infty}_{t}L^2_{x}}$. The large time $L^{\infty}_x$ part in (\ref{kq11}) follows by applying Lemma \ref{density} and the bound $\|\nabla^2d\widetilde{u}\|_{L^{\infty}_{t}L^2_{x}}$ to (\ref{tsreinhy45}), see the proof of (\ref{kp2}) above.
\end{proof}

Direct calculations give
\begin{lemma}\label{poke}
Assume (\ref{d7nn885658}) to (\ref{bcde45rtyf}) hold. Then we have for the coordinates in {(2.10)}
\begin{align}
\| {\sqrt {{h^{pp}}} {\partial _p}\left( {{h^{ii}}{\partial _i}A_i^{qua} - {h^{ii}}\Gamma _{ii}^kA_k^{qua}} \right)} \|_{L^{\infty}_x}
&\lesssim s^{-\frac{1}{2}}e^{-\delta s}C(M_1)\varepsilon_1+\int^{\infty}_s|d\widetilde{u}||\nabla^2\partial_s\widetilde{u}|ds'.\label{2poke}\\
\sqrt {h^{pp}h^{ii}} |{\partial _p}\phi^{qua}_i|&\lesssim C(M_1)\int^{\infty}_s|\nabla^2\phi_s|+|\nabla\phi_s|ds'\label{1poke}
\end{align}
\end{lemma}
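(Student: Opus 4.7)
The plan is to differentiate the integral representations of $A^{qua}$ and $\phi^{qua}$ directly under the integral sign, use the torsion-free identity together with the heat flow equation for $\phi_s$ to avoid losing two spatial derivatives on the differential fields, and then invoke the pointwise bounds of Lemma 7.1 to absorb every term except the explicit $|d\widetilde{u}||\nabla^2\partial_s\widetilde{u}|$ piece kept inside the integral.

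For (7.17), recall that in the caloric gauge $A_s\equiv 0$, so the torsion-free identity reduces to $\partial_s\phi_i=D_i\phi_s=\partial_i\phi_s+A_i\phi_s$. Differentiating $\phi^{qua}_i=\int_s^\infty\partial_s\phi_i\,ds'$ in $x^p$ under the integral yields
\begin{align*}
\partial_p\phi^{qua}_i=\int_s^\infty\bigl(\partial_p\partial_i\phi_s+(\partial_p A_i)\phi_s+A_i\,\partial_p\phi_s\bigr)\,ds'.
\end{align*}
Converting partial to covariant derivatives introduces only bounded Christoffel contributions, and the pointwise bounds in Lemma 7.1 for $\|A\|_{L^\infty}$ and $\|\sqrt{h^{pp}}\partial_p A\|_{L^\infty}$ give $\sqrt{h^{pp}h^{ii}}|\partial_p\phi^{qua}_i|\lesssim C(M_1)\int_s^\infty(|\nabla^2\phi_s|+|\nabla\phi_s|)\,ds'$, as required.

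For (7.16), the key observation is that $h^{ii}\partial_iA^{qua}_i-h^{ii}\Gamma^k_{ii}A^{qua}_k$ is (up to sign) the codifferential $d^*A^{qua}$ of the one-form $A^{qua}=A^{qua}_i\,dx^i$. Differentiating $A^{qua}_i=\int_s^\infty\phi_s\wedge\phi_i\,ds'$ under the integral, using the $\mathbb{R}$-bilinearity of $\wedge$, and invoking the heat flow equation $h^{ii}D_i\phi_i-h^{ii}\Gamma^k_{ii}\phi_k=\phi_s$ together with the antisymmetry $\phi_s\wedge\phi_s=0$, yields
\begin{align*}
h^{ii}\bigl(\partial_iA^{qua}_i-\Gamma^k_{ii}A^{qua}_k\bigr)=\int_s^\infty\bigl[h^{ii}\,\partial_i\phi_s\wedge\phi_i-h^{ii}\,\phi_s\wedge A_i\phi_i\bigr]\,ds'.
\end{align*}
This algebraic cancellation is the crux: without it, an additional spatial derivative would fall on $\phi_i\sim d\widetilde{u}$, which is only pointwise bounded and not small.

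Applying $\sqrt{h^{pp}}\partial_p$ and expanding via the Leibniz rule, every contribution except $\sqrt{h^{pp}}h^{ii}\,\partial_p\partial_i\phi_s\wedge\phi_i$ carries at most one spatial derivative on $\phi_s$. These can be controlled by the Lemma 7.1 bounds $|\phi_s|,|\nabla\phi_s|\lesssim s^{-1/2}e^{-\delta s}\varepsilon_1$, together with $\|A\|_{L^\infty}+\|\sqrt{h^{pp}}\partial_p A\|_{L^\infty}\lesssim C(M_1)$ and $|\phi_i|\lesssim|d\widetilde{u}|\lesssim C(M_1)$; the $ds'$ integration of a factor $(s')^{-1/2}e^{-\delta s'}$ over $[s,\infty)$ produces exactly the claimed $s^{-1/2}e^{-\delta s}C(M_1)\varepsilon_1$ summand. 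The remaining two-derivative term on $\phi_s$ is kept inside the integral as $\int_s^\infty|d\widetilde{u}||\nabla^2\partial_s\widetilde{u}|\,ds'$. The only structural point is the cancellation noted above; the main anticipated nuisance is the careful bookkeeping of Christoffel and connection terms when passing between partial and covariant derivatives on sections of the pullback bundle, but this is routine rather than substantively difficult.
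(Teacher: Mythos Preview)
Your argument is correct and is the kind of direct calculation the paper intends. A few bookkeeping corrections: from (\ref{kp2}) one has $\|\nabla\partial_s\widetilde u\|_{L^\infty}\lesssim s^{-1}e^{-\delta s}\varepsilon_1$ rather than $s^{-1/2}$, one only has $\|d\widetilde u\|_{L^\infty}\lesssim s^{-\beta_1}C(M_1)$ for $s\le 1$, and Lemma~\ref{huguokitredcfr} does not directly record $\sqrt{h^{pp}h^{ii}}|\partial_p A_i|$ (derive it in one line from the integral formula for $A^{qua}$ together with (\ref{3.2})); after integration in $s'$ none of these alter the outcome. The heat-flow cancellation you exploit for (\ref{2poke}) is a clean organisational touch but not indispensable, since without it the resulting $\int_s^\infty|\phi_s|\,|\nabla^2 d\widetilde u|\,ds'$ is absorbed by (\ref{kq11}).
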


We turn to improve the bounds of high order differential fields.
\begin{lemma}\label{kq12}
Assume that (\ref{d7nn885658}) to (\ref{bcde45rtyf}) hold, then
\begin{align}
\omega_{1}(s)\|\nabla^2\phi_s\|_{L^{\infty}_{t}L^2_x}&\lesssim C(M_1)\varepsilon_1\label{kq12a}\\
\omega_{1}(s)\|\nabla^2\partial_s \widetilde{u}\|_{L^{\infty}_{t}L^2_x}&\lesssim C(M_1)\varepsilon_1\label{kq12b}\\
\omega_{\frac{3}{2}}(s)\|\nabla^2\partial_s\widetilde{u}\|_{L^{\infty}_{t,x}}&\lesssim C(M_1)\varepsilon_1\label{kq12c}\\
\omega_{\frac{3}{2}}(s)\|\nabla^2\phi_s\|_{L^{\infty}_x}&\lesssim C(M_1)\varepsilon_1\label{kq12e}\\
\|\sqrt{h^{ii}h^{pp}}\nabla_p\phi^{qua}_i\|_{L^{\infty}_x}&\lesssim C(M_1)\varepsilon_1\min(s^{-\frac{1}{2}},s^{-L+1}).\label{kq12d}
\end{align}
\end{lemma}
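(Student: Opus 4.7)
The plan is to derive heat-type differential inequalities satisfied by $\nabla^2 \partial_s \widetilde u$ and $\nabla^2 \phi_s$, and then run the same ``maximum principle plus density/smoothing'' machinery already used in the proofs of (\ref{kp2})--(\ref{kp4}), but one derivative higher. Differentiating the identity $\partial_s\widetilde u=\tau(\widetilde u)$ and using the Bochner-type commutator $[\nabla,\Delta]$ together with the curvature of $N=\mathbb H^2$, one obtains schematically
\begin{align*}
(\partial_s-\Delta)\nabla^2\partial_s\widetilde u
  = \mathcal{R}_1(d\widetilde u,\nabla d\widetilde u,\nabla^2 d\widetilde u)\cdot\partial_s\widetilde u
  + \mathcal{R}_2(d\widetilde u,\nabla d\widetilde u)\cdot\nabla\partial_s\widetilde u
  + \mathcal{R}_3(d\widetilde u)\cdot\nabla^2\partial_s\widetilde u,
\end{align*}
with $|\mathcal{R}_j|$ bounded by polynomials in $|d\widetilde u|$ and $|\nabla d\widetilde u|$ (and including one Ricci coupling from the base $\mathbb H^2$). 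All coefficients are controlled by Lemma~\ref{huguokitredcfr} and (\ref{kq11}).

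First I would prove (\ref{kq12a})--(\ref{kq12b}). Testing the above equation with $\nabla^2\partial_s\widetilde u$ and integrating by parts yields
\begin{align*}
\tfrac12\partial_s\|\nabla^2\partial_s\widetilde u\|_{L^2_x}^2
 + \|\nabla^3\partial_s\widetilde u\|_{L^2_x}^2
 \lesssim C(M_1)\|\nabla^2\partial_s\widetilde u\|_{L^2_x}^2
 + \|\nabla^2 d\widetilde u\|_{L^2_x}^2\|\partial_s\widetilde u\|_{L^\infty_x}^2 + (\text{lower}).
\end{align*}
Inserting (\ref{s45t}), (\ref{kp2}), (\ref{kp3}), (\ref{kq11}) and integrating in $s$ with the singular factor $s^{-\frac12}$ from $\|\nabla^2 d\widetilde u\|_{L^2_x}$ at $s\sim 0$ produces exactly the weight $\omega_1(s)$, giving (\ref{kq12b}). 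The bound (\ref{kq12a}) for $\nabla^2\phi_s$ then follows from
$\nabla\partial_s\widetilde u=(D\phi_s)e$ and $\nabla^2\partial_s\widetilde u=(D^2\phi_s)e+(\text{terms in }A,\nabla A,\phi_s)$, using the $L^\infty$ control of $A,\nabla A$ and the already-established $L^\infty_t L^2_x$ bound on $\nabla\phi_s$ from (\ref{bcde45rtyf}) to convert covariant to flat derivatives.

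Next, for the pointwise estimates (\ref{kq12c})--(\ref{kq12e}), I would bootstrap the previous $L^2$ estimate through the parabolic mean-value inequality (Lemma~\ref{density}) applied to a suitable nonnegative subsolution for $|\nabla^2\partial_s\widetilde u|^2$, or equivalently through the short-time smoothing estimate for $e^{s\Delta}$ and Duhamel. Since moving from $L^2_x$ to $L^\infty_x$ for a parabolic equation costs a factor $s^{-1/2}$ at short times, the weight is upgraded from $\omega_1$ to $\omega_{3/2}$, and the large-$s$ decay $s^{-L}$ is preserved by the exponential decay factor $e^{-\delta s}$ present in all forcings coming from (\ref{s45t})--(\ref{kp4}). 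The conversion to $\nabla^2\phi_s$ proceeds as before.

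Finally, for (\ref{kq12d}) I would use the caloric-gauge identity $A_s\equiv 0$ and the torsion-free relation (\ref{pknb}) to write
\begin{align*}
\phi^{qua}_i(s)=\int_s^\infty \partial_{s'}\phi_i\,ds'
=\int_s^\infty D_i\phi_{s'}\,ds'
=\int_s^\infty\bigl(\partial_i\phi_{s'}+A_i(s')\phi_{s'}\bigr)ds',
\end{align*}
and then differentiate once in $x$ and estimate in $L^\infty_x$ by combining the bounds (\ref{kq12e}) and (\ref{bcde45rtyf}) on $\phi_s$, $\nabla\phi_s$, $\nabla^2\phi_s$ with the $L^\infty$ control on $A,\nabla A$. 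The $L^1_{s'}$ integrability of the integrand is controlled by $\omega_{3/2}(s')^{-1}$ for $s'\le 1$ and by $s'^{-L}$ for $s'\ge 1$, yielding the stated $\min(s^{-1/2},s^{-L+1})$ decay.

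The main obstacle will be the top-order term $\mathcal{R}_1\cdot\partial_s\widetilde u$ containing $\nabla^2 d\widetilde u$: near $s=0$, $\|\nabla^2 d\widetilde u\|_{L^2_x}$ has the short-time singularity $s^{-1/2}$ recorded in (\ref{kq11}), and matching this singularity against the decay $\|\partial_s\widetilde u\|_{L^\infty_x}\lesssim s^{-1/2}\varepsilon_1$ from (\ref{s45t}) is precisely what forces the weight $\omega_1$ in the $L^2$ statement (and $\omega_{3/2}$ in the $L^\infty$ statement). Every other term is more benign thanks to the exponential decay produced in Lemma~\ref{huguokitredcfr}, so once this weight is absorbed the argument becomes routine parabolic bootstrap.
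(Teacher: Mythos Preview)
Your approach to (\ref{kq12a})--(\ref{kq12b}) is genuinely different from the paper's and can in principle be made to work, but as written it has a real gap.

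The paper proves (\ref{kq12a}) first (not (\ref{kq12b})), and does so through the magnetic heat semigroup rather than a Bochner energy argument. It writes the $\phi_s$ equation as $(\partial_s+H)\phi_s=G$, where $G$ contains only the \emph{quadratic} connection pieces $A^{qua},\phi^{qua}$, applies Duhamel on $[s/2,s]$, and then hits with $H$. The smoothing bound $\|He^{-sH/2}f\|_{L^2}\lesssim s^{-1}\|f\|_{L^2}$ from Proposition~\ref{pkg2} produces the $\omega_1$ weight directly on the homogeneous term; the inhomogeneous piece is controlled by $\|H^{1/2}G\|_{L^2}\lesssim\|\nabla G\|_{L^2}$, and since every term in $G$ carries an extra $\varepsilon_1$, the top-order contribution $C(M_1)\varepsilon_1\|\omega_1 H\phi_s\|_{L^\infty_sL^2_x}$ is absorbed back into the left. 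The advantage of this route is that all coefficients of size $C(M_1)$ (coming from the harmonic map) are already absorbed into $H$, whose smoothing was established in Section~6; nothing large is left in $G$. Then (\ref{kq12b}) follows from (\ref{kq12a}) via the pointwise relation (\ref{xskq12e}).

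Your Bochner/energy route could reach the same conclusion, but not by ``integrating in $s$'' a single Gr\"onwall inequality: your displayed forcing $\|\nabla^2 d\widetilde u\|_{L^2_x}^2\|\partial_s\widetilde u\|_{L^\infty_x}^2\sim C(M_1)\varepsilon_1^2 s^{-2}$ is not integrable near $0$, and you have no $\varepsilon_1$-small datum $\|\nabla^2\partial_s\widetilde u(0)\|_{L^2_x}$ to start from. What actually produces the $\omega_1$ weight in an energy framework is the standard parabolic cascade: use the level-$0$ dissipation to bound $\int_0^S\|\nabla\partial_s\widetilde u\|_{L^2}^2\,ds\lesssim\varepsilon_1^2$; then multiply (\ref{icionm}) by $s$ and integrate to obtain $\int_0^S s\|\nabla^2\partial_s\widetilde u\|_{L^2}^2\,ds\lesssim C(M_1)\varepsilon_1^2$; finally multiply (\ref{tsrinhy45}) by $s^2$ and integrate, using the previous step to absorb the boundary term $\int_0^S 2s\|\nabla^2\partial_s\widetilde u\|_{L^2}^2\,ds$. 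One also has to check that the Bochner coefficients containing $|\nabla d\widetilde u|$ (only $O(s^{-1/2})$ in $L^\infty_x$) and $|d\widetilde u|^2$ (only $O(s^{-2\beta_1})$) survive the weighting; they do, but this is the work your sentence elides. Your treatment of (\ref{kq12c})--(\ref{kq12e}) and of (\ref{kq12d}) matches the paper's.
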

\begin{proof}
Integration by parts gives
\begin{align*}
\|\nabla^2\phi_s\|_{L^2_x}\lesssim \|\Delta\phi_s\|_{L^2_x}+\|\nabla\phi_s\|^2_{L^2_x}.
\end{align*}
For (\ref{kq12a}), due to (\ref{s45t})-(\ref{kp2}) and $|\nabla\phi_s|\le |\nabla\partial_s\widetilde{u}|+\sqrt{h^{ii}}|A_i\phi_s|$,  it suffices to prove
\begin{align*}
\omega_{1}(s)\|\Delta\phi_s\|_{L^{\infty}_{t}L^2_x}\le C(M_1)\varepsilon_1.
\end{align*}
By Duhamel principle one deduces from the heat equation of $\phi_s$ that
\begin{align}\label{povmb}
\phi_s=e^{-\frac{s}{2}H}\phi_s(\frac{s}{2})+\int^{s}_{\frac{s}{2}}e^{-H(s-\tau)}G(\tau)d\tau,
\end{align}
where $G$ denotes the inhomogeneous part, i.e.,
\begin{align*}
G&=2h^{ii}A^{qua}_i\partial_i\phi_s+h^{ii}(\partial_iA^{qua}_i)\phi_s-h^{ii}\Gamma^k_{ii}A^{qua}_k\phi_s+h^{ii}A^{qua}_iA^{qua}_i\phi_s\\
&+h^{ii}\mathcal{A}_iA^{qua}_i\phi_s+h^{ii}A^{qua}_i\mathcal{A}_i\phi_s+h^{ii}(\phi_s\wedge\phi^{qua}_i)\phi^{qua}_i\\
&+h^{ii}(\phi_s\wedge\widehat{\phi}_i)\phi^{qua}_i+h^{ii}(\phi_s\wedge\phi^{qua}_i)\widehat{\phi}_i.
\end{align*}
Applying $H$ to (\ref{povmb}) we get by Proposition \ref{pkg2}
\begin{align}\label{pro1}
\|H\phi_s\|_{L^2_x}\le s^{-1}e^{-\frac{s}{8}}\|\phi_s(\frac{s}{2})\|_{L^2_x}+\int^{s}_{\frac{s}{2}}(s-\tau)^{-\frac{1}{2}}e^{-\frac{s-\tau}{4}}\|H^{\frac{1}{2}}G(\tau)\|_{L^2_x}d\tau,
\end{align}
Then the homogeneous term in (\ref{pro1}) is acceptable by Lemma \ref{huguokitredcfr}. Again due to Proposition \ref{pkg2}, we deduce
\begin{align}\label{pro11}
\|H^{\frac{1}{2}}G(\tau)\|_{L^2_x}\lesssim \|\nabla G(\tau)\|_{L^2_x}.
\end{align}
Denote the one order derivative term in $G$ by $G_1\triangleq2h^{ii}A^{qua}_i\partial_i\phi_s$. And $G_2\triangleq h^{ii}(\partial_iA^{qua}_i)\phi_s-h^{ii}\Gamma^k_{ii}A^{qua}_k\phi_s$. The remainder terms in $G$ are denoted by $G_3\triangleq G-G_1-G_2$.
By Lemma \ref{poke} and (\ref{kp7}),
\begin{align}
\|\nabla G_1(\tau)\|_{L^2_x}&\lesssim \|\sqrt{h^{ii}}A^{qua}_i\|_{L^{\infty}_x}\|\nabla^2\phi_s\|_{L^2_x}+\big(\int^{\infty}_{\tau}\|\nabla \phi_s\|_{L^{\infty}_x}\|\sqrt{h^{ii}}\phi_i\|_{L^{\infty}_x}ds'\big)\|\nabla\phi_s\|_{L^2_x}\nonumber\\
+& \big(\int^{\infty}_{\tau}\|\phi_s\|_{L^{\infty}_x}\|\sqrt{h^{ii}h^{pp}}\partial_p\phi_i\|_{L^{\infty}_x}ds'\big)\|\nabla\phi_s\|_{L^2_x}
\label{pro3}\\
\|\nabla G_2(\tau)\|_{L^2_x}&\lesssim C(M_1)\varepsilon_1\tau^{-\frac{1}{2}}e^{-\delta \tau}\|\nabla \phi_s\|_{L^2_x}
+\big(\int^{\infty}_{\tau}\|\nabla^2 {\partial _s}\widetilde{u}\|_{L^2_x}\|d \widetilde{u}\|_{L^{\infty}_x}ds'\big)\|\phi_s\|_{L^{\infty}_x}\label{pro2}
\end{align}
We get from Lemma \ref{poke} and Lemma \ref{huguokitredcfr} that the $G_3$ term is bounded by
\begin{align}
\|\nabla G_3(\tau)\|_{L^2_x}&\lesssim \tau^{-\frac{1}{2}}e^{-\delta\tau}M^2_1\varepsilon^2_1\|\phi_s\|_{L^2_x}+e^{-\delta\tau}M^2_1\varepsilon^2_1\|\nabla\phi_s\|_{L^2_x}
+\varepsilon^2_1e^{-\delta \tau}(\log \tau)^2\|\nabla\phi_s\|_{L^2_x}\nonumber\\
&+\varepsilon^2_1\tau^{-\frac{1}{2}}e^{-\delta \tau}\log \tau\int^{\infty}_{\tau}\|\nabla^2\phi_s\|_{L^2_x}ds'\label{pro4}
\end{align}
Using the inequality
\begin{align}
\int^{\infty}_{\tau}\|\nabla^2\phi_s\|_{L^2_x}ds'&\lesssim \|\omega_{1}(s)\nabla^2\phi_s\|_{L^{\infty}_sL^2_x}\min(\log \tau,\tau^{-L+1})\label{p7csw34}\\
|\nabla^2\partial_s\widetilde{u}|&\lesssim |\nabla^2\phi_s|+h^{ii}|A_i||\partial_i\phi_s|+\sqrt{h^{pp}h^{ii}}|\partial_p A_i||\phi_s|\label{xskq12e}
\end{align}
we conclude by Lemma \ref{huguokitredcfr} and (\ref{pro1})-(\ref{pro4}) that
\begin{align*}
\|\omega_{1}(s)H\phi_s\|_{L^{\infty}_sL^2_x}\lesssim \varepsilon_1+C(M_1)\varepsilon_1\|\omega_{1}(s)H\phi_s\|_{L^{\infty}_sL^2_x},
\end{align*}
thus finishing the proof of (\ref{kq12a}).

 (\ref{kq12b}) follows from (\ref{kq12a}), Lemma \ref{huguokitredcfr} and (\ref{xskq12e}).
Then the large time part of (\ref{kq12c}) follows from (\ref{kq12b}) and Lemma \ref{huguokitredcfr} by applying Lemma \ref{density} to (\ref{tsrinhy45}). The short time part of (\ref{kq12c}) follows by (\ref{kq12b}) and applying smoothing effect of $e^{t\Delta}$ to (\ref{tsrinhy45}). And due to (\ref{xskq12e}), (\ref{kq12e}) follows by (\ref{kq12c}) and Lemma \ref{huguokitredcfr}.

Finally, (\ref{kq12d}) follows by (\ref{xskq12e}), (\ref{1poke}) and Lemma \ref{huguokitredcfr}.
\end{proof}

The following three lemmas give the bounds for $\phi_t$.
\begin{lemma}
Assume that (\ref{d7nn885658}) to (\ref{bcde45rtyf}) hold, then for $q\in(2,6+2\gamma]$
\begin{align}
{\left\| {{\phi _t}(s)} \right\|_{L_s^\infty L_t^2L_x^q}} &\lesssim C(M_1){\varepsilon _1}\label{time2}\\
{\left\| {D^{\frac{1}{4}}{\phi _t}(s)} \right\|_{L_s^\infty L_t^2L_x^q}} &\lesssim C(M_1){\varepsilon _1}\label{time30}\\
{\left\| s^{L}{D^{\frac{1}{4}}{\phi _t}(s)} \right\|_{L_s[1,\infty) L_t^2L_x^q}} &\lesssim C(M_1){\varepsilon _1}\label{time31}
\end{align}
\end{lemma}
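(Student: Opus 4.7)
The cornerstone is the representation
$\phi_t(s,t,x) = -\int_s^\infty \bigl(\partial_t\phi_s + A_t\phi_s\bigr)(s',t,x)\,ds'$,
which follows from three ingredients: the caloric gauge condition $A_s\equiv 0$ (so that $\partial_s\phi_t=D_s\phi_t$), the torsion-free identity giving $D_s\phi_t = D_t\phi_s = \partial_t\phi_s+A_t\phi_s$, and the vanishing $\phi_t(\infty,t,x)=\langle\partial_t Q,{\bf e}_k\rangle=0$ which holds by Lemma \ref{DERfder} together with the $t$-independence of $Q$. This is the standard Tao caloric gauge device for trading $t$-derivatives against integrability in the heat variable $s$.

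Given this representation, (\ref{time2}) follows by Minkowski in $s'$ together with Sobolev embedding on $\Bbb H^2$. From bootstrap (\ref{c5rtyf}), $\omega_{1/2}(s')D^{-1/2}\partial_t\phi_s \in L^\infty_{s'}L^2_tL^p_x$; Sobolev embedding $D^{1/2}: L^p\to L^{q'}$ with $\tfrac{1}{q'}=\tfrac{1}{p}-\tfrac{1}{4}$ upgrades this to a uniform bound on $\omega_{1/2}(s')\partial_t\phi_s$ in $L^2_tL^{q'}_x$, and any $q\in(2,6+2\gamma]$ is reached by tuning $p$ within the Strichartz range. The $s'$-integral then converges absolutely: $\int_s^\infty \omega_{1/2}^{-1}(s')\,ds'$ is uniformly bounded in $s\ge 0$ because $\omega_{1/2}^{-1}=s'^{-1/2}$ is integrable at $0$ and $\omega_{1/2}^{-1}=s'^{-L}$ is strongly integrable at $\infty$ for $L\gg 1$. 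The bilinear piece $\int_s^\infty A_t\phi_s\,ds'$ is handled by combining $\|A_t\|_{L^\infty_{t,x}}\lesssim\varepsilon_1$ from (\ref{gft56rtklio}) with the Strichartz bound on $\phi_s$ from (\ref{bcde45rtyf}), contributing only an $O(C(M_1)\varepsilon_1^2)$ perturbation.

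The estimates (\ref{time30}) and (\ref{time31}) follow the same scheme after commuting $D^{1/4}$ with $\partial_s$ and $\partial_t$. The linear contribution is $D^{1/4}\partial_t\phi_s = D^{3/4}(D^{-1/2}\partial_t\phi_s)$, controlled by Sobolev ($\tfrac{1}{q'}=\tfrac{1}{p}-\tfrac{3}{8}$) and (\ref{c5rtyf}). For (\ref{time31}), the outer factor $s^L$ is absorbed by the rapid decay of $\omega_{1/2}^{-1}(s')=s'^{-L}$ on $s'\ge 1$, since $s^L\int_s^\infty s'^{-L}\,ds'\lesssim s$ and the product against further bootstrap decay is integrable. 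The principal obstacle will be estimating the bilinear term $D^{1/4}(A_t\phi_s)$: one requires a fractional Leibniz inequality on $\Bbb H^2$ of the schematic form
$\|D^{1/4}(A_t\phi_s)\|_{L^q_x}\lesssim \|A_t\|_{L^\infty_x}\|D^{1/4}\phi_s\|_{L^q_x}+\|D^{1/4}A_t\|_{L^{r_1}_x}\|\phi_s\|_{L^{r_2}_x}$,
which on Euclidean space would be a routine Littlewood-Paley paraproduct argument but on $\Bbb H^2$ must be built from the Helgason-Fourier calculus of Section 2.2 and the shifted wave kernel bounds of Lemma \ref{wjiu87}. The auxiliary factor $\|D^{1/4}A_t\|$ is then reduced to bootstrap variables through the caloric gauge representation $A_t=\int_s^\infty \phi_s\wedge\phi_t\,ds'$ from (\ref{obhxdg}), closing the argument with constant $C(M_1)\varepsilon_1$ as claimed.
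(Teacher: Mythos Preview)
Your argument has a genuine gap: the Sobolev embedding is invoked in the wrong direction. You start from the bootstrap bound $\omega_{1/2}D^{-1/2}\partial_t\phi_s\in L^2_tL^p_x$ in (\ref{c5rtyf}) and claim that ``Sobolev embedding $D^{1/2}:L^p\to L^{q'}$'' upgrades this to $\omega_{1/2}\partial_t\phi_s\in L^2_tL^{q'}_x$. Writing $g=D^{-1/2}\partial_t\phi_s$, this would require $\|D^{1/2}g\|_{L^{q'}_x}\lesssim\|g\|_{L^p_x}$ with $q'>p$, i.e.\ gaining integrability while \emph{applying} a half-derivative. That is anti-Sobolev and false, on $\Bbb H^2$ just as on $\Bbb R^2$ (see the Sobolev inequality in Lemma~\ref{KINsder45tyy}, which requires $\sigma_1-\sigma_2\ge 0$). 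The same error recurs in your treatment of (\ref{time30}), where you write $D^{1/4}\partial_t\phi_s=D^{3/4}(D^{-1/2}\partial_t\phi_s)$ and again appeal to Sobolev to bound a positive-order operator. In fact the required estimate $\partial_t\phi_s\in L^2_tL^q_x$ is only obtained later, in Lemma~\ref{i97clv} via (\ref{uojiknmpx3}), and that lemma in turn relies on the present one; so your scheme is circular within the bootstrap structure.

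The paper proceeds along an entirely different route that avoids this obstruction. Rather than integrating backward from $s=\infty$ via $\partial_s\phi_t=D_t\phi_s$, it writes the forward parabolic equation $(\partial_s+H)\phi_t=\mathcal{G}$ (equation~(\ref{u7iy})), takes initial data at $s=0$ from the bootstrap assumption (\ref{oootyiys}) which already supplies $D^{1/4}\phi_t(0,t,x)\in L^2_tL^p_x$, and then applies Duhamel together with the $L^p$ smoothing of the magnetic heat semigroup $e^{-sH}$ from Proposition~\ref{pkg1}. The fractional power $H^{1/8}$ (hence $D^{1/4}$ via Proposition~\ref{k8vr4}) is absorbed by the parabolic smoothing $\|H^{1/8}e^{-sH}\|_{L^p\to L^p}\lesssim s^{-1/8-\epsilon}e^{-\delta s}$, and the nonlinear source $\mathcal{G}$ is shown perturbative using the pointwise decay of $A^{qua}$, $\phi^{qua}$ from Lemma~\ref{huguokitredcfr}. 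No fractional Leibniz rule on $\Bbb H^2$ is needed.
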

\begin{proof}
(\ref{time2})  follows {by the same arguments of [Lemma 7.2\cite{EFE376}]}. It suffices to prove (\ref{time30}) and (\ref{time31}).
The differential filed $\phi_t$ satisfies
\begin{align*}
(\partial_s-\Delta)\phi_t=2h^{ii}A_i\partial_i\phi_t+h^{ii}A_iA_i\phi_t+h^{ii}\partial_iA_i\phi_t-h^{ii}\Gamma^k_{ii}A_k\phi_t+h^{ii}(\phi_t\wedge\phi_i)\phi_i.
\end{align*}
Separating the $H\phi_t$ part away from the nonlinearity terms yields
\begin{align}
(\partial_s+H)\phi_t&=2h^{ii}A^{qua}_i\partial_i\phi_t+h^{ii}A^{qua}_i\mathcal{A}_i\phi_t+h^{ii}\mathcal{{A}}_iA^{qua}_i\phi_t++h^{ii}A^{qua}_iA^{qua}_i\phi_t\nonumber\\
&+h^{ii}(\partial_iA^{qua}_i-\Gamma^k_{ii}A^{qua}_k)\phi_t+h^{ii}(\phi_t\wedge\widehat{\phi}_i)\phi^{qua}_i+h^{ii}(\phi_t\wedge\phi^{qua}_i)\widehat{\phi}_i\nonumber\\
&+h^{ii}(\phi_t\wedge\phi^{qua}_i)\phi^{qua}_i.\label{u7iy}
\end{align}
Denote the right hand side of (\ref{u7iy}) as $\mathcal{G}$. And denote the one order derivative term of $\phi_t$ by $\mathcal{G}_1$, i.e., $\mathcal{G}_1=2h^{ii}A^{qua}_i\partial_i\phi_t$. The other zero order terms are denoted by $\mathcal{G}_2$. Applying $H^{\frac{1}{8}}$ to (\ref{u7iy}), by Proposition \ref{pkg1} we have
\begin{align}
\|H^{\frac{1}{8}}\phi_t\|_{L^2_tL^q_x}&\lesssim \|H^{\frac{1}{8}}\phi_t(0,t,x)\|_{L^2_tL^q_x}+\int^{s}_0(s-\tau)^{-\frac{1}{8}-\epsilon}e^{-\delta(s-\tau)}\|\mathcal{G}_2(\tau)\|_{L^2_tL^q_x}d\tau
\nonumber\\
&+\int^{s}_0\|e^{-(s-\tau)H}H^{\frac{1}{8}}\mathcal{G}_1(\tau)\|_{L^2_tL^q_x}d\tau\label{poikl}
\end{align}
Lemma \ref{huguokitredcfr} and (\ref{oootyiys}) show
\begin{align}\label{p8v}
\|\mathcal{G}_2(\tau)\|_{L^2_tL^q_x}\lesssim  C(M_1)\varepsilon_1^2\min(\tau^{-\frac{1}{2}},\tau^{-L+1})
\end{align}
By Proposition \ref{pkg1}, the $\mathcal{G}_1$ term in (\ref{poikl}) is bounded by
\begin{align}
&\|e^{-(s-\tau)H}H^{\frac{1}{8}}\mathcal{G}_1(\tau)\|_{L^2_tL^q_x}\nonumber\\
&\lesssim\|e^{-(s-\tau)H}H^{\frac{1}{8}}h^{ii}\partial_i(A^{qua}_i\phi_t)\|_{L^2_tL^q_x}
+\|e^{-(s-\tau)H}H^{\frac{1}{8}}h^{ii}(\partial_iA^{qua}_i)\phi_t\|_{L^2_tL^q_x}.\label{p8mnbv}
\end{align}
The second term in (\ref{p8mnbv}) has appeared in $\mathcal{G}_2$.
By (\ref{p3ib}), (\ref{p4ib}) and Proposition \ref{pkg1}, one has for any $\epsilon\in(0,1)$
\begin{align}
&\|e^{-(s-\tau)H}H^{\frac{1}{8}}h^{ii}\partial_i(A^{qua}_i\phi_t)\|_{L^q_x}\nonumber\\
&\lesssim  \|e^{-(s-\tau)H}H^{\frac{1}{8}}H^{\frac{1}{2}}H^{-\frac{1}{2}}(-\Delta)^{\frac{1}{2}}(-\Delta)^{-\frac{1}{2}}h^{ii}\partial_i(A^{qua}_i\phi_t)\|_{L^q_x}\nonumber\\
&\lesssim (s-\tau)^{-\frac{5}{8}-\epsilon}e^{-\delta(s-\tau)}\|(-\Delta)^{-\frac{1}{2}}h^{ii}\partial_i(A^{qua}_i\phi_t)\|_{L^q_x}\nonumber\\
&\lesssim  (s-\tau)^{-\frac{5}{8}-\epsilon}e^{-\delta(s-\tau)}\|\sqrt{h^{ii}}(A^{qua}_i\phi_t)\|_{L^q_x}\label{p9cve4}
\end{align}
where we use the boundedness of Riesz transform in the last line.
Hence Lemma \ref{huguokitredcfr}, (\ref{d7nn885658})-(\ref{bcde45rtyf}), (\ref{poikl})-(\ref{p9cve4}) give
\begin{align}\label{hjyp67}
\|H^{\frac{1}{8}}\phi_t\|_{L^2_tL^q_x}\lesssim C(M_1)\varepsilon_1.
\end{align}
Then (\ref{time30}) follows from (\ref{hjyp67}) and (\ref{c5rtyf}). Similar arguments yield (\ref{time31}).
\end{proof}

\begin{lemma}
Assume that (\ref{d7nn885658}) to (\ref{bcde45rtyf}) hold, then for any $\epsilon\in(0,1)$,
$\phi_t$ satisfies for $q\in(2,6+2\gamma]$ with $0<\gamma\ll1$
\begin{align}
{\left\| {\omega_{\frac{3}{8}+\epsilon}(s)\nabla{\phi _t}(s)} \right\|_{L_s^\infty L_t^2L_x^q}} &\lesssim {\varepsilon _1}\label{ku3}
\end{align}
\end{lemma}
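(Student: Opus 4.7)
The plan is to apply $\nabla$ to the Duhamel formula
\[
\phi_t(s)=e^{-sH}\phi_t(0)+\int_0^s e^{-(s-\tau)H}\mathcal{G}(\tau)\,d\tau
\]
attached to equation (\ref{u7iy}), where $\mathcal{G}=\mathcal{G}_1+\mathcal{G}_2$ with $\mathcal{G}_1=2h^{ii}A^{qua}_i\partial_i\phi_t$ being the lone first-order nonlinearity and $\mathcal{G}_2$ collecting all zero-order terms. The key tools are Proposition \ref{pkg1} and Proposition \ref{k8vr4} (heat smoothing of $(-\Delta)^{a/2}e^{-tH}$ and the equivalence of $(-\Delta)^{a/2}$ with $H^{a/2}$ on $L^q$), the $L^q$-boundedness of the Riesz transform $\nabla(-\Delta)^{-1/2}$, the bootstrap hypothesis $\|D^{1/4}\phi_t(0,\cdot)\|_{L^2_tL^q_x}\lesssim\varepsilon_1$ from (\ref{oootyiys}), and the pointwise potential estimates of Lemma \ref{huguokitredcfr} together with Lemma \ref{kq12}.

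\smallskip

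For the homogeneous piece I would factor
\[
\nabla e^{-sH}\phi_t(0)=\nabla(-\Delta)^{-1/2}\cdot(-\Delta)^{1/2}e^{-sH}(-\Delta)^{-1/8}\cdot D^{1/4}\phi_t(0),
\]
and estimate the middle operator by composing the equivalence $(-\Delta)^{-1/8}\leftrightarrow H^{-1/8}$ (Proposition \ref{k8vr4}) with the bound $\|H^{3/8}e^{-sH}\|_{L^q\to L^q}\lesssim s^{-3/8-\epsilon}e^{-\delta s}$ obtained from Proposition \ref{pkg1} via Balakrishnan interpolation; the resulting singularity $s^{-3/8-\epsilon}$ is precisely compensated by the weight $\omega_{3/8+\epsilon}(s)$, while the $s^{L}$ growth at infinity is dominated by the exponential decay. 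For the zero-order nonlinearity $\mathcal{G}_2$, the standard estimate $\|\nabla e^{-(s-\tau)H}\mathcal{G}_2(\tau)\|_{L^q_x}\lesssim (s-\tau)^{-1/2-\epsilon}e^{-\delta(s-\tau)}\|\mathcal{G}_2(\tau)\|_{L^q_x}$, combined with the $\min(\tau^{-1/2},\tau^{-L+1})$ decay of $\|\mathcal{G}_2(\tau)\|_{L^2_tL^q_x}$ inherited from Lemma \ref{huguokitredcfr} as in estimate (\ref{p8v}), produces after convolution an $O(C(M_1)\varepsilon_1^2)$ contribution compatible with the weight.

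\smallskip

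\noindent\textbf{Main obstacle.} The genuinely delicate piece is $\mathcal{G}_1$: placing both $\nabla$'s on the semigroup would yield the non-integrable factor $(s-\tau)^{-1-\epsilon}$. I would circumvent this by splitting $\int_0^s=\int_0^{s/2}+\int_{s/2}^s$. On $[0,s/2]$ one has $s-\tau\geq s/2$ bounded below, so one may rewrite $\mathcal{G}_1=2h^{ii}\partial_i(A^{qua}_i\phi_t)-2h^{ii}(\partial_iA^{qua}_i)\phi_t$, absorb the second piece into $\mathcal{G}_2$, and handle the first via $\nabla e^{-(s-\tau)H}\partial_i=\nabla(-\Delta)^{-1/2}\cdot(-\Delta)e^{-(s-\tau)H}\cdot(-\Delta)^{-1/2}\partial_i$ together with Riesz boundedness; the singular factor $(s-\tau)^{-1-\epsilon}$ is integrable once truncated away from $\tau=s$. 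On $[s/2,s]$ one keeps $\partial_i\phi_t$ intact, applies $\|\nabla e^{-(s-\tau)H}\|_{L^q\to L^q}\lesssim (s-\tau)^{-1/2-\epsilon}e^{-\delta(s-\tau)}$, and uses $\|A^{qua}_i\|_{L^\infty_x}\lesssim C(M_1)\varepsilon_1$ from Lemma \ref{huguokitredcfr} to bound $\|A^{qua}_i\partial_i\phi_t\|_{L^q_x}\lesssim C(M_1)\varepsilon_1\|\nabla\phi_t\|_{L^q_x}$; since $\omega_{3/8+\epsilon}(s)/\omega_{3/8+\epsilon}(\tau)$ is uniformly bounded on $\tau\in[s/2,s]$, this produces a term linear in $\|\omega_{3/8+\epsilon}\nabla\phi_t\|_{L^\infty_sL^2_tL^q_x}$ with coefficient $C(M_1)\varepsilon_1$, which by (\ref{dozuoki}) is small enough to be absorbed into the left-hand side, closing the bound at $\varepsilon_1$.
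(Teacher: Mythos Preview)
Your argument is correct, but it takes a structurally different route from the paper. The paper does \emph{not} run Duhamel from $s=0$; instead it restarts the flow at $s/2$,
\[
\|H^{1/2}\phi_t(s)\|_{L^q_x}\lesssim \|H^{1/2}e^{-\frac{s}{2}H}\phi_t(\tfrac{s}{2})\|_{L^q_x}+\int_{s/2}^{s}\|H^{1/2}e^{-(s-\tau)H}\mathcal{G}(\tau)\|_{L^q_x}\,d\tau,
\]
and invokes the \emph{uniform in $s$} bound $\|H^{1/8}\phi_t(s/2)\|_{L^2_tL^q_x}\lesssim C(M_1)\varepsilon_1$ already established in (\ref{time30}). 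This makes your $[0,s/2]$ analysis unnecessary: only the $[s/2,s]$ integral remains, and there the paper treats $\mathcal{G}_1$ exactly as you do on that range, putting $H^{1/2}$ on the semigroup and using $\|\sqrt{h^{ii}}A^{qua}_i\|_{L^\infty}\lesssim C(M_1)\varepsilon_1$ to generate the absorbable term. Your approach effectively folds the proof of (\ref{time30}) into the present lemma, at the cost of the extra integration-by-parts on $[0,s/2]$; the paper's restart-at-$s/2$ trick is cleaner because (\ref{time30}) is already in hand. One small slip: the factorization you wrote, $\nabla e^{-(s-\tau)H}\partial_i=\nabla(-\Delta)^{-1/2}\cdot(-\Delta)e^{-(s-\tau)H}\cdot(-\Delta)^{-1/2}\partial_i$, is not an operator identity since $(-\Delta)^{1/2}$ and $e^{-(s-\tau)H}$ do not commute; what you actually need (and what the paper uses in the analogous step (\ref{p9cve4})) is the chain of \emph{bounds} via Proposition~\ref{k8vr4}: $\|(-\Delta)^{1/2}g\|_{L^q}\lesssim\|H^{1/2}g\|_{L^q}+\|g\|_{L^q}$ and $\|H^{-1/2}(-\Delta)^{1/2}\|_{L^q\to L^q}\lesssim 1$, together with $\|He^{-(s-\tau)H}\|_{L^q\to L^q}\lesssim (s-\tau)^{-1-\epsilon}e^{-\delta(s-\tau)}$. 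With that correction your route goes through.
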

\begin{proof}
Applying $H^{\frac{1}{2}}$ to (\ref{u7iy}), Duhamel principle gives
\begin{align}\label{pwer}
\|H^{\frac{1}{2}}\phi_t\|_{L^q_x}\lesssim \|H^{\frac{1}{2}}e^{-\frac{s}{2}H}\phi_t(\frac{s}{2})\|_{L^q_x}+\int^{s}_{\frac{s}{2}}\sum^2_{j=1}\|H^{\frac{1}{2}}e^{-(s-\tau)H}\mathcal{G}_j(\tau)\|_{L^q_x}d\tau
\end{align}
Proposition \ref{pkg1} gives the bound for the first term in (\ref{pwer})
\begin{align*}
\omega_{\frac{3}{8}+\epsilon}(s)\|H^{\frac{1}{2}}e^{-\frac{s}{2}H}\phi_t(\frac{s}{2})\|_{L^q_x}\lesssim \|H^{\frac{1}{8}}\phi_t(\frac{s}{2})\|_{L^q_x},
\end{align*}
which combined with (\ref{bbd75658}), (\ref{p3ib}), (\ref{p4ib}) implies
\begin{align}\label{e3r}
\omega_{\frac{3}{8}+\epsilon}(s)\|H^{\frac{1}{2}}e^{-\frac{s}{2}H}\phi_t(\frac{s}{2})\|_{L^2_tL^q_x}\lesssim C(M_1)\varepsilon_1.
\end{align}
The second term in (\ref{pwer}) is bounded by
\begin{align}
&\int^{s}_{\frac{s}{2}}\sum^2_{j=1}\|H^{\frac{1}{2}}e^{-(s-\tau)H}\mathcal{G}_j(\tau)\|_{L^2_tL^q_x}d\tau\nonumber\\
&\lesssim C(M_1)\varepsilon_1\int^{s}_{\frac{s}{2}}e^{-\delta(s-\tau)}(s-\tau)^{-\frac{1}{2}-\epsilon}
\|\nabla\phi_t(\tau)\|_{L^2_tL^q_x}d\tau\nonumber\\
&+ C(M_1)\varepsilon_1\int^{s}_{\frac{s}{2}}e^{-\delta(s-\tau)}(s-\tau)^{-\frac{1}{2}-\epsilon}
(\tau^{-\frac{1}{2}}+1)\tau^{-L}d\tau.\label{e34r}
\end{align}
Thus (\ref{e34r}), (\ref{e3r}) and (\ref{pwer}) yield
\begin{align}\label{e4r}
\|\omega_{\frac{3}{8}+\epsilon}(s)H^{\frac{1}{2}}\phi_t\|_{L^{\infty}_sL^2_tL^q_x}\lesssim C(M_1)\varepsilon_1\|\omega_{\frac{3}{8}+\epsilon}(s)H^{\frac{1}{2}}\phi_t\|_{L^{\infty}_sL^2_tL^q_x}+C(M_1)\varepsilon_1.
\end{align}
Hence (\ref{ku3}) follows by (\ref{e4r}) and (\ref{p3ib}), (\ref{p4ib}).
\end{proof}

\begin{lemma}
{Assume (\ref{d7nn885658}) to (\ref{bcde45rtyf}) hold, then}
$\phi_t$ satisfies for $q\in(2,6+2\gamma]$ with $0<\gamma\ll1$ and any $\epsilon\in(0,1)$
\begin{align}
{\left\| {\omega_{\frac{7}{8}+\epsilon}(s)\nabla^2{\phi _t}(s)} \right\|_{L_s^\infty L_t^2L_x^q}} \lesssim {\varepsilon _1}.\label{ku4}
\end{align}
\end{lemma}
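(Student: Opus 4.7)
My plan is to iterate the argument used for (\ref{ku3}), which controls $\omega_{3/8+\epsilon}\|H^{1/2}\phi_t\|_{L^\infty_s L^2_t L^q_x}$, by applying one more factor of $H^{1/2}$ to obtain $\omega_{7/8+\epsilon}\|H\phi_t\|_{L^\infty_s L^2_t L^q_x}\lesssim C(M_1)\varepsilon_1$, and then use elliptic regularity to convert the $H\phi_t$ bound to a $\nabla^2\phi_t$ bound. Throughout I keep the splitting $\mathcal{G}=\mathcal{G}_1+\mathcal{G}_2$ of (\ref{u7iy}), with $\mathcal{G}_1=2h^{ii}A^{qua}_i\partial_i\phi_t$ the only term carrying a derivative on $\phi_t$.

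First I apply $H$ to the Duhamel formula
$$H\phi_t(s)=He^{-(s/2)H}\phi_t(s/2)+\int_{s/2}^{s}He^{-(s-\tau)H}\bigl(\mathcal{G}_1+\mathcal{G}_2\bigr)(\tau)\,d\tau.$$
For the homogeneous piece I factor $He^{-sH/2}=H^{7/8}e^{-sH/4}\cdot e^{-sH/4}H^{1/8}$ and apply Proposition \ref{pkg1} (together with the equivalence in Proposition \ref{k8vr4}) to get the bound $s^{-7/8(1+\epsilon)}e^{-\delta s}\|H^{1/8}\phi_t(s/2)\|_{L^q_x}$. The weight $\omega_{7/8+\epsilon}(s)$ absorbs the $s^{-7/8-\epsilon}$ singularity near $0$ while $e^{-\delta s}$ beats $s^L$ for large $s$, and $\|H^{1/8}\phi_t\|_{L^\infty_s L^2_t L^q_x}\lesssim C(M_1)\varepsilon_1$ is already in (\ref{time30}). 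For the zeroth-order inhomogeneity $\mathcal{G}_2$ I reuse the pointwise-in-$\tau$ bound (\ref{p8v}) and apply $\|He^{-(s-\tau)H}\mathcal{G}_2\|_{L^q_x}\lesssim (s-\tau)^{-7/8-\epsilon}\|H^{1/8}\mathcal{G}_2\|_{L^q_x}$, interpolating the $H^{1/8}$ between $\|\mathcal{G}_2\|_{L^q_x}$ and the $\nabla\mathcal{G}_2$ bound coming from Lemma \ref{huguokitredcfr} and Lemma \ref{poke}; the resulting double integral in $s,\tau$ is absolutely convergent thanks to the $\min(\tau^{-1/2},\tau^{-L+1})$ decay on $\mathcal{G}_2$.

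The main obstacle is the first-order piece $\mathcal{G}_1$: applying $H$ directly in front of the semigroup would cost $(s-\tau)^{-1(1+\epsilon)}$, which is not integrable in $\tau$. I resolve this exactly as in (\ref{p9cve4}): write
$$He^{-(s-\tau)H}\bigl[h^{ii}\partial_i(A^{qua}_i\phi_t)\bigr]=H^{7/8}e^{-(s-\tau)H}\,H^{1/2}\cdot H^{-1/2}(-\Delta)^{1/2}\cdot(-\Delta)^{-1/2}\partial_i(\cdot)/\sqrt{h^{ii}},$$
use Proposition \ref{k8vr4} (or Lemma \ref{pjuy}) to bound $H^{-1/2}(-\Delta)^{1/2}$ by the identity modulo lower-order error, and the $L^q$-boundedness of the Riesz transform $(-\Delta)^{-1/2}\partial_i$ on $\mathbb H^2$. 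This reduces the $\mathcal{G}_1$ contribution to
$$\lesssim(s-\tau)^{-7/8-\epsilon}e^{-\delta(s-\tau)}\bigl\|\sqrt{h^{ii}}A^{qua}_i\phi_t\bigr\|_{L^q_x}+\text{lower-order},$$
and the remaining integral is controlled by the $L^\infty_x$ bound on $\sqrt{h^{ii}}A^{qua}_i$ from (\ref{kp7}) together with $\|\phi_t\|_{L^2_tL^q_x}\lesssim\varepsilon_1$ from (\ref{time2}); the derivative correction from $\partial_i A^{qua}_i$ matches $\mathcal{G}_2$. A Gronwall-type absorption (as in the end of the proof of (\ref{ku3})) swallows the term $C(M_1)\varepsilon_1\|\omega_{7/8+\epsilon}H\phi_t\|_{L^\infty_s L^2_t L^q_x}$ into the left hand side using (\ref{dozuoki}).

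Having obtained $\|\omega_{7/8+\epsilon}H\phi_t\|_{L^\infty_sL^2_tL^q_x}\lesssim C(M_1)\varepsilon_1$, I pass to $\nabla^2\phi_t$ by Lemma \ref{hessian} applied componentwise:
$$\|\nabla^2\phi_t\|_{L^q_x}\lesssim\|\Delta\phi_t\|_{L^q_x}+\|\nabla\phi_t\|_{L^q_x}+\|\phi_t\|_{L^q_x},$$
and $-\Delta=H-W$ with $\|\mathcal A\|_{L^\infty}+\|V\|_{L^\infty}\lesssim C(M_0)$ (Proposition \ref{hutyhvf}) gives $\|\Delta\phi_t\|_{L^q_x}\lesssim\|H\phi_t\|_{L^q_x}+\|\nabla\phi_t\|_{L^q_x}+\|\phi_t\|_{L^q_x}$. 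The two lower-order terms are controlled with even better weights by (\ref{ku3}) and (\ref{time2}) since $\omega_{7/8+\epsilon}\le\omega_{3/8+\epsilon}$ at scales $s\le1$ and both weights are dominated by $s^L$ for large $s$, yielding (\ref{ku4}).
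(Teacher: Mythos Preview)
Your handling of the first–order piece $\mathcal{G}_1=2h^{ii}A^{qua}_i\partial_i\phi_t$ contains a genuine gap. The displayed identity
\[
He^{-(s-\tau)H}\bigl[h^{ii}\partial_i(A^{qua}_i\phi_t)\bigr]
=H^{7/8}e^{-(s-\tau)H}\,H^{1/2}\cdot H^{-1/2}(-\Delta)^{1/2}\cdot(-\Delta)^{-1/2}\partial_i(\cdot)/\sqrt{h^{ii}}
\]
does not balance: the left side carries one full power of $H$, while on the right you have $H^{7/8}e^{-(s-\tau)H}H^{1/2}=H^{11/8}e^{-(s-\tau)H}$ before the bounded Riesz factor. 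In the analogous step (\ref{p9cve4}) only $H^{1/8}$ sat in front, so the total $H^{1/8+1/2}=H^{5/8}$ produced an integrable $(s-\tau)^{-5/8-\epsilon}$. Here you start with $H^{1}$; once the divergence structure is converted via $(-\Delta)^{-1/2}h^{ii}\partial_i$, the honest count is $H^{3/2}e^{-(s-\tau)H}$ acting on $\sqrt{h^{ii}}A^{qua}_i\phi_t$, and Proposition~\ref{pkg1} then yields $(s-\tau)^{-3/2-\epsilon}$, which is \emph{not} integrable on $[s/2,s]$. There is no splitting of $H^{1}$ that simultaneously gives an integrable time singularity and lands on the undifferentiated quantity $\|\sqrt{h^{ii}}A^{qua}_i\phi_t\|_{L^q_x}$.

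The paper resolves this by putting only $H^{1/2}$ on the semigroup (integrable $(s-\tau)^{-1/2-\epsilon}$) and passing the remaining $H^{1/2}$ onto $\mathcal{G}_1$, i.e.\ estimating $\|\nabla\mathcal{G}_1\|_{L^q_x}$ directly; see (\ref{wer1}). This unavoidably produces a $\sqrt{h^{ii}}A^{qua}_i\,\nabla^2\phi_t$ term, and the key device is the off–diagonal Hessian bound (\ref{poking1}) of Lemma~\ref{hessian}: choosing $r>q$ with $1+\tfrac1r-\tfrac1q>\tfrac34$ and H\"older with $\|A^{qua}\|_{L^m_x}$, one gets
\[
\|\sqrt{h^{ii}}A^{qua}_i\nabla^2\phi_t\|_{L^q_x}
\lesssim \|A^{qua}\|_{L^m_x}\|\nabla^2\phi_t\|_{L^r_x}
\lesssim C(M_1)\varepsilon_1\|\Delta\phi_t\|_{L^q_x},
\]
see (\ref{ret})--(\ref{ret1}). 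Only then does the Gronwall absorption close. Your treatment of the homogeneous term and of $\mathcal{G}_2$ is fine and close in spirit to the paper's (\ref{ret5})--(\ref{ret4}); the repair needed is precisely to replace the Riesz shortcut for $\mathcal{G}_1$ by this $\nabla\mathcal{G}_1$ computation.
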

\begin{proof}
Applying $H$ to (\ref{u7iy}), Duhamel principle gives
\begin{align}\label{pwer1}
\|H\phi_t\|_{L^q_x}\lesssim \|He^{-\frac{s}{2}H}\phi_t(\frac{s}{2})\|_{L^q_x}+\int^{s}_{\frac{s}{2}}\sum^2_{j=1}\|He^{-(s-\tau)H}\mathcal{G}_j(\tau)\|_{L^q_x}d\tau.
\end{align}
(\ref{p3ib}), (\ref{p4ib}) and Proposition \ref{pkg1} yield
\begin{align}
&\|He^{-(s-\tau)H}\mathcal{G}_j(\tau)\|_{L^q_x}\lesssim \|H^{\frac{1}{2}}e^{-(s-\tau)H}H^{\frac{1}{2}}\mathcal{G}_j(\tau)\|_{L^q_x}\nonumber\\
&\lesssim (s-\tau)^{-\frac{1}{2}-\epsilon}e^{-\delta(s-\tau)}\|H^{\frac{1}{2}}\mathcal{G}_j(\tau)\|_{L^q_x}\nonumber\\
&\lesssim (s-\tau)^{-\frac{1}{2}-\epsilon}e^{-\delta(s-\tau)}(\|\nabla\mathcal{G}_j(\tau)\|_{L^q_x}+\|\mathcal{G}_j(\tau)\|_{L^q_x}).\label{wer1}
\end{align}
First we deal with the $\mathcal{G}_1$ term. By the explicit formula of $\Gamma^{i}_{kj}$ and $h^{ij}$ in (\ref{rex3456gvh}), we have
\begin{align*}
|\nabla(h^{ii}A^{qua}_i\partial_i\phi_t)|&\lesssim \sum^2_{p=1}\sqrt{h^{pp}}|\partial_p(h^{ii}A^{qua}_i\partial_i\phi_t)|\\
&\lesssim \sum^2_{p=1}\sqrt{h^{pp}}|h^{ii}A^{qua}_i\partial_p\partial_i\phi_t|+\sqrt{h^{pp}}|\partial_p(h^{ii}A^{qua}_i)\partial_i\phi_t|\\
&\lesssim \sqrt{h^{ii}}A^{qua}_i|{\nabla^2}(\phi_t)|+\sqrt{h^{ii}}\sqrt{h^{pp}}|\partial_pA^{qua}_i||\nabla\phi_t|+\sqrt{h^{ii}}|A^{qua}_i||\nabla\phi_t|
\end{align*}
Thus Lemma \ref{huguokitredcfr} shows that the $\mathcal{G}_1$ term in (\ref{wer1}) is bounded by
\begin{align}
&\int^{s}_{\frac{s}{2}}\|He^{-(s-\tau)H}\mathcal{G}_1(\tau)\|_{L^q_x}d\tau\nonumber\\
&\lesssim\int^{s}_{\frac{s}{2}} (s-\tau)^{-\frac{1}{2}-\epsilon}e^{-\delta(s-\tau)}\big(\|\sqrt{h^{ii}}A^{qua}_i{\nabla^2}(\phi_t)\|_{L^q_x}\big)d\tau\nonumber\\
&+\int^{s}_{\frac{s}{2}} \min(\tau^{-\beta_1},\tau^{L})\|\nabla\phi_t\|_{L^q_x}d\tau,\label{ret2}
\end{align}
where $\beta_1$ is any sufficiently small constant in $(0,1)$.
Thus by Lemma \ref{hessian} and Sobolev embedding, for $q<r$, $1+\frac{1}{r}-\frac{1}{q}> \frac{3}{4}$ and $\frac{1}{m}+\frac{1}{r}=\frac{1}{q}$ we have
\begin{align}
\|\sqrt{h^{ii}}A^{qua}_i{\nabla^2}(\phi_t)\|_{L^q_x}&\lesssim \|{\nabla^2}(\phi_t)\|_{L^r_x}\|\sqrt{h^{ii}}A^{qua}_i\|_{L^m_x}\nonumber\\
&\lesssim C(M_1)\varepsilon_1\|{\Delta}\phi_t\|_{L^q_x}.\label{ret}
\end{align}
Then by the trivial inequality
\begin{align}\label{ku5}
\|\Delta f\|_{L^q_x}\lesssim \|H f\|_{L^q_x}+ \|\nabla f\|_{L^q_x}+\|f\|_{L^q_x}
\end{align}
and Lemma \ref{huguokitredcfr}, (\ref{ret}) implies
\begin{align}
\|\sqrt{h^{ii}}A^{qua}_i{\nabla^2}(\phi_t)\|_{L^q_x}&\lesssim  C(M_1)\varepsilon_1\big(\|H\phi_t\|_{L^q_x}+\|\nabla\phi_t\|_{L^q_x}+\|\phi_t\|_{L^q_x}\big).\label{ret1}
\end{align}
Therefore, (\ref{ku3}), (\ref{ret1}) and (\ref{ret2}) give the acceptable bound for $\mathcal{G}_1$,
\begin{align}
&\int^{s}_{\frac{s}{2}}\|He^{-(s-\tau)H}\mathcal{G}_1(\tau)\|_{L^q_x}d\tau\nonumber\\
&\lesssim C(M_1)\varepsilon_1\int^{s}_{\frac{s}{2}}(s-\tau)^{-\frac{1}{2}
-\epsilon}e^{-\delta(s-\tau)}\big(\|H\phi_t\|_{L^q_x}+\|\phi_t\|_{L^q_x}+\|\nabla\phi_t\|_{L^q_x}\big)d\tau+C(M_1)\varepsilon_1.\label{ret3}
\end{align}
For the $\mathcal{G}_2$ term, we first consider the tougher term $\mathcal{G}_{21}\triangleq h^{ii}\partial_iA^{qua}_i\phi_t-h^{ii}\Gamma^{k}_{ii}A^{qua}_k\phi_t$.
Lemma \ref{poke}, Lemma \ref{huguokitredcfr} and the explicit formula of $\Gamma^{i}_{kj}$ and $h^{ij}$ in (\ref{rex3456gvh}) give for any $\beta_1\in (0,1)$
\begin{align}\label{ret5}
|\nabla\mathcal{G}_{21}|\lesssim C(M_1)\varepsilon_1\big(\min(\tau^{-\frac{1}{2}-\beta_1},\tau^{-L})|\phi_t|+\min(\log \tau,\tau^{-L})|\nabla\phi_t|\big).
\end{align}
We denote the remainder terms in $\mathcal{G}_2$ by $\mathcal{G}_{22}$, then Lemma \ref{huguokitredcfr} and Lemma \ref{kq12} show
\begin{align}\label{ret6}
|\nabla\mathcal{G}_{22}(\tau)|\lesssim C(M_1)\varepsilon_1\big(\min(\tau^{-\frac{1}{2}},\tau^{-L})|\phi_t|+\min(\log \tau,\tau^{-L})|\nabla\phi_t|\big).
\end{align}
Hence (\ref{ret5}), (\ref{ret6}) give
\begin{align}
\omega_{\frac{7}{8}+\epsilon}(s)\int^{s}_{\frac{s}{2}}\|He^{-(s-\tau)H}\mathcal{G}_2(\tau)\|_{L^2_tL^q_x}d\tau\lesssim C(M_1)\varepsilon_1\|\omega_{\frac{3}{8}+\epsilon}(s)\nabla\phi_t\|_{L^{\infty}_sL^2_tL^q_x}+\|\omega_{0}(s)\phi_t\|_{L^{\infty}_sL^2_tL^q_x}.\label{ret4}
\end{align}
Combining (\ref{ret3}), (\ref{ret4}), (\ref{wer1})  with (\ref{ku3}), we infer from (\ref{time30}) that
\begin{align*}
\|\omega_{\frac{7}{8}+\epsilon}(s)H \phi_t\|_{L^{\infty}_sL^2_tL^q_x}\lesssim C(M_1)\varepsilon_1\|\omega_{\frac{7}{8}+\epsilon}(s)H \phi_t\|_{L^{\infty}_sL^2_tL^q_x}+C(M_1)\varepsilon_1.
\end{align*}
Therefore, we get (\ref{ku4}) from (\ref{ku3}), (\ref{time30}) and (\ref{p3ib}), (\ref{p4ib}).
\end{proof}

We consider the high order derivatives of $\phi_s$.
\begin{lemma}\label{i97clv}
Assume that (\ref{d7nn885658}) to (\ref{bcde45rtyf}) hold, then for $p\in(2,6)$ and any $\epsilon\in(0,1)$,
\begin{align}\label{huojikn}
{\left\| {\omega_{\frac{3}{4}+\epsilon}(s){{\left\| {{\partial _t}{\phi _s}} \right\|}_{L_t^2L_x^p}}} \right\|_{L_s^\infty}} + {\left\| {\omega_{\frac{3}{4}+\epsilon}(s){{\left\| {\nabla {\phi _s}} \right\|}_{L_t^2L_x^p}}} \right\|_{L_s^\infty}} \lesssim {C(M_1)\varepsilon _1}.
\end{align}
Generally we have for $\theta\in[0,\frac{1}{2})$, $\theta_1\in[0,\frac{3}{4}]$
\begin{align}
&{\left\| {\omega_{\frac{1}{2}+\theta_1+\epsilon}(s){{\left( { - \Delta } \right)}^{\theta_1} }{D^{-\frac{1}{2}}}{\partial _t}{\phi _s}} \right\|_{L_s^\infty L_t^2L_x^p}} + {\left\| {\omega_{\frac{1}{2}+\theta+\epsilon}(s){{\left( { - \Delta } \right)}^\theta }{{D}^{\frac{1}{2}}}{\phi _s}} \right\|_{L_s^\infty L_t^2L_x^p}}\nonumber \\
&\le {C(M_1)\varepsilon _1}.\label{uojiknmpx3}
\end{align}
\end{lemma}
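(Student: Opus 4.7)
The plan is to exploit the heat equation satisfied by $\phi_s$, namely $(\partial_s+H)\phi_s = G$, where (after separating the Coulomb-gauge Schrödinger operator $H$) the inhomogeneity $G$ collects magnetic/curvature corrections built from $A^{qua}$, $\phi^{qua}$, $\widehat{\phi}$. Writing the Duhamel formula on $[s/2,s]$,
\begin{equation*}
\phi_s(s)=e^{-\tfrac{s}{2}H}\phi_s(\tfrac{s}{2})+\int_{s/2}^{s}e^{-(s-\tau)H}G(\tau)\,d\tau,
\end{equation*}
I would apply $D^{1/2}$ to both sides and invoke the $L^p$ smoothing estimates of Proposition \ref{pkg1} together with the equivalence $D^{1/2}\sim H^{1/2}$ (modulo lower order) furnished by Proposition \ref{k8vr4}. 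The homogeneous piece produces a factor $s^{-1/4-\epsilon}e^{-\delta s/2}$ acting on $\|\phi_s(s/2)\|_{L^p_x}$, which is absorbed by the weight $\omega_{3/4+\epsilon}(s)$ after using bootstrap hypothesis (\ref{bcde45rtyf}) and a short interpolation/Sobolev step to upgrade from $L^2_x$ to $L^p_x$.

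For the inhomogeneous piece I would split $G=G_1+G_2+G_3$ as in the proof of Lemma \ref{kq12} (first-order derivative term, $(\partial A^{qua},\Gamma A^{qua})$ term, and zeroth-order remainders). Using Lemma \ref{huguokitredcfr} to bound $A^{qua},\phi^{qua},\partial_t A^{qua}$ and Lemma \ref{kq12} for $\nabla^2\phi_s$ in $L^2_x$, each $\|G_j(\tau)\|_{L^p_x}$ is estimated pointwise in $\tau$ by products of exponentially decaying or $\tau$-polynomial weights with either $\|\nabla\phi_s\|_{L^2_tL^p_x}$, $\|\phi_s\|_{L^2_tL^p_x}$, or $\|\phi_s\|_{L^\infty_tL^2_x}$. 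The convolution against $(s-\tau)^{-1/4-\epsilon}e^{-\delta(s-\tau)}$ is integrable, and the weight $\omega_{3/4+\epsilon}$ reabsorbs the short-time blow-up. To close, the factor $C(M_1)\varepsilon_1$ appearing on the quasi-linear term $\|\sqrt{h^{ii}}A^{qua}_i\|_{L^\infty}\|\nabla\phi_s\|_{L^p_x}$ is used to absorb $\|\omega_{3/4+\epsilon}\nabla\phi_s\|_{L^\infty_s L^2_t L^p_x}$ back to the left by choosing $\varepsilon_1$ small.

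For the $\partial_t\phi_s$ part I would commute $\partial_t$ with the heat equation: $(\partial_s+H)\partial_t\phi_s = \partial_t G + [\partial_t,W]\phi_s$, where $[\partial_t,W]$ features $\partial_t A_i^\infty=0$ (the Coulomb frame on $Q^*TN$ is $t$-independent) so only the $\phi^{qua},A^{qua}$ pieces contribute, and those are controlled by Lemma \ref{huguokitredcfr} via $\|\sqrt{h^{ii}}\partial_t A^{qua}_i\|_{L^\infty}$. Then I apply $D^{-1/2}$ to the corresponding Duhamel formula, using bootstrap (\ref{c5rtyf}) on $\|\omega_{1/2}D^{-1/2}\partial_t\phi_s\|$ at $s/2$ as data; Proposition \ref{pkg1} yields the $(s-\tau)^{-1/4-\epsilon}$ smoothing needed to gain $D^{1/2}$ while only losing the exponent absorbed by $\omega_{3/4+\epsilon}$.

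The general bound (\ref{uojiknmpx3}) follows the same template with $(-\Delta)^\theta D^{1/2}$ (respectively $(-\Delta)^{\theta_1}D^{-1/2}\partial_t$) replacing $D^{1/2}$: Proposition \ref{pkg1} provides an $(s-\tau)^{-\theta(1+\epsilon)-1/4-\epsilon}$ short-time singularity absorbed by $\omega_{1/2+\theta+\epsilon}$, and the constraint $\theta<\tfrac12$ (respectively $\theta_1\le \tfrac34$) ensures Duhamel convolution remains integrable. The main obstacle will be the self-referential structure: the first-order-derivative term in $G_1$ feeds $\nabla\phi_s$ back into the estimate one is trying to close, so the whole argument hinges on the smallness $C(M_1)\varepsilon_1\ll 1$ from (\ref{dozuoki}) to absorb the bad term. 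A secondary technical issue is ensuring the $H\leftrightarrow-\Delta$ conversions of Proposition \ref{k8vr4} are compatible with the weighted $L^p$ bounds uniformly in $s$, which I would handle by commuting with cutoffs in $H^{1/2}$ and controlling $W$ via Proposition \ref{hutyhvf}.
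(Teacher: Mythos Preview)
Your approach is essentially the paper's: Duhamel on $[s/2,s]$ for $(\partial_s+H)\phi_s=G$, smoothing of $e^{-sH}$ via Proposition~\ref{pkg1}, the $H^{s}\leftrightarrow(-\Delta)^{s}$ equivalence of Proposition~\ref{k8vr4}, splitting $G$ into the first-order piece $G_1=2h^{ii}A^{qua}_i\partial_i\phi_s$ and the zero-order remainder, and closing the self-referential $\|\nabla\phi_s\|$ term via $C(M_1)\varepsilon_1\ll 1$; the $\partial_t\phi_s$ part is handled identically using (\ref{kp5}) for $\partial_t A^{qua}$, exactly as you outline.

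One notational slip to fix: since $D=\sqrt{-\Delta}$, one has $D^{1/2}=(-\Delta)^{1/4}\sim H^{1/4}$, not $H^{1/2}$. For the $\nabla\phi_s$ estimate in (\ref{huojikn}) the paper therefore applies $H^{1/2}$ (not $D^{1/2}$) to the Duhamel formula, picking up $s^{-1/4-\epsilon}$ against the data $\|H^{1/4}\phi_s(s/2)\|_{L^p_x}\sim\|D^{1/2}\phi_s(s/2)\|_{L^p_x}$, which is already controlled in $L^2_tL^p_x$ with weight $\omega_{1/2}$ directly by bootstrap (\ref{bcde45rtyf}); no Sobolev upgrade from $L^2_x$ is needed. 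The paper then iterates once more (applying $H$ and using the just-proved $H^{1/2}\phi_s$ bound as data) to reach the full range in (\ref{uojiknmpx3}).
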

\begin{proof}
{\bf Step 1.} We prove the desired estimates for $\phi_s$ in Step 1. First by (\ref{p3ib}), (\ref{p4ib}) and (\ref{bcde45rtyf}), we note that (\ref{uojiknmpx3}) follows by
\begin{align}\label{koim6}
{\left\| {\omega_{\frac{1}{2}+\theta+\epsilon}{H}^{\theta}{{H}^{\frac{1}{4}}}{\phi _s}} \right\|_{L_s^\infty L_t^2L_x^p}} \lesssim C(M_1){\varepsilon _1}.
\end{align}
{\bf Step 1.1} It will be useful if we first obtain the following estimate
\begin{align}\label{koim9}
\left\| \omega_{\frac{3}{4}+\epsilon}{H^{\frac{1}{2}}}{\phi _s} \right\|_{L_s^\infty L_t^2L_x^p} \lesssim C(M_1){\varepsilon _1}.
\end{align}
Applying $H^{\frac{1}{2}}$ to (\ref{povmb}) by Proposition \ref{pkg1} we obtain for some $\delta>0$ and any $\epsilon\in(0,1)$
\begin{align}\label{kj5}
\|H^{\frac{1}{2}}\phi_s\|_{L^p_x}\lesssim s^{-\frac{1}{4}-\epsilon}e^{-s\delta}\|H^{\frac{1}{4}}\phi_s\|_{L^p_x}+\int^{s}_{\frac{s}{2}}\|e^{-H(t-\tau)}HG(\tau)\phi_s\|_{L^p_x}d\tau.
\end{align}
Split the $G$ into the one order term $G_1\triangleq2h^{ii}A^{qua}_i\partial_i\phi_s$ and the zero order terms $G_2\triangleq G-G_1$. Then for $G_2$, Proposition \ref{pkg1} gives
\begin{align*}
\int^{s}_{\frac{s}{2}}\|e^{-H(t-\tau)}H^{\frac{1}{2}}G(\tau)\phi_s\|_{L^p_x}d\tau\lesssim \int^{s}_{\frac{s}{2}}e^{-\delta(s-\tau)}(s-\tau)^{-\frac{1}{2}-\epsilon}\|G_2(\tau)\|_{L^p_x}d\tau.
\end{align*}
Thus by (\ref{kp5}), Proposition \ref{pkg1} and (\ref{p1ib}), we have
\begin{align}
\omega_{\frac{3}{4}+\epsilon}(s)\int^{s}_{\frac{s}{2}}\|e^{-H(t-\tau)}H^{\frac{1}{2}}G_2(\tau)\|_{L^2_tL^p_x}d\tau\lesssim C(M_1)\varepsilon_1\|\omega_{\frac{1}{2}}\phi_s\|_{L^{\infty}_sL^2_tL^p_x}.\label{kj1}
\end{align}
For the one order term $G_1$, Proposition \ref{pkg1}, Lemma \ref{huguokitredcfr} yield
\begin{align}\label{kj4}
\omega_{\frac{3}{4}+\epsilon}(s)\int^{s}_{\frac{s}{2}}\|e^{-H(t-\tau)}H^{\frac{1}{2}}G_1(\tau)\|_{L^2_tL^p_x}d\tau\lesssim C(M_1)\varepsilon_1\|\omega_{\frac{3}{4}+\epsilon}\nabla\phi_s\|_{L^{\infty}_sL^2_tL^p_x}.
\end{align}
Therefore, (\ref{kj5}) to (\ref{kj4}) give
\begin{align}\label{kj6}
\|\omega_{\frac{3}{4}+\epsilon}H^{\frac{1}{2}}\phi_s\|_{L^{\infty}_sL^2_tL^p_x}\lesssim C(M_1)\varepsilon_1 \|\omega_{\frac{3}{4}+\epsilon}\nabla\phi_s\|_{L^{\infty}_sL^2_tL^p_x}+C(M_1)\varepsilon_1.
\end{align}
Thus by (\ref{pknv}) and (\ref{bcde45rtyf}), (\ref{kj6}) we get (\ref{koim9}) and
\begin{align}\label{kj67}
\|\omega_{\frac{3}{4}+\epsilon}\nabla\phi_s\|_{L^{\infty}_sL^2_tL^p_x}\lesssim C(M_1)\varepsilon_1.
\end{align}
{\bf Step 1.2.} In this step, we prove (\ref{koim6}). Applying $H$ to  (\ref{povmb}), by Proposition \ref{pkg1} we obtain
\begin{align}\label{lk11}
\|H\phi_s\|_{L^p_x}&\lesssim s^{-\frac{1}{2}-\epsilon}e^{-s\delta}\|H^{\frac{1}{2}}
\phi_s(\frac{s}{2})\|_{L^p_x}+\int^{s}_{\frac{s}{2}}e^{-\delta(s-\tau)}(s-\tau)^{-\frac{1}{2}-\epsilon}\|H^{\frac{1}{2}}G(\tau)\|_{L^p_x}d\tau.
\end{align}
The same arguments in the proof of (\ref{ku4}) give
\begin{align}
&\omega_{\frac{5}{4}+\epsilon}(s)\int^{s}_{\frac{s}{2}}e^{-\delta (s-\tau)}(s-\tau)^{-\frac{1}{2}-\epsilon}\|\nabla G(\tau)\|_{L^2_tL^p_x}\nonumber\\
&\lesssim C(M_1)\varepsilon_1\|\omega_{\frac{5}{4}+\epsilon}(s)\Delta\phi_s(\tau)\|_{L^{\infty}_sL^2_tL^p_x}
+C(M_1)\varepsilon_1\|\omega_{\frac{3}{4}+\epsilon}(s)\nabla\phi_s(\tau)\|_{L^{\infty}_sL^2_tL^p_x}\nonumber\\
&+C(M_1)\varepsilon_1\|\omega_{\frac{1}{2}}\phi_s(\tau)\|_{L^{\infty}_sL^2_tL^p_x}+C(M_1)\varepsilon_1.\label{lk12}
\end{align}
Thus we arrive at (\ref{koim6}) from (\ref{lk11}), (\ref{lk12}), (\ref{kj67}) and (\ref{bcde45rtyf}).\\
\noindent{\bf Step 2.} In this step we prove the desired estimates in (\ref{uojiknmpx3}) for $\partial_t\phi_s$. The proof is almost the same as Step 1 with the help of (\ref{kp5}).
\end{proof}

The estimates for the wave map tension filed are given below.
\begin{lemma}
Assume that (\ref{d7nn885658}) to (\ref{bcde45rtyf}) hold, then
the wave map tension field $Z(s,t,x)$ satisfies
\begin{align}
{\left\|s^{-\frac{1}{2}} Z(s) \right\|_{L^\infty_s L_t^1L_x^2}}\lesssim C(M_1)\varepsilon^2_1\label{Zxser}\\
{\left\| \nabla Z(s) \right\|_{L^\infty_s L_t^1L_x^2}}\lesssim C(M_1)\varepsilon^2_1\label{fracsilon}\\
{\left\| s^{\frac{1}{2}}\Delta Z(s) \right\|_{L^\infty_s L_t^1L_x^2}}\lesssim C(M_1)\varepsilon^2_1\label{facsilon}\\
{\left\| \omega_{\frac{1}{2}}\partial_s Z(s) \right\|_{L^\infty_s L_t^1L_x^2}}\lesssim C(M_1)\varepsilon^2_1.\label{cf7er7834}
\end{align}
\end{lemma}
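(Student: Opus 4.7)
The plan is to exploit the fact that $Z(0,t,x)=0$---a consequence of $u$ solving the wave map equation \eqref{z32ccc344fyeo}---and then apply Duhamel's principle to the parabolic equation \eqref{ab1} for $Z$. After splitting $A_i = \mathcal{A}_i + A^{qua}_i$ and $\phi_i = \widehat{\phi}_i + \phi^{qua}_i$ to isolate the Coulomb-gauge Schr\"odinger operator $H$ from the remaining corrections, one recognizes that \eqref{ab1} can be rewritten as $\partial_s Z + HZ = \mathcal{G} + \mathcal{F}$, where $\mathcal{F}\triangleq 3h^{ij}(\partial_t\widetilde u \wedge \partial_i\widetilde u)\nabla_t\partial_j\widetilde u$ is the geometric forcing and $\mathcal{G}$ collects all terms involving at least one $A^{qua}$, $\phi^{qua}$ or the self-interaction $h^{ij}(Z\wedge\phi_i)\phi_j$. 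Since $Z(0,t,x)=0$, Duhamel gives
$$Z(s)=\int_0^s e^{-(s-\tau)H}\bigl[\mathcal{F}(\tau)+\mathcal{G}(\tau)\bigr]\,d\tau.$$

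First I would establish the pointwise-in-$s$ forcing estimate $\|\mathcal{F}(\tau)\|_{L^1_t L^2_x}\lesssim \tau^{-1/2} C(M_1)\varepsilon_1^2$ by H\"older, pairing $\|\partial_t\widetilde u\|_{L^2_t L^\infty_x}$ against $\|d\widetilde u\|_{L^\infty_{t,x}}$ and $\|\nabla_t d\widetilde u\|_{L^\infty_t L^2_x}$, and then invoking the heat-flow bounds \eqref{s45t}, \eqref{kp4} from Lemma \ref{huguokitredcfr} together with Sobolev embedding applied to the hypothesis \eqref{oootyiys}; the analogous bound for $\mathcal{G}$ follows directly from \eqref{kp7}, \eqref{vrts4rioik} and Lemma \ref{kq12}, except for the self-interaction $h^{ij}(Z\wedge\phi_i)\phi_j$, which is absorbed into the left-hand side by a bootstrap once $\varepsilon_1$ is small. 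For \eqref{Zxser}, combine with the $L^2\to L^2$ contraction of $e^{-sH}$ from Lemma \ref{qiu1} (cf.\ Proposition \ref{pkg2}) to obtain $\|Z(s)\|_{L^1_t L^2_x}\lesssim \int_0^s e^{-\delta(s-\tau)}\tau^{-1/2}d\tau\cdot C(M_1)\varepsilon_1^2\lesssim s^{1/2}C(M_1)\varepsilon_1^2$, which upon multiplication by $s^{-1/2}$ gives the bound.

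For \eqref{fracsilon} I would apply $H^{1/2}$ under the Duhamel integral and use the parabolic smoothing $\|H^{1/2}e^{-(s-\tau)H}\|_{L^2\to L^2}\lesssim (s-\tau)^{-1/2}e^{-\delta(s-\tau)}$ from Proposition \ref{pkg2}, then pass from $H^{1/2}Z$ to $\nabla Z$ via the equivalence \eqref{2ib}. The inequality \eqref{facsilon} is obtained similarly, now applying $H$ to the Duhamel integral: the smoothing bound $\|H e^{-(s-\tau)H}\|_{L^2\to L^2}\lesssim (s-\tau)^{-1}e^{-\delta(s-\tau)}$ from Lemma \ref{qiu3} at the $L^2$ endpoint, combined with splitting $\int_0^s = \int_0^{s/2}+\int_{s/2}^s$ to absorb the $(s-\tau)^{-1}$ singularity against the initial data and the forcing respectively, yields $\|Z(s)\|_{L^1_t\dot H^2_x}\lesssim s^{-1/2}C(M_1)\varepsilon_1^2$; \eqref{2ib} again converts $HZ$ to $\Delta Z$. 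Finally \eqref{cf7er7834} is a direct consequence: from $\partial_s Z = -HZ + \mathcal{F}+\mathcal{G}$, combine \eqref{facsilon} with the forcing estimate and the short-time weight $\omega_{1/2}$ to conclude.

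The main obstacle will be the singular behavior of the forcing as $\tau\to 0^+$, where $\|\nabla_t d\widetilde u\|_{L^2_x}$ and $\|\partial_t\widetilde u\|_{L^\infty_x}$ each contribute a $\tau^{-1/2}$ factor by short-time heat smoothing; one must check carefully that the convolution against $e^{-(s-\tau)H}$ with its own smoothing factor $(s-\tau)^{-\theta}$ still reproduces exactly the claimed weights $s^{-1/2}$, $1$, $s^{1/2}$, $\omega_{1/2}(s)$ without any logarithmic loss, and that the self-interaction $h^{ij}(Z\wedge\phi_i)\phi_j$ can be bootstrapped closed---this requires placing $\phi_i$ in $L^\infty_{t,x}$ with the controlled constant $C(M_1)$ coming from \eqref{d7nn885658}--\eqref{oootyiys} and Sobolev, so that the self-interaction picks up the required $\varepsilon_1$ smallness only from $Z$.
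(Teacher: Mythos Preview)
Your overall architecture---Duhamel from $Z(0,t,x)=0$, isolating $H$ via the Coulomb-gauge decomposition, then climbing to $\nabla Z$ and $\Delta Z$ by applying $H^{1/2}$ and $H$ under the integral, and reading off $\partial_s Z$ from the equation---matches the paper exactly. The gap is in your treatment of the geometric forcing $\mathcal{F}=3h^{ij}(\partial_t\widetilde u\wedge\partial_i\widetilde u)\nabla_t\partial_j\widetilde u$.

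Your proposed H\"older split places $\partial_t\widetilde u$ in $L^2_tL^\infty_x$, $d\widetilde u$ in $L^\infty_{t,x}$, and $\nabla_t d\widetilde u$ in $L^\infty_tL^2_x$. In the time variable this is $L^2_t\times L^\infty_t\times L^\infty_t$, which lands in $L^2_t$, not the required $L^1_t$. Moreover, $\|\partial_t\widetilde u(\tau)\|_{L^2_tL^\infty_x}$ is not among the available bounds: the bootstrap hypotheses and Lemma~\ref{huguokitredcfr} give $\phi_t\in L^2_tL^q_x$ only for $q\le 6+2\gamma$, while \eqref{s45t} gives $L^\infty_tL^\infty_x$ with a $\tau^{-1/2}$ weight---neither yields $L^2_tL^\infty_x$. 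If instead you use the pointwise heat smoothing for both $\partial_t\widetilde u$ and $\nabla_t d\widetilde u$ (as your last paragraph suggests), the product carries $\tau^{-1}$, which is not integrable at $\tau=0$ and you never reach $L^1_t$ at all.

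The paper's remedy is to put \emph{two} factors in $L^2_t$ via an $L^\infty_tL^6_x\times L^2_tL^6_x\times L^2_tL^6_x$ split: $d\widetilde u\in L^\infty_tL^6_x$ by \eqref{d7nn885658} and Sobolev, $\partial_t\widetilde u\in L^2_tL^6_x$ by \eqref{time2}, and---this is the missing ingredient---$\nabla_t\partial_i\widetilde u=\partial_i\phi_t+A_i\phi_t\in L^2_tL^6_x$ by \eqref{ku3}, itself obtained by running the magnetic heat semigroup smoothing of Proposition~\ref{pkg1} on the $\phi_t$-equation. This produces $\|\mathcal{F}(\tau)\|_{L^1_tL^2_x}\lesssim C(M_1)\varepsilon_1^2\,\tau^{-3/8-\epsilon}$, integrable at the origin; the same $L^2_tL^q_x$ control on $\nabla\phi_t$ and $\nabla^2\phi_t$ (via \eqref{ku4}) is what drives the $\nabla\mathcal{F}$ estimate needed for \eqref{facsilon}. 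Without an analogue of \eqref{ku3} providing a second $L^2_t$ factor with subcritical $\tau$-weight, your forcing estimate does not close.

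A secondary remark: once the $\widehat\phi_i\widehat\phi_j$ piece of $h^{ij}(Z\wedge\phi_i)\phi_j$ has been absorbed into $H$, the residual self-interaction in $\mathcal{G}$ carries at least one $\phi^{qua}$ factor; the required smallness for the bootstrap therefore comes from $\phi^{qua}$ via \eqref{vrts4rioik}, not from $Z$ itself as you suggest.
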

\begin{proof}
First we notice that $Z(0,t,x)=0$ for all $(t,x)$ in $[0,T^*)\times \Bbb H^2$. And
(\ref{ab1}) shows
\begin{align}
&\partial_s Z+H Z=2h^{ii}A^{qua}_i\partial_i Z+h^{ii}A^{qua}_i\mathcal{A}_i Z+h^{ii}\mathcal{A}_iA^{qua}_i Z+h^{ii}A^{qua}_iA^{qua}_i Z\nonumber\\
&+h^{ii}(\partial_iA^{qua}_i-\Gamma^{k}_{ii}A^{qua}_k) Z
+h^{ii}(Z\wedge\widehat{\phi}_i)\phi^{qua}_i+h^{ii}(Z\wedge \phi^{qua}_i)\widehat{\phi}_i\nonumber\\
&+h^{ii}(Z\wedge \phi^{qua}_i)\phi^{qua}_i+3h^{ii}(\partial_t\widetilde{u}\wedge\partial_i\widetilde{u})\nabla_t\partial_i\widetilde{u}.\label{ku1}
\end{align}
Then Duhamel Principle gives
\begin{align}\label{ku2}
\|Z(s)\|_{L^1_tL^2_x}\lesssim \sum^3_{j=1}\int^s_0\|e^{-H(s-\tau)}\widetilde{G}_j(\tau)\|_{L^1_tL^2_x}d\tau,
\end{align}
where $\widetilde{G}_1$ denotes the one derivative term of $Z$, i.e., $\widetilde{G}_1=2h^{ii}A^{qua}_i\partial_i Z$,  $\widetilde{G}_2$ denotes the zero order derivative terms
of $Z$, and $\widetilde{G}_3$ denotes $3h^{ii}(\partial_t\widetilde{u}\wedge\partial_i\widetilde{u})\nabla_t\partial_i\widetilde{u}$.
When $s\in[0,1]$, by Lemma \ref{huguokitredcfr}, the $\widetilde{G}_2$ term in (\ref{ku2}) is bounded by
\begin{align*}
C(M_1)\varepsilon_1\|Z(s)s^{-\frac{1}{2}}\|_{L^{\infty}_s[0,1]L^1_tL^2_x}(s^{\frac{3}{2}}+s+s^{\frac{1}{2}}).
\end{align*}
By (\ref{ku3}) and Lemma \ref{huguokitredcfr}, the $\widetilde{G}_3$ term in (\ref{ku2}) is bounded by
\begin{align}\label{poe7}
\int^{s}_0\|d\widetilde{u}\|_{L^{\infty}_tL^6_x}\|\nabla\phi_t\|_{L^2_tL^6_x}\|\partial_t\widetilde{u}\|_{L^2_tL^6_x}ds'
+\int^{s}_0\|d\widetilde{u}\|_{L^{\infty}_tL^6_x}\|\sqrt{h^{ii}}A_i\phi_t\|_{L^2_tL^6_x}\|\partial_t\widetilde{u}\|_{L^2_tL^6_x}ds'.
\end{align}
Thus (\ref{ku3}) and Lemma \ref{huguokitredcfr} show that
$\widetilde{G}_3$ in (\ref{ku2}) is bounded by
\begin{align}\label{poe6}
\int^s_0\|\widetilde{G}_3\|_{L^1_tL^2_x}d\tau\lesssim s^{\frac{5}{8}-\epsilon}C(M_1)\varepsilon_1.
\end{align}
For the $\widetilde{G}_1$  term in  (\ref{ku2}), direct calculations show
\begin{align}
&\|e^{-(s-\tau)H}h^{ii}A^{qua}_i\partial_i Z\|_{L^2_x}\nonumber\\
&\lesssim \|e^{-(s-\tau)H}h^{ii}\partial_i(A^{qua}_i Z)\|_{L^2_x}+\|e^{-(s-\tau)H}h^{ii}(\partial_iA^{qua}_i) Z\|_{L^2_x}\label{kj16v}
\end{align}
By Proposition \ref{pkg2} and the boundedness of Riesz transform, the first term in (\ref{kj16v}) is bounded by
\begin{align}
 &\|e^{-(s-\tau)H}h^{ii}\partial_i(A^{qua}_i Z)\|_{L^2_x}\nonumber\\
&\lesssim \|e^{-(s-\tau)H}H^{\frac{1}{2}}H^{-\frac{1}{2}}(-\Delta)^{\frac{1}{2}}(-\Delta)^{-\frac{1}{2}}h^{ii}\partial_i(A^{qua}_i Z)\|_{L^2_x}\nonumber\\
 &\lesssim (s-\tau)^{-\frac{1}{2}}e^{-\delta(s-\tau)}\|(-\Delta)^{-\frac{1}{2}}h^{ii}\partial_i(A^{qua}_i Z)\|_{L^2_x}\nonumber\\
 &\lesssim (s-\tau)^{-\frac{1}{2}}e^{-\delta(s-\tau)}\|\sqrt{h^{ii}}A^{qua}_i Z\|_{L^2_x}\label{poe4}
\end{align}
Thus by Lemma \ref{huguokitredcfr}, (\ref{poe4}) and the second term in (\ref{kj16v}) are bounded as
\begin{align}\label{poe5}
\|e^{-(s-\tau)H}h^{ii}A^{qua}_i\partial_i Z\|_{L^2_x}&\lesssim C(M_1)\varepsilon_1(s-\tau)^{-\frac{1}{2}}e^{-\delta(s-\tau)}\| Z\|_{L^2_x}\nonumber\\
&+C(M_1)\varepsilon_1\tau^{-\frac{1}{2}}e^{-\delta(s-\tau)}\| Z\|_{L^2_x}.
\end{align}
Therefore, (\ref{ku2}), (\ref{poe6}), (\ref{poe7}), (\ref{poe5}) give (\ref{Zxser}) for $s\in(0,1)$. Using the exponential decay to $s$ of $\{A^{qua}_i\}$  and their one order derivatives in Lemma \ref{huguokitredcfr}, one obtains (\ref{Zxser}) for $s\in[1,\infty)$ by the same arguments above.

For (\ref{fracsilon}), applying $H^{\frac{1}{2}}$ to  (\ref{ku1}) we have
\begin{align*}
\|H^{\frac{1}{2}}Z(s)\|_{L^1_tL^2_x}\lesssim \|H^{\frac{1}{2}}e^{-H(s-\tau)}Z(\frac{s}{2})\|_{L^1_tL^2_x}+\sum^3_{j=1}\int^s_{\frac{s}{2}}\|H^{\frac{1}{2}}e^{-H(s-\tau)}\widetilde{G}_j(\tau)\|_{L^1_tL^2_x}d\tau.
\end{align*}
And Proposition \ref{pkg2} and Lemma \ref{huguokitredcfr} give for $s\in(0,1)$
\begin{align*}
&\int^s_{\frac{s}{2}}\|H^{\frac{1}{2}}e^{-H(s-\tau)}\widetilde{G}_1(\tau)\|_{L^1_tL^2_x}d\tau\lesssim C(M_1)\varepsilon_1\int^s_{\frac{s}{2}}(s-\tau)^{-\frac{1}{2}}\|\nabla Z(\tau)\|_{L^1_tL^2_x}d\tau \\
&\int^s_{\frac{s}{2}}\|H^{\frac{1}{2}}e^{-H(s-\tau)}\widetilde{G}_2(\tau)\|_{L^1_tL^2_x}d\tau\lesssim \int^s_{\frac{s}{2}}(s-\tau)^{-\frac{1}{2}}\tau^{-\frac{1}{2}}\|Z(\tau)\|_{L^1_tL^2_x}d\tau \\
&\int^s_{\frac{s}{2}}\|H^{\frac{1}{2}}e^{-H(s-\tau)}\widetilde{G}_3(\tau)\|_{L^1_tL^2_x}d\tau\lesssim C(M_1)\varepsilon_1\int^s_{\frac{s}{2}}(s-\tau)^{-\frac{1}{2}}\tau^{-\frac{1}{2}}d\tau
\end{align*}
Thus (\ref{fracsilon}) follows from (\ref{Zxser}), (\ref{p3ib}) and (\ref{p4ib}) when $s\in(0,1)$. Similar arguments give (\ref{fracsilon}) for $s\in[0,\infty)$.

The rest is to prove (\ref{facsilon}). Applying $H$ to (\ref{ku1}), we have by Proposition \ref{pkg2} that
\begin{align}\label{1nonu}
\|HZ\|_{L^1_tL^2_x}&\lesssim  s^{-\frac{1}{2}}e^{-\delta\frac{s}{2}}\|\nabla Z(\frac{s}{2})\|_{L^1_tL^2_x}+\sum^3_{j=1}\int^s_{\frac{s}{2}}(s-\tau)^{-\frac{1}{2}}e^{-\delta(s-\tau)}\|\nabla\widetilde{G}_j(\tau)\|_{L^1_tL^2_x}d\tau.
\end{align}
The same arguments as the proof of (\ref{ku4}) with Proposition \ref{pkg2} give
\begin{align}
\sum^2_{j=1}\omega_{\frac{1}{2}}\int^s_{\frac{s}{2}}(s-\tau)^{-\frac{1}{2}}e^{-\delta(s-\tau)}\|\nabla\widetilde{G}_j(\tau)\|_{L^1_tL^2_x}d\tau
\lesssim C(M_1)\varepsilon_1\|\omega_{\frac{1}{2}}\Delta Z\|_{L^{\infty}_sL^1_tL^2_x}+C(M_1)\varepsilon_1.\label{nonu}
\end{align}
It remains to bound $\widetilde{G}_3$. Since $\nabla_t\partial_i\widetilde{u}=\nabla_i\partial_t\widetilde{u}$, we have
$\widetilde{G}_3=3h^{ii}(\phi_t\wedge\phi_i)(\partial_i\phi_t+A_i\phi_t)$. Then the explicit formula for $\Gamma^{k}_{ij}$ and $h^{jk}$ yields
\begin{align*}
|\nabla\widetilde{G}_3(\tau)|&\lesssim \sqrt{h^{pp}}\left|\big((\partial_ph^{ii})(\phi_t\wedge\phi_i)
+h^{ii}(\partial_p\phi_t)\wedge\phi_i+h^{ii}\phi_t\wedge\partial_p\phi_i\big)(\partial_i\phi_t+A_i\phi_t)\right|\\
&+\sqrt{h^{pp}}\left|h^{ii}(\phi_t\wedge\phi_i)\right|\left|\partial_p\partial_i\phi_t+(\partial_pA_i)\phi_t+A_i\partial_p\phi_t\right|\\
&\lesssim \big(|du||\nabla\phi_t|+|\nabla du||\phi_t|+|du||A||\phi_t|\big)\big(|\nabla\phi_t|+|A||\phi_t|\big)\\
&+|\phi_t||du|(|\nabla^2\phi_t|+|A||\nabla\phi_t|+|\nabla A||\phi_t|+|A||\phi_t|)
\end{align*}
By (\ref{ku4}), (\ref{ku3}) and Lemma \ref{huguokitredcfr}, the $\widetilde{G}_3$ term in (\ref{1nonu}) is bounded by
\begin{align*}
\int^s_{\frac{s}{2}}\|He^{-H(s-\tau)}\widetilde{G}_3(\tau)\|_{L^1_tL^2_x}&\lesssim C(M_1)\varepsilon_1\int^s_{\frac{s}{2}}(s-\tau)^{-\frac{1}{2}}
e^{-\delta(s-\tau)}\tau^{-\frac{7}{8}-\epsilon}d\tau,\\
 &\mbox{ }{\rm{when}}\mbox{  }s\in[0,1];\\
\int^s_{\frac{s}{2}}\|He^{-H(s-\tau)}\widetilde{G}_3(\tau)\|_{L^1_tL^2_x}&\lesssim C(M_1)\varepsilon_1\int^s_{\frac{s}{2}}(s-\tau)^{-\frac{1}{2}}
e^{-\delta(s-\tau)}\tau^{-L}d\tau, \\
&\mbox{ }{\rm{when}}\mbox{  }s\in[1,\infty).
\end{align*}
This combined with (\ref{nonu}), Proposition \ref{pkg2} yields (\ref{facsilon}). (\ref{cf7er7834}) follows by (\ref{ku1}), (\ref{facsilon}) and previously obtained bounds for
$\|\widetilde{G}_{i}\|_{L^1_tL^2_x}$, $i=1,2,3$.
\end{proof}

\begin{lemma}
Assume that (\ref{d7nn885658}) to (\ref{bcde45rtyf}) hold, then for $0<\gamma\ll1$
\begin{align}
\left\| s^{-\frac{1}{2}+\epsilon}Z(s)\right\|_{L_s^\infty L_t^2L_x^{3+\gamma}}+\left\| \omega_{\frac{1}{2}}Z(s)\right\|_{L_s^\infty L_t^2L_x^{3+\gamma}}& \lesssim {C(M_1)\varepsilon^2 _1}\label{time90}\\
{\left\|{\omega_{\frac{1}{2}}{\partial _t}{\phi _t}(s)} \right\|_{L_s^\infty L_t^2L_x^{3+\gamma}}} &\lesssim {C(M_1)\varepsilon^2 _1}\label{bbd75658}\\
{\left\|{{\partial _t}{A_t}(s)} \right\|_{L_s^\infty L_t^2L_x^{3+\gamma}}} &\lesssim  {C(M_1)\varepsilon^2 _1}\label{time37}\\
{\left\|{{A_t}(s)} \right\|_{L_s^\infty L_t^1L_x^{\infty}}} &\lesssim  {C(M_1)\varepsilon^2 _1}\label{AAA1}
\end{align}
\end{lemma}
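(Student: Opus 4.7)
\smallskip
\noindent\textbf{Proof plan.} The estimate \eqref{time90} on $Z$ is the core of the lemma; the other three bounds follow from it by algebraic identities together with estimates already established. I describe the plan in four steps.

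\smallskip
\noindent\emph{Step 1: the bound on $Z$.} Since $Z(0,t,x)=0$, Duhamel applied to \eqref{ku1} gives $Z(s)=\sum_{j=1}^{3}\int_{0}^{s} e^{-(s-\tau)H}\widetilde G_{j}(\tau)\,d\tau$, where $\widetilde G_{1},\widetilde G_{2},\widetilde G_{3}$ are the splitting introduced in the previous lemma (one-derivative term, zero-order terms, and the trilinear source $3h^{ij}(\partial_{t}\widetilde u\wedge\partial_{i}\widetilde u)\nabla_{t}\partial_{j}\widetilde u$). To upgrade the $L^{1}_{t}L^{2}_{x}$ bounds on $Z,\nabla Z,\Delta Z$ already obtained in \eqref{Zxser}--\eqref{facsilon} to $L^{2}_{t}L^{3+\gamma}_{x}$, I combine the $L^{p}$-smoothing of the magnetic heat semigroup (Proposition~\ref{pkg1}) with the Sobolev embedding $H^{\theta,2}\hookrightarrow L^{3+\gamma}$ for $\theta=1-\tfrac{2}{3+\gamma}$, and the almost-equivalence between fractional powers of $H$ and $-\Delta$ (Propositions~\ref{k8vr4}, \ref{pkg2}). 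Schematically,
$$
\|Z(s)\|_{L^{2}_{t}L^{3+\gamma}_{x}}\le \sum_{j}\int_{0}^{s}(s-\tau)^{-\theta(1+\epsilon)}e^{-\delta(s-\tau)}\|H^{\theta/2}\widetilde G_{j}(\tau)\|_{L^{2}_{t}L^{2}_{x}}\,d\tau,
$$
and I control the integrand by: for $\widetilde G_{1}$ moving the derivative through $e^{-(s-\tau)H}$ via Riesz-transform boundedness and absorbing $\sqrt{h^{ii}}A_i^{qua}$ via Lemma~\ref{huguokitredcfr} together with \eqref{fracsilon}; for $\widetilde G_{2}$ the bounds \eqref{Zxser}--\eqref{fracsilon} multiplied by the $\tau^{-1/2}$-decay of $\partial_i A^{qua}_i$ and $A^{qua}_i$; and for $\widetilde G_{3}$ the estimates \eqref{ku3}, \eqref{kq11}, \eqref{kp4}, and H\"older in $L^{6}_{x}$. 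The factor $s^{-1/2+\epsilon}$ (resp.\ $\omega_{1/2}$ for $s\ge 1$) emerges from the worst power of $\tau$ appearing in $\widetilde G_{3}$, while $L^{2}_{t}$-integrability is preserved because each factor in the trilinear expressions already carries either $L^{2}_{t}$ or $L^{\infty}_{t}$. For $s\ge 1$, the exponential decay of $A^{qua},\phi^{qua}$ from Lemma~\ref{huguokitredcfr} upgrades the power $\tau^{-1/2}$ to $\tau^{-L}$, yielding the weight $\omega_{1/2}$.

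\smallskip
\noindent\emph{Step 2: from $Z$ to $\partial_{t}\phi_{t}$.} Subtracting the definition of $\phi_{s}$ in \eqref{zz2ccgsrucuvc344fyeo} from that of $Z$ in \eqref{zzz32314rucuvc344fyeo} yields the pointwise identity $D_{t}\phi_{t}=Z+\phi_{s}$, i.e.\
$$
\partial_{t}\phi_{t}=Z+\phi_{s}-A_{t}\phi_{t}.
$$
Inserting \eqref{time90} for $Z$, \eqref{uojiknmpx3} (specialized to $\theta=0$ and combined with Sobolev) for $\phi_{s}$, and estimating $\|A_{t}\phi_{t}\|_{L^{2}_{t}L^{3+\gamma}_{x}}\lesssim \|A_{t}\|_{L^{\infty}_{t,x}}\|\phi_{t}\|_{L^{2}_{t}L^{3+\gamma}_{x}}$ via \eqref{gft56rtklio} and \eqref{time2} produces \eqref{bbd75658}.

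\smallskip
\noindent\emph{Step 3: $\partial_{t}A_{t}$ and $A_{t}$.} Differentiating \eqref{obhxdg} in $t$ gives
$$
\partial_{t}A_{t}(s)=\int_{s}^{\infty}\bigl(\partial_{t}\phi_{s}\wedge\phi_{t}+\phi_{s}\wedge\partial_{t}\phi_{t}\bigr)\,ds'.
$$
Using \eqref{uojiknmpx3} for $\partial_{t}\phi_{s}$ (with $\theta_{1}=0$, plus Sobolev to land in $L^{3+\gamma}_{x}$), \eqref{time2} for $\phi_{t}$, and \eqref{bbd75658} for $\partial_{t}\phi_{t}$ together with the $L^{\infty}_{t}L^{\infty}_{x}$ bound on $\phi_{s}$ from \eqref{s45t}, one dominates the integrand by an $L^{1}_{s'}$-integrable weight carrying quadratic $\varepsilon_{1}$, giving \eqref{time37}. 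Finally, \eqref{AAA1} follows directly from \eqref{obhxdg} by taking $L^{\infty}_{x}$ in the wedge product, bounding $\|\phi_{t}\|_{L^{\infty}_{x}}$ via \eqref{s45t} and $\|\phi_{s}\|_{L^{1}_{t}L^{\infty}_{x}}$ by interpolating \eqref{s45t} with Lemma~\ref{kq12}.

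\smallskip
\noindent\emph{Main obstacle.} The delicate point is to make the Duhamel integral in Step~1 converge with the right power of $s$: the source $\widetilde G_{3}$ has a $\tau^{-1/2}$ singularity at $\tau=0$ (inherited from the short-time behavior of $\nabla\partial_{t}\widetilde u$ via \eqref{kp4}), while the heat-smoothing factor $(s-\tau)^{-\theta(1+\epsilon)}$ needed to pass from $L^{2}_{x}$ to $L^{3+\gamma}_{x}$ consumes close to $\tfrac12$ of the available integrability when $\gamma$ is small. Choosing $\gamma$ tiny and $\epsilon$ even smaller, so that $\theta(1+\epsilon)<\tfrac{1}{2}$ yet $H^{\theta,2}\hookrightarrow L^{3+\gamma}$ still holds, secures the bound $\int_{0}^{s}(s-\tau)^{-\theta(1+\epsilon)}\tau^{-1/2}\,d\tau\lesssim s^{1/2-\epsilon'}$, which is precisely the short-time weight claimed in \eqref{time90}.
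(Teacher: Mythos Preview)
Your overall outline—Duhamel for $Z$ with the splitting $\widetilde G_1,\widetilde G_2,\widetilde G_3$, then the identity $\partial_t\phi_t=Z+\phi_s-A_t\phi_t$, then differentiation of \eqref{obhxdg}—is the same as the paper's, and Step~2 and the first half of Step~3 match the paper.

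In Step~1 the paper takes a simpler route than yours: it works \emph{directly} in $L^2_tL^{3+\gamma}_x$, using only the $L^p$-contraction $\|e^{-(s-\tau)H}g\|_{L^{3+\gamma}_x}\lesssim e^{-\delta(s-\tau)}\|g\|_{L^{3+\gamma}_x}$ on $\widetilde G_2,\widetilde G_3$ and one instance of $H^{1/2}$-smoothing (with the Riesz-transform trick of \eqref{swe3}--\eqref{swe4}) to undo the derivative in $\widetilde G_1$. No Sobolev detour through $L^2_x$, hence no fractional derivatives of the sources. Your route is in principle workable but adds a layer.

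There is, however, a genuine gap in your treatment of $\widetilde G_1$ and $\widetilde G_2$. You cite \eqref{fracsilon}, but that is an $L^{\infty}_sL^1_tL^2_x$ bound on $\nabla Z$; the target here is $L^2_t$, and the previous lemma supplies no $L^2_t$ information on $Z$ whatsoever. The correct mechanism—used by the paper and equally available in your $L^2_x$ route—is a \emph{self-closing bootstrap}: after moving the derivative and using $|A^{qua}|\lesssim C(M_1)\varepsilon_1$ from Lemma~\ref{huguokitredcfr}, the contributions of $\widetilde G_1,\widetilde G_2$ are bounded by $C(M_1)\varepsilon_1\int_0^s(s-\tau)^{-a}\|Z(\tau)\|_{L^2_tL^{3+\gamma}_x}\,d\tau$ with $a<1$ and then absorbed into the left-hand side for $\varepsilon_1$ small. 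The $\varepsilon_1^2$ in \eqref{time90} then comes entirely from the genuinely inhomogeneous source $\widetilde G_3$, estimated via \eqref{ku3} and \eqref{time2}. Your ``Main obstacle'' paragraph correctly identifies the short-time singularity of $\widetilde G_3$, but the absorption of $\widetilde G_1,\widetilde G_2$ must be stated explicitly rather than deferred to \eqref{fracsilon}.

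A second gap is your argument for \eqref{AAA1}. You propose to bound $\|\phi_s\|_{L^1_tL^\infty_x}$ by ``interpolating \eqref{s45t} with Lemma~\ref{kq12}'', but both of these are $L^\infty_t$ estimates and cannot manufacture $L^1_t$ integrability. The time integrability has to come from the Strichartz-type norms: the paper places each factor of the wedge in $L^2_tL^\infty_x$ via Sobolev applied to \eqref{ku3} and \eqref{uojiknmpx3} (or \eqref{bcde45rtyf}), obtaining $\|\phi_s\wedge\phi_t\|_{L^1_tL^\infty_x}\lesssim\varepsilon_1^2\,\omega_{1/2}(s')^{-1}\omega_{3/8+\epsilon}(s')^{-1}$, which is $L^1$ in $s'$.
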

\begin{proof}
Applying Duhamel principle to (\ref{ku1}), one obtains from \underline{Lemma 6.1} that
\begin{align}
\left\|Z(s)\right\|_{L^2_tL^{3+\gamma}_x}&\le \sum^3_{j=1}\int^{s}_0\|e^{-H(s-\tau)}\widetilde{G}_i\|_{L^2_tL^{3+\gamma}_x}d\tau\nonumber\\
&\lesssim \sum^3_{j=1}\int^{s}_0e^{-\delta(s-\tau)}\|\widetilde{G}_j\|_{L^2_tL^{3+\gamma}_x}d\tau.\label{pimmvb}
\end{align}
where $\{\widetilde{G}_i\}^3_{i=1}$ are defined below (\ref{ku2}).
Lemma \ref{huguokitredcfr} and Proposition \ref{pkg1} give
\begin{align}
&\int^{s}_0\|e^{-H(s-\tau)}\widetilde{G}_1\|_{L^2_tL^{3+\gamma}_x}d\tau\nonumber\\
&\lesssim\int^{s}_0\|e^{-H(s-\tau)}H^{\frac{1}{2}}H^{-\frac{1}{2}}(-\Delta)^{\frac{1}{2}}(-\Delta)^{-\frac{1}{2}}\widetilde{G}_1\|_{L^2_tL^{3+\gamma}_x}d\tau\nonumber\\
&\lesssim\int^{s}_0(s-\tau)^{-\frac{1}{2}-\epsilon}e^{-\delta(s-\tau)}\|(-\Delta)^{-\frac{1}{2}}\widetilde{G}_1\|_{L^2_tL^{3+\gamma}_x}d\tau.\label{swe3}
\end{align}
Due to the explicit expressions for $h^{ij}$, we can write $\widetilde{G}_1=2h^{ii}A^{qua}_i\partial_i Z$ in the form  $\widetilde{G}_1=2\sqrt{h^{ii}}\partial_i(\sqrt{h^{ii}}A^{qua}_i Z)-2(h^{ii}\partial_iA^{qua}_i)Z$. Then (\ref{swe3}), Lemma \ref{huguokitredcfr} and the boundedness of Riesz transform yield
\begin{align}\label{swe4}
\int^{s}_0\|e^{-H(s-\tau)}\widetilde{G}_1\|_{L^2_tL^{3+\gamma}_x}d\tau\lesssim\int^{s}_0C(M_1)\varepsilon_1(s-\tau)^{-\frac{1}{2}-\epsilon}\|Z\|_{L^2_tL^{3+\gamma}_x}d\tau.
\end{align}
The $ \widetilde{G}_i, i=2,3$ are bounded by Lemma \ref{huguokitredcfr}, (\ref{ku3}) and (\ref{bbd75658}):
\begin{align}
\sum_{j=2,3}\int^{s}_0\|e^{-H(s-\tau)}\widetilde{G}_{j}\|_{L^2_tL^{3+\gamma}_x}d\tau
&\lesssim C(M_1)\varepsilon_1\int^{s}_0(s-\tau)^{-\frac{1}{2}-\epsilon}\|Z\|_{L^2_tL^{3+\gamma}_x}d\tau\nonumber\\
&+C(M_1)\varepsilon_1\int^{s}_0(s-\tau)^{-\frac{3}{8}-\epsilon}d\tau.\label{swe5}
\end{align}
Thus we obtain (\ref{time90}) from (\ref{swe4}), (\ref{swe5}). By $Z=D_t\phi_t-\phi_s$, we get
\begin{align*}
\|{\partial _t}{\phi _t}(s)\|_{L^2_tL^{3+\gamma}_x}\lesssim \|{\phi _s}+A_t{\phi _t}+Z\|_{L^2_tL^{3+\gamma}_x}.
\end{align*}
Hence, (\ref{bbd75658}) when $s\in[0,1]$ follows from (\ref{ku3}), (\ref{bbd75658}) and (\ref{bcde45rtyf}).  (\ref{bbd75658}) when $s\ge1$ follows by the same arguments with (\ref{pimmvb}) replaced by
\begin{align*}
\left\|Z(s)\right\|_{L^2_tL^{3+\gamma}_x}&\lesssim e^{-\delta\frac{s}{2}}\left\|Z(\frac{s}{2})\right\|_{L^2_tL^{3+\gamma}_x}
+\sum^3_{j=1}\int^{s}_{\frac{s}{2}}\|e^{-H(s-\tau)}\widetilde{G}_i\|_{L^2_tL^{3+\gamma}_x}d\tau.
\end{align*}
(\ref{time37}) follows from (\ref{bbd75658}), (\ref{ku3}), (\ref{huojikn}) and
\begin{align*}
\partial_tA_t=\int^{\infty}_s\partial_t\phi_t\wedge\phi_sds'+\int^{\infty}_s\phi_t\wedge\partial_t\phi_sds'.
\end{align*}
(\ref{AAA1}) follows by Sobolev embedding, (\ref{ku3}), (\ref{uojiknmpx3}).
\end{proof}

The rest arguments are similar to \cite{EFE376} in the small energy case. We present most details here to keep the completeness.
\begin{proposition}\label{vcvfcdexser}
Assume that (\ref{d7nn885658})to (\ref{bcde45rtyf}) hold.
Then we have for $p\in(2,6)$
\begin{align}
&{\left\| {\omega_{\frac{1}{2}}{D^{ - \frac{1}{2}}}{{\partial}_t}{\phi _s}} \right\|_{L_s^\infty L_t^2L_x^p([{0,T}] \times {\Bbb H^2})}} + {\left\| {\omega_{\frac{1}{2}}{D^{\frac{1}{2}}}{\phi _s}} \right\|_{L_t^2L_x^p([0,T] \times {\Bbb H^2})}} \nonumber\\
&+ {\left\| {{\omega}_{\frac{1}{2}}{\partial _t}{{\phi}_s}} \right\|_{L_s^\infty L_t^\infty L_x^2([0,T] \times {\Bbb H^2})}}
+ {\left\| {\omega_{\frac{1}{2}}\nabla {{\phi _s}}} \right\|_{L_s^\infty L_t^\infty L_x^2([0,T] \times {\Bbb H^2})}} \le \varepsilon _1^2C(M_1). \label{huoji}
\end{align}
\end{proposition}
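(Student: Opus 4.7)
The plan is to treat the equation for $\phi_s$ given in Lemma \ref{REWSDERF} as a magnetic wave equation
\[
(\partial_t^2 - \Delta + W)\phi_s = F,
\]
where $F$ collects all the terms on the right-hand side of the identity in Lemma \ref{REWSDERF}, and to apply the Strichartz estimates of Proposition \ref{de486tfcer} for each fixed heat parameter $s\ge 0$. Proposition \ref{de486tfcer} directly produces three of the four norms appearing in (\ref{huoji}), namely $\|D^{\frac{1}{2}}\phi_s\|_{L^2_tL^p_x}$, $\|\partial_t\phi_s\|_{L^\infty_tL^2_x}$, and $\|\nabla\phi_s\|_{L^\infty_tL^2_x}$. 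The remaining quantity $\|D^{-\frac{1}{2}}\partial_t\phi_s\|_{L^2_tL^p_x}$ is handled by the same estimate applied to the time-differentiated equation $(\partial_t^2 - \Delta + W)\partial_t\phi_s = \partial_t F$, together with the almost-equivalence $H^{1/2}\sim(-\Delta)^{1/2}$ from Proposition \ref{pkg2} to trade $\sqrt{H}$ for $D$.

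The initial data at $t=0$ in the Strichartz inequality is controlled by the wave map data $(u_0,u_1)$ through (\ref{1989}) and the heat flow regularity of Proposition \ref{fcuyder47te}. Multiplying through by $\omega_{\frac{1}{2}}(s)$ and taking $L^\infty_s$ is compatible with this because $\omega_{\frac{1}{2}}$ grows at most polynomially while the heat flow produces exponential decay in $s$ as encoded in Lemma \ref{huguokitredcfr}. The bulk of the work is therefore to bound $\|\omega_{\frac{1}{2}}(s)F(s)\|_{L^\infty_sL^1_tL^2_x}$ by $C(M_1)\varepsilon_1^2$.

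The forcing $F$ splits naturally into three types. First, the wave-tension-field source $\partial_sZ$ is already controlled by (\ref{cf7er7834}). Second, the zero-order quadratic terms of the form $A_tA_t\phi_s$, $\partial_tA_t\phi_s$, $(\phi_t\wedge\phi_s)\phi_t$, $h^{jk}(\phi^\infty_j\wedge\phi_s)\phi^{qua}_k$, $h^{jk}A^\infty_jA^{qua}_k\phi_s$, and $h^{jk}(\partial_jA^{qua}_k-\Gamma^l_{jk}A^{qua}_l)\phi_s$ can be estimated by H\"older's inequality: the coefficient factors obey the pointwise bounds from Lemma \ref{huguokitredcfr}, (\ref{time37}) and (\ref{AAA1}) (short-time polynomial singularities balanced by long-time exponential decay in $s$), while the remaining $\phi_s$, $\phi_t$ factors are controlled by the bootstrap hypotheses (\ref{c5rtyf})--(\ref{bcde45rtyf}). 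Third, the first-order terms $-2A_t\partial_t\phi_s$ and $2h^{jk}A^{qua}_j\partial_k\phi_s$ couple back to the LHS of (\ref{huoji}).

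The main obstacle is closing the bootstrap against these self-referential first-order terms. For $A_t\partial_t\phi_s$ I would pair $\|A_t\|_{L^1_tL^\infty_x}\lesssim C(M_1)\varepsilon_1^2$ from (\ref{AAA1}) with $\|\partial_t\phi_s\|_{L^\infty_tL^2_x}$, which is exactly one of the target norms; the prefactor $C(M_1)\varepsilon_1^2$ is small enough to be absorbed into the LHS. For $h^{jk}A^{qua}_j\partial_k\phi_s$, Lemma \ref{huguokitredcfr} supplies $\|\sqrt{h^{ii}}A_i^{qua}\|_{L^\infty_x}\lesssim C(M_1)\varepsilon_1 e^{-\delta s}\log s$, so H\"older against $\|\nabla\phi_s\|_{L^\infty_tL^2_x}$ produces a contribution of size $C(M_1)\varepsilon_1$ times the LHS, and the smallness of $\varepsilon_1$ chosen in (\ref{dozuoki}) closes the bootstrap at the level $\varepsilon_1^2 C(M_1)$.
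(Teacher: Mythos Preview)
Your overall architecture matches the paper's: apply Proposition \ref{de486tfcer} to the $\phi_s$-equation from Lemma \ref{REWSDERF}, bound the initial data via the heat flow, and estimate $\|\omega_{\frac12}F\|_{L^\infty_sL^1_tL^2_x}$ term by term, with $\partial_sZ$ handled by (\ref{cf7er7834}) and the $A_t$-terms by (\ref{time37}), (\ref{AAA1}). The gap is in your treatment of the first-order spatial term $h^{jk}A^{qua}_j\partial_k\phi_s$.

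You propose to bound it by $\|\sqrt{h^{ii}}A^{qua}_i\|_{L^\infty_x}\cdot\|\nabla\phi_s\|_{L^\infty_tL^2_x}$. But this only puts the product in $L^\infty_tL^2_x$, whereas Strichartz requires the forcing in $L^1_tL^2_x$. Since $A^{qua}_i$ is built from the heat flow and carries no time decay, and $\nabla\phi_s$ is only controlled in $L^\infty_tL^2_x$ by the bootstrap, there is no second $L^2_t$ factor to produce $L^1_t$; the bound you wrote simply does not close on an unbounded time interval. The paper's fix is to use the extra weighted norm $\|\rho^\sigma\nabla\phi_s\|_{L^2_tL^2_x}$ that Proposition \ref{de486tfcer} also places on the left-hand side, together with the spatial decay of the harmonic-map data. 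Concretely one expands $A^{qua}_i=\int_s^\infty(\widehat{\phi}_i+\int_\kappa^\infty\partial_s\phi_i\,d\tau)\wedge\phi_s(\kappa)\,d\kappa$; the $\widehat{\phi}_i$-piece carries a $\rho^\sigma$ weight (since $|dQ|$ decays exponentially by admissibility), so H\"older gives $\|\rho^\sigma\nabla\phi_s\|_{L^2_tL^2_x}\cdot\|\rho^{-\sigma}\widehat{\phi}\|_{L^\infty_x}\cdot\int_s^\infty\|\phi_s(\kappa)\|_{L^2_tL^\infty_x}\,d\kappa$, which is $L^1_t$ and yields (\ref{hiyura2}). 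The remaining double-integral piece $F_2$ supplies two $L^2_t$ factors from the two $\phi_s$'s (via Lemma \ref{i97clv} and Sobolev), allowing $\|\nabla\phi_s\|_{L^\infty_tL^2_x}$ to be used there. Without this weighted-Strichartz mechanism your bootstrap on the magnetic term does not close.
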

\begin{proof}
By Lemma \ref{REWSDERF} and Proposition \ref{de486tfcer},  for any $p\in(2,6)$ there holds
\begin{align}
&{\omega_{\frac{1}{2}}}{\left\| {{\partial _t}{{\phi _s}}} \right\|_{L_t^\infty L_x^2}} + {\omega_{\frac{1}{2}}}{\left\| {\nabla {\phi _s}} \right\|_{{L_t^\infty L_x^2}}} + {\omega_{\frac{1}{2}}}{\left\| D^{\frac{1}{2}}{{\phi _s}} \right\|_{L_t^2L_x^p}}\nonumber\\
&+ {\omega_{\frac{{1}}{2}}}{\left\| D^{-\frac{1}{2}}{\partial _t}{\phi _s} \right\|_{{L_t^2L_x^p}}} +{\omega_{\frac{1}{2}}}{\left\| {{\rho ^\sigma }\nabla {\phi _s}} \right\|_{L_t^2L_x^2}} \nonumber\\
&\lesssim {\omega_{{\frac{1}{2}}}}{\left\| {{\partial _t}{\phi _s}(0,s,x)} \right\|_{L_x^2}} + {\omega_{\frac{1}{2}}}{\left\| {\nabla {\phi _s}(0,s,x)} \right\|_{L_x^2}} + {\omega_{{\frac{1}{2}}}}{\left\|{\mathbf{G} }\right\|_{L_t^1L_x^2}}.\label{vcde4xser}
\end{align}
where $\mathbf{G}$ denotes the inhomogeneous term.
The $\phi_s(0,s,x)$ term is bounded by Proposition \ref{fcuyder47te} and (\ref{kp32}). In fact, we have
\begin{align*}
\|\omega_{\frac{1}{2}}\nabla_{t,x}\phi_s(0,s,x)\|_{L^2_x}&\lesssim \|\omega_{\frac{1}{2}}\nabla_{t,x}\partial_s\mathbf{U}\|_{L^2_x}+\|\omega_{\frac{1}{2}}\sqrt{h^{\gamma\gamma}}A_{\gamma}\partial_sU\|_{L^2_x}\\
&\lesssim \|(\nabla du_0,\nabla du_1)\|_{L^2_x\times L^2_x}+C(M_1)\|\nabla du_0\|_{L^2_x}\\
&\lesssim M_0+M^2_0+C(M_1)\varepsilon_1M_0.
\end{align*}
where $\mathbf{U}(s,x)$ denotes the heat flow with initial data $u_0$.
The terms in (\ref{vcde4xser}) involved with $A_t$ can be deal  with as follows
\begin{align*}
{\left\|{\omega_{\frac{1}{2}}} {{A_t}{\partial _t}{\phi _s}} \right\|_{L_t^1L_x^2}} &\lesssim{\left\| {{A_t}} \right\|_{L_t^1L_x^\infty }}{\left\| {\omega_{\frac{1}{2}}} {{\partial _t}{\phi _s}} \right\|_{L_t^\infty L_x^2}} \\
{\left\|{\omega_{\frac{1}{2}}} {{A_t}{A_t}{\phi _s}} \right\|_{L_t^1L_x^2}} &\lesssim {\left\| {{A_t}} \right\|_{L_t^1L_x^\infty }}{\left\| {{A_t}} \right\|_{L_t^\infty L_x^\infty }}{\left\| {\omega_{\frac{1}{2}}}{{\phi _s}} \right\|_{L_t^\infty L_x^2}} \\
{\left\| {\omega_{\frac{1}{2}}}{{\partial _t}{A_t}{\phi _s}} \right\|_{L_t^1L_x^2}} &\lesssim{\left\| {{\partial _t}{A_t}} \right\|_{L_t^2L_x^{3+\gamma}}}{\left\|{\omega_{\frac{1}{2}}} {{\phi _s}} \right\|_{L_t^2L_x^a}},
\end{align*}
where $\frac{1}{a}+\frac{1}{3+\gamma}=\frac{1}{2},$ and $a\in(2,6)$.
They are admissible by (\ref{d7nn885658})-(\ref{bcde45rtyf}), (\ref{AAA1}) and (\ref{time37}).
The $\partial_t\widetilde{u}$ term in (\ref{vcde4xser}) is bounded by
\begin{align*}
{\left\|{\omega_{\frac{1}{2}}} {({\partial _t}\widetilde{u}\wedge{\partial _s}\widetilde{u})({\partial _t}\widetilde{u})} \right\|_{L_t^1L_x^2}} \lesssim {\left\| {{\partial _t}\widetilde{u}} \right\|_{L_t^2L_x^{6+2\gamma}}}{\left\| {{\partial _t}\widetilde{u}} \right\|_{L_t^\infty L_x^{6+2\gamma}}}{\left\|{\omega_{\frac{1}{2}}} {{\phi _s}} \right\|_{L_t^2L_x^a}},
\end{align*}
where $\frac{1}{a}+\frac{1}{3+\gamma}=\frac{1}{2},$ and $a\in(2,6)$. This is acceptable due to (\ref{d7nn885658})-(\ref{bcde45rtyf}).
The $\partial_sZ$ term  in (\ref{vcde4xser}) is bounded by (\ref{cf7er7834}).
The $A^{qua}_i$ terms  in (\ref{vcde4xser}) need more efforts to bound. We state the estimates for $A^{qua}_i$ terms as a claim:\\
We claim that with the assumptions of Proposition \ref{vcvfcdexser}, there holds\\
\begin{align}
{\|{\omega_{\frac{1}{2}}} {{h^{ii}}A_i^{qua}{\partial _i}{\phi _s}} \|_{L_t^1L_x^2}} &\lesssim {\varepsilon _1}{\omega_{\frac{1}{2}}}{\left\| {{\rho ^\sigma }\nabla {\phi _s}} \right\|_{L_t^2L_x^2}} + C(M_1)\varepsilon _1^2\label{hiyura2}\\
\|{\omega_{\frac{1}{2}}} {\phi _s}h^{ii}A_i^{qua}\mathcal{A}_i \|_{{L_t^1L_x^2}}&\lesssim C(M_1)\varepsilon _1^2\label{p67yugbiha2}\\
\| {\omega_{\frac{1}{2}}}{\phi _s}  h^{ii}A_i^{qua}A_i^{qua}\|_{L_t^1L_x^2}&\lesssim C(M_1)\varepsilon _1^2\label{yugira2}\\
\| {\omega_{\frac{1}{2}}}{\phi _s}h^{ii}\partial_iA_i^{qua} \|_{{L_t^1L_x^2}}&\lesssim C(M_1)\varepsilon _1^2\label{goplira2}\\
\|{\omega_{\frac{1}{2}}}{\phi _s} h^{ii}\Gamma _{ii}^kA_k^{qua}\|_{L_t^1L_x^2}&\lesssim C(M_1)\varepsilon _1^2\label{hlira2}.
\end{align}
\noindent {\bf Step 1. Proof of (\ref{hiyura2})}
Recall $\phi_i=\widehat{\phi}_i+\int^{\infty}_s\partial_s \phi_ids'$, then
\begin{align*}
 A^{qua} _i:= \int^\infty_s (\phi _i\wedge \phi _s) d\kappa  = \int^\infty _s {\left( {\int_{\kappa}^\infty  {{\partial _s}} {\phi _i}(\tau )d\tau  + \widehat{\phi} _i } \right) \wedge {\phi _s}} (\kappa)d\kappa.
\end{align*}
Inserting this expansion to the LHS of (\ref{hiyura2}) we get
\begin{align*}
{\left\| {\omega_{\frac{1}{2}}}{{h^{ii}}A_i^{qua}{\partial _i}{\phi _s}} \right\|_{L_t^1L_x^2}} &\lesssim {\left\|{\omega_{\frac{1}{2}}}\big( {{h^{ii}}\widehat{\phi }_i\wedge\int_s^\infty  {{\phi _s}(\kappa)d\kappa} }\big) {\partial _i}{\phi _s}\right\|_{L_t^1L_x^2}}\\
&+ {\left\|{\omega_{\frac{1}{2}}}\left( {\int_s^\infty  {{\phi _s}(\kappa) \wedge \left( {\int_{\kappa}^\infty  {{\partial _s}} {\phi _i}(\tau )d\tau } \right)d\kappa} } \right) {{h^{ii}} {\partial _i}{\phi _s}} \right\|_{L_t^1L_x^2}} \\
&:= F_1 + F_2
\end{align*}
The $F_1$ term is bounded by
\begin{align}
 {F_1} &\lesssim {\omega_{\frac{1}{2}}}{\left\| {{\rho ^\sigma }\nabla {\phi _s}} \right\|_{L_t^2L_x^2}}{\left\| {\int_s^\infty  {{\rho ^{ - \sigma }}\sqrt{h^{ii}}\widehat{\phi }_i\wedge{\phi _s}(\kappa)d\kappa} } \right\|_{L_t^2L_x^{\infty}}}\nonumber \\
 &\lesssim {\omega_{\frac{1}{2}}}{\left\| {{\rho ^\sigma }\nabla {\phi _s}} \right\|_{L_t^2L_x^2}}{\left\| {{\rho ^{ - \sigma }}\widehat{\phi } } \right\|_{L_x^{\infty}}}\int_s^\infty  {{{\left\| {{\phi _s}(\kappa)} \right\|}_{L_t^2L_x^{\infty}}}d\kappa}\nonumber  \\
 &\lesssim {\omega_{\frac{1}{2}}}{\left\| {{\rho ^\sigma }\nabla {\phi _s}} \right\|_{L_t^2L_x^2}}{\left\| {{\rho ^{ - \sigma }}\widehat{\phi} _i} \right\|_{L_x^{\infty}}}{\left\| {\omega_{\frac{3}{4}+\epsilon}(s){{\left\| {\nabla{\phi _s}(s)} \right\|}_{L_t^2L_x^{4}}}} \right\|_{L_s^\infty }},\label{xs390puygjexser}
\end{align}
where $\widehat{\phi }$, which denotes $\widehat{\phi }_idx^i$, decays exponentially in spatial space and we used the Sobolev embedding in the last line. Thus, by Lemma \ref{i97clv} we have an acceptable bound:
\begin{align*}
{F_1} \lesssim C(M_1)\varepsilon_1{\omega_{\frac{1}{2}}}{\left\| {{\rho ^\sigma }\nabla {\phi _s}} \right\|_{L_t^2L_x^2}}.
\end{align*}
The $F_2$ term is bounded by
\begin{align*}
 {F_2} \lesssim {\left\| {\omega_{\frac{1}{2}}}{\nabla {\phi _s}} \right\|_{L_t^\infty L_x^2}}\int_s^\infty  {{{\left\| {{\phi _s}(\kappa)} \right\|}_{L_t^2L_x^\infty }}\left( {\int_{\kappa}^\infty  {{{\left\| {\nabla {\phi _s}(\tau )} \right\|}_{L_t^2L_x^\infty }}} d\tau } \right)d\kappa}.
\end{align*}
Moreover, by Sobolev embedding and Lemma \ref{i97clv}, we obtain for $\vartheta\in (\frac{7}{5},\frac{3}{2})$
\begin{align}
&{\left\| {\nabla {\phi _s}(\tau )} \right\|_{L_t^2L_x^\infty }} \nonumber\\
&\lesssim {\left( {{\omega_{\frac{3}{4}+\epsilon}}{{\left\| {\nabla {\phi _s}(\tau )} \right\|}_{L_t^2L_x^5}}} \right)^{1 - \varsigma }}{\left( {{\omega_{\vartheta +\epsilon}}{{\left\| {{D^\vartheta }{\phi _s}(\tau )} \right\|}_{L_t^2L_x^5}}} \right)^\varsigma }{\left( {{\omega _{\frac{3}{4}+\epsilon}}} \right)^{ - \varsigma + 1}}{\left( {{\omega_{\vartheta+\epsilon}}} \right)^{ - \varsigma }},\label{v1213dexser}
\end{align}
where $\varsigma  = \frac{2}{{5(\vartheta  - 1)}}$.
Similarly, the Sobolev embedding $\|g\|_{L^{\infty}}\lesssim \|D^{\frac{1}{2}}g\|_{L^5_x}$ shows
\begin{align}\label{w7k2cd}
{\left\|\omega_{\frac{1}{2}} {{\phi _s}(\tau )} \right\|_{L_t^2L_x^\infty }} \lesssim {\varepsilon _1}.
\end{align}
Therefore choosing $\vartheta$ slightly above $\frac{7}{5}$, we conclude from (\ref{v1213dexser}) and (\ref{w7k2cd}) that
\begin{align}
{F_2}\lesssim \varepsilon _1^2{\left\|{\omega_{\frac{1}{2}}} {\nabla {\phi _s}} \right\|_{L^{\infty}_sL_t^\infty L_x^2}}\label{vx3pdexser}.
\end{align}
Lemma \ref{i97clv} together with (\ref{xs390puygjexser}), (\ref{vx3pdexser}) gives (\ref{hiyura2}).\\
\noindent {\bf Step 2. Proof of (\ref{p67yugbiha2}), (\ref{yugira2}) and (\ref{goplira2})} These three terms can be bounded as \cite{EFE376}, we do not repeat the arguments here to avoid overlap.

\end{proof}

Lemma \ref{i97clv} and Proposition \ref{vcvfcdexser} yield
\begin{proposition}\label{zz2ccuvc344fyeo}
Assume that the solution to (\ref{z32ccc344fyeo}) satisfies (\ref{d7nn885658}) to (\ref{oootyiys}), then for any  $p\in(2,6)$, $\theta\in[0,\frac{1}{2})$, $\theta_1\in[0,\frac{3}{4}]$
\begin{align*}
{\left\| \omega_{\frac{1}{2}}{\nabla\phi_s} \right\|_{L^{\infty}_sL_t^\infty L_x^2}} + {\left\|\omega_{\frac{1}{2}} D^{\frac{1}{2}}{\phi_s} \right\|_{L^{\infty}_sL_t^2 L_x^p}} &\lesssim {C(M_1)\varepsilon^2 _1}\\
{\left\| \omega_{\frac{1}{2}+\theta_1+\epsilon}{(-\Delta)^{\theta_1}D^{-\frac{1}{2}}{\partial _t}\phi_s} \right\|_{L^{\infty}_sL_t^2L_x^p}}
+\left\|\omega_{\frac{1}{2}+\theta+\epsilon}(-\Delta)^{\theta}D^{\frac{1}{2}}{\phi_s} \right\|_{L^{\infty}_sL_t^2L_x^p}&\lesssim{C(M_1)\varepsilon^2 _1}.
\end{align*}
\end{proposition}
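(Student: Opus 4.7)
The statement is essentially the conclusion of Lemma \ref{i97clv} with the constant $C(M_1)\varepsilon_1$ upgraded to $C(M_1)\varepsilon_1^2$, the upgrade being supplied by the sharper $\varepsilon_1^2$ base-level bounds on $\omega_{1/2}\nabla\phi_s$, $\omega_{1/2}D^{1/2}\phi_s$, and $\omega_{1/2}D^{-1/2}\partial_t\phi_s$ furnished by Proposition \ref{vcvfcdexser}. The plan is therefore to re-run the Duhamel scheme from the proof of Lemma \ref{i97clv} verbatim, feeding in the improved source terms of Proposition \ref{vcvfcdexser} in place of the bootstrap hypotheses (\ref{c5rtyf})--(\ref{bcde45rtyf}).

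Concretely, I would start from the heat representation (\ref{povmb}), $\phi_s(s) = e^{-(s/2)H}\phi_s(s/2) + \int_{s/2}^s e^{-(s-\tau)H}G(\tau)\,d\tau$, apply $(-\Delta)^{\theta}D^{1/2}$ (respectively $(-\Delta)^{\theta_1}D^{-1/2}\partial_t$) to both sides, and use the smoothing estimates of Propositions \ref{pkg1} and \ref{pkg2}. On the homogeneous piece this produces $s^{-\theta-\epsilon}\|D^{1/2}\phi_s(s/2)\|_{L_t^2L_x^p}$, which Proposition \ref{vcvfcdexser} controls by $\omega_{1/2}(s/2)^{-1}C(M_1)\varepsilon_1^2$; weighting by $\omega_{1/2+\theta+\epsilon}$ absorbs the semigroup singularity at $s=0$ and yields the desired $C(M_1)\varepsilon_1^2$ contribution. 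For the Duhamel integral I would split $G$, as in Step~1 of Lemma \ref{i97clv}, into the one-derivative piece $G_1 = 2h^{ii}A^{qua}_i\partial_i\phi_s$ and the zero-derivative remainder $G_2$. On $G_1$ I would rewrite $h^{ii}A^{qua}_i\partial_i\phi_s = \sqrt{h^{ii}}\partial_i(\sqrt{h^{ii}}A^{qua}_i\phi_s) - (h^{ii}\partial_iA^{qua}_i)\phi_s$, trade the derivative for $H^{1/2}$ via the Riesz-transform trick used in (\ref{p9cve4}), and absorb the small coefficient $\|\sqrt{h^{ii}}A^{qua}_i\|_{L^\infty}\lesssim C(M_1)\varepsilon_1$ back into the left-hand side. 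On each term of $G_2$, the sole factor of $\phi_s$, $\nabla\phi_s$, or $D^{1/2}\phi_s$ is now bounded at the $\varepsilon_1^2$ level by Proposition \ref{vcvfcdexser}, while the multiplying coefficient carries exponential $s$-decay from Lemma \ref{huguokitredcfr}.

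The main obstacle is weight bookkeeping near $s=0$: every application of $(-\Delta)^{\theta}$ costs a factor $s^{-\theta-\epsilon}$ in the semigroup smoothing from Propositions \ref{pkg1} and \ref{pkg2}, which forces the weight to be raised from $\omega_{1/2}$ to $\omega_{1/2+\theta+\epsilon}$ so that the Duhamel convolution $\int_{s/2}^s (s-\tau)^{-\theta-\epsilon}\tau^{-1/2-\epsilon}\,d\tau$ stays integrable; this constraint dictates the admissible ranges $\theta\in[0,1/2)$ and $\theta_1\in[0,3/4]$ in the statement. The $\partial_t\phi_s$ bound is obtained in parallel, the only additional ingredient being the commutator of $\partial_t$ with $H$, whose dangerous term $(\partial_t A_i)\phi_s$ is controlled by the bound (\ref{kp5}) on $\partial_t A_i$ from Lemma \ref{huguokitredcfr}.
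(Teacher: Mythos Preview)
Your proposal is correct and matches the paper's approach: the paper's proof is the single line ``Lemma \ref{i97clv} and Proposition \ref{vcvfcdexser} yield [the proposition],'' which is exactly your plan of re-running the Duhamel scheme of Lemma \ref{i97clv} with the $\varepsilon_1^2$ inputs from Proposition \ref{vcvfcdexser} replacing the bootstrap hypotheses (\ref{c5rtyf})--(\ref{bcde45rtyf}). Note that the first displayed line of the proposition is literally contained in (\ref{huoji}) of Proposition \ref{vcvfcdexser}, so only the second line actually requires the rerun; your detailed sketch (including the absorption of $G_1$ via the small factor $\|\sqrt{h^{ii}}A^{qua}_i\|_{L^\infty}\lesssim C(M_1)\varepsilon_1$ and the use of (\ref{kp5}) for the $\partial_t$ commutator) is faithful to how Lemma \ref{i97clv} is structured.
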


\subsection{Close all the bootstrap}

\begin{lemma}\label{oi89}
Assume that the solution to (\ref{z32ccc344fyeo}) satisfies (\ref{d7nn885658}) and (\ref{oootyiys}), then for any $p\in(2,6+2\gamma]$
\begin{align}
{\left\| {du} \right\|_{L_t^\infty L_x^2([0,T] \times {\Bbb H^2})}} + {\left\| {\nabla du} \right\|_{L_t^\infty L_x^2([0,T] \times {\Bbb H^2})}}&\le CM_0\label{boot10}\\
{\left\| {{\partial _t}u} \right\|_{L_t^\infty L_x^2([0,T] \times {\Bbb H^2})}} + {\left\| {\nabla {\partial _t}u} \right\|_{L_t^\infty L_x^2([0,T] \times {H^2})}} &\lesssim C(M_1){\varepsilon^2 _1}\label{fengyucbedcfr}\\
{\left\| D^{\frac{1}{4}}\phi_t \right\|_{L_t^2L_x^p([0,T] \times {\Bbb H^2})}} &\lesssim C(M_1){\varepsilon^2_1}.\label{hccfdgjvcbedcfr}
\end{align}
\end{lemma}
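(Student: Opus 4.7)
The plan is to reduce every bound in the conclusion to the weighted $s$-integrable estimates on the heat tension field $\phi_s$ established in Proposition \ref{vcvfcdexser}, using the fundamental theorem of calculus in the heat time $s$ together with the boundary behavior at $s=\infty$ dictated by the caloric gauge (namely that $\widetilde{u}(\infty,t,x)=Q(x)$ is time-independent, so $\phi_t(\infty,t,x)=0$ while $\phi_i(\infty,t,x)=\widehat{\phi}_i(x)$).

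For (\ref{boot10}), recall that at $s=0$ the differential field decomposes as $\phi_i(0,t,x)=\widehat{\phi}_i(x)+\phi_i^{qua}(0,t,x)$ with $\phi_i^{qua}(0,t,x)=\int_0^\infty\partial_s\phi_i\,ds=\int_0^\infty D_i\phi_s\,ds$. The harmonic-map part $\widehat{\phi}_i$ is time-independent and bounded by $\|dQ\|_{\mathfrak{H}^1}$, which is controlled by $M_0$ through the perturbation assumption (\ref{1988}). The remainder $\phi_i^{qua}(0,t,x)$ together with one spatial derivative is bounded in $L^\infty_tL^2_x$ by integrating the bounds on $\nabla\phi_s$ and $\nabla^2\phi_s$ supplied by Proposition \ref{vcvfcdexser} and Lemma \ref{kq12} against the weight $\omega_{1/2}(s)^{-1}$, which is integrable on $(0,\infty)$; since the resulting gain is $C(M_1)\varepsilon_1^2\ll M_0$ by (\ref{dozuoki}), this produces $\|du\|_{L^\infty_tL^2_x}+\|\nabla du\|_{L^\infty_tL^2_x}\le CM_0$.

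For (\ref{fengyucbedcfr}) and (\ref{hccfdgjvcbedcfr}), the decisive identity is that in the caloric gauge $\partial_s\phi_t=D_s\phi_t=D_t\phi_s=\partial_t\phi_s+A_t\phi_s$, so
\[
\phi_t(0,t,x)=-\int_0^\infty\bigl(\partial_t\phi_s+A_t\phi_s\bigr)(s,t,x)\,ds.
\]
Taking $L^\infty_tL^2_x$ and using Proposition \ref{vcvfcdexser} to get $\|\partial_t\phi_s\|_{L^\infty_tL^2_x}\lesssim C(M_1)\varepsilon_1^2\omega_{1/2}(s)^{-1}$, together with $\|A_t\|_{L^\infty_{t,x}}\lesssim\varepsilon_1$ and $\|\phi_s\|_{L^\infty_tL^2_x}\lesssim\varepsilon_1 e^{-\delta s}$ from Lemma \ref{huguokitredcfr}, the integrability of $\omega_{1/2}^{-1}$ and $e^{-\delta s}$ yields the $\partial_tu$ bound in (\ref{fengyucbedcfr}). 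For the $\nabla\partial_tu$ bound, I would use the commutator $D_sD_i\phi_t=D_tD_i\phi_s+\mathbf{R}(\phi_i,\phi_t)\phi_s+\mathbf{R}(\phi_s,\phi_i)\phi_t$ and integrate in $s$, applying the $\theta_1=1/2$ case of (\ref{uojiknmpx3}) for the linear piece $D_tD_i\phi_s$ and the smallness/decay of the fields for the two curvature terms. For (\ref{hccfdgjvcbedcfr}) I would apply $D^{1/4}$ to the same integral representation and invoke the fractional estimates in (\ref{uojiknmpx3}) together with the Strichartz/Kato framework of Proposition \ref{de486tfcer} to bound the spacetime norm $\|D^{1/4}\phi_t\|_{L^2_tL^p_x}$.

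The main obstacle is the $\nabla\partial_tu$ piece of (\ref{fengyucbedcfr}): Proposition \ref{vcvfcdexser} gives fractional-derivative control on $\partial_t\phi_s$ in $L^2_tL^p_x$, so upgrading it to $L^\infty_tL^2_x$ after commuting one spatial derivative past $\partial_s$ requires a careful interpolation between the $L^2_t$ weighted bounds in (\ref{uojiknmpx3}) and the energy-level bound $\|\omega_{1/2}\partial_t\phi_s\|_{L^\infty_sL^\infty_tL^2_x}\lesssim C(M_1)\varepsilon_1^2$, while the quadratic curvature contributions and the $A_t\phi_s$ term are absorbed by the exponential $s$-decay in Lemma \ref{huguokitredcfr} and the bootstrap smallness of the differential fields. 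All remaining contributions reduce to routine $s$-integrations of weighted bounds already at hand.
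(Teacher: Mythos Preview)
Your approach for $\|du\|_{L^\infty_tL^2_x}$, $\|\partial_tu\|_{L^\infty_tL^2_x}$, and (\ref{hccfdgjvcbedcfr}) is essentially the paper's: write the $s=0$ quantity as an $s$-integral of $D_t\phi_s$ or $D_i\phi_s$ and use the $\omega_{1/2}$-weighted $L^\infty_s$ bounds from Proposition~\ref{vcvfcdexser}, whose inverse weight $s^{-1/2}$ is integrable at the origin.

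The gap is in the two \emph{higher-derivative} pieces, $\|\nabla du\|_{L^\infty_tL^2_x}$ and $\|\nabla\partial_tu\|_{L^\infty_tL^2_x}$. Your plan there requires placing one extra spatial derivative, i.e.\ $\nabla^2\phi_s$ (or $\nabla\partial_t\phi_s$), into $L^1_sL^\infty_tL^2_x$. But the only pointwise-in-$s$ control available is Lemma~\ref{kq12}, giving $\omega_1(s)\|\nabla^2\phi_s\|_{L^\infty_tL^2_x}\lesssim C(M_1)\varepsilon_1$; since $\omega_1(s)^{-1}=s^{-1}$ on $(0,1]$ is \emph{not} integrable, your $s$-integral diverges logarithmically. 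The same obstruction hits your proposed $\theta_1=1/2$ use of (\ref{uojiknmpx3}) for $\nabla\partial_t\phi_s$: that estimate lives in $L^2_tL^p_x$, not $L^\infty_tL^2_x$, and carries weight $\omega_{1+\epsilon}^{-1}$, again non-integrable. No interpolation against the $\omega_{1/2}$ energy bound can repair this, because the barrier is the short-time singularity of the parabolic smoothing weight, not a missing half-derivative.

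The paper avoids the issue by never integrating a second derivative of $\phi_s$. For $\|\nabla du\|$ it uses the elliptic inequality $\|\nabla du\|_{L^2}^2\lesssim\|\tau(u)\|_{L^2}^2+\|du\|_{L^2}^2$ (valid for non-positively curved targets), so one only needs $\|\tau(u)\|_{L^2}=\|\phi_s(0,t,\cdot)\|_{L^2}$, which is already part of the bootstrap hypothesis (\ref{bcde45rtyf}). For $\|\nabla\partial_tu\|$ it computes the energy identity
\[
\tfrac12\tfrac{d}{ds}\|\nabla\partial_t\widetilde u\|_{L^2}^2=-\|D_t\phi_s\|_{L^2}^2+\text{(cubic curvature terms)}
\]
along the heat flow and integrates in $s$; the resulting $L^2_s$ integral $\int_0^\infty\|D_t\phi_s\|_{L^2}^2\,ds$ is controlled from the parabolic energy structure and the boundary data, not by squaring the $\omega_{1/2}^{-1}$ pointwise bound. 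This $L^2_s$-versus-$L^1_s$ mechanism, together with the elliptic reduction to the tension field, is the idea your proposal is missing.
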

\begin{proof}
{\bf Step 1.} We prove (\ref{hccfdgjvcbedcfr}) first. By  $D_s\phi_t=D_t\phi_s$, $A_s=0$, one has
\begin{align}
{\left\| {D^{\frac{1}{4}}{\phi _t}(0,t,x)} \right\|_{L_t^2L_x^p}}& \lesssim{\left\| {\int_0^\infty  {\left| D^{\frac{1}{4}}{{\partial _s}{\phi _t}} \right|} ds} \right\|_{L_t^2L_x^p}} \nonumber\\
&\lesssim{\left\|D^{\frac{1}{4}} ({\partial _t}{\phi _s}) \right\|_{L_s^1L_t^2L_x^p}}+{\left\|D^{\frac{1}{4}} ({A _t}{\phi _s}) \right\|_{L_s^1L_t^2L_x^p}}.\label{upx4}
\end{align}
{\bf Step 2.} We verify (\ref{boot10}) in this step.
(\ref{uojiknmpx3}) shows for $\vartheta\in(\frac{1}{4},\frac{1}{2})$, $q\in (2,6)$
\begin{align}\label{upx3}
{\left\|D^{\vartheta} ({\partial _t}{\phi _s}) \right\|_{L_s^1L_t^2L_x^q}}\lesssim C(M_1)\varepsilon_1.
\end{align}
And by Sobolev embedding one has
$${\left\|D^{\frac{1}{4}} {{\partial _t}{\phi _s}} \right\|_{L_x^{6 + 2\gamma }}} \lesssim  {\left\| D^\vartheta \partial_t\phi _s \right\|_{L_x^{6 - \eta }}},
$$
where $\frac{\vartheta }{2} -\frac{1}{8}= \frac{1}{{6 - \beta }} - \frac{1}{{6 + 2\gamma }}$, $0<\beta\ll1,0<\gamma\ll1$.
Thus the first term in (\ref{upx4}) is acceptable by (\ref{upx3})  and (\ref{ku3}).
For the second term in (\ref{upx4}), by Sobolev embedding
\begin{align*}
&{\left\|D^{\frac{1}{4}} ({A _t}{\phi _s}) \right\|_{L_s^1L_t^2L_x^p}}\lesssim {\left\|\nabla({A _t}{\phi _s}) \right\|_{L_s^1L_t^2L_x^p}}\\
&\lesssim{\left\|\nabla{\phi _s} \right\|_{L_s^1L_t^{\infty}L_x^{\infty}}\left\|A_t\right\|_{L_s^{\infty}L_t^{2}L_x^{p}}}+
{\left\|\nabla{A _t} \right\|_{L_s^1L_t^2L_x^p}\left\|\phi_s\right\|_{L_s^{\infty}L_t^{\infty}L_x^{\infty}}}.
\end{align*}
Meanwhile, we have
\begin{align*}
\|{A_t}\|_{L_t^2L_x^p}&\le \int^{\infty}_s\|{\phi_t}\|_{L_t^2L_x^p}\|{\phi_s}\|_{L_t^{\infty}L_x^{\infty}}d\tau\\
&\lesssim C(M_1)\varepsilon^2_1\min(1,s^{-L})\\
\|\nabla{A_t}\|_{L_t^2L_x^p}&\lesssim \int^{\infty}_s\|\nabla{\phi_t}\|_{L_t^2L_x^p}\|{\phi_s}\|_{L_t^{\infty}L_x^{\infty}}d\tau\\
&\lesssim C(M_1)\varepsilon^2_1\min(1,s^{-L})
\end{align*}
Thus (\ref{uojiknmpx3}) and Lemma \ref{huguokitredcfr} imply that the second term in (\ref{upx4}) is also acceptable, thus proving (\ref{hccfdgjvcbedcfr}).\\
\noindent{\bf Step 3.1.} We prove (\ref{fengyucbedcfr}) in this step. Recalling ${\phi _i}(0,t,x) =\widehat{ \phi }_i  + \int_0^\infty  {{\partial _s}{\phi _i}ds'}$, and  $|d\widetilde{u}|\le \sqrt{h^{ii}}|\phi_i|$,  $\|\sqrt{h^{ii}}\widehat{\phi }_i \|_{L^2}\le \|dQ\|_{L^2}\le M_0$, it suffices to show for all $t \in[0,T] $
$$\int_0^\infty  \|\sqrt{h^{ii}}{\partial _s}{\phi _i}\|_{L^2_x}d\kappa\lesssim C(M_1)\varepsilon_1.
$$
which is acceptable by applying Proposition \ref{zz2ccuvc344fyeo}, Lemma \ref{huguokitredcfr} and $|\sqrt{h^{ii}}\partial_s\phi_i|\lesssim |\nabla \phi_s|+\sqrt{h^{ii}}|A_i||\phi_s|$. Hence, we get
\begin{align*}
\|du\|_{L^2_x}\le M_0.
\end{align*}
{\bf Step 3.2.}
Recalling the evolution equation of $\phi_s$  in the heat flow direction,
\begin{align*}
\phi_s&=h^{kl}D_k\phi_l-h^{kl}\Gamma^p_{kl}\phi_p\\
\partial_s\phi_s&=h^{kl}D_kD_l\phi_s-h^{kl}\Gamma^p_{kl}D_p\phi_s+h^{kl}(\phi_s\wedge\phi_k)\phi_l,
\end{align*}
one obtains by integration by parts,
\begin{align*}
 \frac{d}{{ds}}\left\| {\tau (\widetilde{u})} \right\|^2_{L^2_x}=  - 2{h^{kl}}\left\langle {{D_k}{\phi _s},{D_l}{\phi _s}} \right\rangle+\left\langle h^{lk}(\phi_s\wedge\phi_l)\phi_k,\phi_s\right\rangle.
\end{align*}
Thus by $\|\partial_s\widetilde{u}\|_{L^2}\lesssim e^{-\delta s}$, we have
\begin{align}
 &\left\| {\tau (\widetilde{u}(0,t,x))} \right\|_{L_x^2}^2 \lesssim \int_0^\infty  {{h^{ii}}\left\langle {{D_i}{\phi _s},{D_i}{\phi _s}} \right\rangle } ds \le \int_0^\infty  {\left\langle {\nabla {\phi _s},\nabla {\phi _s}} \right\rangle } ds\nonumber\\
 &+ \int_0^\infty  {{h^{ii}}\left\langle {{A_i}{\phi _s},{A_i}{\phi _s}} \right\rangle } ds+4\int^{\infty}_0|d\widetilde{u}|^2|\phi_s|^2ds.\label{z32ccuvc344fyeo}
\end{align}
Recall the inequality
$$\left\| {\nabla du} \right\|_{L_x^2}^2 \lesssim \left\| {\tau (u)} \right\|_{L_x^2}^2 + \left\| {du} \right\|_{L_x^2}^2.
$$
due to the nonnegative sectional curvature property of $N=\Bbb H^2$ and integration by parts.
Then (\ref{z32ccuvc344fyeo}) gives
\begin{align}
\left\| {\nabla d\widetilde{u}(0,t,x)} \right\|_{L_x^2}^2 &\lesssim \int_0^\infty  {\left\langle {\nabla {\phi _s},\nabla {\phi _s}} \right\rangle } ds +\int^{\infty}_0|d\widetilde{u}|^2|\phi_s|^2ds\nonumber\\
&+ \int_0^\infty  {{h^{ii}}\left\langle {{A_i}{\phi _s},{A_i}{\phi _s}} \right\rangle } ds+\|d\widetilde{u}(0,t,x)\|^2_{L^2_x}.\label{hjbn7v89}
\end{align}
Notice that the $|d\widetilde{u}|$ term has been estimated before, then by Proposition \ref{zz2ccuvc344fyeo} , Lemma \ref{huguokitredcfr} and (\ref{hjbn7v89}),
$$ {\left\| {\nabla d\widetilde{u}} \right\|_{L_t^\infty L_x^2([0,T] \times {\Bbb H^2})}}\le M_0+\varepsilon^2_1C(M_1).
$$
{\bf Step 4. Estimates for $|\nabla\partial_t\widetilde{u}|$ in (\ref{fengyucbedcfr}) }
By integration by parts,
\begin{align*}
 \frac{d}{{ds}}\left\| {\nabla {\partial _t}\widetilde{u}} \right\|_{{L^2}}^2 =  - 2\left\langle {{D_t}{\phi _s},{h^{jj}}{D_j}{D_j}{\phi _t} - {h^{jj}}\Gamma _{jj}^l{D_l}{\phi _t}} \right\rangle  + 2{h^{jj}}\left\langle {({\phi _s} \wedge {\phi _j}){\phi _t},{D_j}{\phi _t}} \right\rangle.
\end{align*}
By the parabolic equation of $\phi_t$ along the heat flow direction, one has
$$\frac{1}{2}\frac{d}{{ds}}\left\| {\nabla {\partial _t}\widetilde{u}} \right\|_{{L^2}}^2 =  - \left\langle {{D_t}{\phi _s},{D_s}{\phi _t}} \right\rangle  +{h^{jj}}\left\langle {({\phi _s} \wedge {\phi _j}){\phi _t},{D_j}{\phi _t}} \right\rangle  + {h^{jj}}\left\langle {{D_t}{\phi _s},({\phi _t} \wedge {\phi _j}){\phi _j}} \right\rangle.
$$
Consequently, we obtain
\begin{align*}
 &\left\| {\nabla {\partial _t}\widetilde{u}(0,t,x)} \right\|_{{L^2}}^2 \le 4\int_0^\infty  {\left\langle {{\partial _t}{\phi _s},{\partial _t}{\phi _s}} \right\rangle ds'}  + 4\int_0^\infty  {\left\langle {{A_t}{\phi _s},{A_t}{\phi _s}} \right\rangle ds'}  \\
 &+ 2\int_0^\infty  {{{\left\| {{\phi _s}} \right\|}_{L_x^2}}{{\left\| {d\widetilde{u}} \right\|}_{L_x^\infty }}{{\left\| {{\partial _t}\widetilde{u}} \right\|}_{L_x^\infty }}{{\left\| {\nabla {\partial _t}\widetilde{u}} \right\|}_{L_x^2}}ds'}  + 2{\int_0^\infty  {\left\| {{\partial _t}\widetilde{u}} \right\|} _{L_x^2}}\left\| {d\widetilde{u}} \right\|_{L_x^\infty }^2{\left\| {{D_t}{\phi _s}} \right\|_{L_x^2}}ds'.
\end{align*}
Hence by Proposition \ref{fcuyder47te}, Proposition \ref{zz2ccuvc344fyeo} and Lemma \ref{huguokitredcfr}, we deduce that
$$\left\| {\nabla {\partial _t}\widetilde{u}(0,t,x)} \right\|_{{L^2}}^2 \lesssim \varepsilon _1^4C(M_1).
$$
So, we have obtained all estimates in (\ref{fengyucbedcfr}) and (\ref{hccfdgjvcbedcfr}).
\end{proof}

\subsection{Proof of Theorem 1.1}

By Proposition \ref{vskfheccc344fyeo} and Lemma \ref{oi89}, (\ref{1988}), (\ref{1989}), (\ref{dozuoki}),  (1.1) has a global solution and $\phi_s$ satisfies
\begin{align}
\|\phi_s\|_{L^2_tL^4_x}+\|\partial_t\phi_s\|_{L^2_tL^4_x}\le C(s).
\end{align}
Then Theorem 1.1. follows by the same arguments in [Section 8, \cite{EFE376}].

\section{Appendix A}

\subsection{Intrinsic v.s. extrinsic formulations}

The intrinsic and extrinsic formulations are equivalent in the following sense.
\begin{lemma}[\cite{LZ}]\label{swe345t}
Assume  that $Q$ is an admissible harmonic map.
If $u\in \mathcal{H}_{Q}^k$ then for $k=2,3$, then there exist continuous functions ${\Upsilon}_1,\Upsilon_2$ such that
\begin{align}
\|u\|_{{\mathcal H}^k_Q}&\le C(R_0,\|u\|_{\mathfrak{H}^2}){\Upsilon}_1(\|u\|_{\mathfrak{H}^k})\label{jia8}\\
\|u\|_{\mathfrak{H}^k}&\le C(R_0,\|u\|_{{\mathcal{H}}_{Q}^2})\Upsilon_2(\|u\|_{\mathcal{H}^k_Q})\label{jia9}.
\end{align}
\end{lemma}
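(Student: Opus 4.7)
The plan is to establish the equivalence between the extrinsic coordinate-based Sobolev norm $\mathcal H^k_Q$ and the intrinsic geometric seminorm $\mathfrak H^k$ by a direct coordinate computation, exploiting the fact that admissibility confines all relevant maps to a bounded region of $\mathbb H^2$ where the Christoffel symbols of $N$ are uniformly smooth. The starting point is that $\overline{Q(\mathbb H^2)} \Subset \mathbb D$ is contained in a geodesic ball $B_{R_0}$, and for $k\ge 2$ the two-dimensional Sobolev embedding $W^{k,2}(\mathbb H^2)\hookrightarrow L^\infty$ implies that $u$ itself takes values in a slightly larger ball $B_{R_1}$, where $R_1$ depends on $R_0$ together with either $\|u\|_{\mathcal H^2_Q}$ (for (\ref{jia9})) or $\|u\|_{\mathfrak H^2}$ (for (\ref{jia8})). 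On $B_{R_1}$ the symbols $\overline\Gamma^k_{ij}(y)$ and their derivatives are bounded by some $C(R_0,\|u\|_{\cdot})$.

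The algebraic heart of the argument is the coordinate expansion of $\nabla^{m-1}du$. For $m=1,2,3$, one writes schematically
\begin{align*}
(\nabla^{m-1}du)^k_{\alpha_1\cdots\alpha_m} = \partial^{m}u^k + \sum_{|\beta|<m} P_{m,\beta}\bigl(\overline\Gamma(u),\partial\overline\Gamma(u),\ldots;\partial u,\ldots,\partial^{m-1}u\bigr) + R_{m}(\Gamma(x);\partial^{\le m-1}u),
\end{align*}
where $P_{m,\beta}$ and $R_m$ are universal polynomials arising from Faà di Bruno's formula applied to $\overline\Gamma(u(x))$ and from the Christoffel symbols of $M$. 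Crucially this relation is \emph{triangular} in the highest derivative of $u$: one can either read off $\nabla^{m-1}du$ from $\partial^m u$ and lower order data (forward direction), or invert to read off $\partial^m u$ from $\nabla^{m-1}du$ and lower order data (backward direction).

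The forward direction (\ref{jia8}) then follows by plugging the coordinate expansion into $\|\nabla^{m-1}du\|_{L^2}$ and applying Moser-type product estimates in $W^{s,2}(\mathbb H^2)$: the composition $\overline\Gamma(u)\in W^{k-1,\infty}\cap W^{k,2}$ with norms bounded by continuous functions of $\|u\|_{\mathcal H^2_Q}$ and of $R_0$, while the polynomial factors in $\partial^\alpha u^k$ are handled by the Sobolev embeddings in dimension two. For the reverse direction (\ref{jia9}) one iterates the inverted identity from $m=1$ upward, each step bounding $\|\partial^m u^k\|_{L^2}$ by $\|\nabla^{m-1}du\|_{L^2}$ plus lower-order contributions already controlled. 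The only quantity not captured directly by $\mathfrak H^k$ is the plain $L^2$ norm of $u^i-Q^i$; this is recovered using the spectral gap $\|f\|_{L^2}\le 2\|\nabla f\|_{L^2}$ on $\mathbb H^2$ applied to $f=u^i-Q^i\in H^1$ (which holds by density of $\mathcal D$ in $\mathcal H^k_Q$), together with $|\nabla(u^i-Q^i)|\lesssim_{R_1}|du|+|dQ|$ on the bounded image.

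The main obstacle is the bookkeeping of the higher-order chain rule: one has to control terms of the form $\partial^\alpha(\overline\Gamma^k_{ij}\circ u)$ in $W^{s,2}$ using Faà di Bruno together with repeated applications of Sobolev embeddings, and to choose Hölder exponents so that the product estimates close in two dimensions. Since we only require $k\in\{2,3\}$ and the image is precompact in $\mathbb D$ (so $\overline\Gamma$ is smooth with all derivatives bounded on $B_{R_1}$), this is routine but lengthy; the continuous functions $\Upsilon_1,\Upsilon_2$ then arise as explicit polynomials in the relevant norms whose coefficients depend only on $R_0$ and on $\sup_{B_{R_1}}|\partial^\alpha\overline\Gamma|$, and hence only on $R_0$ together with the prescribed lower-order norm of $u$.
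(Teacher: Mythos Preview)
The paper does not prove this lemma; it is quoted from \cite{LZ}. Your outline is the standard route for such intrinsic/extrinsic equivalences and is essentially what that reference does: confine the image to a compact set so that the target Christoffel symbols are uniformly smooth, then use the triangular relation between $\nabla^{m-1}du$ and $\partial^m u^k$ together with Moser estimates and the spectral gap on $\mathbb H^2$.

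Two points deserve correction. First, you have the labels of (\ref{jia8}) and (\ref{jia9}) interchanged throughout: (\ref{jia8}) bounds the \emph{extrinsic} norm $\|u\|_{\mathcal H^k_Q}$ by the \emph{intrinsic} $\|u\|_{\mathfrak H^k}$, so it is the step where one inverts the expansion to read off $\partial^m u^k$ from $\nabla^{m-1}du$, and its constants must depend on $\|u\|_{\mathfrak H^2}$, not on $\|u\|_{\mathcal H^2_Q}$ as you wrote; (\ref{jia9}) is the other direction. Second, the assertion that $R_1$ can be chosen depending only on $R_0$ and $\|u\|_{\mathfrak H^2}$ is the one place where a little work is actually needed and you have not indicated how. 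Sobolev embedding $W^{2,2}\hookrightarrow L^\infty$ applies to scalar functions, not to maps; to use it here one applies it to $f(x)=d_N(u(x),Q(x))$, for which Kato's inequality gives $|\nabla f|\le |du|+|dQ|$ and a Bochner-type computation (using the nonpositive curvature of $N$) controls $|\nabla^2 f|$ by $|\nabla du|+|\nabla dQ|+(|du|+|dQ|)^2$. Combined with the spectral gap for the $L^2$ piece, this yields $\|f\|_{L^\infty}\le C(R_0,\|u\|_{\mathfrak H^2})$, which is exactly what you need. Once this is in place, the rest of your argument goes through.
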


The $\mathcal{H}_{Q}^k$ space implies the map $u$ has a compact image in the target $N=\Bbb H^2$.
\begin{corollary}[\cite{LZ}]\label{8908}
Suppose that $Q$ is an admissible harmonic map.
If $u\in \mathcal{H}_{Q}^k$ then for $k=2,3$, then $\overline{u(\Bbb H^2)}$ is compact in $N=\Bbb H^2$.
\end{corollary}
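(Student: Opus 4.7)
\medskip
\noindent\textbf{Proof proposal for Corollary \ref{8908}.}

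The plan is to read off compactness of $\overline{u(\mathbb{H}^2)}$ directly from the global coordinate system $\Phi:\mathbb{R}^2\to\mathbb{H}^2$ in (\ref{tyw55gvu}) and a Sobolev-embedding argument in $\mathcal{H}_Q^k$. Since $\Phi$ is a diffeomorphism onto $\mathbb{H}^2$ (being the Iwasawa parametrization $(v_1,v_2)\mapsto n_{v_1}a_{v_2}.o$), the image $u(\mathbb{H}^2)$ has compact closure in the target as soon as the coordinate vector $(u^1,u^2):\mathbb{H}^2\to\mathbb{R}^2$ is uniformly bounded.

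First, I would observe that $u\in\mathcal{H}^k_Q$ with $k\in\{2,3\}$ means $u^p-Q^p\in\mathrm{H}^k(\mathbb{H}^2;\mathbb{C})$ for $p=1,2$. By the Sobolev inequalities on $\mathbb{H}^2$ recalled in the paper (Section 2.2), together with the fact that $\dim M=2$ and $k\ge 2$, we have the continuous embedding $\mathrm{H}^k(\mathbb{H}^2)\hookrightarrow L^\infty(\mathbb{H}^2)$. Moreover, by density of $C^\infty_c$ in $\mathrm{H}^k$ (which is also used in defining $\mathcal{H}_Q^k$ via the class $\mathcal{D}$), the functions $u^p-Q^p$ can be approximated in $\mathrm{H}^k$ by compactly supported ones, so $u^p-Q^p\in C_0(\mathbb{H}^2)$; in particular
\begin{equation*}
\|u^p-Q^p\|_{L^\infty(\mathbb{H}^2)}\le C\|u-Q\|_{\mathcal{H}^k_Q},\qquad p=1,2.
\end{equation*}

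Next, admissibility of $Q$ (Definition \ref{dywetu67tu68}) gives $\overline{Q(\mathbb{H}^2)}\Subset\mathbb{H}^2$; applying the continuous map $\Phi^{-1}$ shows that $(Q^1,Q^2)$ is uniformly bounded in $\mathbb{R}^2$. Combined with the previous step, $(u^1,u^2)$ is bounded in $\mathbb{R}^2$, i.e.\ there is $R<\infty$ such that $(u^1(x),u^2(x))\in \overline{B_R^{\mathbb{R}^2}}$ for all $x\in\mathbb{H}^2$. Since $\Phi$ is a global homeomorphism, $K:=\Phi\bigl(\overline{B_R^{\mathbb{R}^2}}\bigr)$ is a compact subset of $\mathbb{H}^2$ containing $u(\mathbb{H}^2)$, hence $\overline{u(\mathbb{H}^2)}\subset K$ is compact.

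The only mildly subtle ingredient is the embedding $\mathrm{H}^k(\mathbb{H}^2)\hookrightarrow L^\infty\cap C_0$ for $k\ge 2$; this is the genuine substance of the argument, but it follows from the Sobolev theory on $\mathbb{H}^2$ collected in Appendix A together with the density statement for $C^\infty_c$. Everything else is a direct concatenation of (i) boundedness of $Q$ in coordinates via admissibility, and (ii) the global nature of the Iwasawa chart $\Phi$. No additional analytic obstacle arises.
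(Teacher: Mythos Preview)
The paper does not give its own proof of this corollary; it simply cites \cite{LZ}. Your argument is correct and is the natural direct route: use the extrinsic definition of $\mathcal{H}^k_Q$ together with the Sobolev embedding $\mathrm{H}^k(\mathbb{H}^2)\hookrightarrow L^\infty\cap C_0$ for $k\ge 2$ (cf.\ (\ref{yy6frt}) in Appendix~A) and the boundedness of $(Q^1,Q^2)$ coming from admissibility to conclude that $(u^1,u^2)$ is uniformly bounded in the global Iwasawa chart $\Phi$, whence $u(\mathbb{H}^2)$ sits inside the compact set $\Phi(\overline{B_R^{\mathbb{R}^2}})$.
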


The same arguments of proving Lemma \ref{swe345t} (see \cite{LZ}) give the following lemma which shows the heat tension field associated with the initial data of wave map is small under the assumptions of Theorem 1.1.
\begin{lemma}\label{nabla}
Let $  M=\Bbb H^2,  N=\Bbb H^2$. If $(u_0,u_1)$ with $u_0:$, $u_1(x)\in T_{u_0(x)}N$ for any $x\in M$
is the initial data to (1.2) satisfying (\ref{as3}) and
\begin{align*}
\|\nabla du_0\|_{L^2_x}+\|du_0\|_{L^2_x}\le M_0,
\end{align*}
then we have
\begin{align*}
\|\tau(u_0)\|_{L^2_x}\le C(M_0)\mu_1,
\end{align*}
where $\tau(u_0)$ denotes the tension filed.
\end{lemma}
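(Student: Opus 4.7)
The plan is to exploit the fact that $\tau(Q)=0$ (since $Q$ is a harmonic map) and compute $\tau(u_0)=\tau(u_0)-\tau(Q)$ directly in the global coordinates (\ref{tyw55gvu}), where both $u_0$ and $Q$ are represented by pairs of scalar functions. Writing $v^i=u_0^i-Q^i$, we have $\|v^1\|_{\mathrm{H}^2}+\|v^2\|_{\mathrm{H}^2}\le\mu_1$ by hypothesis (\ref{as3}). In local coordinates, for $l=1,2$,
\begin{align*}
\tau(u_0)^l=h^{jk}\partial_{jk}u_0^l-h^{jk}\Gamma^p_{jk}\partial_pu_0^l+h^{jk}\overline{\Gamma}^l_{mn}(u_0)\partial_ju_0^m\partial_ku_0^n,
\end{align*}
and the analogous identity holds for $Q$ with vanishing left-hand side. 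Subtracting yields a linear part in $v$ and a quadratic-type part involving differences of Christoffel symbols of $N$.

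First I would bound the linear part. The contribution $h^{jk}\partial_{jk}v^l-h^{jk}\Gamma^p_{jk}\partial_pv^l$ is controlled in $L^2$ by $\|v\|_{\mathrm{H}^2}\le\mu_1$, using that the metric and Christoffel symbols of $M=\mathbb{H}^2$ are smooth and uniformly bounded in the coordinates (\ref{rex3456gvh}). Next, I would split the quadratic difference as
\begin{align*}
&\overline{\Gamma}^l_{mn}(u_0)\partial_ju_0^m\partial_ku_0^n-\overline{\Gamma}^l_{mn}(Q)\partial_jQ^m\partial_kQ^n\\
&=\bigl[\overline{\Gamma}^l_{mn}(u_0)-\overline{\Gamma}^l_{mn}(Q)\bigr]\partial_ju_0^m\partial_ku_0^n+\overline{\Gamma}^l_{mn}(Q)\bigl[\partial_jv^m\partial_ku_0^n+\partial_jQ^m\partial_kv^n\bigr].
\end{align*}
Because $u_0\in\mathcal{H}^2_Q$, Corollary \ref{8908} guarantees $\overline{u_0(\mathbb{H}^2)}$ and $\overline{Q(\mathbb{H}^2)}$ lie in a common compact set of $N=\mathbb{H}^2$, so $\overline{\Gamma}$ and its derivatives are uniformly bounded along their arguments and we can invoke the mean value estimate $|\overline{\Gamma}(u_0)-\overline{\Gamma}(Q)|\lesssim|v|$ (with a constant depending on $Q$ and the image of $u_0$).

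The remaining step is to bound the three resulting quadratic nonlinearities in $L^2$. Using the $\mathbb{H}^2$ Sobolev embedding $\mathrm{H}^2\hookrightarrow L^\infty$ and $\mathrm{H}^1\hookrightarrow L^4$,
\begin{align*}
\|\,|v|\,|du_0|^2\|_{L^2}&\le\|v\|_{L^\infty}\|du_0\|_{L^4}^2\lesssim\|v\|_{\mathrm{H}^2}\|du_0\|_{\mathrm{H}^1}^2\lesssim\mu_1 M_0^2,\\
\|\,|dv|\,|du_0|\|_{L^2}&\lesssim\|dv\|_{L^4}\|du_0\|_{L^4}\lesssim\|v\|_{\mathrm{H}^2}\|du_0\|_{\mathrm{H}^1}\lesssim\mu_1 M_0,
\end{align*}
and the $|dQ|$ term is controlled the same way using $\|dQ\|_{\mathrm{H}^1}\le C(Q)$ from Definition \ref{dywetu67tu68}. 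Summing these contributions and absorbing the $Q$-dependent constants into the notation, we arrive at $\|\tau(u_0)\|_{L^2_x}\le C(M_0)\mu_1$. The main (and essentially only) obstacle is ensuring that $\overline{\Gamma}$ is estimated on a compact subset of the target, which is exactly what the structure of $\mathcal{H}^2_Q$ provides via Lemma \ref{swe345t} and Corollary \ref{8908}.
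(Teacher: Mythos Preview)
Your proposal is correct and follows essentially the same approach the paper indicates. The paper does not spell out a proof but simply refers to ``the same arguments of proving Lemma \ref{swe345t}'' (the intrinsic--extrinsic equivalence from \cite{LZ}); your argument---expressing $\tau(u_0)-\tau(Q)$ in the global coordinates (\ref{tyw55gvu}), bounding the Christoffel-symbol differences via the compact image furnished by Corollary \ref{8908}, and closing with the $\mathrm{H}^2\hookrightarrow L^\infty$ and $\mathrm{H}^1\hookrightarrow L^4$ embeddings---is precisely what those arguments amount to in this special case.
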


The conditional global well-posedness theory is recalled below:
\begin{proposition}[\cite{EFE376}]\label{vskfheccc344fyeo}
For any initial data $(u_0,u_1)\in \mathcal{H}^3\times \mathcal{H}^2$, there exists $T>0$ depending only on $\|(u_0,u_1)\|_{\mathcal{H}^3\times\mathcal{H}^2}$ such that (\ref{z32ccc344fyeo}) has a unique local solution $(u,\partial_tu)\in C([0,T];\mathcal{H}^3\times\mathcal{H}^2)$.
Furthermore assume that for all $t\in[0,T_*)$ and some $C>0$ independent of $t\in[0,T_*)$ the solution $(u,\partial_tu)$ satisfies
\begin{align}\label{vskfhefyeo}
\|\nabla\partial_t u\|_{L^2_x}+\|\partial_t u\|_{L^2_x}+\|\nabla du\|_{L^2_x}+\|du\|_{L^2_x}\le C,
\end{align}
then $T_*=\infty$.
\end{proposition}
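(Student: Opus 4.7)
The plan is to combine a standard Picard iteration for the local existence with a logarithmic Sobolev argument for the continuation criterion. First I would use the global coordinate chart $\Phi$ from (\ref{tyw55gvu}) to identify any map $u:\Bbb H^2\to\Bbb H^2$ lying in $\mathcal H^k_Q$ with a pair of scalar fields $(u^1,u^2)$, set $U = u - Q$, and substitute into (\ref{z32ccc344fyeo}). Since the target Christoffel symbols (\ref{rex3456gvh}) depend smoothly on $u^2$ alone and are bounded on any pre-compact subset of $N$, this produces a semilinear system
\begin{equation*}
\Box U^l = F^l\bigl(U;\nabla(U+Q),\nabla(U+Q)\bigr)
\end{equation*}
with $F$ smooth in $U$ and quadratic in first derivatives. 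The equivalence Lemma \ref{swe345t} and the compact-image Corollary \ref{8908} let me move freely between the intrinsic norm on $\mathcal H^k_Q$ and the scalar Sobolev spaces $W^{k,2}(\Bbb H^2;\Bbb R^2)$; in particular, any element of the iteration in $W^{3,2}$ automatically corresponds to a map with image in a compact set of $N$, so the coordinate representation stays valid along the scheme.

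For the local existence I would run a Picard iteration in $C([0,T];W^{3,2})\cap C^1([0,T];W^{2,2})$, using the linear wave energy inequality on $\Bbb R\times\Bbb H^2$ together with the Sobolev embedding $W^{2,2}(\Bbb H^2)\hookrightarrow L^\infty$ (valid in two dimensions) to absorb the quadratic gradient terms and the smooth dependence on $U$. A contraction closes on a time interval whose length depends only on the initial $W^{3,2}\times W^{2,2}$ norm; uniqueness follows from the standard difference estimate at one derivative below.

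For the continuation criterion, suppose (\ref{vskfhefyeo}) holds on $[0,T_*)$. Corollary \ref{8908} keeps the image of $u(t)$ inside a fixed compact subset of $N$, so $\overline{\Gamma}(u(t))$ and its derivatives remain uniformly bounded. Writing $\mathcal E_3(t)\sim\|du(t)\|_{H^2}^2+\|\partial_tu(t)\|_{H^2}^2$, differentiating (\ref{z32ccc344fyeo}) twice and pairing with $\partial_t\nabla^2 U$ gives the higher-order energy identity
\begin{equation*}
\frac{d}{dt}\mathcal E_3(t)\;\lesssim\;\bigl(1+\|(du,\partial_tu)(t)\|_{L^\infty_x}\bigr)\mathcal E_3(t).
\end{equation*}
The assumption (\ref{vskfhefyeo}) bounds $\|(du,\partial_tu)\|_{H^1}$ uniformly in $t$, and on $\Bbb H^2$ the two-dimensional Brezis--Gallouet--Wainger inequality then gives
\begin{equation*}
\|(du,\partial_tu)(t)\|_{L^\infty_x}\;\lesssim\;1+\sqrt{\log\bigl(e+\mathcal E_3(t)\bigr)},
\end{equation*}
so Gronwall produces a (double-exponential) bound on $\mathcal E_3(t)$ on any bounded time interval, which prevents finite-time blow-up of the $\mathcal H^3\times\mathcal H^2$ norm and yields $T_*=\infty$.

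The main obstacle is aligning the geometric structure with the coordinate analysis: one must verify that compactness of the image is preserved along the Picard iterates (which follows from $W^{2,2}(\Bbb H^2)\hookrightarrow L^\infty$ together with Corollary \ref{8908}), and the Brezis--Gallouet step requires the logarithmic embedding on $\Bbb H^2$ rather than $\Bbb R^2$. The latter follows from its Euclidean counterpart by a partition-of-unity localization combined with the Sobolev calculus of Section~2.2. These points are technical but standard; everything else reduces to the usual manifold-valued wave-map energy argument.
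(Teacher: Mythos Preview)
The paper does not actually prove this proposition here; it is quoted from the earlier companion work \cite{EFE376} (see the sentence immediately preceding Section~2.4 and the attribution in the proposition itself), so there is no in-paper argument against which to compare your outline.

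On its own merits, your route is standard and essentially correct. The Picard iteration in $W^{3,2}\times W^{2,2}$ closes because $W^{2,2}(\Bbb H^2)\hookrightarrow L^\infty$ (this is (\ref{yy6frt}) with $\alpha=2$), and the Brezis--Gallouet--Wainger step is precisely the right device for the continuation criterion, since $H^1(\Bbb H^2)$ barely misses $L^\infty$. Two minor refinements are worth making explicit. First, the higher-order energy inequality generically carries a \emph{polynomial} dependence on $\|(du,\partial_tu)\|_{L^\infty}$ rather than the linear one you wrote, but $\bigl(1+\sqrt{\log(e+\mathcal E_3)}\bigr)^k$ still Gronwalls to a finite iterated-exponential bound, so nothing changes in the conclusion. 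Second, since $|du|$ is an intrinsic tensor norm while BGW is a scalar inequality, you should pass to components in orthonormal frames on $T\Bbb H^2$ and on $u^*TN$ before invoking it; the domain frame has bounded connection coefficients by the bounded geometry of $\Bbb H^2$, and the target-side coefficients are uniformly controlled in $t$ because Corollary~\ref{8908} combined with (\ref{vskfhefyeo}) and Lemma~\ref{swe345t} confines the image to a fixed compact subset of $N$. With these two points made explicit, your argument goes through.
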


\subsection{Sobolev inequality}

\begin{lemma}[\cite{KINTY,45XDADER45P,6H8UR}]\label{KINsder45tyy}
Let $f\in C^{\infty}_c(\mathbb{H}^2;\Bbb R)$. Then for $1\le p\le q\le \infty$, $0<\theta<1$, $\frac{1}{p} - \frac{\theta }{2} = \frac{1}{q}$, the Gagliardo-Nirenberg inequality is
\begin{align}
 {\left\| f \right\|_{{L^q}}} \lesssim \left\| {\nabla f} \right\|_{{L^2}}^\theta \left\| f \right\|_{{L^p}}^{1 - \theta }. \label{yy676gvfrt}
\end{align}
The spectrum  gap inequality is known as
\begin{align}
{\left\| f \right\|_{{L^2}}} \lesssim {\left\| {\nabla f} \right\|_{{L^2}}}. \label{yy676frt}
\end{align}
For $\alpha>1$ the following inequality holds
\begin{align}
 {\left\| f \right\|_{{L^\infty }}} \lesssim {\left\| {{{\left( { - \Delta } \right)}^{\frac{\alpha }{2}}}f} \right\|_{{L^2}}}.\label{yy6frt}
\end{align}
The Riesz transform is bounded in $L^p$ for $1<p<\infty,$ i.e.,
\begin{align}
{\left\| {\nabla f} \right\|_{{L^p}}} \sim{\left\| {{{\left( { - \Delta } \right)}^{\frac{1}{2}}}f} \right\|_{{L^p}}} \label{y6frtsder45}.
\end{align}
And we recall the Sobolev inequality: Let $1<p,q<\infty$ and $\sigma_1,\sigma_2\in\Bbb R$  such that $\sigma_1-\sigma_2\ge n/p-n/q\ge0$. Then for all $f\in C^{\infty}_c(\Bbb H^n;\Bbb R)$
\begin{align*}
\|(-\Delta)^{\sigma_2}f\|_{L^q}\lesssim \|(-\Delta)^{\sigma_1}f\|_{L^p}.
\end{align*}
The diamagnetic inequality known also as Kato's inequality is as follows (e.g. [\cite{6H8UR}]): If $T$ is a tension filed defined on $\Bbb H^2$, then in the distribution sense it holds that
\begin{align}\label{y6frtsr45}
|\nabla|T||\le |\nabla T|.
\end{align}
\end{lemma}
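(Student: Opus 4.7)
The plan is to recognize that every inequality in Lemma \ref{KINsder45tyy} is a known result on hyperbolic space that one assembles from spectral calculus, heat kernel bounds, and classical real-variable arguments; the reference list (\cite{KINTY,45XDADER45P,6H8UR}) already contains the needed statements, so I would only need to indicate the mechanism and verify that the conventions in this paper match. I would begin by recalling the functional calculus set up in Section 2.2: the operator $-\Delta$ has spectrum $[\tfrac14,\infty)$, the fractional powers $(-\Delta)^{s/2}$ are defined via the Fourier multiplier $\lambda\mapsto(\tfrac14+\lambda^2)^{s/2}$, and the heat semigroup $e^{t\Delta}$ has a radial convolution kernel with sharp dispersive bounds (\ref{mm8}).

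For the spectrum gap inequality (\ref{yy676frt}), I would use the identity $\|\nabla f\|_{L^2}^2=\langle(-\Delta)f,f\rangle\ge\frac14\|f\|_{L^2}^2$, which is immediate from the Plancherel formula in Section 2.2. The Gagliardo--Nirenberg inequality (\ref{yy676gvfrt}) and the $L^\infty$-embedding (\ref{yy6frt}) follow from the $L^p$--$L^q$ mapping properties of $e^{t\Delta}$ proved in \cite{45XDADER45P} via interpolation: writing $f=\int_0^\infty e^{t\Delta}(-\Delta)f\,dt$ (up to projection onto the spectrum bounded away from $0$), splitting the integral at a well-chosen time, and balancing, yields both estimates; alternatively one may invoke the corresponding estimates in \cite{KINTY,6H8UR} directly. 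The Sobolev embedding for fractional powers is then obtained by Littlewood--Paley decomposition adapted to the shifted Laplacian together with the scalar kernel bounds of Lemma \ref{Tomnagdfe4}.

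The boundedness of the Riesz transform (\ref{y6frtsder45}) in $L^p$ for $1<p<\infty$ is the deeper point. The natural route is to represent $\nabla(-\Delta)^{-1/2}$ via the subordination formula $(-\Delta)^{-1/2}=c\int_0^\infty t^{-1/2}e^{t\Delta}\,dt$, differentiate the heat kernel, and obtain a Calder\'on--Zygmund-type singular integral on $\mathbb{H}^2$; one then verifies the Calder\'on--Zygmund conditions against the volume-doubling-at-small-scales / exponential-growth-at-large-scales geometry of $\mathbb{H}^2$ using the kernel bounds derived in the references. This is the one step that is not purely spectral, and it is the main technical obstacle — but it is precisely the result of Lohou\'e (cited in \cite{KINTY,45XDADER45P}) that can be quoted directly.

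Finally, Kato's diamagnetic inequality (\ref{y6frtsr45}) is pointwise: whenever $T\ne0$, one computes $\nabla|T|=\Re\langle T/|T|,\nabla T\rangle$ in the smooth locus, so $|\nabla|T||\le|\nabla T|$; an approximation $\sqrt{|T|^2+\varepsilon^2}\to|T|$ handles the zero set in the distributional sense. This argument is intrinsic and coordinate-free, so the $\mathbb{H}^2$ setting introduces no obstruction. Putting these pieces together completes the proof; no new ideas beyond the standard hyperbolic harmonic analysis machinery are required.
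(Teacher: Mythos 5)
This lemma is quoted in the paper without proof directly from \cite{KINTY,45XDADER45P,6H8UR}, and your sketch assembles exactly the standard ingredients those references rely on — the spectral gap $\sigma(-\Delta)=[\tfrac14,\infty)$ via Plancherel for (\ref{yy676frt}), heat-semigroup smoothing plus time-splitting for (\ref{yy676gvfrt}) and (\ref{yy6frt}), the Lohou\'e--Strichartz singular-integral argument for the Riesz transform, and the $\sqrt{|T|^2+\varepsilon^2}$ regularization for Kato's inequality — so your proposal is correct and consistent with the paper's (purely citational) treatment. One caveat: your time-splitting argument for the Gagliardo--Nirenberg inequality yields $\int_0^T t^{-1+1/q}\,dt$, which diverges at the endpoint $q=\infty$ formally permitted by the statement, and that endpoint is in fact false (a bump concentrated at geodesic scale $\lambda\to 0$ has $\|f\|_{L^\infty}\sim 1$ while $\|\nabla f\|_{L^2}^{\theta}\|f\|_{L^p}^{1-\theta}\to 0$), so both your argument and the lemma should be read with $q<\infty$ there.
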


The estimate of the heat semigroup in $\mathbb{H}^2$ is as follows.
\begin{lemma}[\cite{DoooooM,EFE376,6H8UR}]\label{8.5}
For $1\le r\le p\le\infty$, $\alpha\in[0,1]$, $1<q<\infty$, $0\delta_1>0$, the heat semigroup on $\Bbb H^2$ denoted by $e^{s\Delta}$ satisfies
\begin{align}
\|e^{s\Delta}f\|_{L^{2}_x}&\lesssim e^{-\frac{s}{4}}\|f\|_{L^{2}_x}\label{ytsr45}\\
\|e^{s\Delta}f\|_{L^{\infty}_x}&\lesssim e^{-\frac{s}{4}}s^{-1}\|f\|_{L^{1}_x}\label{tyihy6ys}\\
\|e^{s\Delta}(-\Delta)^{\alpha} f\|_{L^{q}_x}&\lesssim s^{-\alpha}e^{-s\delta_1}\|f\|_{L^{q}_x},\label{mm8}\\
\|e^{s\Delta}f\|_{L^p_x}&\lesssim s^{\frac{1}{p}-\frac{1}{r}}\|f\|_{L^{r}_x}.\label{huhu89}
\end{align}
And for $f\in L^2$ it holds that
\begin{align}\label{xcss434fer}
\int^{\infty}_0\|e^{s\Delta}f\|^2_{L^{\infty}_x}ds\lesssim \|f\|^2_{L^2}.
\end{align}
\end{lemma}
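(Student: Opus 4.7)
The plan is to establish the five estimates by combining spectral-theoretic information about $-\Delta$ on $\mathbb{H}^2$ with sharp heat kernel bounds. The spectral gap $\sigma(-\Delta)\subset[\tfrac14,\infty)$ is the unifying tool: it gives exponential decay in $s$, while the local Euclidean structure near the diagonal gives the polynomial-in-$s$ smoothing factor.

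First I would dispatch the $L^2$ bound (8.24) as an immediate consequence of the functional calculus: since $-\Delta$ is self-adjoint with spectrum in $[\tfrac14,\infty)$, $\|e^{s\Delta}\|_{L^2\to L^2}=\sup_{\lambda\ge 1/4}e^{-s\lambda}=e^{-s/4}$. Next, for the $L^1\to L^\infty$ dispersive estimate (8.25), I would invoke the sharp Davies--Mandouvalos-type pointwise bound for the heat kernel $p_s(r)$ on $\mathbb{H}^2$,
\begin{align*}
p_s(r)\;\lesssim\;s^{-1}\,e^{-s/4-r^2/(4s)}(1+r)(1+r+s)^{1/2}e^{-r/2},
\end{align*}
which uniformly in $r\ge 0$ majorizes $s^{-1}e^{-s/4}$; Young's convolution inequality for the radial kernel on the symmetric space (cf.\ Lemma \ref{tywe55gvu} and formula (\ref{67trf})) converts this to the operator bound $\|e^{s\Delta}\|_{L^1\to L^\infty}\lesssim s^{-1}e^{-s/4}$. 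The estimate (\ref{huhu89}) then follows by Riesz--Thorin interpolation between the $L^1\to L^\infty$ bound just obtained and the trivial $L^p\to L^p$ contraction that holds for every $p\in[1,\infty]$ by positivity of the kernel (Markovian property of the heat semigroup).

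The mixed estimate (\ref{mm8}) with the fractional power would be handled by writing $(-\Delta)^\alpha e^{s\Delta}=e^{(s/2)\Delta}\circ (-\Delta)^\alpha e^{(s/2)\Delta}$ and estimating the two factors separately. For the right factor, on $L^2$ the functional calculus gives $\|(-\Delta)^\alpha e^{(s/2)\Delta}\|_{L^2\to L^2}=\sup_{\lambda\ge 1/4}\lambda^\alpha e^{-s\lambda/2}\lesssim s^{-\alpha}e^{-s\delta_1}$ for some $\delta_1\in(0,\tfrac18)$. To promote the bound to general $L^q$, one uses that $(-\Delta)^\alpha e^{(s/2)\Delta}$ admits a convolution kernel which can be estimated by differentiating the heat kernel formula (or by the subordination formula $(-\Delta)^\alpha=c_\alpha\int_0^\infty t^{-\alpha-1}(I-e^{t\Delta})\,dt$ for $\alpha\in(0,1)$), yielding the same $s^{-\alpha}e^{-s\delta_1}$ bound in $\mathcal{L}(L^q,L^q)$; composition with the $L^q$-contraction $e^{(s/2)\Delta}$ completes the estimate.

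Finally, the Kato-type smoothing (\ref{xcss434fer}) is the only genuinely global-in-time statement and is the main obstacle. I would prove it by a $TT^*$ argument: setting $T:f\mapsto e^{s\Delta}f$, the claim is $\|Tf\|_{L^2_sL^\infty_x}\lesssim\|f\|_{L^2_x}$, equivalent to $\|TT^*\|_{L^2_sL^1_x\to L^2_sL^\infty_x}\lesssim 1$. The kernel of $TT^*$ at times $s,s'$ is $p_{s+s'}(x,y)$, so after applying Minkowski one reduces to showing that $g(s)=\|p_{s}\|_{L^\infty}$ belongs to the weak-$L^1$-type Schur class, i.e.\ $\int_0^\infty\int_0^\infty g(s+s')F(s')\,ds'\cdot G(s)\,ds\lesssim\|F\|_{L^2}\|G\|_{L^2}$. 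Using the bound $g(s)\lesssim s^{-1}e^{-s/4}$ established above, this reduces to a one-dimensional convolution inequality that follows from Hardy--Littlewood--Sobolev on $\mathbb{R}^+$ (since $s^{-1}$ is the critical weight for $L^2$-smoothing in one variable), with the exponential factor guaranteeing absolute convergence at infinity. The delicate point is the $s^{-1}$ endpoint, which is exactly compensated by the fact that on $\mathbb{H}^2$ we integrate against the bounded-in-$s$ dispersive decay; this is why the spectral gap (equivalently, the exponential tail) is essential and cannot be replaced by the Euclidean dispersive estimate alone.
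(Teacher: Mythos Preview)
The paper does not prove this lemma; it is stated in Appendix~A with citations to \cite{DoooooM,EFE376,6H8UR} and no argument. Your sketch therefore supplies a proof where the paper offers none, and the overall strategy (spectral gap for (\ref{ytsr45}), Davies--Mandouvalos kernel bounds for (\ref{tyihy6ys}), interpolation for (\ref{huhu89}), semigroup calculus for (\ref{mm8}), and $TT^*$ for (\ref{xcss434fer})) is the right one. Two points need fixing.

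First, the Davies--Mandouvalos bound on $\mathbb{H}^2$ carries the factor $(1+r+s)^{-1/2}$, not $(1+r+s)^{+1/2}$. With your sign, evaluating at $r=0$ gives $s^{-1}e^{-s/4}(1+s)^{1/2}$, which is \emph{not} dominated by $s^{-1}e^{-s/4}$ for large $s$, so (\ref{tyihy6ys}) would only follow with a weaker exponential rate. With the correct negative exponent the supremum over $r$ is genuinely $\lesssim s^{-1}e^{-s/4}$ and the argument goes through as you intend.

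Second, your justification of (\ref{xcss434fer}) misidentifies the mechanism. After the $TT^*$ reduction you need the operator with kernel $K(s,s')=(s+s')^{-1}e^{-(s+s')/4}$ to be bounded on $L^2(0,\infty)$. You invoke Hardy--Littlewood--Sobolev and say the exponential tail is essential. In fact HLS concerns kernels $|s-s'|^{-\alpha}$ and does not apply at $\alpha=1$ for $L^2\to L^2$; what you actually need is Hilbert's integral inequality, which asserts that the kernel $(s+s')^{-1}$ alone already defines a bounded operator on $L^2(0,\infty)$ with norm $\pi$ (prove it by Schur's test with weight $s^{-1/2}$). The exponential factor is therefore \emph{not} needed for this step, and indeed the same $TT^*$ argument shows that (\ref{xcss434fer}) holds verbatim on $\mathbb{R}^2$ as well. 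The spectral gap is what drives (\ref{ytsr45})--(\ref{mm8}); it plays no role in (\ref{xcss434fer}).
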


Estimates of the kernel of resolvent are recalled as follows:
\begin{lemma}[\cite{TYWE5E65GVU}]\label{Tomnagdfe4}
Denote the kernel of $(-\Delta_{\Bbb H^{n}}+\sigma^2-\frac{(n-1)^2}{4})^{-1}$ by $[{ }^n\widetilde{R}]_0(\frac{n-1}{2}+\sigma,x,y)$.
Then for $\Re \sigma\ge0$, $|\sigma|\ge 1$, $r\in (0,\infty)$, we have
\begin{align}\label{hyt543wsedx}
\left| {{[{ }^n\widetilde{R}]_0}(\frac{n-1}{2} + \sigma ,x,y)} \right| \le \left\{ \begin{array}{l}
 C\left| {\log r} \right|,\mbox{  }\mbox{  }\left| {r\sigma } \right| \le 1, n=2 \\
 C_nr^{2-n},\mbox{  }\left| {r\sigma } \right| \le 1, n\ge 3 \\
 C_n{\left| \sigma  \right|^{\frac{n-1}{2}-1}}{e^{ - (\frac{n-1}{2} + {\Re} \sigma )r}},\left| {r\sigma } \right| \ge 1 \\
 \end{array} \right.
\end{align}
and for $\Re \sigma\ge0$, $|\sigma|\ge 1$, $r\in (0,\infty)$, any $\epsilon\in(0,1)$, we have
\begin{align}\label{Hypwer45thiys}
\left| {{\partial_{\sigma}[{ }^n\widetilde{R}]_0}(\frac{n-1}{2} + \sigma ,x,y)} \right| \le \left\{ \begin{array}{l}
 C\left| {\log r} \right|,\mbox{  }\mbox{  }\left| {r\sigma } \right| \le 1, n=2 \\
 C_nr^{2-n},\mbox{  }\left| {r\sigma } \right| \le 1, n\ge 3 \\
 C_{n,\epsilon}{\left| \sigma  \right|^{\frac{n-1}{2}-1}}{e^{ - (\frac{n-1}{2} + {\Re} \sigma-\epsilon )r}},\left| {r\sigma } \right| \ge 1 \\
 \end{array} \right.
\end{align}
Moreover, for $\Re \sigma\ge0$, $|\sigma|\le 1$, $r\in (0,\infty)$ we have
\begin{align}\label{jiujinfa2}
\left| {{[{ }^n\widetilde{R}]_0}(\frac{n-1}{2} + \sigma ,x,y)} \right| \le \left\{ \begin{array}{l}
C\left| {\log r} \right|,\mbox{  }\mbox{  }\left| r \right| \le 1, \mbox{  }n=2 \\
C_n|r|^{2-n},\mbox{  }\mbox{  }\left| r \right| \le 1, \mbox{  }n\ge 3 \\
C_n{\left| \sigma  \right|^{\frac{n-1}{2}-1}}{e^{ - (\frac{n-1}{2} + {\Re}\sigma )r}},\left| r \right| \ge 1 \\
\end{array} \right.
\end{align}
and for $\Re \sigma\ge0$, $|\sigma|\le 1$, $r\in (0,\infty)$
\begin{align}\label{jiujinbfa2}
\left| {{\partial_{\sigma}[{ }^n\widetilde{R}]_0}(\frac{n-1}{2} + \sigma ,x,y)} \right| \le \left\{ \begin{array}{l}
C\left| {\log r} \right|,\mbox{  }\mbox{  }\mbox{  }\mbox{  }\mbox{  }\left| r \right| \le 1, \mbox{  }n=2 \\
C_n|r|^{2-n},\mbox{  }\mbox{  }\mbox{  }\mbox{  }\left| r \right| \le 1, \mbox{  }n\ge 3\\
C_{n,\epsilon}{\left| \sigma  \right|^{\frac{n-1}{2}-1}}{e^{ - (\frac{n-1}{2} + {\Re}\sigma )r}},\left| r \right| \ge 1 \\
\end{array} \right.
\end{align}
\end{lemma}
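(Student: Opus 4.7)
The plan is to start from the closed-form representation
\[
[{ }^{n}\widetilde R]_0(\tfrac{n-1}{2}+\sigma,x,y)=(2\pi)^{-n/2}e^{-i\pi\mu}(\sinh r)^{-\mu}Q^{\mu}_{\nu}(\cosh r),\qquad \mu=\tfrac{n-2}{2},\ \nu=\sigma-\tfrac12,
\]
already recorded in (\ref{7mvbtr}), and then read off the stated bounds from standard asymptotics of the Legendre function $Q^{\mu}_{\nu}$. The natural partition of the $(r,\sigma)$-plane is into the four panels $\{|r\sigma|\le1,\,|\sigma|\le1\}$, $\{|r\sigma|\le1,\,|\sigma|\ge1\}$, $\{|r\sigma|\ge1,\,|\sigma|\le1\}$, and $\{|r\sigma|\ge1,\,|\sigma|\ge1\}$; the logic is that on the near-diagonal panels ($|r\sigma|\le 1$) only the local singularity of the Laplacian contributes, while on the far panels ($|r\sigma|\ge 1$) the hyperbolic geometry forces both the exponential factor $e^{-(\frac{n-1}{2}+\Re\sigma)r}$ and the $|\sigma|^{(n-3)/2}$ prefactor coming from $\Gamma$-quotients.

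First I would handle the near-diagonal regime by using the limiting behavior $Q^{\mu}_{\nu}(z)\sim 2^{\mu-1}\Gamma(\mu)(z-1)^{-\mu}$ as $z\to1^+$ for $\mu>0$, with the logarithmic replacement $Q^{0}_{\nu}(z)\sim-\tfrac12\log(z-1)$ when $n=2$. Combined with $\cosh r-1\sim r^2/2$ and the $(\sinh r)^{-\mu}\sim r^{-\mu}$ prefactor, this yields $|\log r|$ in two dimensions and $r^{2-n}$ in higher dimensions, uniformly for $|\sigma|\le 1/r$. For the far regime I would use the Mehler--type integral
\[
Q^{\mu}_{\nu}(\cosh r)=e^{i\pi\mu}\frac{\Gamma(\nu+\mu+1)}{\Gamma(\nu+1)}\int_{0}^{\infty}(\cosh r+\sinh r\cosh t)^{-\nu-\mu-1}(\sinh r\sinh t)^{\mu}\,dt,
\]
extract the decaying factor $e^{-(\nu+\mu+1)r}=e^{-(\frac{n-1}{2}+\Re\sigma)r}$ by the substitution $s=e^{r}(1+\cosh t)/2$, and control the residual Laplace-type integral. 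Stirling's formula applied to $\Gamma(\nu+\mu+1)/\Gamma(\nu+1)$ gives the claimed $|\sigma|^{(n-3)/2}$ growth in the $|\sigma|\ge1$ regime and a bounded contribution when $|\sigma|\le1$.

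For the $\sigma$-derivative bounds in (\ref{Hypwer45thiys}) and (\ref{jiujinbfa2}) I would differentiate under the integral sign; the $\partial_\sigma$ brings down a factor $-\log(\cosh r+\sinh r\cosh t)$, which on the near-diagonal side is merely $O(\log r)$ and therefore absorbed into the already logarithmic bound, while on the far-diagonal side it behaves like $r+O(1)$ and costs at most $re^{-(\frac{n-1}{2}+\Re\sigma)r}\le C_\epsilon e^{-(\frac{n-1}{2}+\Re\sigma-\epsilon)r}$, which is precisely the $\epsilon$-loss appearing in the statement. The main technical obstacle is the transition curve $|r\sigma|\sim1$, where neither the near-singular nor the large-argument asymptotics is directly applicable; bridging the two requires a uniform (in $\sigma$) expansion of $Q^{\mu}_{\nu}(\cosh r)$ of the type developed in \cite{TYWE5E65GVU}, together with a careful check that the stated piecewise bounds agree up to harmless constants along this curve. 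Since the lemma is quoted from \cite{TYWE5E65GVU}, in the present paper one would simply cite that reference rather than reproduce these asymptotics in detail.
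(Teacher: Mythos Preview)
Your proposal is correct and, as you yourself note at the end, matches what the paper actually does: the lemma is stated with attribution to \cite{TYWE5E65GVU} and no proof is given in the paper. Your sketch via the Legendre-function representation (\ref{7mvbtr}) and standard near/far asymptotics is the natural argument behind the cited result, so there is nothing to compare.
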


\begin{lemma}[\cite{6HDFRG}]\label{xuannv}
In the $\Bbb H^2$ case, for $\Re \sigma\ge0$, $|\sigma|\ge 1$, $r\in (0,\infty)$, we have
\begin{align}\label{jiujinfa3}
\left| {{\nabla _x}{[{ }^2\widetilde{R}]_0}(\frac{1}{2} + \sigma ,x,y)} \right| \lesssim \left\{ \begin{array}{l}
 {r^{ - 2}}{\left( {\sinh r} \right)^2}{\left( {{{\cosh }^2}r - 1} \right)^{ - \frac{1}{2}}},\mbox{  }\mbox{  }\left| {r\sigma } \right| \le 1 \\
 {\left| \sigma  \right|^{\frac{1}{2}}}{e^{ - (\frac{3}{2} + {\Re}\sigma )r}}{\left( {\sinh r} \right)^2}{\left( {{{\cosh }^2}r - 1} \right)^{ - \frac{1}{2}}},\left| {r\sigma } \right| \ge 1 \\
 \end{array} \right.
\end{align}
and for $\Re \sigma\ge0$, $|\sigma|\le 1$, $r\in (0,\infty)$ we have
\begin{align}\label{jiujinfa4}
\left| {{\nabla _x}{[{ }^2\widetilde{R}]_0}(\frac{1}{2} + \sigma ,x,y)} \right| \lesssim \left\{ \begin{array}{l}
 {r^{ - 2}}{\left( {\sinh r} \right)^2}{\left( {{{\cosh }^2}r - 1} \right)^{ - \frac{1}{2}}},\mbox{  }\left| r \right| \le 1 \\
 {\left| \sigma  \right|^{\frac{1}{2}}}{e^{ - (\frac{3}{2} + {\Re}\sigma )r}}{\left( {\sinh r} \right)^2}{\left( {{{\cosh }^2}r - 1} \right)^{ - \frac{1}{2}}},\left| r \right| \ge 1 \\
\end{array} \right.
\end{align}
\end{lemma}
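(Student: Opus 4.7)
\medskip

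\noindent\textbf{Proof plan for Lemma \ref{xuannv}.}
Since $|\nabla_x d(x,y)| = 1$, the chain rule reduces the estimate of $|\nabla_x [{}^2\widetilde{R}]_0(\tfrac{1}{2}+\sigma;x,y)|$ to the radial derivative $|\partial_r[{}^2\widetilde{R}]_0(\tfrac{1}{2}+\sigma;r)|$ with $r=d(x,y)$. So my plan is to compute $\partial_r$ of the explicit formula (\ref{7mvbtr}) and reduce it to a resolvent kernel in a higher dimensional hyperbolic space, where the already-established pointwise bounds of Lemma \ref{Tomnagdfe4} can be applied directly. This is exactly parallel to how the paper computes $\partial_r^2[{}^2\widetilde{R}]_0$ in (\ref{stein5})--(\ref{stein7}) just above, but using a single derivative; the second derivative case is the template I will imitate.

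Specifically, from (\ref{7mvbtr}) in the case $n=2$ we have $[{}^2\widetilde{R}]_0(\tfrac12+\sigma;r) = c\, Q_{\sigma-1/2}^{0}(\cosh r)$. Differentiating once in $r$ and invoking the identity (\ref{tiand}) with $m=1$, i.e.\ $(z^2-1)^{1/2}\tfrac{d}{dz}Q_\eta^0(z) = Q_\eta^1(z)$, I get
\[
\partial_r[{}^2\widetilde{R}]_0(\tfrac12+\sigma;r) \;=\; c\,\sinh r\,(\cosh^2 r-1)^{-1/2}\,Q_{\sigma-1/2}^{1}(\cosh r).
\]
Next I recognize $Q_{\sigma-1/2}^{1}(\cosh r)$ via (\ref{7mvbtr}) with $n=4$ (so $\mu=1$, $\nu=\sigma-1/2$): up to a universal constant,
\[
Q_{\sigma-1/2}^{1}(\cosh r) \;=\; c'\,(\sinh r)\,[{}^4\widetilde{R}]_0(\tfrac{3}{2}+\sigma;r).
\]
Substituting gives the clean identity
\[
\partial_r[{}^2\widetilde{R}]_0(\tfrac12+\sigma;r) \;=\; C\,(\sinh r)^{2}(\cosh^2 r-1)^{-1/2}\,[{}^4\widetilde{R}]_0(\tfrac32+\sigma;r),
\]
which is the same mechanism the paper just used to produce the $[{}^4\widetilde{R}]_0$ and $[{}^6\widetilde{R}]_0$ terms in the second-derivative formula.

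Finally, the conclusion is immediate from Lemma \ref{Tomnagdfe4} applied in dimension $n=4$ to $[{}^4\widetilde{R}]_0(\tfrac32+\sigma;r)$: in the regime $|r\sigma|\le 1$ (resp.\ $|r|\le 1$ when $|\sigma|\le 1$), the bound (\ref{hyt543wsedx}) (resp.\ (\ref{jiujinfa2})) gives $|[{}^4\widetilde{R}]_0(\tfrac32+\sigma;r)|\lesssim r^{-2}$, producing the small-$r$ branches of (\ref{jiujinfa3}) and (\ref{jiujinfa4}); in the regime $|r\sigma|\ge 1$ (resp.\ $|r|\ge 1$), the same lemma yields $|[{}^4\widetilde{R}]_0(\tfrac32+\sigma;r)|\lesssim |\sigma|^{1/2}e^{-(3/2+\Re\sigma)r}$, which produces the large-$r$ branch with the stated exponential decay $e^{-(3/2+\Re\sigma)r}$. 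There is no genuine obstacle here: the whole argument is a one-line differentiation plus recognition step, followed by invoking Lemma \ref{Tomnagdfe4}. The only minor point to double-check is tracking the constants $c, c'$ produced by (\ref{tiand}) and the normalizations in (\ref{7mvbtr}), but these are inessential since the lemma is stated up to absolute constants.
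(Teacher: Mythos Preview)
Your proposal is correct and is precisely the intended approach: the paper cites this lemma from the companion work \cite{6HDFRG}, but the very same mechanism---differentiating (\ref{7mvbtr}) via the Legendre identity (\ref{tiand}) to lift to $[{}^4\widetilde{R}]_0(\tfrac{3}{2}+\sigma;r)$, then reading off the bounds from Lemma \ref{Tomnagdfe4} in dimension $n=4$---is exactly what the paper itself does for the second radial derivative in (\ref{stein5})--(\ref{stein7}) and for $\partial_\epsilon\partial_r[{}^2\widetilde{R}]_0$ in the proof of Lemma \ref{asw234dfe4}. Your one-derivative version is the direct simplification of that computation and nothing further is needed.
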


\section{Appendix B. Two resolvent estimates in $L^p$}

\begin{lemma}\label{L1P}
We have the following high frequency $L^p $ estimates for resolvent of $-\Delta$ on $\Bbb H^2$.
\begin{itemize}
  \item (a) For $p\in (1,\infty)$, $\Re \sigma\ge 3$,
if $\Re \sigma\ge c|\sigma| $, then for any $\epsilon>0$
\begin{align}
\|(-\Delta+\sigma^2-\frac{1}{4})^{-1}f\|_{L^p_x}&\lesssim |\sigma|^{-2+\epsilon}\|f\|_{L^p_x}\label{LPP}\\
\|\nabla(-\Delta+\sigma^2-\frac{1}{4})^{-1}f\|_{L^p_x}&\lesssim |\sigma|^{-1}\|f\|_{L^p_x}£¬\label{LPQ}
\end{align}
where the implicit constant depends only on $c$.
  \item (b) For $p\in (1,\infty)$, $\Re \sigma\ge 0$,
if $\Re \left(\sigma^2-\frac{1}{4}\right)\ge 0$, then
\begin{align}
\|(-\Delta+\sigma^2-\frac{1}{4})^{-1}f\|_{L^p_x}&\lesssim \min\left(1,\frac{1}{\Re(\sigma ^{2})}\right)\|f\|_{L^p_x}\label{L2qPP}\\
\|\nabla(-\Delta+\sigma^2-\frac{1}{4})^{-1}f\|_{L^p_x}&\lesssim |\sigma|^{\frac{1}{2}}\|f\|_{L^p_x}.\label{L2qPQ}
\end{align}
\end{itemize}
\end{lemma}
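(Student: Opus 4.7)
I would approach both parts by combining the pointwise resolvent kernel estimates of Lemma \ref{Tomnagdfe4} and Lemma \ref{xuannv} with two different tools: Young's convolution inequality for part (a), and a Laplace subordination identity to the heat semigroup for part (b). The starting observation is that $R_0(\sigma) := (-\Delta + \sigma^2 - \tfrac{1}{4})^{-1}$ acts on $f \in L^p(\Bbb H^2)$ as convolution with the radial kernel $K(r) = [{}^2 \widetilde{R}]_0(\tfrac{1}{2} + \sigma, r)$ from \eqref{7mvbtr}, so Young's inequality (valid for all $p \in (1,\infty)$) reduces the $L^p \to L^p$ bounds to
\[
\|R_0(\sigma)\|_{L^p \to L^p} \le \|K\|_{L^1(\Bbb H^2)}, \qquad \|\nabla R_0(\sigma)\|_{L^p \to L^p} \le \|\nabla_x K\|_{L^1(\Bbb H^2)},
\]
where for a radial function $\|\cdot\|_{L^1(\Bbb H^2)} = 2\pi\int_0^\infty |\cdot|(r)\,\sinh r\, dr$.

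For part (a), with $|\sigma|\ge 3$ and $\Re \sigma \ge c|\sigma|$, I would split the $r$-integral at the natural scale $r = 1/|\sigma|$. On $[0,1/|\sigma|]$ the bound $|K(r)| \le C|\log r|$ from Lemma \ref{Tomnagdfe4} combined with $\sinh r \le 2r$ gives $\int_0^{1/|\sigma|}|\log r|\sinh r\,dr \lesssim |\sigma|^{-2}\log|\sigma|$, absorbed into $|\sigma|^{-2+\epsilon}$ for any $\epsilon>0$. On $[1/|\sigma|,\infty)$ the bound $|K(r)| \le C|\sigma|^{-1/2}e^{-(1/2+\Re\sigma)r}$ together with $\sinh r \le e^r/2$ and $\Re\sigma\ge 3$ yields an integral of order $|\sigma|^{-1/2}(\Re\sigma-1/2)^{-1}e^{-(\Re\sigma-1/2)}$, which is exponentially small in $|\sigma|$ since $\Re\sigma\ge c|\sigma|$. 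This proves \eqref{LPP}. The same splitting applied to the bounds of Lemma \ref{xuannv}, combined with the algebraic simplification $(\sinh r)^2/\sqrt{\cosh^2 r - 1}= \sinh r$, gives $\|\nabla_x K\|_{L^1(\Bbb H^2)} \lesssim 1/|\sigma|$, establishing \eqref{LPQ}.

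For part (b), rather than estimate $\|K\|_{L^1}$ directly (which would incur a spurious $\log|\sigma|$ loss missing the sharp target $1/\Re(\sigma^2)$), I would use the subordination formula
\[
R_0(\sigma) = \int_0^\infty e^{-t(\sigma^2-1/4)}\, e^{t\Delta}\, dt.
\]
This is justified by Hille--Yosida: $\Delta$ generates the heat semigroup on $L^p(\Bbb H^2)$, and Lemma \ref{8.5} equation \eqref{mm8} (with $\alpha=0$) yields $\|e^{t\Delta}\|_{L^p\to L^p} \le Ce^{-\delta_1 t}$ for some $\delta_1>0$, so the integral converges absolutely in operator norm whenever $\Re(\sigma^2 - 1/4)+\delta_1>0$, which holds under the hypothesis $\Re(\sigma^2-1/4)\ge 0$. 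Integrating in $t$ yields
\[
\|R_0(\sigma)\|_{L^p \to L^p} \le \frac{C}{\Re(\sigma^2) - 1/4 + \delta_1} \lesssim \min\!\Big(1, \tfrac{1}{\Re(\sigma^2)}\Big),
\]
by a routine case split on $\Re(\sigma^2)\le 1$ versus $\Re(\sigma^2)\ge 1$, giving \eqref{L2qPP}. For \eqref{L2qPQ} I would combine the same representation with the bound $\|\nabla e^{t\Delta}f\|_{L^p} \lesssim t^{-1/2}e^{-\delta_1 t}\|f\|_{L^p}$, which follows from Lemma \ref{8.5} equation \eqref{mm8} with $\alpha=1/2$ and the Riesz transform equivalence \eqref{y6frtsder45}. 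This produces $\|\nabla R_0(\sigma)\|_{L^p\to L^p}\lesssim 1/\sqrt{\Re(\sigma^2)-1/4+\delta_1} \lesssim 1 \lesssim |\sigma|^{1/2}$, since $|\sigma|\ge 1/2$ under the hypothesis.

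There is no deep obstacle: both parts are computational once the right reduction is chosen. The one genuine subtlety is that a direct Young's inequality argument with the Legendre-function kernel bounds costs a $\log|\sigma|$ factor that cannot be absorbed into the sharp target $1/\Re(\sigma^2)$ of part (b), which is precisely why the heat-semigroup subordination is essential there. The argument hinges on the strict positivity of the $L^p$ heat-semigroup decay rate $\delta_1>0$ for every $p\in(1,\infty)$, reflecting the $L^p$ spectral gap of $-\Delta$ on $\Bbb H^2$.
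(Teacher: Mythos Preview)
Your overall strategy matches the paper's: Young's inequality on the Legendre-function kernel for part (a), and the heat-semigroup/Lumer--Phillips route for \eqref{L2qPP}. For \eqref{L2qPQ} you take a slightly different path (subordination plus the smoothing bound $\|\nabla e^{t\Delta}\|_{L^p\to L^p}\lesssim t^{-1/2}e^{-\delta_1 t}$) whereas the paper reuses the kernel integrals \eqref{NM12}--\eqref{NM23}; your route is valid and in fact yields the stronger uniform bound $O(1)$.

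There is, however, a genuine computational gap in your treatment of part (a). On the far piece you write that
\[
\int_{1/|\sigma|}^\infty |\sigma|^{-1/2}e^{-(1/2+\Re\sigma)r}\sinh r\,dr \;\approx\; |\sigma|^{-1/2}(\Re\sigma-\tfrac12)^{-1}e^{-(\Re\sigma-1/2)},
\]
and conclude it is exponentially small. The exponent is wrong: evaluating at the lower limit $r=1/|\sigma|$ produces $e^{-(\Re\sigma-1/2)/|\sigma|}\sim e^{-c}$, a constant, not $e^{-c|\sigma|}$. With the crude bound $\sinh r\le e^r/2$ on all of $[1/|\sigma|,\infty)$ you therefore get only $|\sigma|^{-3/2}$ for \eqref{LPP} and $|\sigma|^{-1/2}$ for \eqref{LPQ}, missing the targets $|\sigma|^{-2+\epsilon}$ (for arbitrary $\epsilon>0$) and $|\sigma|^{-1}$. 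The fix, as the paper carries out, is a three-region split at $r=1/|\sigma|$ and $r=1$: on $[1/|\sigma|,1]$ use $\sinh r\approx r$ and rescale $\tilde r=|\sigma|r$ to obtain $|\sigma|^{-5/2}$; only on $[1,\infty)$ does the exponential factor $e^{-(\Re\sigma-1/2)r}$ evaluated at $r=1$ give genuine exponential decay. The same refinement repairs your estimate for \eqref{LPQ}.
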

\begin{proof}
We prove (\ref{LPP}) first. By (\ref{jiujinfa3}) and Young's inequality, it suffices to prove
\begin{align}
\int^{\frac{1}{|\sigma|}}_0\log r|rdr&\lesssim |\sigma|^{-2+\epsilon}\label{NM1}\\
\int^{\infty}_{{|\sigma|}^{-1}}\sigma^{-\frac{1}{2}}e^{-(\frac{1}{2}+\Re \sigma)r}\sinh {r} dr&\lesssim |\sigma|^{-2+\epsilon}.\label{NM2}
\end{align}
(\ref{NM1}) is direct. To verify (\ref{NM2}), we consider two cases:\\

When $r\in[\sigma^{-1},1]$ we have
\begin{align}
&\int^{1}_{{|\sigma|}^{-1}}|\sigma|^{-\frac{1}{2}}e^{-(\frac{1}{2}+\Re \sigma)r}\sinh r dr\lesssim\int^{1}_{{|\sigma|}^{-1}}|\sigma|^{-\frac{1}{2}}e^{-(\frac{1}{2}+\Re \sigma)r} r dr\\
&\lesssim\int^{|\sigma|}_{1}|\sigma|^{-\frac{5}{2}}e^{-\frac{\Re \sigma}{|\sigma| } \widetilde{r}}\widetilde{r} d\widetilde{r}\lesssim |\sigma|^{-\frac{5}{2}},
\end{align}
where in the last inequality we used $\Re \sigma\ge c|\sigma| $.

When $r\in[1,\infty)$ we have
\begin{align}
&\int^{\infty}_{1}|\sigma|^{-\frac{1}{2}}e^{-(\frac{1}{2}+\Re \sigma)r}\sinh r dr\lesssim e^{-\Re \sigma+1}\int^{\infty}_{1}|\sigma|^{-\frac{1}{2}}e^{-\frac{3}{2}r} \sinh r dr\\
&\lesssim e^{-\frac{1}{2}\Re \sigma}\lesssim | \sigma|^{-n}
\end{align}
for any $n>0$, where in the last inequality we used $\Re \sigma\ge c|\sigma| $.

Second, we prove (\ref{LPQ}). By (\ref{jiujinfa4}) and Young's inequality it suffices to prove
\begin{align}
|\sigma|^{\frac{1}{2}}\int^{\frac{1}{|\sigma|}}_0 {r^{ - 2}}{\left( {\sinh r} \right)^2}{\left( {{{\cosh }^2}r - 1} \right)^{ - \frac{1}{2}}}rdr&\lesssim |\sigma|^{-1}\label{NM12}\\
|\sigma|^{\frac{1}{2}}\int^{1}_{{|\sigma|}^{-1}}r^{-2}(\sinh  {r})^2(\cosh r ^2r-1)^{-\frac{1}{2}}e^{-(\frac{3}{2}+\Re\sigma)r}rdr&\lesssim |\sigma|^{-1}\label{NM22}\\
|\sigma|^{\frac{1}{2}}\int^{\infty}_{1}r^{-2}(\sinh  {r})^2(\cosh  ^2r-1)^{-\frac{1}{2}}e^{-(\frac{3}{2}+\Re\sigma)r}e^{r}dr&\lesssim |\sigma|^{-1}\label{NM23}.
\end{align}
(\ref{NM12})-(\ref{NM23}) follow by direct calculations as above. (\ref{L2qPQ}) is much easier and follows directly from estimating the LHS of (\ref{NM12})-(\ref{NM23}).

Now we prove (\ref{L2qPP}). By (\ref{mm8}), $e^{\delta_1 t}e^{t\Delta}$ is a contraction $C^0$ semigroup in $L^p_x$. Since the infinitesimal generator of $e^{\delta_1 t}e^{t\Delta}$ is $\delta_1+\Delta$. Then by Lumer-Phillips theorem or [ Corollary 3.6 , \cite{Pazy}], $\{z:\Re z>0\}\subset \rho(\Delta+\delta_1)$, and for such $z$, there  holds
\begin{align*}
\|(\Delta+\delta_1-z)^{-1}f\|_{L^p_x}&\lesssim \frac{1}{\Re(z)}\|f\|_{L^p_x}.
\end{align*}
Thus for $\Re \left(\sigma^2-\frac{1}{4}\right)\ge 0$ we have
\begin{align*}
\|(-\Delta+\sigma^2-\frac{1}{4})^{-1}f\|_{L^p_x}\lesssim \frac{1}{\Re(\delta_1+\sigma^2-\frac{1}{4})}\|f\|_{L^p_x},
\end{align*}
which gives (\ref{L2qPP}).
\end{proof}

\begin{lemma}\label{LP}
We have the following high frequency $L^p $ estimates for resolvent of $H$. For $p\in (1,\infty)$, $\Re \sigma\ge 3$,
if $\Re \sigma\ge c|\sigma| $, then for any $\epsilon>0$ there exists $K_0$ sufficiently large such that for all $|\sigma|\ge K_0$
\begin{align}
\|(H+\sigma^2-\frac{1}{4})^{-1}f\|_{L^p_x}&\lesssim |\sigma|^{-2+\epsilon}\|f\|_{L^p_x}\label{LPY}.
\end{align}
where the implicit constant depends only on $c$.
\end{lemma}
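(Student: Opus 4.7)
\medskip

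\noindent\textbf{Proof plan for Lemma 9.2.} The plan is to view $H=-\Delta+W$ as a perturbation of $-\Delta$ and apply the free resolvent estimates of Lemma 9.1(a). Writing $z=\frac{1}{4}-\sigma^2$ and $R_0(z)=(-\Delta-z)^{-1}$, I will use the formal identity
\begin{equation*}
(H+\sigma^2-\tfrac{1}{4})^{-1}=R_0(z)\bigl(I+W R_0(z)\bigr)^{-1},
\end{equation*}
and justify it by showing $\|W R_0(z)\|_{L^p\to L^p}\le \tfrac{1}{2}$ when $|\sigma|\ge K_0$.

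First I would decompose $W$ according to Proposition 4.2 into its electric part $V$ and its magnetic part $X=-2h^{ii}\mathcal{A}_i\partial_i$, both of which are bounded by $C$ on $\mathbb{H}^2$ (in fact exponentially decaying, but all we need here is $L^\infty$ boundedness from parts (b)--(c) of Proposition 4.2). For the electric piece, Lemma 9.1(a) gives
\begin{equation*}
\|V R_0(z)f\|_{L^p}\le \|V\|_{L^\infty}\|R_0(z)f\|_{L^p}\lesssim |\sigma|^{-2+\epsilon}\|f\|_{L^p}.
\end{equation*}
For the magnetic piece, since $|X f|\lesssim|\mathcal{A}||\nabla f|$, Lemma 9.1(a) again applies to give
\begin{equation*}
\|X R_0(z)f\|_{L^p}\le C\|\mathcal{A}\|_{L^\infty}\|\nabla R_0(z)f\|_{L^p}\lesssim |\sigma|^{-1}\|f\|_{L^p}.
\end{equation*}
Combining the two estimates yields $\|W R_0(z)\|_{L^p\to L^p}\le C_0|\sigma|^{-1}$, so choosing $K_0$ so large that $C_0 K_0^{-1}\le \tfrac{1}{2}$ makes the Neumann series for $(I+WR_0(z))^{-1}$ converge absolutely in $\mathcal{L}(L^p,L^p)$ with operator norm at most $2$ for all $|\sigma|\ge K_0$.

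Consequently
\begin{equation*}
\|(H+\sigma^2-\tfrac{1}{4})^{-1}f\|_{L^p}\le \|R_0(z)\|_{L^p\to L^p}\,\|(I+WR_0(z))^{-1}\|_{L^p\to L^p}\|f\|_{L^p}\le 2C|\sigma|^{-2+\epsilon}\|f\|_{L^p},
\end{equation*}
which is exactly (9.8). There is no real obstacle here: the only non-routine input is the $|\sigma|^{-2+\epsilon}$ and $|\sigma|^{-1}$ bounds from Lemma 9.1(a), and the logic is a standard Neumann perturbation argument made possible precisely because $W$ has the gain of one derivative at worst and $\mathcal{A},V$ are bounded.
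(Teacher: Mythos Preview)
Your proposal is correct and follows essentially the same approach as the paper: use the resolvent identity $(H+\sigma^2-\tfrac14)^{-1}=R_0(z)(I+WR_0(z))^{-1}$, bound $\|WR_0(z)\|_{L^p\to L^p}$ small via Lemma~9.1(a), and invert by Neumann series. If anything, your write-up is slightly more careful than the paper's, since you explicitly separate the electric part $V$ (using the $|\sigma|^{-2+\epsilon}$ bound) from the magnetic part $X$ (which needs the gradient estimate $\|\nabla R_0(z)\|_{L^p\to L^p}\lesssim|\sigma|^{-1}$), whereas the paper's displayed inequality lumps them together.
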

\begin{proof}
The proof is an easy application of resolvent identity and Lemma \ref{L1P}. In fact formally one has
\begin{align*}
(H+\sigma^2-\frac{1}{4})^{-1}=(-\Delta+\sigma^2-\frac{1}{4})^{-1}\left(I+W(-\Delta+\sigma^2-\frac{1}{4})^{-1}\right)^{-1}.
\end{align*}
By Lemma \ref{L1P} it suffices to prove
\begin{align*}
\|W(-\Delta+\sigma^2-\frac{1}{4})^{-1}\|_{ (L^p\to L^p)}\le \frac{1}{2}.
\end{align*}
By Lemma \ref{L1P}, this is easy to obtain by letting $|\sigma|\ge K_0\gg 1$:
\begin{align*}
&\|W(-\Delta+\sigma^2-\frac{1}{4})^{-1}\|_{ (L^p\to L^p)}\\
&\lesssim (\|\mathcal{A}\|_{L^{\infty}}+\|V\|_{L^{\infty}})\|(-\Delta+\sigma^2-\frac{1}{4})^{-1}\|_{ (L^p\to L^p)}\\
&\lesssim  CK^{-1+\epsilon}_0,
\end{align*}
where we applied Proposition {4.1} to bound the potentials in $W$.
\end{proof}

\section{Appendix C. Harnack inequality for linear heat equations and Bochner inequality for heat flows}

The following version of Harnack inequality for linear heat equations on complete manifolds was widely used in heat flow literature.
\begin{lemma}\label{density}
Suppose that $f$ is a nonnegative function which satisfies
\begin{align*}
\partial_t f-\Delta f\le 0,
\end{align*}
then it is known in the heat flow literature that for $t\ge1$
\begin{align*}
f(x,t)\le \int^t_{t-1}\int_{B(x,1)}f(y,s){\rm{dvol_y}}ds.
\end{align*}
\end{lemma}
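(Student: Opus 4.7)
The plan is to prove this as a parabolic mean value inequality for nonnegative subsolutions of the heat equation on $\mathbb{H}^2$ via the standard Moser iteration scheme; the constant on the right is implicitly absorbed (the inequality should be read up to a universal multiplicative constant, in line with the paper's $\lesssim$ conventions). The two structural inputs that make Moser iteration work uniformly in $x$ are (i) $\mathbb{H}^2$ is a homogeneous space, so there is no dependence on the base point $x$, and (ii) $\mathbb{H}^2$ has bounded geometry, so local Sobolev and Poincar\'e inequalities hold uniformly on balls of radius at most $1$.

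First I would establish a Caccioppoli-type energy inequality. Multiplying $\partial_t f-\Delta f\le 0$ by $\eta^{2}f^{p-1}$ with $p\ge 1$, where $\eta(y,s)$ is a smooth spacetime cutoff supported in a parabolic cylinder $Q_{r}:=B(x,r)\times [t-r^{2},t]$, and integrating by parts gives
$$
\sup_{s\in[t-r^{2},t]}\int_{B(x,r)}\eta^{2}f^{p}\,d\mathrm{vol}+\int_{Q_{r}}\eta^{2}|\nabla f^{p/2}|^{2}\,d\mathrm{vol}\,ds\lesssim \int_{Q_{r}}f^{p}\bigl(|\nabla\eta|^{2}+\eta|\partial_{s}\eta|\bigr)d\mathrm{vol}\,ds.
$$
Combining this energy estimate with the local Sobolev embedding $\|g\|_{L^{2\cdot(1+\frac{2}{n})}(B(x,r))}\lesssim \|\nabla g\|_{L^{2}(B(x,r))}+r^{-1}\|g\|_{L^{2}(B(x,r))}$ (valid uniformly for $0<r\le 1$ by bounded geometry), an interpolation of the $L^{\infty}_{s}L^{2}_{y}$ and $L^{2}_{s}L^{2(1+2/n)}_{y}$ bounds yields the reverse-H\"older step: for any $p\ge 2$ and radii $\tfrac{1}{2}\le r'<r\le 1$,
$$
\|f\|_{L^{p(1+2/n)}(Q_{r'})}\lesssim \bigl(r-r'\bigr)^{-\kappa}\|f\|_{L^{p}(Q_{r})},
$$
with $\kappa$ a fixed exponent depending only on $n=2$.

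Second, I would iterate this step along a sequence of shrinking cylinders $Q_{r_{k}}$ with $r_{k}\searrow \tfrac{1}{2}$ and exponents $p_{k}=2(1+\tfrac{2}{n})^{k}\to\infty$, and pass to the supremum, obtaining
$$
\sup_{Q_{1/2}} f\;\lesssim\; \Bigl(\int_{Q_{1}}f^{2}\,d\mathrm{vol}\,ds\Bigr)^{1/2}.
$$
Finally, to upgrade the right-hand side from $L^{2}$ to $L^{1}$ I would use the standard Li--Schoen/Saloff-Coste self-improving trick: apply the iteration with starting exponent $p_{0}\in(0,2)$ and, within the iteration, interpolate $\|f\|_{L^{p_{k}}}$ against the supremum via Young's inequality to absorb a small power of the supremum on the left. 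This yields
$$
f(x,t)\;\lesssim\;\int_{t-1}^{t}\int_{B(x,1)}f(y,s)\,d\mathrm{vol}_{y}\,ds,
$$
which is the desired statement.

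The main obstacle is essentially bookkeeping rather than analysis: one has to check that the Sobolev constant, the Caccioppoli constant, and the iteration exponents can be taken independent of the base point $x$. Both of these follow from the isometry group of $\mathbb{H}^{2}$ acting transitively, so all local Sobolev/Poincar\'e constants on unit balls agree; thus the universal constant in the final inequality depends only on $n=2$. There is no genuine analytic difficulty beyond carrying out the classical Moser scheme.
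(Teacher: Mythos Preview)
Your Moser iteration argument is correct and is the standard route to this parabolic mean value inequality; the bounded geometry of $\mathbb{H}^{2}$ and its transitive isometry group indeed make all the local constants uniform in $x$, and the Li--Schoen self-improvement from $L^{2}$ to $L^{1}$ control is the right final step. Note, however, that the paper does not prove this lemma at all: it is stated in Appendix~C as a fact ``widely used in heat flow literature'' and simply invoked where needed. So there is no paper proof to compare against --- your proposal supplies the omitted argument, and does so along the expected lines.
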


We collect the Bochner inequalities for heat flows in the following two lemmas.
\begin{lemma}
If $(u,\partial_tu)$ solves (\ref{z32ccc344fyeo}) in $\mathcal{X}_T$, then
\begin{align}
&{\partial _s}{\left| {{\nabla _t}{\partial _s}\widetilde{u}} \right|^2} - \Delta {\left| {{\nabla _t}{\partial _s}\widetilde{u}} \right|^2} + 2{\left| {\nabla {\nabla _t}{\partial _s}\widetilde{u}} \right|^2} \lesssim \left| {{\nabla _t}{\partial _s}\widetilde{u}} \right|\left| {\nabla {\partial _s}\widetilde{u}} \right|\left| {{\partial _t}\widetilde{u}} \right|\left| {d\widetilde{u}} \right| \nonumber\\
&+ \left| {\nabla {\partial _t}\widetilde{u}} \right|\left| {{\partial _s}\widetilde{u}} \right|\left|{\nabla _t}{\partial _s}\widetilde{u}\right|\left| {d\widetilde{u}} \right| + \left| {\nabla d\widetilde{u}} \right|\left| {{\partial _t}\widetilde{u}} \right|\left|{\nabla _t}{\partial _s}\widetilde{u}\right|\left| {{\partial _s}\widetilde{u}} \right| +{\left| {{\nabla _t}{\partial _s}\widetilde{u}} \right|^2}{\left| {d\widetilde{u}} \right|^2}  ,\label{Hwer45thiys}
\end{align}
and it holds
\begin{align}
&{\partial _s}{\left| {{\nabla}{\partial _s}\widetilde{u}} \right|^2} - \Delta {| {{\nabla}{\partial _s}\widetilde{u}} |^2} + 2{\left| { {\nabla^2 }{\partial _s}\widetilde{u}} \right|^2} \lesssim {| {{\nabla}{\partial _s}\widetilde{u}} |^2}{| {d\widetilde{u}} |^2} + |\nabla\partial_s\widetilde{u}|^2\nonumber\\
&+ | {{\nabla}{\partial _s}\widetilde{u}} || { {\partial _s}\widetilde{u}} || {{\nabla d}\widetilde{u}}|| {d\widetilde{u}} |+| {\nabla \partial_s\widetilde{u}}|| {{\partial _s}\widetilde{u}} ||{d}\widetilde{u}|^3.\label{icionm}
\end{align}
Moreover we have
\begin{align}
&{\partial _s}{| {{\nabla}{\partial _t}\widetilde{u}} |^2} - \Delta {| {{\nabla}{\partial _t}\widetilde{u}} |^2} + 2{| { {\nabla^2 }{\partial _t}\widetilde{u}} |^2} \lesssim |\nabla\partial_t\widetilde{u}|^2|d\widetilde{u}|^2+ |\nabla\partial_t\widetilde{u}|^2\nonumber\\
&+|\partial_s\widetilde{u}||d\widetilde{u}|^2|\nabla\partial_t\widetilde{u}|+ | {{\nabla}{\partial _t}\widetilde{u}} || { {\partial _t}\widetilde{u}} || {{\nabla d}\widetilde{u}}|| {d\widetilde{u}}|+| {\nabla \partial_t\widetilde{u}}|| {{\partial _t}\widetilde{u}} ||{d}\widetilde{u}|^3.\label{y45aswe345t}
\end{align}
\end{lemma}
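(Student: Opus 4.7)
All three inequalities are covariant Bochner-type identities for the (negative gradient) heat flow $\partial_s\widetilde u=\tau(\widetilde u)=h^{jk}D_j\partial_k\widetilde u-h^{jk}\Gamma^l_{jk}\partial_l\widetilde u$, so the plan is uniform: pick the vector field $V\in \widetilde u^*TN$ that we wish to control (either $\nabla_t\partial_s\widetilde u$, $\nabla\partial_s\widetilde u$, or $\nabla\partial_t\widetilde u$), derive a parabolic equation for $V$ by commuting covariant derivatives with the heat operator $\partial_s-h^{jk}D_jD_k$, and then apply the pointwise Bochner identity
\[
\tfrac12\bigl(\partial_s-\Delta\bigr)|V|^2+|\nabla V|^2=\bigl\langle (\partial_s-h^{jk}D_jD_k)V,V\bigr\rangle
\]
(valid whenever $V$ is a section of $\widetilde u^*TN$ and $\Delta$ is the scalar Laplacian on $M=\mathbb{H}^2$; the extra $h^{jk}\Gamma^l_{jk}D_l$ term is absorbed into the covariant Laplacian on the bundle).

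The key commutation rule will be $[D_\alpha,D_\beta]W=\mathbf{R}(\widetilde u)(\partial_\alpha\widetilde u,\partial_\beta\widetilde u)W$, together with $D_s\partial_\alpha\widetilde u=D_\alpha\partial_s\widetilde u$. For $V=\nabla_t\partial_s\widetilde u$, for instance, I would first write
\[
D_sV=D_tD_s\partial_s\widetilde u+\mathbf{R}(\partial_s\widetilde u,\partial_t\widetilde u)\partial_s\widetilde u,
\]
then expand $D_s\partial_s\widetilde u=h^{jk}D_jD_k\partial_s\widetilde u-h^{jk}\Gamma^l_{jk}D_l\partial_s\widetilde u$, commute $D_t$ past $D_jD_k$ to produce $h^{jk}D_jD_kV$ plus three curvature terms of schematic form $\mathbf{R}(\partial_t\widetilde u,\partial_j\widetilde u)D_k\partial_s\widetilde u$, and finally bound every curvature contribution by $|\mathbf{R}|=O(1)$ on $N=\mathbb{H}^2$. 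Plugging the resulting expression for $(\partial_s-h^{jk}D_jD_k)V$ into the Bochner identity yields exactly (\ref{Hwer45thiys}), where the four terms on the right-hand side correspond to the three curvature commutators and the interior $\mathbf{R}$ in the first display. The cases $V=\nabla\partial_s\widetilde u$ and $V=\nabla\partial_t\widetilde u$ proceed identically, with the only real difference being that commuting a spatial $D_i$ past $h^{jk}D_jD_k$ generates an extra $|\nabla V|^2\cdot|d\widetilde u|^2$ type term and lower-order Christoffel terms producing the $|\nabla\partial_s\widetilde u|^2$ (resp.\ $|\nabla\partial_t\widetilde u|^2$) terms without derivatives of $d\widetilde u$, which exactly account for the extra lines in (\ref{icionm}) and (\ref{y45aswe345t}).

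I do not anticipate a genuine analytic obstacle: bounded curvature of the target, finite-dimensional bundles, and smooth solutions (which we are entitled to assume since $(u,\partial_tu)\in\mathcal X_T$) reduce everything to bookkeeping of commutators. The main chore will be checking that no additional term of the form $|\nabla^2\partial_s\widetilde u|\cdot(\text{junk})$ survives: such terms can always be absorbed into the $2|\nabla\nabla_t\partial_s\widetilde u|^2$ (resp.\ $2|\nabla^2\partial_s\widetilde u|^2$, $2|\nabla^2\partial_t\widetilde u|^2$) positive term on the left via Cauchy--Schwarz, and one must verify this absorption is consistent with the stated right-hand side. Beyond that, the estimate $|\mathbf{R}|\lesssim 1$ on $\mathbb{H}^2$ and the schematic bound $|\tau(\widetilde u)|+|\nabla^2\widetilde u|\lesssim|\nabla d\widetilde u|$ suffice, and the three inequalities follow.
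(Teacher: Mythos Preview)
Your proposal is correct and is exactly the standard derivation of these Bochner inequalities; the paper itself offers no proof, merely collecting the three estimates in Appendix~C under the heading ``We collect the Bochner inequalities for heat flows in the following two lemmas.'' Your outline---commute covariant derivatives through the heat operator using $[D_\alpha,D_\beta]=\mathbf R(\partial_\alpha\widetilde u,\partial_\beta\widetilde u)$, exploit $\nabla\mathbf R=0$ on the constant-curvature target, then pair with $V$ via the pointwise Bochner identity---is precisely what is implicitly invoked, and the schematic bookkeeping you describe recovers each right-hand side (the apparently missing $|\partial_s\widetilde u|^2|\partial_t\widetilde u||\nabla_t\partial_s\widetilde u|$ term from $[D_s,D_t]\partial_s\widetilde u$ is absorbed by $|\partial_s\widetilde u|=|\tau(\widetilde u)|\lesssim|\nabla d\widetilde u|$ into the third term of~(\ref{Hwer45thiys})).
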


\begin{lemma}
If $(u,\partial_tu)$ solves (\ref{z32ccc344fyeo}) in $\mathcal{X}_T$, then we have
\begin{align}
&\partial _s| {{\nabla}^2d\widetilde{u}}|^2 - \Delta | {{\nabla}^2d\widetilde{u}}|^2 + 2|  \nabla^3 d\widetilde{u}|^2\nonumber\\
&\lesssim |\nabla^2d\widetilde{u}|^2|d\widetilde{u}|^2+|\nabla^2d\widetilde{u}|^2+|\nabla d\widetilde{u}||\nabla^2 d\widetilde{u}|^2+|\nabla d\widetilde{u}|^2|\nabla^2d\widetilde{u}||d\widetilde{u}|\nonumber\\
&+|d\widetilde{u}|^3|\nabla d\widetilde{u}||\nabla^2d\widetilde{u}|\label{tsreinhy45}
\end{align}
and
\begin{align}
&\partial _s| \nabla^2{\partial _s}\widetilde{u}|^2 - \Delta | \nabla^2{\partial _s}\widetilde{u}|^2 + 2| \nabla^3 \partial _s\widetilde{u} |^2\nonumber\\
&\lesssim |\nabla^2\partial_s\widetilde{u}|^2(|d\widetilde{u}|^2+1)+|\partial_s\widetilde{u}|^2|\nabla^2\partial_s\widetilde{u}||\nabla d\widetilde{u}|+|\partial_s\widetilde{u}||d\widetilde{u}||\nabla \partial_s\widetilde{u}||\nabla^2\partial_s\widetilde{u}|\nonumber\\
&+|\nabla^2 \partial_s\widetilde{u}|^2|\nabla d\widetilde{u}|+|d\widetilde{u}||\nabla d\widetilde{u}||\nabla^2\partial_s\widetilde{u}||\nabla\partial_s\widetilde{u}|+|\nabla^2\partial_s\widetilde{u}|^2|d\widetilde{u}||\partial_s\widetilde{u}|
+\nonumber\\
&+
|\nabla^2d\widetilde{u}||d\widetilde{u}||\partial_s\widetilde{u}||\nabla^2\partial_s\widetilde{u}|
+|d\widetilde{u}||\nabla d\widetilde{u}||\nabla\partial_s\widetilde{u}||\nabla^2\partial_s\widetilde{u}|
.\label{tsrinhy45}
\end{align}
\end{lemma}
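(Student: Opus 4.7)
The plan is to apply the standard parabolic Bochner identity
\[
(\partial_s - \Delta)|T|^2 = -2|\nabla T|^2 + 2\langle (\partial_s - \Delta)T,\,T\rangle
\]
with $T=\nabla^2 d\widetilde u$ for the first estimate and $T=\nabla^2\partial_s\widetilde u$ for the second. The $-2|\nabla T|^2$ on the right becomes the $+2|\nabla^3 d\widetilde u|^2$, respectively $+2|\nabla^3\partial_s\widetilde u|^2$, term on the left of the claimed inequalities, so the whole game is to bound $(\partial_s-\Delta)T$ pointwise by a combination of $d\widetilde u,\nabla d\widetilde u,\nabla^2 d\widetilde u,\partial_s\widetilde u,\nabla\partial_s\widetilde u,\nabla^2\partial_s\widetilde u$ with the right multiplicities, and then apply Cauchy-Schwarz once to get the stated RHS.

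For $T=\nabla^2 d\widetilde u$, viewed as a section of $T^*M\otimes T^*M\otimes T^*M\otimes \widetilde u^*TN$, I would compute $(\partial_s-\Delta) T$ by swapping $\nabla_s$ past $\nabla^2$ twice and using $\nabla_s d\widetilde u = \nabla\tau(\widetilde u)=\nabla\partial_s\widetilde u$. Each commutator $[\nabla_s,\nabla_i]\phi = \mathbf R^N(\partial_s\widetilde u,\partial_i\widetilde u)\phi$ produces a curvature factor on $N$ times $d\widetilde u$ times a lower order tensor; commutators $[\nabla_i,\nabla_j]$ on $M$ produce $\mathbf R^M$ acting on $\nabla d\widetilde u$, and commutators of $\nabla^2$ with $\Delta=h^{ij}\nabla_i\nabla_j$ produce Ricci-type terms on $M$ applied to $\nabla^2 d\widetilde u$. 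Since both curvature tensors on $\mathbb H^2$ are bounded (in fact constant), all such terms contribute one of the schematic products $|d\widetilde u||\nabla d\widetilde u||\nabla^2 d\widetilde u|$, $|d\widetilde u|^3|\nabla d\widetilde u|$, $|d\widetilde u|^2|\nabla^2 d\widetilde u|$, or $|\nabla^2 d\widetilde u|$, exactly matching \eqref{tsreinhy45}.

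For $T=\nabla^2\partial_s\widetilde u$, I would start from the fact that, along the heat flow, $\partial_s\widetilde u$ satisfies the Jacobi-type equation $\nabla_s\partial_s\widetilde u=\Delta\partial_s\widetilde u+\mathbf R^N(\partial_s\widetilde u,d\widetilde u)d\widetilde u$ (this is \eqref{tyihy6ys}-level Bochner applied once). Commuting $\nabla^2$ past this equation and then past $\Delta$, the principal parts reduce to $\Delta \nabla^2\partial_s\widetilde u$; every commutator produces either $\mathbf R^M$ acting on $\nabla^k\partial_s\widetilde u$ for $k\le 1$ (giving the $|\nabla^2\partial_s\widetilde u|^2$ and $|\nabla\partial_s\widetilde u||\nabla^2\partial_s\widetilde u|$ contributions) or $\mathbf R^N$ applied to three factors among $d\widetilde u$, $\partial_s\widetilde u$ and their first derivatives (giving the mixed products in \eqref{tsrinhy45}); derivatives hitting $\mathbf R^N$ differentiate the base point $\widetilde u$ and produce an extra $|d\widetilde u|$ or $|\nabla d\widetilde u|$ factor, which is exactly what shows up on the RHS. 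Once each resulting term is paired with $\nabla^2\partial_s\widetilde u$ via Cauchy-Schwarz, the claimed bound follows.

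The only real obstacle is pure bookkeeping: one must verify that no term of the form $|\nabla^3 d\widetilde u|^2$ or $|\nabla^3\partial_s\widetilde u|^2$ appears on the right hand side of $2\langle (\partial_s-\Delta)T,T\rangle$, since otherwise it could not be absorbed into the good term on the left. This follows because, after the commutations, the only genuine third-derivative objects that remain in $(\partial_s-\Delta)T$ come with a curvature coefficient but no other third-derivative factor, so Cauchy-Schwarz produces at worst $|\nabla^3 T'|\,|\nabla^2 T|\cdot(\text{bounded})$, which can be dominated by $\tfrac12|\nabla T|^2+(\text{lower order})$. Every other curvature term involves only zeroth, first and second derivatives of $\widetilde u$ and $\partial_s\widetilde u$, and bounded curvature on $\mathbb H^2$ makes the coefficients harmless, yielding precisely the schematic right hand sides in \eqref{tsreinhy45} and \eqref{tsrinhy45}.
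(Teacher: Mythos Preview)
The paper does not actually prove this lemma: it is listed in Appendix~C with the remark ``We collect the Bochner inequalities for heat flows in the following two lemmas'' and no argument is given. Your approach---applying the parabolic Bochner identity
\[
(\partial_s-\Delta)|T|^2 = 2\langle(\nabla_s-\Delta)T,T\rangle - 2|\nabla T|^2
\]
with $T=\nabla^2 d\widetilde u$ (resp.\ $T=\nabla^2\partial_s\widetilde u$), commuting $\nabla_s$ and $\Delta$ past $\nabla^2$ via curvature, and reading off the schematic products---is exactly the standard derivation and is correct.

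One small refinement of your last paragraph: in fact no third-derivative term survives in $(\nabla_s-\Delta)T$, so no absorption step is needed. A direct computation gives
\[
[\Delta,\nabla^2]S \;=\; \mathrm{curv}\cdot\nabla^2 S + (\nabla\mathrm{curv})\cdot\nabla S + (\nabla^2\mathrm{curv})\cdot S,
\]
with no $\nabla^3 S$ contribution, and $[\nabla_s,\nabla^2]S$ likewise involves only $S$ and $\nabla S$. Here ``curv'' stands for $R^M$ (bounded, covariantly constant) together with $R^N(d\widetilde u,d\widetilde u)$ on the $\widetilde u^*TN$ factor. For the first inequality one then uses $\partial_s\widetilde u=\tau(\widetilde u)=\mathrm{tr}\,\nabla d\widetilde u$, hence $|\partial_s\widetilde u|\lesssim|\nabla d\widetilde u|$ and $|\nabla\partial_s\widetilde u|\lesssim|\nabla^2 d\widetilde u|$, to rewrite the $[\nabla_s,\nabla^2]d\widetilde u$ contributions purely in terms of $d\widetilde u$ and its covariant derivatives; for the second inequality one differentiates the Jacobi-type equation $(\nabla_s-\Delta)\partial_s\widetilde u=\mathrm{Ric}^M\!\cdot\partial_s\widetilde u + h^{ij}\mathbf R(\partial_s\widetilde u,\partial_i\widetilde u)\partial_j\widetilde u$ twice. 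Pairing with $T$ then produces precisely the listed products (some of the terms on the stated right-hand sides are redundant for the constant-curvature target $N=\mathbb H^2$, but the inequality as written is certainly valid).
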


\textit{rikudosennin@163.com}\\
\noindent{\small{Academy of Mathematics and Systems Science (AMSS)}}\\
\noindent{\small{Chinese Academy of Sciences (CAS)}}\\
\noindent{\small{Beijing 100190, P. R. China}}\\

\end{document}